\DeclareSymbolFontAlphabet{\mathcalorig}{symbols}
\theoremstyle{definition}
\newtheorem{theorem}{Theorem}[section] 
\newtheorem{proposition}[theorem]{Proposition}  
\newtheorem{example}[theorem]{Example}  
\newtheorem{definition}[theorem]{Definition}  
\newtheorem{prgrph}[theorem]{} 
\newtheorem{lemma}[theorem]{Lemma} 
\newtheorem{remark}[theorem]{Remark} 
\newtheorem{notation}[theorem]{Notation} 
\newtheorem{corollary}[theorem]{Corollary} 
\newtheorem{warning}[theorem]{Warning} 
\newtheorem{application}[theorem]{Application} 
\newtheorem{globtheorem}{Theorem} 
\newcommand{\thistheoremname}{}
\newtheorem{genericthm}[theorem]{\thistheoremname}
\newcommand{\Z}{\mathbb{Z}}
\newcommand{\Q}{\mathbb{Q}}
\newcommand{\C}{\mathbb{C}}
\newcommand{\M}{\overline{\mathcalorig{M}}}
\newcommand{\F}{\mathcalorig{F}}
\newcommand{\X}{\mathcalorig{X}}
\newcommand{\id}{\textup{id}}
\newcommand{\B}{\overline{\mathcal{B}}}
\renewcommand{\H}{\overline{\mathcalorig{H}}}
\newcommand{\Hyp}{\overline{\mathcal{H}}}
\DeclareMathOperator{\codim}{codim}
\DeclareMathOperator{\im}{Im}
\DeclareMathOperator{\aut}{Aut}
\DeclareMathOperator{\spec}{Spec}
\newcommand{\Ho}{\mathcalorig{H}}
\renewcommand{\L}{\mathbb{L}}
\newcommand{\Base}{S}
\renewcommand{\S}{\mathfrak{S}}
\DeclareMathOperator{\Sym}{Sym}
\newcommand{\fundgp}{\Xi}
\newcommand{\hypbiram}{\ensuremath{{n_1}}}
\newcommand{\hypbipair}{\ensuremath{{n_2}}}
\title{Intersections of loci of admissible covers with tautological classes}
 \author{Johannes Schmitt}
 \author{Jason van Zelm}
\author{Johannes Schmitt and Jason van Zelm}
\date{}
\begin{document}

\maketitle

\begin{abstract}
    For a finite group $G$, let $\H_{g,G,\xi}$ be the stack of admissible $G$-covers  $C\to D$ of stable curves with ramification data $\xi$, $g(C)=g$ and $g(D)=g'$. There are source and target morphisms $\phi\colon \H_{g,G,\xi}\to \M_{g,r}$ and $\delta\colon \H_{g,G,\xi}\to \M_{g',b}$, remembering the curves $C$ and $D$ together with the ramification or branch points of the cover respectively. 
    In this paper we study admissible cover cycles, i.e. cycles of the form $\phi_* [\H_{g,G,\xi}]$. Examples include the fundamental classes of the loci of hyperelliptic or bielliptic curves $C$ with marked ramification points.
    
    The two main results of this paper are as follows: Firstly, for the gluing morphism  $\xi_A\colon \M_A\to \M_{g,r}$ associated to to a stable graph $A$ we give a combinatorial formula for the pullback $\xi^*_A \phi_*[\H_{g,G,\xi}]$ in terms of spaces of admissible $G$-covers and $\psi$ classes. This allows us to describe the intersection of the cycles $\phi_*[\H_{g,G,\xi}]$ with tautological classes.
    Secondly, the pull-push $\delta_*\phi^*$ sends tautological classes to tautological classes and we also give a combinatorial description of this map in terms of standard generators of the tautological rings.
    
     We show how to use the pullbacks to algorithmically compute tautological expressions  for cycles of the form $\phi_* [\H_{g,G,\xi}]$. In particular, we compute the classes $[\Hyp_5]$ and $[\Hyp_6]$ of the hyperelliptic loci in $\M_5$ and $\M_6$ and the class $[\B_4]$ of the bielliptic locus in $\M_4$.
    \end{abstract}


\tableofcontents

\section{Introduction}
\subsection{Motivation: Admissible Cover Cycles}

A classical way of constructing closed subvarieties of the moduli space $\M_{g,n}$ of stable curves is by considering families of finite covers of curves; for example we can consider the closure of the locus of smooth curves $(C,p_1,...,p_n)$ such that there exists a finite degree $d$ cover $C\rightarrow D$ to a curve of fixed genus $g'$ with fixed ramification profile at the marked points $p_i$ of $C$. An example would be the loci $\Hyp_{g,\hypbiram,2\hypbipair}$  and $\B_{g,\hypbiram,2\hypbipair}$ of hyperelliptic and bielliptic curves with $\hypbiram$ marked fixed points of the corresponding involution and $2\hypbipair$ marked points pairwise switched.

Given such a locus of  covers it is natural to study its fundamental class in the Chow or cohomology ring\footnote{In this paper we restrict to the case of rational coefficients, both for Chow groups and for cohomology groups.} of $\M_{g,n}$.  The class of any locus of admissible covers of genus 0 curves with fixed degree $d$ and fixed ramification profile  is tautological (see \cite{Faber2005}). However, not all classes coming from spaces of admissible covers are tautological (see for example \cite{Graber2001} and \cite{vanZelm}). In fact such classes are the prime example of explicit nontautological algebraic classes.

In the cases where the admissible cover cycles are tautological, there is a wide literature on expressing such cycles in terms of decorated stratum classes, which are a set of additive generators of the tautological ring\footnote{See the following section for a reminder about the tautological ring.}. 
In \cite[Theorem 2.2]{Eisenbud1987} the class  $[\textup{Wp}]\in A^1(\M_{2,1})$ of the locus of pointed genus 2 curves where the marked point is one of the Weierstrass points of the curve is computed in terms of decorated stratum classes. Since every genus 2 curve is hyperelliptic and the Weierstrass points coincide with ramification points of the hyperelliptic map, the cycle $[\textup{Wp}]$ equals $[\Hyp_{2,1,0}]$, the class  of the locus of hyperelliptic genus $2$ curves with one point fixed by the involution. The computation of the class $[\Hyp_3]\in A^1(\M_3)$ of the locus of genus 3 hyperelliptic curves 
is well known (see for example \cite[Section 3.H]{harris1998moduli}). The computation of $[\Hyp_4]$ in terms of decorated stratum classes is much harder and was first given in \cite[Proposition 5]{Faber2005}. More recently in \cite{Cavalieri2017}  a generating series for the class of the locus $[\Hyp_{2,\hypbiram,0}]$ is given for all $0\leq \hypbiram \leq 6$ in terms of $\psi$ classes and graphs. In \cite{Faber2012} the class $[\B_3]\in A^2(\M_3)$ of the locus of bielliptic curves of genus 3 is calculated.

In this paper, we give a systematic way for computing the intersection of admissible cover cycles with decorated stratum classes and show how to use this information to compute the admissible cover cycles, in the cases where they are tautological.


\subsection{The Tautological Ring}
In the following we give a reminder about the tautological ring on the moduli space of curves. For a more detailed introduction see e.g. \cite{faber2000,calcmodcurves,Arbarello2011}.

The moduli space $\M_{g,n}$ of stable curves $(C,p_1, \ldots, p_n)$ has a stratification according to the topological type of $C$. This type is encoded in a stable graph $A$, whose vertices $V(A)$ correspond to the irreducible components of $C$ and whose edges $E(A)$ correspond to nodes of $C$ connecting these components. Further, the vertices of $A$ carry legs $l_1, \ldots, l_n$ indicating on which component the marked points $p_1, \ldots, p_n$ are located.

Given a stable graph $A$ we can parametrize the closure of the locus of stable curves $(C,p_1, \ldots, p_n)$ with type $A$ by a map
\[\xi_A : \M_A = \prod_{v \in V(A)} \M_{g(v),n(v)} \to \M_{g,n}\]
which glues a union of marked curves by identifying pairs of markings and forms the corresponding stable curve, prescribed by the data of the graph $A$.

On the moduli spaces $\M_{g(v),n(v)}$ there are natural cycle classes given by the cotangent line bundles $\psi_i$, the Arbarello-Cornalba classes $\kappa_a$ and the classes given by taking the closure of the locus of curves of a specified topological type. We call a \emph{decorated stratum class} the pushforward of a product of $\kappa$ and $\psi$ classes under the morphism $\xi_A$. In particular, pushing forward the fundamental class of $\M_A$ gives (a multiple of) the class of the closure of the locus of curves of type $A$. The vector spaces $R^\bullet(\M_{g,n}) \subset A^\bullet(\M_{g,n})$ and $RH^\bullet(\M_{g,n}) \subset H^\bullet(\M_{g,n})$ spanned by decorated stratum classes turn out to be closed under the intersection product and are called the tautological rings. Alternatively (see \cite{Faber2005}) these rings can be defined as the smallest system of $\Q$-subalgebras of $A^\bullet(\M_{g,n})$ and $H^\bullet(\M_{g,n})$ closed under pushforward along the gluing and forgetful morphisms. 

The intersection between two decorated stratum classes has an explicit, combinatorially defined expression in terms of decorated stratum classes. This result was first written down in \cite{Graber2001} and has since been implemented as a computer program, see \cite{Pixtprogram} and \cite{Yang2008}. 

Our primary goal in this paper is to derive a similar formula for the intersection of a locus of admissible covers with a decorated stratum class. In order to state our results, we need to establish some notation in the following section.

\subsection{Stacks of Admissible \texorpdfstring{$G$}{G}-Covers}
A modular interpretation for the loci of covers can be given by stacks of admissible covers. The definition of admissible covers was first written down in \cite{Harris1982} for the case of $d$-sheeted covers of genus 0 curves and later clarified and extended in \cite{Abramovich2001,Jarvis2005,BertinRomagny}. See Definition \ref{def:admGcover} for a reminder.

In this paper, we are going to study loci of admissible $G$-covers for a fixed finite group~$G$, that is, admissible covers $\varphi: C \to D$ of curves together with a $G$-action on $C$ making~$\varphi$ the quotient map. 

The reason why we restrict ourselves to $G$-covers instead of arbitrary covers of fixed degree is that it makes the moduli spaces of such covers behave like $G$-equivariant versions of moduli spaces of curves, allowing a simple description of the corresponding combinatorics. Since every degree $2$ cover is automatically a $\mathbb{Z}/2\mathbb{Z}$-cover, the important cases of hyperelliptic and bielliptic loci are covered 
by our treatment. For a remark about the case of arbitrary covers see Section \ref{Sect:outlook} below.

Given a $G$-cover $\varphi: C \to D$, the ramification points of $\varphi$ are exactly those smooth points of $C$ having a nontrivial $G$-stabilizer. The ramification behaviour of $\varphi$ is encoded in a \emph{monodromy datum} $\xi$, a tuple of elements of $G$ generating the stabilizers at the various marked points in $C$.


Fix a genus $g \geq 0$, a finite group $G$ and a monodromy datum $\xi=(h_1, \ldots, h_b) \in G^b$. Then we can define a stack $\H_{g,G,\xi}$ whose objects  are given by morphisms
\begin{equation} \label{eqn:introGCD}  G \curvearrowright (C,(p_{i,a})_{\substack{i=1, \ldots, b\\ a \in G/\langle h_i \rangle}}) \xrightarrow{\varphi} (D,(q_i)_{i=1, \ldots, b}) \end{equation}
 where $(C,(p_{i,a})_{i,a})$ and $(D,(q_i)_i)$ are (connected) stable curves, with the  genus of $C$ being $g$. The map $\varphi: C \to D$ is an admissible cover such that the action of $G$ on $C$ is a principal $G$-bundle outside the preimages of markings and nodes of $D$. Further we have $\varphi^{-1}(q_i)=\{p_{i,a}: a\in G/\langle h_i \rangle\}$ for all $i$. The monodromy of the $G$-action at the $p_{i,a}$ is described by $h_i$ and the $G$-action permutes the points $p_{i,a}$ according to left-multiplication of $G$ on $a \in G/\langle h_i \rangle$.



The space $\H_{g,G,\xi}$ is a smooth proper Deligne-Mumford stack. It is closely related to the stacks of pointed admissible covers defined in \cite{Jarvis2005}. A crucial difference is that we require the curve $C$ to be connected, while in \cite{Jarvis2005} also disconnected admissible covers are considered.

We ask $C$ to be connected since it allows us to define maps
\begin{equation} \label{eqn:introphidelta}
\begin{tikzcd}
\H_{g,G,\xi} \arrow[r,"\phi"] \arrow[d,"\delta"] &\M_{g,r}\\
\M_{g',b}
\end{tikzcd}
\end{equation}
where $r$ and $b$ are the number of ramification and branch points of the admissible cover and where $\phi$ sends the admissible cover (\ref{eqn:introGCD}) to the stable curve $(C,(p_{i,a})_{i,a})$ and $\delta$ sends it to $(D,(q_i)_i)$. The map $\phi$ is representable, finite, unramified and a local complete intersection, while $\delta$ is flat, proper and quasi-finite, but in general not representable.

We obtain a cycle class $\phi_*[\H_{g,G,\xi}] \in A^\bullet(\M_{g,r})$. These cycles (and their pushforwards under forgetful maps $\pi: \M_{g,r} \to \M_{g,n}$) are the main objects of study in this paper. These include the cycles of loci $\Hyp_{g,\hypbiram,2\hypbipair}$ and $\B_{g,\hypbiram,2\hypbipair}$ of hyperelliptic and bielliptic curves mentioned above.

\subsection{Intersection Results}
Now we are ready to present our results about the intersection of $\phi_*[\H_{g,G,\xi}]$ with a cycle $\xi_{A*} [\M_A]$ coming from a boundary stratum for a stable graph $A$. In the result, we use that, similarly to $\M_{g,n}$, the space $\H_{g,G,\xi}$ has a stratification by topological type. Roughly, the strata are enumerated by the stable graphs $\Gamma$ of the curves $C$ together with a $G$-action on $\Gamma$ induced by the $G$-action on the components, nodes and markings  of $C$ (see Section \ref{Sect:boundaryHurwitz}). Denoting the graph with $G$-action by $(\Gamma,G)$, the corresponding locus in $\H_{g,G,\xi}$ admits a parametrization by an equivariant gluing map
\[\xi_{(\Gamma,G)} : \H_{(\Gamma,G)} = \prod_{w \in V(\Gamma)/G} \H_{g(w),G_w,\xi_w} \to \H_{g,G,\xi}.\]
 The factors $\H_{g(w),G_w,\xi_w}$ correspond to \emph{orbits} of vertices in $\Gamma$ under $G$. Indeed, for a curve in $\H_{g,G,\xi}$ with stable graph $(\Gamma,G)$, an orbit of a vertex $w$ in $\Gamma$ corresponds to an orbit of components of the curve. Since $G$ acts by isomorphisms of the curve, all these components are isomorphic, and their isomorphism type is specified by an element of $\H_{g(w),G_w,\xi_w}$.

One checks that the intersection of (the image of) ${\phi: \H_{g,G,\xi} \to \M_{g,r}}$  with (the image of) $\xi_A$ is given as the union of the spaces $\H_{(\Gamma,G)}$ such that $\Gamma$ is a specialization of the graph $A$. Technically speaking this means that we have a morphism $f: \Gamma \to A$ of stable graphs (also called an $A$-structure on $\Gamma$ or a contraction of $\Gamma$) such that each $G$-orbit of edges of $\Gamma$ contains at least one edge coming from $A$. For details see Definition \ref{Def:genericequivAstructure}. We call such an $f$ a \emph{generic $A$-structure on $(\Gamma,G)$}. Denote by $\mathfrak{H}_{A;G,\xi}$ the set of isomorphism classes of generic $A$-structures $(\Gamma,G,f)$ for a given space $\H_{g,G,\xi}$.

In this situation, the composition of $\xi_{(\Gamma,G)}$ with the map $\phi: \H_{g,G,\xi} \to \M_{g,r}$ factors through the gluing maps $\xi_\Gamma$ and $\xi_A$ via a diagram

\begin{equation}
  \begin{tikzcd}
   \H_{(\Gamma,G)} \arrow[dd, "\phi_f", bend right=35,swap] \arrow[r,"\xi_{(\Gamma,G)}"]\arrow[d,"\phi_{(\Gamma,G)}"] & \H_{g,G,\xi}\arrow[d,"\phi"]\\
   \M_\Gamma  \arrow[d] \arrow[r,"\xi_\Gamma", swap] & \M_{g,r}\\
   \M_A \arrow[ru,"\xi_A",swap] &
  \end{tikzcd}
  \end{equation}


 \begin{globtheorem}[see Theorem \ref{th:main}] \label{Thm:intropullback}
  We have
  \begin{align*}
   \xi_A^*\phi_{*} ([\H_{g,G,\xi}])=  \sum_{ (\Gamma,G,f) \in \mathfrak{H}_{A;G,\xi}} c_{\textup{top}}(E_f)
   \cap  \phi_{f*} 
   [\H_{(\Gamma,G)}].
  \end{align*}
    where $c_{\text{top}}(E_f)$ is the top Chern class of an excess intersection bundle. It is the product of factors $-(\psi_h + \psi_{h'})$ for some half-edges $h,h'$ of $A$, as described in Proposition \ref{prop:excess}.
 \end{globtheorem}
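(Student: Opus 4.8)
The plan is to express the left-hand side as the pushforward of a refined Gysin class supported on the fibre product $W := \H_{g,G,\xi}\times_{\M_{g,r}}\M_A$, and then to evaluate that class one component at a time by excess intersection. Since $\M_{g,r}$ is a smooth Deligne--Mumford stack, the gluing morphism $\xi_A$ is a local complete intersection of codimension $|E(A)|$, and $\phi$ is proper (indeed finite). Let $p\colon W\to\M_A$ and $q\colon W\to \H_{g,G,\xi}$ be the two projections from the Cartesian square defining $W$. The refined Gysin homomorphism $\xi_A^{!}$ is defined and commutes with proper pushforward along $\phi$ (Fulton, \emph{Intersection Theory}, Thm.~6.2, in the form valid for Deligne--Mumford stacks, cf.\ Vistoli and Kresch), which gives
\[
  \xi_A^{*}\phi_{*}[\H_{g,G,\xi}] \;=\; p_{*}\bigl(\xi_A^{!}[\H_{g,G,\xi}]\bigr),
\]
where on the left $\xi_A^{*}$ is the ordinary pullback of $\phi_{*}[\H_{g,G,\xi}]$, available because the target is smooth. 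It remains to compute $\xi_A^{!}[\H_{g,G,\xi}]$ as a cycle on $W$.

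\textbf{Components of the fibre product.} A point of $W$ is an admissible $G$-cover $C\to D$ together with an $A$-structure on its source curve $C$. Stratifying by the topological type of $C$ with its $G$-action, I would identify the reduced closed substacks carrying the components of $W$ with the spaces $\H_{(\Gamma,G)}$, as $(\Gamma,G,f)$ ranges over the generic $A$-structures in $\mathfrak{H}_{A;G,\xi}$: the universal property of $\xi_{(\Gamma,G)}$ produces a map $\H_{(\Gamma,G)}\to W$ that factors $\phi_f$ through $p$, and the genericity condition---every $G$-orbit of edges of $\Gamma$ meets $E(A)$---is exactly what singles out the types whose locus is not contained in the closure of another. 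Keeping track of automorphisms and of the non-representable stacky structure, so that the cycle-theoretic multiplicities come out right, is the \emph{main technical obstacle} of the argument.

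\textbf{Excess intersection.} Each component $\H_{(\Gamma,G)}$ has dimension $\dim\H_{g,G,\xi}-n_\Gamma$, where $n_\Gamma$ is the number of $G$-orbits of edges of $\Gamma$, whereas $\xi_A^{!}[\H_{g,G,\xi}]$ has virtual dimension $\dim\H_{g,G,\xi}-|E(A)|$. Genericity forces $n_\Gamma\le |E(A)|$, so the component carries excess dimension $|E(A)|-n_\Gamma$: geometrically, because $G$ acts by automorphisms of the cover, the $|O\cap E(A)|$ nodes of $C$ lying in a single $G$-orbit $O$ of edges must smooth simultaneously and hence impose a single condition rather than $|O\cap E(A)|$ independent ones. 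The excess intersection formula then yields
\[
  \xi_A^{!}[\H_{g,G,\xi}] \;=\; \sum_{(\Gamma,G,f)\in\mathfrak{H}_{A;G,\xi}} c_{\textup{top}}(E_f)\cap[\H_{(\Gamma,G)}],
\]
where $E_f$, the excess bundle, is the quotient of the pullback $\phi_f^{*}N_{\xi_A}$ of the normal bundle $N_{\xi_A}$ of $\xi_A$ by the subbundle recording the genuine smoothing directions. Since $N_{\xi_A}$ along an edge $e$ with half-edges $h,h'$ is the line bundle with first Chern class $-(\psi_h+\psi_{h'})$, the bundle $E_f$ is pulled back from $\M_A$ and its top Chern class is the advertised product of factors $-(\psi_h+\psi_{h'})$, the precise half-edges being read off in Proposition~\ref{prop:excess}.

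\textbf{Conclusion.} Applying $p_{*}$, which restricts to $\phi_{f*}$ on each component, and using the projection formula (legitimate because $E_f$ is pulled back from $\M_A$) to move $c_{\textup{top}}(E_f)$ outside the pushforward, turns each summand into $c_{\textup{top}}(E_f)\cap\phi_{f*}[\H_{(\Gamma,G)}]$, which is the claimed formula.
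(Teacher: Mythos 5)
Your overall strategy is the same as the paper's: identify the fibre product $\M_A \times_{\M_{g,r}} \H_{g,G,\xi}$ with the disjoint union of the spaces $\H_{(\Gamma,G)}$ indexed by generic $A$-structures, compute the excess bundle on each component, and conclude by the excess intersection formula and the projection formula. The problem is that the step you label the ``main technical obstacle'' --- establishing this identification as stacks, with the correct multiplicities --- cannot be deferred: it is the actual content of the theorem, and it occupies the bulk of the paper's argument (Proposition \ref{prop:fibreprod}, proved via the modular interpretation of admissible covers with $(\Gamma,G)$-markings from Definition \ref{def:admmark} and an explicit pair of mutually inverse functors, in parallel with Proposition \ref{intprop}). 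The payoff of that proposition is precisely that the fibre product is isomorphic to $\coprod_{(\Gamma,G,f) \in \mathfrak{H}_{A;G,\xi}} \H_{(\Gamma,G)}$ as a stack, hence smooth and in particular reduced, so every component enters with multiplicity one and the excess formula applies with $E_f = \phi_f^* N_{\xi_A}/N_{\xi_{(\Gamma,G)}}$. Without it, your formula carries undetermined rational multiplicities (a priori the fibre product of the finite unramified map $\phi$ with $\xi_A$ could be non-reduced), and the statement to be proved is exactly the assertion that all these multiplicities equal one.

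A second, related inaccuracy: genericity of the $A$-structure is not what ``singles out the types whose locus is not contained in the closure of another.'' In the paper's proof, the graph $\Gamma$ attached to a point of the fibre product is constructed by taking the $G$-orbits of the nodes coming from the $A$-structure, so every $G$-orbit of edges of $\Gamma$ contains an $A$-edge by construction; genericity is automatic, and there simply are no components of the fibre product indexed by non-generic structures. Under your reading one would still have to argue that non-generic strata contribute nothing, an extra step the paper never needs. Finally, your identification of $c_{\textup{top}}(E_f)$ is asserted rather than proved: the paper's Proposition \ref{prop:excess} requires an induction over the edges of $A$ (factoring $\xi_A$ into one-edge gluings) to pin down which sub-line-bundles of $\phi_f^* N_{\xi_A}$ constitute $N_{\xi_{(\Gamma,G)}}$ --- namely one representative edge per $G$-orbit --- and hence that each orbit containing $k$ edges from $A$ contributes exactly the factor $(-\psi_h - \psi_{h'})^{k-1}$.
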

Clearly, by the projection formula this theorem allows us to express the intersection of $\phi_*[\H_{g,G,\xi}]$ with a decorated stratum class for the graph $A$ using tautological classes\footnote{In the description of the map $\phi_{(\Gamma,G)}$, diagonal embeddings $\Delta : \M_{g_i,n_i} \to (\M_{g_i,n_i})^{\times m}$ naturally appear (see Section \ref{Sect:boundaryHurwitz}). Thus to give a complete description of the intersection of $\phi_*[\H_{g,G,\xi}]$ with a decorated stratum class, we need to understand the class of the diagonal. While this class will not be tautological in general, it is tautological for many small $(g,n)$  and in that case we can describe its Kunneth decomposition (see Application \ref{app:tautkunneth}).} and the fundamental classes of the factors in $\H_{(\Gamma,G)}$.
 
There are several variants of the above result. With a bit more care, it is possible to replace $\phi_*[\H_{g,G,\xi}]$ by a pushforward $\pi_* \phi_*[\H_{g,G,\xi}]$ from a moduli space with more markings (in other words, we forget some of the ramification points). This is described in Section \ref{Sect:forgetting}. 

Another interesting consequence of the proof of the theorem above is the following result.
\begin{globtheorem} [see Theorem \ref{th:mainpush} and  Corollary \ref{Cor:pullpushdelta}]
  For the maps $\phi, \delta$ as in the diagram (\ref{eqn:introphidelta}), the composition
  \[
   \delta_*\phi^* \colon A^\bullet(\M_{g,r})\rightarrow A^\bullet(\M_{g',b})
  \]
 sends tautological classes to tautological classes. More precisely, on decorated boundary strata, this map admits an explicit, combinatorial formula. 
\end{globtheorem}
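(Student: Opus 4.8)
The plan is to evaluate $\delta_*\phi^*$ on the additive generators of the tautological ring, namely the decorated stratum classes $\xi_{A*}(\alpha)$ with $\alpha$ a monomial in $\psi$- and $\kappa$-classes on $\M_A$. The first step is to compute the pullback $\phi^*\xi_{A*}(\alpha)$. Since $\phi$ is a local complete intersection morphism it carries a refined Gysin pullback, and applying it to the fibre square built from $\phi$ and $\xi_A$ --- the very square analysed in the proof of Theorem~\ref{th:main} --- exhibits $\phi^{-1}(\im\xi_A)$ as the union of the boundary strata $\H_{(\Gamma,G)}$ indexed by the \emph{generic $A$-structures} $(\Gamma,G,f)\in\mathfrak{H}_{A;G,\xi}$. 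The same excess intersection bundle $E_f$ appearing in Theorem~\ref{Thm:intropullback} governs the correction, so I expect
\[
\phi^*\xi_{A*}(\alpha)=\sum_{(\Gamma,G,f)\in\mathfrak{H}_{A;G,\xi}} \xi_{(\Gamma,G)*}\bigl(c_{\textup{top}}(E_f)\cap \phi_f^*\alpha\bigr).
\]

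Next I would push forward along $\delta$. The key structural fact is that $\delta$ is compatible with the boundary gluing maps: a $G$-cover $C\to D$ whose source has stable graph $\Gamma$ has target with stable graph the quotient $\Gamma/G$, so $\delta\circ\xi_{(\Gamma,G)}=\xi_{\Gamma/G}\circ\delta_{(\Gamma,G)}$ for a product map $\delta_{(\Gamma,G)}=\prod_w \delta_w$ over the vertex orbits. Using $\delta_*\xi_{(\Gamma,G)*}=\xi_{\Gamma/G\,*}\,\delta_{(\Gamma,G)*}$ reduces the problem to pushing the class $c_{\textup{top}}(E_f)\cap\phi_f^*\alpha$ forward along $\delta_{(\Gamma,G)}$ into $\M_{\Gamma/G}$ and then gluing. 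Because the source and target strata are products on which $\delta_{(\Gamma,G)}$ respects the factorisation, everything reduces to the basic case $\delta\colon\H_{g,G,\xi}\to\M_{g',b}$ applied to a monomial in $\psi$- and $\kappa$-classes.

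The heart of the argument is then this single-factor computation, which rests on two comparison relations between tautological classes on the source and on the target. For the cotangent classes, at a ramification point $p_{i,a}$ with ramification index $e_i=|\langle h_i\rangle|$ one has $\delta^*\psi_{q_i}=e_i\,\phi^*\psi_{p_{i,a}}$, so every source $\psi$-class at a ramification point equals $\tfrac{1}{e_i}\delta^*\psi_{q_i}$; the half-edge classes assembling $c_{\textup{top}}(E_f)$ are cotangent classes at nodes and obey the same rule with the ramification index of the node. For the $\kappa$-classes, the Riemann--Hurwitz identity $\omega_C^{\log}=\Phi^*\omega_D^{\log}$ for the universal cover $\Phi$ of universal curves, together with $\Phi_*\Phi^*=|G|\cdot\id$, shows that the logarithmic $\kappa$-classes of the source equal $|G|$ times the $\delta$-pullback of those of the target; rewriting ordinary $\kappa$-classes in terms of logarithmic ones introduces only corrections by $\psi$-classes at ramification points, again governed by the first relation. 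Feeding these into the projection formula turns $\delta_*$ of any $\psi$-$\kappa$ monomial into a $\psi$-$\kappa$ monomial on $\M_{g',b}$ times a combinatorial coefficient built from the $e_i$, from $|G|$, and from $\deg\delta$, the automorphism-weighted count of $G$-covers with the prescribed branch data.

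Assembling these pieces over all generic $A$-structures yields the asserted explicit combinatorial formula and in particular shows the output is tautological. The main obstacle I anticipate is precisely this single-factor $\kappa$-comparison together with the bookkeeping forced by the non-representability of $\delta$: establishing $\omega_C^{\log}=\Phi^*\omega_D^{\log}$ cleanly across nodes and ramification, converting correctly between logarithmic and ordinary $\kappa$-classes, and pinning down the automorphism factors entering $\deg\delta$ and the gerbe structure of $\delta$ over its image. A secondary subtlety is tracking how the half-edge $\psi$-classes in $E_f$ transform under $\delta_{(\Gamma,G)}$, since the ramification index at a node is read off from the stabiliser of the corresponding edge orbit of $\Gamma$; consequently the combinatorial coefficients must be extracted from the decorated graph $(\Gamma,G,f)$ rather than from $A$ alone.
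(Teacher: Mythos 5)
Your proposal is correct and follows essentially the same route as the paper: pull back along $\phi$ via the Cartesian square of Proposition \ref{prop:fibreprod} with excess class from Proposition \ref{prop:excess}, use the compatibility $\delta\circ\xi_{(\Gamma,G)}=\xi_{\Gamma/G}\circ\delta_{(\Gamma,G)}$, convert $\psi$-$\kappa$ monomials on $\H_{(\Gamma,G)}$ into $\delta_{(\Gamma,G)}$-pullbacks via the comparison relations of Lemma \ref{Lem:psikappacompat}, and finish with the projection formula and $\deg\delta_{(\Gamma,G)}$ from Theorem \ref{thm:condnonempty}. The only cosmetic difference is that you sketch a proof of the $\kappa$-comparison via $\omega^{\log}$ and $\Phi_*\Phi^*=|G|\cdot\id$, where the paper cites Bertin--Romagny for that step.
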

In particular, this theorem allows us to compute intersection numbers of cycles $\phi_*[\H_{g,G,\xi}]$ with decorated boundary strata of the complementary dimension: these strata are mapped to tautological zero cycles via $\delta_* \phi^*$ and we can compute the degrees of those explicitly.

\subsection{Applications}
One of the main applications of the results in the previous section is that they often allow us to compute cycles of admissible covers in terms of decorated stratum classes in the case that these cycles are tautological. Let $\alpha \in H^{2i}(\M_{g,n})$ be an admissible cover cycle, i.e. a cycle of the form $\alpha=\pi_* \phi_* [\H_{g,G,\xi}]$ for some forgetful map $\pi : \M_{g,r} \to \M_{g,n}$. Assume that $\alpha$ is tautological.

The first method to compute $\alpha$ is to calculate its intersection numbers with tautological cycles of the complementary cohomological degree $2(3g-3+n-i)$. If the pairing
\[RH^{2i}(\M_{g,n}) \otimes RH^{2(3g-3+n-i)}(\M_{g,n}) \to \mathbb{Q}\]
is perfect in the relevant degree (e.g. if all cohomology is tautological) then these intersection numbers completely determine $\alpha$.

A second method is to compute the pullback of $\alpha$ via boundary maps $\xi_A : \M_A \to \M_{g,n}$ for various stable graphs $A$. Using a variant of Theorem \ref{Thm:intropullback} (see Theorem \ref{th:mainpush}) we can express the result in terms of cycles of smaller-dimensional spaces $\H_{g_i,G_i,\xi_i}$, which can be computed by recursion. Comparing the result with the pullbacks of a basis of $RH^{2i}(\M_{g,n})$ we obtain a linear condition on $\alpha \in RH^{2i}(\M_{g,n})$. Combining this information for different graphs $A$ is often sufficient to compute $\alpha$.

Since these computations quickly become untractable by hand, we have implemented the operations discussed in this section in a computer program \cite{Schmittprogram}, written in SAGE \cite{sagemath}. This program uses several functions already implemented in \cite{Pixtprogram}. Our program is available on the websites of the authors together with a short manual describing its use and some example computations.

Using the program we are able to greatly extend the list of such classes which can be computed (as well as repeat and verify most of the afformentioned calculations).
In Figures \ref{fig:hyperelliptcycles} and \ref{fig:bielliptcycles} we give lists of the values $(g,\hypbiram,\hypbipair)$ for which we can calculate the cycles $[\Hyp_{g,\hypbiram,2\hypbipair}]$ and $[\B_{g,\hypbiram,2\hypbipair}]$ of hyperelliptic and bielliptic curves  as sums of decorated stratum classes. Note that in the hyperelliptic case, we always have $\hypbiram \leq 2g+2$ and in the bielliptic case $\hypbiram \leq 2g-2$.

\begin{figure}[h]
    \centering
    \begin{tabular}{|c||c|c|c||c|c|c|c|c||c|c|c|c||c|c||c|c||c|c||c||}
    \hline
       g  & \multicolumn{3}{c||}{0} & \multicolumn{5}{c||}{1}& \multicolumn{4}{c||}{2}& \multicolumn{2}{c||}{3}& \multicolumn{2}{c||}{4}& \multicolumn{2}{c||}{5} & \multicolumn{1}{c||}{6}\\
    \hline  
       \hypbiram  & 0 & 1 & 2   &   0 & 1 & 2 & 3 & 4  &   0 & 1 & 2 & 3  &   0 & 1    &    0 & 1    &    0 & 1    & 0 \\
    \hline  
       \hypbipair  & 4 & 3 & 2   &   2 & 2 & 1 & 1 & 0  &   2 & 0 & 0 & 0  &   1 & 0    &    0 & 0    &    0 & 0    & 0 \\
    \hline 
    \end{tabular}
    \caption{Values of $g,\hypbiram,\hypbipair$ for which we computed the cycle $[\Hyp_{g,\hypbiram,2\hypbipair}]$ explicitly}
    \label{fig:hyperelliptcycles}
\end{figure}

\begin{figure}[h]
    \centering
    \begin{tabular}{|c||c||c|c|c||c|c||c||}
    \hline
       g  & \multicolumn{1}{c||}{1} & \multicolumn{3}{c||}{2}& \multicolumn{2}{c||}{3}& \multicolumn{1}{c||}{4}\\
    \hline  
       \hypbiram  & 0 & 0 & 1 & 2   &   0& 1    &  0 \\
    \hline  
       \hypbipair & 2 &1  &  0  & 0  & 0 & 0& 0\\
    \hline 
    \end{tabular}
    \caption{Values of $g,\hypbiram,\hypbipair$ for which which we computed the cycle $[\B_{g,\hypbiram,2\hypbipair}]$ explicitly}
    \label{fig:bielliptcycles}
\end{figure}

As explicit examples, we give expressions for the cycles $[\Hyp_5] \in H^{6}(\M_5)$ and $[\B_4] \in H^{6}(\M_4)$ in Theorem \ref{th:H5} and Theorem \ref{th:B4}. We describe the computation of $[\Hyp_6]\in H^{8}(\M_6)$ in Remark \ref{Rm:H6}. Note that our methods also allow us to go beyond the case of double covers. In Section \ref{Sect:furtherexa} we give some examples of cycles of cyclic triple covers that we can currently compute.




\subsection{Outlook} \label{Sect:outlook}
The results in our paper show that cycles of admissible covers behave very well under intersection with tautological classes. It is therefore natural to define an extension of the tautological ring obtained by adding all such cycles. As shown in \cite{Graber2001} and \cite{vanZelm}, this eventually gives strictly more cycle classes.

However, to work  with this extension we must also understand the intersection of different admissible cover cycles. We investigate a first example of this in Section \ref{sec:inthyp}, where we pull back the locus of bielliptic curves in $\M_4$ to the space of hyperelliptic curves. As shown there, this intersection between the hyperelliptic and bielliptic locus again has a neat description in terms of admissible cover spaces and the Chern class of the excess bundle is tautological. We expect this to be true in general, which would allow us to write down an explicit additive set of generators for the extended tautological ring, together with a combinatorial description of their intersections. 

Another direction is the generalization from admissible $G$-covers to arbitrary covers of a fixed degree $d$. As described in \cite[Section 4.2]{Abramovich2001}, for every admissible cover $E \to D$ of degree $d$ there is a $S_d$-admissible cover $C \to D$ such that $E=C/S_{d-1}$, where $S_d$ is the symmetric group. In other words we have a diagram
\[
\begin{tikzcd}
   C \arrow[d, "/S_{d-1}",swap] \arrow[dr, "/S_{d}"] & \\
   E \arrow[r] & D
  \end{tikzcd}
\]
This suggests, given a space $\H_{g,G,\xi}$ and a subgroup $H \subset G$, to consider the map
\begin{align*}\phi_H : \H_{g,G,\xi} &\to \M_{g'',b'},\\ \big[(G \curvearrowright (C,(p_{i,a})_{i,a}) \to (D,(q_i)_i))\big] &\mapsto (C/H, (\eta(p_{i,a}))_{i,a}),\end{align*}
where $\eta: C \to C/H$ is the quotient map. For the trivial subgroup $H=\{e_G\} \subset G$ we obtain our previous map $\phi$, for $H=G$ we obtain the map $\delta$. By the argument above, for $G=S_d$ the cycles $\phi_{S_{d-1} *} [\H_{g,G,\xi}]$ allow us to describe all degree $d$ admissible covers, and we expect that they behave similarly to the cycles $\phi_* [\H_{g,G,\xi}]$ we saw above. We plan to investigate this in further papers.

Finally, it is possible to look at the intersection theory on the spaces $\H_{g,G,\xi}$ themselves. There is a natural definition of a tautological ring $R^\bullet(\H_{g,G,\xi})\subset A^\bullet(\H_{g,G,\xi})$ and thus of tautological relations in this ring. A first basic way to obtain such relations is to pull back usual tautological relations on $\M_{g',b}$ via the target map $\delta: \H_{g,G,\xi} \to \M_{g',b}$.  Any tautological relation on $\H_{g,G,\xi}$ pushes forward to a relation between cycles from admissible cover spaces under the source map $\phi: \H_{g,G,\xi} \to \M_{g,r}$. Such relations can potentially be used to express one admissible cover cycle in terms of smaller-dimensional ones, enabling us to compute them recursively.

\subsection{Outline of the Paper}
In Section \ref{sec:intbound} we recall the definition of decorated stratum classes and compute the intersection between such classes following \cite{Graber2001}. This section mainly serves as a warm-up and to establish notation. In Section \ref{sec:hurspace} we introduce the stack of admissible covers and state a number of facts for later. We compute the intersection between spaces of admissible covers and decorated stratum classes in Section \ref{sec:theint}. Finally in Section \ref{ch:computing} we show how these theorems can be used to compute classes of admissible covers in terms of decorated stratum classes. We give a number of examples explicitly.






\section*{Acknowledgements}

The authors would like to thank Nicola Pagani and Rahul Pandharipande for the many helpful discussions and comments during this project. 
Special thanks also go to Aaron Pixton for allowing us to use and modify his program \cite{Pixtprogram}. The first author would also like to thank Johannes Lengler from the Algorithm Consulting Service of ETH Z\"urich for advice concerning the implementation of the methods in this paper. 

The first author was
supported by the grant SNF-200020162928. The second author was supported by a GTA fellowship from the University of Liverpool and an Einstein fellowship at Humboldt Universit\"{a}t Berlin.

\section{Intersections in the tautological ring}\label{sec:intbound}

In this section we will compute the intersection between two decorated stratum classes in terms of decorated stratum classes. This was first done in \cite{Graber2001}. The purpose of this section is to introduce notation and to serve as a warmup for later when we will compute the intersection between  decorated stratum classes and stacks of pointed admissible $G$-covers.
 Readers familiar with tautological classes can likely skip to Section \ref{sec:hurspace} and refer back to this section as necessary.

\begin{definition}
  Recall that an \emph{undirected finite graph} (or simply \emph{graph}) is a triple
  \[
   (V,E,s\colon E\rightarrow \Sym^2V)
  \]
  where $V$ is a finite set of vertices, $E$ is a finite set of edges and $s$ sends each edge into the second symmetric power of $V$ thus assigning two vertices to every edge. 
  
  Also recall that a \emph{path} between two vertices $v,v'\in V$ is a sequence of vertices \[v=v_0,v_1,...,v_n=v'\] and a sequence of edges $e_1,...,e_n$ such that $s(e_i)=v_{i-1}v_i$ for all $i$. A graph is said to be \emph{connected} if there is a path between any pair of vertices $v,v'\in V$.
  \end{definition}
  
   \begin{definition}
  We define a \emph{stable graph} $\Gamma$ to be the data
  \[
   \Gamma := (V ,H ,L ,g \colon V \rightarrow \Z_{\geq 0}, \iota \colon H  \rightarrow H , a \colon H \rightarrow V , \zeta  \colon L  \rightarrow V )
  \]
 which satisfies the following conditions:
 \begin{enumerate}[i]
  \item $\iota $ is a fixed point free involution,
  
  \item let $E $ be the set of orbits of $\iota $ and let $s\colon E \rightarrow \Sym^2V $ be the function induced by $a $ on $E $ then $(V ,E ,s )$ is a connected graph,
  
  \item for each vertex $v\in V $ the stability condition $2g(v)-2+|a ^{-1}(v)|+|\zeta ^{-1}(v)|>0$ is satisfied.
 \end{enumerate}
 \end{definition}
 
 \begin{notation}
 We call the elements of $H$ \emph{half edges} and the elements of $L$ \emph{legs}. We call $g$ the \emph{genus function} and the genus of a vertex is defined to be $g(v)$. For a vertex $v\in V$ we set $n(v)= |a^{-1}(v)|+|\zeta^{-1}(v)|$ the number of \emph{half edges plus legs incident to $v$}.  The \emph{genus of a stable graph} $\Gamma$ is defined to be
 \[
  g(\Gamma):=\sum_{v\in V}g(v)+h^1(\Gamma).
 \]
  We will say that a stable graph $\Gamma$ is $n$-pointed if $n=|L|$.
\end{notation}

   \begin{example}
   Given a stable curve $C$ over $\spec \C$, we associate a stable graph to $C$ as follows. Let $V$ be the set of irreducible components of $C$, let $H$ be the set of sections of the normalization of $C$ corresponding to the nodes of $C$, $L$ the set of marked points of $C$, $g$ the function sending the irreducible components to the genus of their normalization, $\iota$ the involution identifying sections $h\in H$ mapping to the same node of $C$, $a$ the map identifying sections with the irreducible component they lie on and $\zeta$ the map sending a marked point to its corresponding irreducible component. The graph $(V,H,L,g,\iota,a,\zeta)$ is stable and is called the \emph{dual graph} of $C$.
  \end{example}
   \begin{figure}[H]
  \centering
    \includegraphics[width=.3\textwidth]{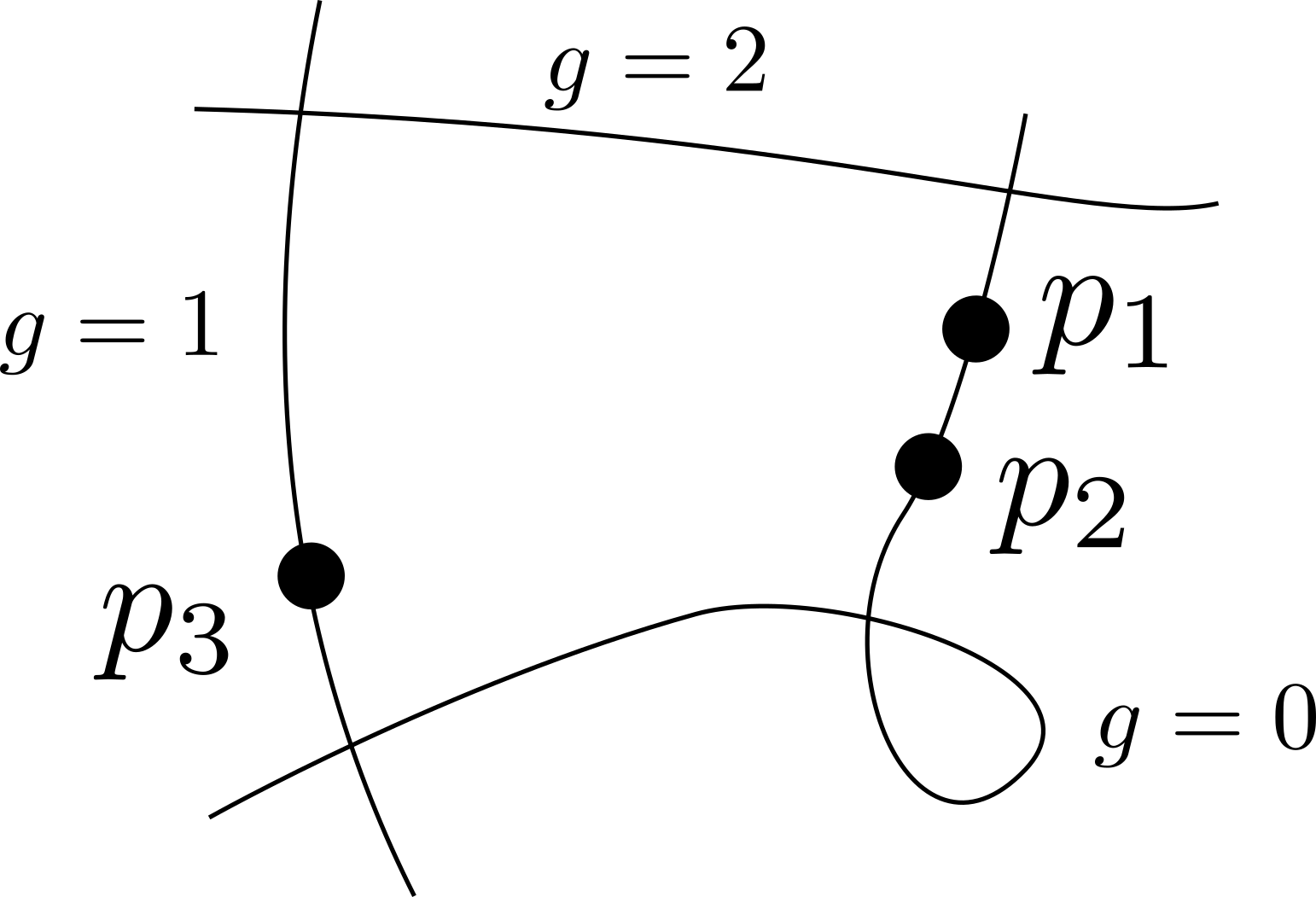}\hspace{3cm}
 \begin{tikzpicture}[baseline=4pt,->,>=bad to,node distance=1.3cm,thick,main node/.style={circle,draw,font=\Large,scale=0.5}]
\node at (0,1) [main node] (A) {2};
\node at (1,0) [scale=.3,draw,circle,fill=black] (B) {};
\node at (-1,0) [main node] (C)  {1};
\node at (-1.7,0)  (p3)  {3};
\node at (1.3,-.5)  (p2)  {2};
\node at (.7,-.5)  (p1)  {1};
\draw [-] (A) to  (B);
\draw [-] (C) to  (B);
\draw [-] (A) to (C);
\draw [-] (p3) to (C);
\draw [-] (p2) to (B);
\draw [-] (p1) to (B);
\draw [-] (B) to [out=30, in=-30,looseness=30] (B);
\end{tikzpicture}
    \label{fig:curve}
    \caption{A curve and its dual graph. When the genus of a vertex is 0 we shall depict it as a black dot.}
\end{figure}

The following definition formalizes the notation that a stable graph $\Gamma$ is a specialization of a stable graph $A$, recording the data of this specialization. 
  
   \begin{definition}\label{def:morgraph}
  Let $A$ and $\Gamma$ be  genus $g$ stable graphs with set of legs $L_\Gamma=L=L_A$. An $A$-structure on $\Gamma$ is a triple
  \[
  ( \alpha \colon V_\Gamma \twoheadrightarrow V_A,\; \beta\colon H_A\hookrightarrow H_\Gamma ,\; \gamma\colon H_\Gamma \backslash \im \beta \rightarrow V_A)
  \]
 which satisfies the following conditions:
 \begin{enumerate}[i]
  \item the map $\beta$ commutes with the involutions, i.e.\  $\beta\circ \iota_A =\iota_\Gamma \circ \beta$, 
  
  \item the map $\alpha$ respects the leg assignments, i.e.\  $\alpha \circ \zeta_\Gamma = \zeta_A$,
  
  \item if $h\in \im \beta$ and $a_\Gamma(h)=v$ then $a_A(\beta^{-1}(h))=\alpha(v)$,
  
  \item if $h\in H_\Gamma\backslash \im \beta$ and $a_\Gamma(h)=v$ then $\alpha(v)=\gamma(h)$,
  
  \item if $v\in V_A$ then 
  \[
   (\alpha^{-1}(v),\; \gamma^{-1}(v),\; \beta(a_A^{-1}(v))\cup \zeta_A^{-1}(v),\; g_\Gamma, a_\Gamma, \iota_\Gamma, \zeta)
  \]
 is a stable graph of genus $g(v)$ (where  $g_\Gamma$, $a_\Gamma$ and $\iota_\Gamma$ are restricted to the appropriate subsets and $\zeta$ is defined by $\zeta_\Gamma$ on $\zeta_A^{-1}(v)$ and by $a_\Gamma$ on $\beta(a_A^{-1}(v))$).
 \end{enumerate}
 \end{definition}

 \begin{remark}
  If $\Gamma$ has an $A$-structure $f=(\alpha,\beta,\gamma)$ and $A$ has a $B$-structure $g=(\alpha',\beta',\gamma')$ it is easy to check that there exists a unique $\gamma'': H_\Gamma \setminus \im (\beta \circ \beta') \to V_B$ making $(\alpha'\circ \alpha,\beta\circ\beta' ,\gamma'')$ a $B$-structure on $\Gamma$. Indeed, $\gamma''$ is given as $\alpha' \circ \gamma$ on $H_\Gamma \setminus \im \beta$  and as $\gamma' \circ \beta^{-1}$ on $\im \beta \setminus \im \beta \circ \beta'$. We can therefore define a \emph{morphism of $n$ pointed genus $g$ stable graphs} $\Gamma\rightarrow A$  as an $A$-structure on $\Gamma$. An \emph{isomorphism} of stable graphs $f: A\rightarrow B$ is thus a $B$-structure  $f=(\alpha,\beta,\gamma)$ on $A$ and an $A$-structure  $g=(\alpha',\beta',\gamma')$ on $B$ such that $g \circ f=(\text{id}_{V_A},\text{id}_{H_B},\text{id}_\emptyset)$ and $f \circ g=(\text{id}_{V_B},\text{id}_{H_A},\text{id}_\emptyset)$.
   
 We have formed an essentially finite category of stable graphs of genus $g$ with set of legs $L$. 
 \end{remark}

 \begin{example}
 Let 
 \begin{center}
\begin{tabular}{cc}
$\Gamma =
\begin{tikzpicture}[->,>=bad to,baseline=-3pt,node distance=1.3cm,thick,main node/.style={circle,draw,font=\Large,scale=0.5}]
\node at (0,0) [main node] (A) {1};
\node at (1.3,0) [main node] (B)  {1};
\node at (2.6,0) [main node] (C)  {1};
\node at (.4,.5) (h1) {\tiny $h_1$};
\node at (.9,.5) (h2) {\tiny $h_2$};
\node at (1.7,.25) (h3) {\tiny $h_3$};
\node at (2.2,.25) (h4) {\tiny $h_4$};
\node at (.4,-.5) (h5) {\tiny $h_5$};
\node at (.9,-.5) (h6) {\tiny $h_6$};
\node at (-.3,-.3) (v1) {\tiny $v_1$};
\node at (1.6,-.3) (v2) {\tiny $v_2$};
\node at (2.9,-.3) (v3) {\tiny $v_3$};
\draw [-] (A) to [out=40, in=140] (B);
\draw [-] (A) to [out=-40, in=-140] (B);
\draw [-] (B) to (C);
\end{tikzpicture}$, &
$A =
\begin{tikzpicture}[->,>=bad to,baseline=-3pt,node distance=1.3cm,thick,main node/.style={circle,draw,font=\Large,scale=0.5}]
\node at (0,0) [main node] (A) {2};
\node at (1.3,0) [main node] (B) {1};
\node at (.3,-.3) (v1) {\tiny $v'_1$};
\node at (1.6,-.3) (v2) {\tiny $v'_2$};
\node at (-.7,.4) (h1) {\tiny $h'_1$};
\node at (-.7,-.4) (h2) {\tiny $h'_2$};
\node at (.4,.25) (h5) {\tiny $h'_3$};
\node at (.9,.25) (h6) {\tiny $h'_4$};
\draw [-] (A) to [out=150, in=-150,looseness=10] (A);
\draw [-] (A) to (B);
\end{tikzpicture} $
\end{tabular}
\end{center}
 One $A$-structure on $\Gamma$ is
 \begin{align*}
 \alpha\colon v_1, v_2&\mapsto v'_1 \\
  v_3& \mapsto v'_2\\
  \beta\colon h'_i &\mapsto h_i\\
  \gamma \colon h_5,h_6 &\mapsto v'_1.
 \end{align*}

 In total there are 4 different $A$-structures which can be given to $\Gamma$. Indeed we can send $h'_1$ to $h_1$, $h_2$, $h_5$, or $h_6$ and each of these choices completely determines an $A$-structure on $\Gamma$.
  \end{example}

 \begin{notation}\label{par:gluing}
  For a stable graph $\Gamma$ we define 
  \[
   \M_\Gamma := \prod_{v\in V} \M_{g(v),n(v)}.
  \]
There is a natural \emph{gluing morphism} \[\xi_{\Gamma}\colon \M_\Gamma \rightarrow \M_{g,n}\] defined by $\Gamma$. It glues the curve
 \[\left( \coprod_{v\in V} C_v\rightarrow \Base \;\; ; \;\; (\sigma_h\colon \Base \rightarrow C_{a(h)})_{h \in H},\; (\sigma'_{l}\colon \Base\rightarrow C_{\zeta(l)})_{l \in L}\right) \in \M_\Gamma \]
 over $\Base$ together by gluing the section $\sigma_h$ to $\sigma_{\iota(h)}$. 
 The morphism $\xi_\Gamma$ is a representable morphism of Deligne-Mumford stacks (see \cite[Proposition 10.25]{Arbarello2011}) and since both its domain and codomain are smooth and complete it is an lci and proper morphism.
 
   If we have a morphism of stable graphs $\Gamma \rightarrow A$ we can construct a corresponding map of moduli spaces 
  \[
   \xi_{\Gamma\rightarrow A} \colon \M_\Gamma \rightarrow \M_A
  \]
 defined as a composition of gluing morphisms component wise.
  \end{notation}
  
  \begin{remark}
   The image of $\xi_\Gamma$ in $\M_{g,n}$ equals the closure of the locus of all curves in $\M_{g,n}$ with dual graph isomorphic to $\Gamma$. The generic degree of $\xi_\Gamma$ equals the order of the automorphism group of $\Gamma$.
  \end{remark}

 \begin{prgrph}\label{pr:setup}
   Let $A$ and $B$ be $n$ pointed genus $g$ stable graphs. We will compute the intersection 
  \[
   \xi_{A*}([\M_A])\cdot \xi_{B*}([\M_B])
  \]
 as an explicit sum of classes of $\M_{g,n}$. By the excess intersection formula (see for example \cite[Proposition 17.4.1]{fulton1984}) we have to identify the fiber product
 \begin{center}
  \begin{tikzcd}
   \F_{A,B} \arrow[r,"p_2"]\arrow[d,"p_1"] & \M_B\arrow[d,"\xi_B"]\\
   \M_{A} \arrow[r,"\xi_A", swap] & \M_{g,n}
  \end{tikzcd}
 \end{center}
 and the top Chern class of the excess intersection bundle $E=p_1^*N_{\xi_A}/N_{p_2}$. We will prove $\F_{A,B}$ is isomorphic to a disjoint union of stacks $\M_\Gamma$ where $\Gamma$ is a graph which appears as a specialization of both $A$ and $B$. 
 \end{prgrph}

 \begin{prgrph} 
 We will say that $\Gamma$ has an \emph{$(A,B)$-structure $(f,g)$}, or that \emph{$\Gamma$ is an $(A,B)$-graph}, if $\Gamma$ has both an $A$-structure $f=(\alpha_A,\beta_A,\gamma_A)$ and a $B$-structure $g=(\alpha_B,\beta_B,\gamma_B)$. We say that $(f,g)$ is \emph{a generic $(A,B)$-structure} on $\Gamma$ if every edge of $\Gamma$ corresponds to an edge of $A$ or an edge of $B$, i.e.\  if 
 \[
  \beta_A(H_A)\cup \beta_B(H_B)=H_\Gamma.
 \]
 
 We say that two stable graphs $\Gamma$, $\Gamma'$ with $(A,B)$-structures $(f,g)$ and $(f',g')$ are isomorphic $(A,B)$-graphs if there exists an isomorphism $\tau:\Gamma\rightarrow \Gamma'$ such that the following diagram commutes
 \begin{center}
  \begin{tikzcd}
   & B \\
   \Gamma \arrow[ru,"g"]\arrow[r,"\tau"]\arrow[rd,"f",swap] & \Gamma'\arrow[u,"g'",swap]\arrow[d,"f'"]\\
   & A
  \end{tikzcd}.
 \end{center}
 \end{prgrph}


 \begin{example}
  Let 
  \[
   A=B =
   \begin{tikzpicture}[->,>=bad to,baseline=-3pt,node distance=1.3cm,thick,main node/.style={circle,draw,font=\Large,scale=0.5}]
\node at (0,0) [main node] (A) {3};
\draw [-] (A) to [out=30, in=-30,looseness=10] (A);
\node at (.7,.4) (h1) {\tiny $h_1$};
\node at (.7,-.4) (h2) {\tiny $h_2$};
\end{tikzpicture}.
  \]
There are three stable graphs which admit a generic $(A,B)$-structure:  
  \begin{center}
   \begin{tabular}{c@{\hskip 2cm}c@{\hskip 2cm}c}
  $\Gamma_1 = 
     \begin{tikzpicture}[->,>=bad to,baseline=-3pt,node distance=1.3cm,thick,main node/.style={circle,draw,font=\Large,scale=0.5}]
\node at (0,0) [main node] (A) {3};
\draw [-] (A) to [out=30, in=-30,looseness=10] (A);
\node at (.7,.4) (h'_1) {\tiny $h'_1$};
\node at (.7,-.4) (h'_2) {\tiny $h'_2$};
\end{tikzpicture}$  
  & $\Gamma_2 = \begin{tikzpicture}[->,>=bad to,baseline=-3pt,node distance=1.3cm,thick,main node/.style={circle,draw,font=\Large,scale=0.5}]
\node at (0,0) [main node] (A) {2};
\node at (1.3,0) [main node] (B) {1};
\node at (.4,.4) (h5) {\tiny $h'_1$};
\node at (.9,.4) (h6) {\tiny $h'_2$};
\node at (.4,-.4) (h5) {\tiny $h'_3$};
\node at (.9,-.4) (h6) {\tiny $h'_4$};
\draw [-] (A) to [out=30, in=150] (B);
\draw [-] (A) to [out=-30, in=-150] (B);
\end{tikzpicture} $
&
  $\Gamma_3 = 
     \begin{tikzpicture}[->,>=bad to,baseline=-3pt,node distance=1.3cm,thick,main node/.style={circle,draw,font=\Large,scale=0.5}]
\node at (0,0) [main node] (A) {2};
\draw [-] (A) to [out=30, in=-30,looseness=10] (A);
\draw [-] (A) to [out=150, in=-150,looseness=10] (A);
\node at (-.7,.4) (h'_1) {\tiny $h'_1$};
\node at (-.7,-.4) (h'_2) {\tiny $h'_2$};
\node at (.7,.4) (h'_3) {\tiny $h'_3$};
\node at (.7,-.4) (h'_4) {\tiny $h'_4$};
\end{tikzpicture}$ 
   \end{tabular}
  \end{center}
  The stable graph $\Gamma_1$ has two isomorphism classes of generic $(A,B)$-structures $(f,g)$: Set $f=(\alpha_f,\beta_f,\gamma_f)$ and $g=(\alpha_g,\beta_g,\gamma_g)$.   Up to isomorphism we can assume that $\beta_f(h_1)=h_1'$, there are then two possible nonisomorphic choices for the images of $\beta_g(h_1)$, namely $h'_1$ and $h'_2$.
  
  The graph $\Gamma_2$ has four isomorphism classes of generic $(A,B)$-structures: There is always an isomorphism such that $\beta_f$ sends the edge $(h_1,h_2)$ to $(h'_1,h'_2)$. We then have $\beta_f(h_1)=h'_1$ or $\beta_f(h_1)=h'_2$ and these choices are nonisomorphic. Since the $(A,B)$-structure is generic $\beta_g$ must send $(h_1,h_2)$ to $(h'_3,h'_4)$ and we again have 2 choices.
  
  The stable graph $\Gamma_3$ has only one isomorphism class of stable $(A,B)$-structures $(f,g)$.  Up to isomorphism we have $\beta_f(h_1)=h_1'$ and $\beta_g(h_1)=h_3'$.
 \end{example}

 \begin{notation}
   Let $A$ and $B$  be stable $n$ pointed genus $g$ graphs. We will denote by $\mathfrak{G}_{A,B}$ a set of representatives of the set of all isomorphism classes of generic $(A,B)$-graphs.
   We set 
   \[
 \X = \coprod_{(\Gamma,f,g)\in \mathfrak{G}_{A,B}} \M_{\Gamma}.
   \]
 \end{notation}

  \begin{proposition}[see Proposition 9, \cite{Graber2001}]\label{intprop}
  There is a natural isomorphism $\X\xrightarrow{\sim} \F_{A,B}$.
 \end{proposition}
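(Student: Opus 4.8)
\section*{Proof proposal}

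The plan is to produce the isomorphism by writing down a map $\X \to \F_{A,B}$ via the universal property of the fiber product and then an explicit inverse by partial normalization. For the forward map, fix a generic $(A,B)$-graph $(\Gamma,f,g) \in \mathfrak{G}_{A,B}$. The $A$-structure $f$ and the $B$-structure $g$ are in particular morphisms of stable graphs $\Gamma \to A$ and $\Gamma \to B$, so (see \ref{par:gluing}) they induce gluing maps $\xi_{\Gamma\rightarrow A}\colon \M_\Gamma \to \M_A$ and $\xi_{\Gamma\rightarrow B}\colon \M_\Gamma \to \M_B$. Since a composition of gluing morphisms along $\Gamma \to A \to \M_{g,n}$ is again the gluing morphism of $\Gamma$, we have $\xi_A \circ \xi_{\Gamma\rightarrow A} = \xi_\Gamma = \xi_B \circ \xi_{\Gamma\rightarrow B}$. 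These two composites therefore agree, and the universal property of $\F_{A,B}$ yields a canonical map $\M_\Gamma \to \F_{A,B}$. Taking the disjoint union over $\mathfrak{G}_{A,B}$ produces the desired morphism $\X \to \F_{A,B}$.

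Next I would construct the inverse on geometric points and afterwards promote it to families. A geometric point of $\F_{A,B}$ is a stable curve $C$ (the common image under $\xi_A$ and $\xi_B$, identified via the tautological isomorphism $\theta$) together with a presentation of $C$ as the gluing of an $\M_A$-point and as the gluing of an $\M_B$-point. Writing $\Gamma_C$ for the dual graph of $C$, the first presentation equips $\Gamma_C$ with an $A$-structure $f_C$ and the second with a $B$-structure $g_C$: concretely, $\beta_A$ singles out the edges of $\Gamma_C$ that are glued to build $C$ from the $A$-pieces, $\alpha_A$ records on which $A$-vertex each component sits, and similarly for $B$. I would then define $\Gamma$ to be the contraction of $\Gamma_C$ along all edges \emph{not} lying in $\beta_A(H_A)\cup\beta_B(H_B)$. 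The structures $f_C,g_C$ descend to $\Gamma$, and by construction every edge of $\Gamma$ comes from $A$ or from $B$, so $(\Gamma,f,g)$ is a \emph{generic} $(A,B)$-graph. Partially normalizing $C$ along the edges of $\Gamma$ — all remaining degeneration being absorbed into the vertices, hence into the boundary of $\M_\Gamma$ — produces a point of $\M_\Gamma$ whose images under $\xi_{\Gamma\rightarrow A}$ and $\xi_{\Gamma\rightarrow B}$ recover the original $\M_A$- and $\M_B$-points. This shows the forward map is surjective on points, and the fact that $(f_C,g_C)$ determines $(\Gamma,f,g)$ up to isomorphism of $(A,B)$-graphs — together with the choice of one representative per class in $\mathfrak{G}_{A,B}$ — gives injectivity.

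Finally, to see that the forward map is an isomorphism of stacks and not merely a bijection on points, I would work étale-locally on $\M_{g,n}$. Near a stable curve $C$ with $\delta$ nodes the versal deformation has coordinates $t_1,\dots,t_\delta$ smoothing the individual nodes; the image of $\xi_A$ is locally the coordinate subspace where the parameters of the $A$-edges vanish, together with the deformations of the normalized $A$-pieces, and likewise for $\xi_B$. The fiber product $\F_{A,B}$ is then locally the locus where the parameters of the $A$-edges and of the $B$-edges both vanish, which decomposes precisely according to how the two edge sets overlap — that is, according to the generic $(A,B)$-structures on the contraction $\Gamma$ — and each piece appears with the expected reduced and smooth structure, matching $\M_\Gamma$. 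I anticipate this last step to be the main obstacle: one must check that the local picture glues to the global decomposition indexed by $\mathfrak{G}_{A,B}$, that the isomorphism class of $(\Gamma,f,g)$ is locally constant on each component, and — crucially for the stacky statement — that the automorphism group of $(\Gamma,f,g)$ as an $(A,B)$-graph agrees with the automorphism group of the corresponding object of $\F_{A,B}$, so that the identification is an equivalence of Deligne--Mumford stacks rather than only an isomorphism of coarse sets.
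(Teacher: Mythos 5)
Your forward map is exactly the paper's map $q$ (the universal property of $\F_{A,B}$ applied to $\xi_{\Gamma\rightarrow A}$, $\xi_{\Gamma\rightarrow B}$ and the identity $2$-isomorphism), and your point-level inverse --- contract the edges of the dual graph of $C$ not coming from $A$ or $B$, then partially normalize along the remaining nodes --- is the same combinatorial construction the paper performs. The genuine divergence is in how the identification is upgraded from a bijection on geometric points to an isomorphism of stacks, and this is where your proposal has a real gap: you reduce everything to an \'etale-local versal-deformation computation, but you do not carry it out, and you yourself list the three assertions that would still need proof (gluing of the local pictures, local constancy of the class of $(\Gamma,f,g)$, and matching of automorphism groups). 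These are not routine afterthoughts; they are the entire content of the statement, since a morphism of Deligne--Mumford stacks that is bijective on geometric points need not be an isomorphism, and since the a priori danger here is precisely that the fiber product could be non-reduced or could fail to decompose along $\mathfrak{G}_{A,B}$.

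There is also a concrete imprecision in the local step: you treat $\xi_A$ and $\xi_B$ as if they were closed immersions (``the image of $\xi_A$ is locally the coordinate subspace where the parameters of the $A$-edges vanish''). But $\xi_A$ has generic degree $\abs{\aut A}$ onto its image, and $\F_{A,B}$ is the fiber product of the \emph{maps}, not the intersection of the \emph{images}; \'etale-locally near a curve $C$ one must keep track of all preimages of $C$ under $\xi_A$ and under $\xi_B$, i.e.\  of all $A$-structures and all $B$-structures on the dual graph of $C$, and the components of the fiber product are indexed by pairs of these up to the appropriate equivalence. This bookkeeping is exactly what your sketch elides. The paper sidesteps deformation theory entirely by working functorially: using the modular reinterpretation of $\M_\Gamma$ as curves with a $\Gamma$-marking (Definition \ref{def:curstruc}, Proposition \ref{prop:Gcurve}), it defines an inverse functor $r\colon \F_{A,B}\rightarrow \X$ directly on objects over an arbitrary base --- transport the $A$-marking through the isomorphism $\alpha$, take the union of the two sets of sections, and read off the resulting generic $(A,B)$-graph --- defines $r$ on morphisms, and then checks that $r\circ q$ and $q\circ r$ are naturally isomorphic to the identities. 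That construction handles families, automorphisms and the stack structure in one stroke. If you want to keep your local approach, the missing work is to show that $q$ is representable, \'etale and bijective on geometric points together with their automorphism groups, by actually performing the versal computation on each sheet of $\xi_A$ and $\xi_B$; as it stands, your proof stops exactly where that work begins.
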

 
 In the following we recall the proof of the above proposition, since 
 our proofs in Section \ref{sect:fibreprod} will follow a similar strategy and we introduce some essential terminology on the way.

 We start by giving a different modular interpretation of $\M_\Gamma$ for any stable graph $\Gamma$ (see  \cite[page 315]{Arbarello2011} or \cite[Section A2]{Graber2001}):

 \begin{definition}\label{def:curstruc}
  Let $\Gamma=(V,H,L,g,\iota,a,\zeta)$ be an $n$-pointed stable graph of genus $g$ and let~$C$ be an $n$-pointed stable curve
  \[
   \pi\colon C\rightarrow \Base, \qquad \qquad s_i\colon \Base \rightarrow C\quad i=1,...,n
  \]
 of genus $g$ over a connected base $\Base$. A \emph{$\Gamma$-marking on $C$} is the following additional data: (this is called a $\Gamma$-structure in \cite{Graber2001})
 \begin{enumerate}[i]
  \item $\# E$ additional disjoint sections $\sigma_1,...,\sigma_{e(\Gamma)}$ of $\pi$ with image in the singular locus of $C$,
  
  \item $\# H$ sections $\tilde{\sigma}_{1,1},\tilde{\sigma}_{1,2},\tilde{\sigma}_{2,1},...,\tilde{\sigma}_{e(\Gamma),2}$ of the normalization $\tilde{C}$ of $C$ along the sections $\sigma_i$,
  
  \item $\# V$ disjoint connected components $C_v$ of $C\backslash \{\sigma_i\}$ whose union is  $C\backslash \{\sigma_i\}$ and such that each $C_v$ remains connected upon pullback along any morphism $\Base'\rightarrow\Base$ of base schemes (we shall call such components \emph{$\pi$-relative components of $C\backslash \{\sigma_i\}$}),  
  
  \item a choice of isomorphism between $\Gamma$ and the stable graph \label{enum:curstruc}
  \[
   (\{C_v\},\{\tilde{\sigma}_{i,j}\}, \{s\},g,\iota,\alpha,\zeta)
  \]
where $g(C_v)$ is the arithmetic genus of $C_v$, the involution $\iota$ is defined by $\iota(\tilde{\sigma}_{i,1})=\tilde{\sigma}_{i,2}$, $\alpha$ maps $\tilde{\sigma}_{i,j}$ to the $\pi$-relative component corresponding to the component of $\tilde{C}$ it lies on and $\zeta$ maps $s_i'$ to the $\pi$-relative component it lies on.
 \end{enumerate}
 We will denote the curve $C$ together with the data of a $\Gamma$-marking on $C$ by $C_\Gamma$. 
 \end{definition}

 \begin{prgrph}
  The data of a $\Gamma$-marking on a stable curve can be pulled back under any morphism of connected base schemes. We can therefore define a stack $\M_\Gamma'$ whose objects consist of stable curves with $\Gamma$-marking and whose morphisms respects the $\Gamma$-marking under pullback. 
 \end{prgrph}

 \begin{proposition}[see Proposition 8, \cite{Graber2001}]\label{prop:Gcurve}
  There exists a natural isomorphism between $\M_\Gamma$ and $\M_\Gamma'$.
 \end{proposition}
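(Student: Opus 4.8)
The plan is to exhibit the isomorphism by constructing a pair of mutually inverse functors between $\M_\Gamma$ and $\M_\Gamma'$ and then checking that both are compatible with base change, so that they define an isomorphism of stacks.

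First I would build a functor $F \colon \M_\Gamma \to \M_\Gamma'$ using the gluing construction underlying the morphism $\xi_\Gamma$. Given an $S$-point of $\M_\Gamma$, that is a tuple of pointed stable curves $\big(C_v \to \Base\,;\,(\sigma_h)_{h \in a^{-1}(v)},\,(\sigma'_l)_{l \in \zeta^{-1}(v)}\big)_{v \in V}$, one glues the disjoint union $\coprod_v C_v$ along each pair of sections $\sigma_h$ and $\sigma_{\iota(h)}$ to obtain a curve $\pi \colon C \to \Base$. This is Knudsen's gluing of sections lying in the smooth locus (see \cite{Arbarello2011}): it exists in families, yields a flat family whose only new singularities are the prescribed nodes along the glued sections, and commutes with base change. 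Stability of $C$, and the fact that $g(C) = g$, follow from the stability condition and the genus formula built into the definition of a stable graph. The curve $C$ carries a tautological $\Gamma$-marking: the glued sections are the nodal sections $\sigma_i$; the original sections $\sigma_h$ reappear as the normalization sections $\tilde{\sigma}_{i,j}$ on $\tilde{C} = \coprod_v C_v$; the images of the $C_v$ are the $\pi$-relative components; and the isomorphism to $\Gamma$ is the one coming from the indexing set.

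Conversely I would build $G \colon \M_\Gamma' \to \M_\Gamma$ by decomposing and normalizing. Given a stable curve $C \to \Base$ with a $\Gamma$-marking, the normalization $\tilde{C}$ of $C$ along the nodal sections $\sigma_i$ decomposes, by the data of the marking, into the disjoint union of the $\pi$-relative components $C_v$. On each $C_v$ the half-edge sections $\tilde{\sigma}_{i,j}$ incident to $v$ together with the legs $s_l$ incident to $v$ give $n(v)$ disjoint sections, and the marking identifies the arithmetic genus of $C_v$ with $g(v)$; hence $(C_v\,;\,\tilde{\sigma}_{i,j},\,s_l)$ is an $S$-point of $\M_{g(v),n(v)}$, and the product over $v \in V$ is the desired point of $\M_\Gamma$. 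The requirement in Definition \ref{def:curstruc} that the $C_v$ remain connected after \emph{arbitrary} base change is exactly what guarantees that each factor is a genuine family of connected stable curves, so $G$ is well defined in families.

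It then remains to produce natural isomorphisms $G \circ F \cong \id_{\M_\Gamma}$ and $F \circ G \cong \id_{\M_\Gamma'}$. For the first, normalizing a curve that was glued from the $C_v$ recovers the $C_v$ with their original sections, since partial normalization at the distinguished nodes is inverse to gluing there, and the bookkeeping of which section is attached to which vertex is preserved by construction. For the second, gluing the relative components of a $\Gamma$-marked curve $C$ back along $\tilde{\sigma}_{i,1} \sim \tilde{\sigma}_{i,2}$ reconstructs $C$ together with its marking, because a nodal curve is canonically the pushout of its normalization along the node sections; one checks that the chosen identification with $\Gamma$ is respected. Both isomorphisms are compatible with morphisms of $S$-points by functoriality of gluing and normalization.

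I expect the main obstacle to be the careful verification that gluing and normalization behave well in families and commute with base change -- specifically, that gluing along sections in the smooth locus yields a flat family whose only new singularities are the prescribed nodes, and that the decomposition into $\pi$-relative components is stable under base change. These are precisely the properties for which the notion of a $\Gamma$-marking is engineered (via the insistence on $\pi$-relative components), so once one invokes Knudsen's gluing theorem in the form recalled in \cite{Arbarello2011}, the remaining verifications are formal.
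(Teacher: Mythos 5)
Your proposal is correct and follows essentially the same route as the paper's proof: the forward functor equips the glued (universal) curve over $\M_\Gamma$ with its canonical $\Gamma$-marking, and the inverse functor decomposes a $\Gamma$-marked curve into its $\pi$-relative components with their normalization sections, after which one checks the two constructions are mutually inverse. The paper states this more tersely, while you spell out the role of Knudsen gluing and base-change compatibility, but the underlying argument is the same.
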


 \begin{proof}
  We can construct a natural morphism from $\M_\Gamma$ to $\M_\Gamma'$ by assigning the canonical $\Gamma$-marking to the universal curve over $\M_\Gamma$. In the other direction given a $\Base$ valued point of $\M_\Gamma'$ we naturally obtain a collection of $v(\Gamma)$ stable curves by analysing the $\pi$-relative components of $C$. Since we have a bijection between these curves and $v(\Gamma)$ and a bijection between the sections of the normalization of $C$ and the sections of curves in $\M_\Gamma$ we obtain a $\Base$ valued point of $\M_\Gamma$. It is straightforward to check that this correspondence induces a bijection on the space of morphisms between corresponding objects. 
 \end{proof}

\begin{proof}[Proof of Proposition \ref{intprop}]
 In this proof we will always use the modular interpretation of curves with a $\Gamma$-structure for $\M_{\Gamma}$ and write $\M_{\Gamma}$ for $\M_{\Gamma}'$ everywhere.
 
 
 Let $u\colon \X\rightarrow \M_A$ be the map defined as $\xi_{f\colon \Gamma \rightarrow A}\colon \M_\Gamma \rightarrow \M_A$ on the connected component $\M_\Gamma$ of $\X$ indexed by $(\Gamma,f,g)$. Similarly define $v\colon \X\rightarrow \M_B$ to be the map  $\xi_{g\colon \Gamma \rightarrow B}\colon \M_\Gamma \rightarrow \M_B$ on the connected component of $\X$ indexed by $(\Gamma,f,g)$.   An object $(C_\Gamma,f,g)$ of~$\X$, over a connected base scheme $S$, consists of a graph $\Gamma$ with a generic $(A,B)$-structure~$(f,g)$ together with a stable curve $C$ over $S$ endowed  with a $\Gamma$-marking. Let $(C_\Gamma,f,g)$ be one such object of $\X$ over $S$. By definition we have $\xi_A(u(C_\Gamma,f,g))=C=\xi_B(v(C_\Gamma,f,g))$ a natural isomorphism $\xi_A\circ u\Rightarrow \xi_B \circ v$ is therefore given by the identity. We have the following diagram:
  \begin{center}
   \begin{tikzcd}
    \X \arrow[ddr,"u"{name=U},swap,bend right] \arrow[drr,"v",bend left] \arrow[dr,"q" description] & &\\
   &     \F_{A,B}\arrow[dr,phantom,"\Rightarrow"] \arrow[r,"p_2"]\arrow[d,"p_1",swap]  & \M_B\arrow[d,"\xi_B"]\\
   &  \M_A\arrow[r,"\xi_A",swap]&\M_{g,n}.
   \end{tikzcd}
  \end{center}
where the map $q$ is given by the strict universal property of the fiber product. It sends the object $(C_\Gamma,f,g)$ over $S$ to the object $(C_A,C_B,\id_C)$ over $S$ and a morphism $C_\Gamma'\rightarrow C_\Gamma$ over $S' \rightarrow S$ to the induced pair of morphisms $(C_A'\rightarrow C_A, C_B'\rightarrow C_B)$.

We want to prove that $q$ is an isomorphism. We will do so by defining a map $r\colon \F_{A,B}\rightarrow \X$ and showing that $r\circ q$ and $q\circ r$ are naturally isomorphic to the respective identities on $\X$ and on $\F_{A,B}$. 

Let $(D_A,C_B,\alpha\colon D\xrightarrow{\sim} C)$ be an object of $\F_{A,B}$ over $S$. Following the notation of Definition \ref{def:curstruc} we will define an $A$-marking on $C$ by passing through the isomorphism $\alpha$: 
\begin{enumerate}[i]
 \item if $\{\sigma_i\}$ are the $e(A)$ sections of $\varpi\colon D\rightarrow S$ in the singular locus of $D$ defined by the $A$-marking, we get sections $\sigma'_i:=\alpha\circ \sigma_i$ in the singular locus of  $C$,
 
 \item the pullback of $\alpha$ along the partial normalization $\tilde{C}\rightarrow C$ defines a map $\tilde{\alpha}\colon \tilde{D}\rightarrow \tilde{C}$, in this way we obtain sections $\{\tilde{\sigma}'_{i,j}\}:=\{\tilde{\alpha}\circ \tilde{\sigma}_{i,j}\}$ in the partial normalization $\tilde{C}$ of $C$ at $\{\sigma'_i\}$,
 
 \item if $D_{v,A}$ are the $\varpi'$ relative components of $D$ then the $\pi$-relative components are given by $C_{v,A}:=\alpha(D_{v,A})$,
 
 \item we obtain an isomorphism of stable graphs by the composition 
 \begin{center}
  \begin{tikzcd}
    (\{C_{v,A}\},\{\tilde{\sigma}'_{i,j}\}, \{\alpha\circ s_k \},g',\iota',a',\zeta') \arrow[r] & (\{D_{v,A}\},\{\tilde{\sigma}_{i,j}\}, \{s_k\},g,\iota,a,\zeta) \arrow[r,"\lambda"] & A
  \end{tikzcd}
 \end{center}
 where $\lambda$ is the isomorphism of stable graphs defined by the $A$-structure on $D$.
\end{enumerate}
    Let $\tau_i$ be the sections on $C$ defined by the $B$-structure, $C_{v,B}$ the $\pi$-relative components defined by the $B$-structure. The curve $C$ now comes with the following structure:
 \begin{figure}[H]
  \centering
    \includegraphics[width=.4\textwidth]{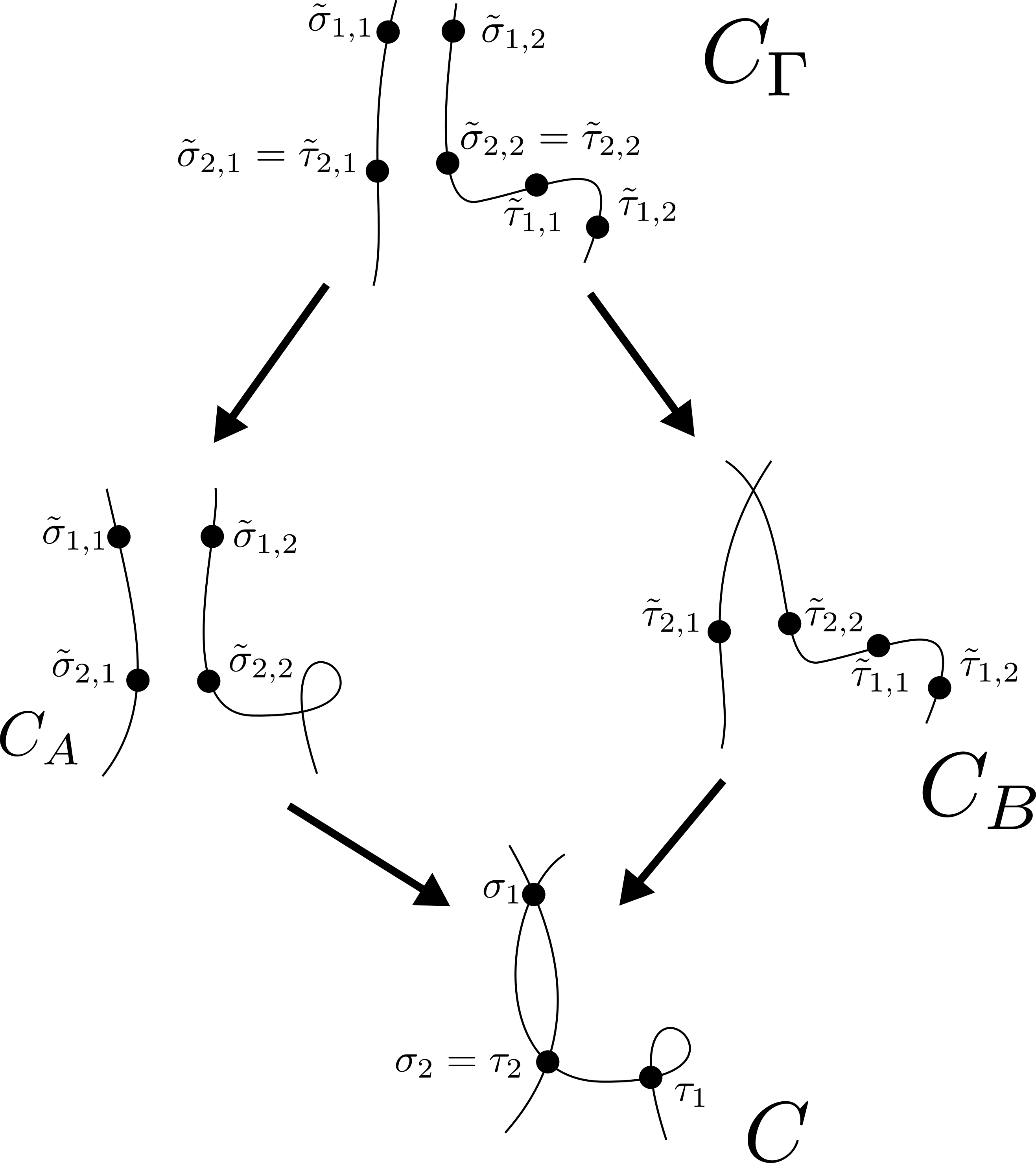}
    \label{fig:intprod}
\end{figure}

\begin{enumerate}[i]
 \item a set of sections $E:=\{\sigma'_{i}\}\cup \{\tau_i\}$ of $\pi$ in the singular locus of $C$,
 
 \item a set of sections $H:=\{\tilde{\sigma}'_{i,j}\} \cup \{\tilde{\tau}_{i,j}\}$ in the partial normalization of $C$ at $E$,
 
 \item a set of $\pi$-relative components $V$ of $C\backslash \{\sigma'_i\} \cup \{\tau_i\}$.
\end{enumerate}
This data defines a stable graph
\[
 \Gamma:=(V,H,\{\alpha(s_i)\}, g,\iota,a,\zeta)
\]
 as in Definition \ref{def:curstruc}.\ref{enum:curstruc} (where the $\alpha(s_i)$ are the $n$ sections of $C$ outside of the singular locus corresponding to the marked points). This data defines a $\Gamma$-marking on $C$.

 This $\Gamma$ has an $(A,B)$-structure, indeed let $\alpha_A\colon V \rightarrow V_A,$ be the map of $\pi$-relative components given by the inclusion $C\backslash\{\sigma'_i\}\cup \{\tau_i\}\hookrightarrow C\backslash\{\sigma'_i\}$,  let $\beta_A\colon H_A\hookrightarrow H$ be the obvious inclusion of sections and let $\gamma_A\colon H\backslash\im \beta \rightarrow H_A$ be the map that sends $\tilde{\tau}_{i,j}$ to the $\pi$-relative component $C_{v,A}$ in which $\tau_i$ lies. In this way we have constructed an $A$-structure $f=(\alpha_A,\beta_A,\gamma_A)$ on $\Gamma$ and we can define a $B$-structure $g$ on $\Gamma$ in the same way. Clearly $H=\beta_A(H_A) \cup \beta_B(H_B)$, so the $(A,B)$-structure is generic. In other words we have defined an object $(C_\Gamma,f,g)$ of $\X$ over $S$. This completes the definition of the functor $r$ on the objects of $\F_{A,B}$.

 Let $(\lambda_1\colon D'_A \rightarrow D_A, \lambda_2\colon C_B'\rightarrow C_B)$ be a morphism in $\F_{A,B}$ over $\lambda\colon S' \rightarrow S$. Let $C'_{\Gamma'}$ and $C_{\Gamma}$ be the curves with $\Gamma'$ and $\Gamma$-structure defined as above by $D_{A}'$, $C_{B}'$ and respectively $D_A$, $C_B$. The maps $\lambda_1$ and $\lambda_2$ together define an isomorphism of stable graph $\Gamma'\rightarrow \Gamma$ which commutes with the respective $(A,B)$-structures $(f',g')$ and $(f,g)$ of these graphs. In other words this defines an isomorphism of $(A,B)$-graphs and a map 
 \[
  (C'_{\Gamma'},f',g')\rightarrow (C_\Gamma,f,g).
 \]
 This completes the definition of the functor $r$ on the morphisms of $\F_{A,B}$.
 
 It remains to check that $r$ and $q$ are inverses of each other. Let $(C_\Gamma,f,g)$ be an object of $\X$. Then $q(C_\Gamma,f,g)=(C_A,C_B, \id_C)$. If we pass this through the above construction we see that $r(C_A,C_B, \id_C)=(C_{\Gamma'},f',g')$ where $\Gamma$ and $\Gamma'$ are isomorphic $(A,B)$-graphs. In other words $r\circ q$ is the identity on $\X$.
 
 In the other direction let $(D_A,C_B, \alpha\colon D\rightarrow C)$ be a $S$-point of $\F_{A,B}$. We have \[q(r(D_A,C_B, \alpha))= q((C_\Gamma,f,g))=(C_A,C_B,\alpha).\] An isomorphism between $D_A$ and $C_A$ is given by passing the $A$-structure through $\alpha$ as above when we define the functor $r$. It is clear that this defines an isomorphism of objects $(D_A,C_B, \alpha)\rightarrow (C_A,C_B,\alpha)$. It follows that $q\circ r$ is naturally isomorphic to the identity on $\F_{A,B}$. 
 \end{proof}

 \begin{prgrph}\label{prgrph:excess}
  Let $A$ and $B$ be $n$ pointed genus $g$ stable graphs. To identify the excess bundle $E:=p_1^*N_{\xi_A}/N_{p_2}$ on $\F_{A,B}$ of the intersection between $\M_A$ and $\M_B$ (see Paragraph \ref{pr:setup}) we can compute the excess bundle on the connected components of $\X$.  For any $(\Gamma,f,g) \in \mathfrak{G}_{A,B}$ consider the diagram
  \begin{center}
   \begin{tikzcd}
    \M_{\Gamma} \arrow[d,"\xi_{\Gamma\rightarrow A}",swap]\arrow[r,"\xi_{\Gamma \rightarrow B}"] &\M_B\arrow[d,"\xi_B"]\\
    \M_A\arrow[r,"\xi_A"] &\M_{g,n}
   \end{tikzcd}.
  \end{center}
We want to identify
\begin{align*}
E_\Gamma:= \xi_{\Gamma\rightarrow A}^*N_{\xi_A} /N_{\xi_{\Gamma\rightarrow B}}.
\end{align*}
and to compute the top Chern class $c_{\text{top}}(E_\Gamma)$.

Let $\pi\colon \M_{g,n+1} \rightarrow \M_{g,n}$ be the forgetful morphism, let $\omega_\pi$ be the dualizing sheaf and let $\sigma_i$ be the sections $\M_{g,n}\rightarrow \M_{g,n+1}$ given by the $n$ markings. Set 
\[
 \L_i = \sigma_i^* (\omega_\pi).
\]
Recall (see for example \cite[Section 13.3]{Arbarello2011}) that the normal bundle $N_{\xi_A}$ can be identified with 
\begin{align*}
 N_{\xi_A} = \bigoplus_{(h,h')\in E_A} \L^\vee_h\otimes \L^\vee_{h'},
\end{align*}
similarly 
\begin{align*}
 N_{\xi_{\Gamma \rightarrow B}} = \bigoplus_{(h,h')\in E_B\backslash \im \beta_B} \L^\vee_{h}\otimes \L^\vee_{h'}.
\end{align*}
It follows that 
\begin{align*}
 E_\Gamma = \bigoplus_{(h,h') \in \im \beta_A \cap \im \beta_B} \L^\vee_h\otimes \L^\vee_{h'}.
\end{align*}
Since $c_1(\L_h)=\psi_h$, the top Chern class of $E$ is just the product over the relevant $\psi$-classes. 

 In conclusion we get:
\end{prgrph}

\begin{proposition}\label{intbound}
 Let $A$ and $B$ be stable graphs, then 
 \begin{align*}
 \xi^*_A\xi_{B*}([\M_B]) = \sum_{\Gamma \in \mathfrak{G}_{A,B}}\xi_{\Gamma\rightarrow A*}\left( \prod_{(h,h')\in \im\beta_A \cap \im \beta_B} (-\psi_h -\psi_{h'}) \cdot [\M_\Gamma]\right)
 \end{align*}
\end{proposition}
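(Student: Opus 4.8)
The plan is to realize this identity as a direct application of the excess intersection formula, feeding into it the two computations already carried out above: the identification of the fiber product $\F_{A,B}$ in Proposition \ref{intprop} and the description of the excess bundle in Paragraph \ref{prgrph:excess}. Since $\M_A$, $\M_B$ and $\M_{g,n}$ are all smooth and complete and the gluing maps $\xi_A$, $\xi_B$ are representable, proper and lci (Notation \ref{par:gluing}), the refined Gysin pullback $\xi_A^*$ exists and the excess intersection formula of \cite[Proposition 17.4.1]{fulton1984} applies, with $\Q$-coefficients, on these Deligne--Mumford stacks.

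First I would invoke the excess intersection formula for the Cartesian square of Paragraph \ref{pr:setup}. Writing $N_{\xi_A}$ for the normal bundle of the lci map $\xi_A$ and $E = p_1^* N_{\xi_A}/N_{p_2}$ for the excess bundle, this yields
\begin{align*}
\xi_A^* \xi_{B*}([\M_B]) = p_{1*}\big(c_{\text{top}}(E)\cap [\F_{A,B}]\big).
\end{align*}
For the right-hand side to make sense one needs $\F_{A,B}$ to be smooth, so that $p_2$ is itself an lci morphism with a genuine normal bundle $N_{p_2}$ sitting inside $p_1^* N_{\xi_A}$; this is exactly what Proposition \ref{intprop} provides, identifying $\F_{A,B}$ with the smooth stack $\X = \coprod_{(\Gamma,f,g)\in\mathfrak{G}_{A,B}} \M_\Gamma$.

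Next I would transport the formula along the isomorphism $\X \xrightarrow{\sim} \F_{A,B}$ of Proposition \ref{intprop}. By construction of the maps $u$ and $v$ in its proof, $p_1$ restricts to $\xi_{\Gamma\rightarrow A}$ and $p_2$ to $\xi_{\Gamma\rightarrow B}$ on the component $\M_\Gamma$. Hence $[\F_{A,B}] = \sum_\Gamma [\M_\Gamma]$, the pushforward $p_{1*}$ splits as $\sum_\Gamma \xi_{\Gamma\rightarrow A*}$, and the excess bundle restricts on each piece to $E_\Gamma = \xi_{\Gamma\rightarrow A}^* N_{\xi_A}/N_{\xi_{\Gamma\rightarrow B}}$. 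Substituting the computation of Paragraph \ref{prgrph:excess}, namely $c_{\text{top}}(E_\Gamma) = \prod_{(h,h')\in\im\beta_A\cap\im\beta_B}(-\psi_h-\psi_{h'})$, and summing over $\Gamma\in\mathfrak{G}_{A,B}$ gives exactly the asserted formula.

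Since the conceptual content is already settled by the two preceding results, the remaining difficulties are essentially bookkeeping, and this is where I expect the only real care to be needed. I would check that the hypotheses of the excess formula hold componentwise: $N_{\xi_A}$ has rank $|E_A|$ and $N_{\xi_{\Gamma\rightarrow B}}$ has rank $|E_\Gamma|-|E_B|$, so $E_\Gamma$ has rank $|E_A|+|E_B|-|E_\Gamma|$, which equals $|\im\beta_A\cap\im\beta_B|$ precisely because the $(A,B)$-structure is generic, i.e.\ $H_\Gamma = \beta_A(H_A)\cup\beta_B(H_B)$. This confirms both that $N_{\xi_{\Gamma\rightarrow B}}$ embeds as a subbundle of $\xi_{\Gamma\rightarrow A}^* N_{\xi_A}$ and that $c_{\text{top}}(E_\Gamma)$ is a product of exactly $|\im\beta_A\cap\im\beta_B|$ factors, giving a cycle of the expected codimension. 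The last point to verify is that the natural isomorphism of Proposition \ref{intprop} intertwines $p_1$ with $\coprod_\Gamma \xi_{\Gamma\rightarrow A}$ on the level of maps of stacks, which is immediate from the definition there of the functor $q$ via $u$; this is what guarantees that $p_{1*}$ decomposes as the sum of the $\xi_{\Gamma\rightarrow A*}$.
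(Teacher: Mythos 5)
Your proposal is correct and takes essentially the same route as the paper: the paper's own proof is a one-line citation of the excess intersection formula together with Proposition \ref{intprop} and Paragraph \ref{prgrph:excess}, which are exactly the ingredients you assemble, with $p_1$, $p_2$ restricting to $\xi_{\Gamma\rightarrow A}$, $\xi_{\Gamma\rightarrow B}$ componentwise. Your added rank check (that genericity gives $\operatorname{rank} E_\Gamma = |E_A|+|E_B|-|E_\Gamma| = |\im\beta_A\cap\im\beta_B|$) is a sound verification the paper leaves implicit.
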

\begin{proof}
This now follows directly from the excess intersection formula (see \cite[Proposition 17.4.1]{fulton1984}, Proposition \ref{intprop} and Paragraph \ref{prgrph:excess}.
\end{proof}

\subsection{Decorated Stratum Classes}

We will now define decorated stratum classes and compute the product of two such classes.

\begin{prgrph}
 For a given space $\M_{g,n}$ let  $\pi:\M_{g,n+1} \to \M_{g,n}$ be the map forgetting the marking $n+1$ and stabilizing, with sections $\sigma_i : \M_{g,n} \to \M_{g,n+1}$ corresponding to the markings $i$ for $i=1, \ldots,n$. Let $\omega_\pi$ be the relative dualizing sheaf.
 
 Recall the $\psi$-classes and (Arbarello-Cornalba) $\kappa$ classes on $\M_{g,n}$ are defined by
 \[\psi_i = \sigma_i^* c_1(\omega_\pi) \in A^1(\M_{g,n}),\qquad \kappa_j = \pi_* (\psi_{n+1})^{j+1} \in A^j(\M_{g,n}).\]
 
 Now let $A=(V,H,L,g,\iota,a,\zeta)$ be a stable graph and $v\in V$. Consider the projection map 
 \begin{align*}
  p_v\colon \M_{A} = \prod_{w\in V} \M_{g(w),n(w)} \rightarrow \M_{g(v),n(v)}.
 \end{align*}
 We will set $\kappa_{v,i} := p_v^*(\kappa_i)\in A^i(\M_A)$ and $\psi_{v,i} := p_v^* (\psi_{i})\in A^1(\M_A)$. 
 \end{prgrph}
 
 \begin{definition}
  A \emph{decorated stable graph} $A_\theta$  is a stable graph $A$ together with a decoration 
   \begin{align*}
  \theta = \prod_{v\in V} \left( \prod_{i\in a^{-1}(v)\cup \zeta^{-1}(v)} \psi_{v,i}^{a_i} \prod_{j=1}^m \kappa_{v,j}^{b_j} \right)\in A^\bullet (\M_A).
 \end{align*}
 \end{definition}
 
 \begin{remark}
  Restricted to each vertex  the decoration $\theta$ is just a monomial in $\psi$ and $\kappa$ classes. Given another decoration $\theta'$ the product $\theta\cdot \theta'$ in $A^\bullet(\M_A)$ is given by
  \[
   \theta \cdot \theta' = \prod_{v\in V} \left( \prod_{i\in a^{-1}(v)\cup \zeta^{-1}(v)} \psi_{v,i}^{a_i+a'_i} \prod_{j=1}^m \kappa_{v,j}^{b_j+b'_j} \right)
  \]
 \end{remark}


 \begin{notation}\label{pr:decboun}
     We set
  \[
   [A_\theta]:=\frac{1}{|\aut A|}\xi_{A*}(\theta)\in A^\bullet(\M_{g,n})
  \]
  and will call $[A_\theta]$ a \emph{decorated stratum class}. If $\theta=1$ we will drop it from the notation.

We will draw decorated stratum classes by attaching a number of arrowheads to each half edge or leg $i$ equal to $a_i$ and by attaching the monomial
\[
 \prod_{j=1}^m \kappa_{v,j}^{b_j}
\]
 to each vertex $v$.
 \end{notation}

 \begin{example}
 Let $A$ be the graph 
     \begin{tikzpicture}[->,>=bad to,baseline=-3pt,node distance=1.3cm,thick,main node/.style={circle,draw,font=\Large,scale=0.5}]
\node [main node] (A) {3};
\node[main node] (B) [left of=A] {2};
\draw [-] (A) to (B);
\end{tikzpicture}.
If $v_1$ is the vertex of genus 2 and $v_2$ is the vertex of genus 3 and we have a decoration $\theta = \psi^2_{v_2,h} \kappa_{v_1,1}^2$ we will draw the decorated graph $A_\theta$ as 
  \begin{center}
    \begin{tikzpicture}[->,>=bad to,baseline=-3pt,node distance=1.3cm,thick,main node/.style={circle,draw,font=\Large,scale=0.5}]
\node [main node] (A) {3};
\node[main node] (B) [left of=A] {2};
\node [above left] at (B) {$\kappa^2_1$};
\draw [<<-] (A) to (B);
\end{tikzpicture}.
  \end{center}
 \end{example}

 \begin{warning}
  There are two conflicting notations in the literature for decorated stratum classes. One is as given in \ref{pr:decboun} the other one is without dividing by the size of the automorphism group of $A$. 
 
  The advantage to our definition is that the class $[A]$ is the Poincar\'{e} dual of the closure of the locus of all stable curves with dual graph isomorphic to $A$. In particular $[A]$ corresponds to an actual closed integral substack of $\M_{g,n}$. The advantage of not dividing by the order of the automorphism group is that it makes calculations slightly easier. 
 \end{warning}

\begin{remark}
 The codimension (or degree) of $\psi_i$ in $A^\bullet(\M_{g,n})$ is 1, the codimension of $\kappa_j$ is~$j$ and the codimension of $[A]\in A^\bullet(\M_{g,n})$ equals the number of edges of $A$. Therefore if $A_\theta$ is a decorated boundary graph with decoration
 \[
  \theta = \prod_{v\in V} \left( \prod_{i\in a^{-1}(v)\cup \zeta^{-1}(v)} \psi_{v,i}^{a_i} \prod_{j=1}^m \kappa_{v,j}^{b_j} \right)
 \]
then  
\[
 \codim [A_\theta] = \# E_A + \sum_{i\in H \cup L} a_i + \sum_{v\in V,j} j b_{v,j}.
 \]
\end{remark}

  To determine the product $[A_\theta]\cdot [B_\lambda]$ for two decorated stable graphs $A_\theta$, $B_\lambda$ we need to know the pullback of $\theta$ under $\xi_{\Gamma \rightarrow A}$. Since the pullback is a ring homeomorphism this can be done by pulling back the individual $\psi$ and $\kappa$ classes in $\theta$.

\begin{lemma}\label{lem:pullpsikappa}
 Let $f=(\alpha,\beta,\gamma)\colon \Gamma \rightarrow A$ be a map of stable graphs, then
 \begin{align*}
 \xi^*_{f\colon\Gamma \rightarrow A} (\psi_{v,i}) &= \psi_{a_\Gamma\circ \beta(i),\beta(i)}, \\
  \xi^*_{f\colon \Gamma \rightarrow A} (\kappa_{v,i}) &= \sum_{w\in \alpha^{-1}(v)} \kappa_{w,i}.
 \end{align*}
\end{lemma}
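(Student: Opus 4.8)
The plan is to reduce the statement to the pullback along a single gluing morphism and then invoke the standard comparison formulas for $\psi$ and $\kappa$ classes under gluing. Recall from Notation \ref{par:gluing} that $\xi_{f\colon \Gamma \to A}$ is defined component-wise: for each vertex $v \in V_A$, condition (v) of Definition \ref{def:morgraph} produces a stable graph $\Gamma_v$ with vertex set $\alpha^{-1}(v)$, whose external half-edges and legs are $\beta(a_A^{-1}(v)) \cup \zeta_A^{-1}(v)$ and carry the labels of the markings of $A$ at $v$. Under the identification $\M_\Gamma = \prod_{v} \M_{\Gamma_v}$ and $\M_A = \prod_v \M_{g(v),n(v)}$, the map $\xi_f$ is the product over $v$ of the gluing morphisms $\xi_{\Gamma_v}\colon \M_{\Gamma_v} = \prod_{w \in \alpha^{-1}(v)} \M_{g(w),n(w)} \to \M_{g(v),n(v)}$. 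Since $\psi_{v,i} = p_v^*(\psi_i)$ and $\kappa_{v,i} = p_v^*(\kappa_i)$ are pulled back from the single factor $\M_{g(v),n(v)}$, the commuting square $p_v \circ \xi_f = \xi_{\Gamma_v} \circ (\text{projection})$ reduces everything to computing $\xi_{\Gamma_v}^*\psi_i$ and $\xi_{\Gamma_v}^*\kappa_i$ on $\M_{\Gamma_v}$ and then pulling back along the projection $\M_\Gamma \to \M_{\Gamma_v}$.

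For the $\psi$-class I would first observe that under $\xi_{\Gamma_v}$ the marking $i$ of $\M_{g(v),n(v)}$ corresponds to the half-edge $\beta(i)$, which is an external (unglued) half-edge of $\Gamma_v$ sitting on the vertex $a_\Gamma(\beta(i))$; only the internal edges coming from $\gamma^{-1}(v)$ get glued. The key input is the standard fact that a gluing morphism modifies the curve only at the glued nodes, away from the marking $i$, so the cotangent line there is unchanged and the $\psi$-class at a surviving marking pulls back to the $\psi$-class at the corresponding marking on the factor carrying it (see \cite[Section 13.3]{Arbarello2011}). This gives $\xi_{\Gamma_v}^*\psi_i = \psi_{a_\Gamma(\beta(i)),\beta(i)}$, and pulling back along the projection yields the claimed formula. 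The same argument applies verbatim when $i$ is a leg of $A$, with $\beta$ and $a_\Gamma$ replaced by the identity and $\zeta_\Gamma$.

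For the $\kappa$-class I would use the additivity of $\kappa$-classes under gluing, $\xi_{\Gamma_v}^*\kappa_i = \sum_{w \in \alpha^{-1}(v)} \kappa_{w,i}$. The geometric reason is that the universal curve over $\M_{\Gamma_v}$ is obtained by gluing the pulled-back universal curves of the factors $\M_{g(w),n(w)}$ along the node-sections, and $\kappa_i = \pi_*\psi_{n+1}^{i+1}$ is computed by letting the extra marked point move over this universal curve. The point lands on exactly one component, and with the Arbarello--Cornalba normalization of $\psi_{n+1}$ the node-sections contribute nothing to the pushforward, so $\pi_*\psi_{n+1}^{i+1}$ splits as the sum over components of their individual $\kappa_i$. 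Pulling back along the projection then gives the stated sum over $\alpha^{-1}(v)$.

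The local-geometry computation for $\psi$ is routine; the \emph{main obstacle} is the $\kappa$-formula, where one must verify that the correction terms supported on the node-sections, arising from the difference between the relative dualizing sheaf of the glued family and those of the individual factors, do not contribute to $\pi_*\psi_{n+1}^{i+1}$. This is precisely the content of the Arbarello--Cornalba comparison, which I would cite from \cite[Section 13.3]{Arbarello2011} rather than rederive.
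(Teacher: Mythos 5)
Your proof is correct and follows essentially the same route as the paper, which disposes of the $\psi$-formula as a triviality (the gluing map does not modify the curve near a surviving marking) and cites the Arbarello--Cornalba additivity result for $\kappa$-classes under gluing; your extra scaffolding (component-wise reduction and the sketch of why the AC normalization makes $\kappa$ additive) is sound, though the paper cites \cite[Lemma 4.31]{Arbarello2011} specifically rather than Section 13.3.
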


\begin{proof}
 The first of these is trivial, the second is  \cite[Lemma 4.31]{Arbarello2011}.
\end{proof}

\begin{corollary}\label{cor:pulltheta}
 Let $A$ and $\Gamma$ be stable graphs, let $f$ be an $A$-structure on $\Gamma$ and let $\theta$ be a decoration on $A$. We have
 \[
   \xi^*_{f}(\theta) = \prod_{v\in V_A} \left( \prod_{i=1}^{n(v)} \psi_{a_\Gamma \circ \beta_f(i),\beta_f (i)}^{a_i} \prod_{j=1}^m (\sum_{w\in \alpha_f^{-1}(v)} \kappa_{w,j})^{b_j} \right)
 \]
\end{corollary}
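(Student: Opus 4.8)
The plan is to deduce Corollary \ref{cor:pulltheta} directly from Lemma \ref{lem:pullpsikappa} by exploiting that pullback along a morphism of smooth Deligne--Mumford stacks is a ring homomorphism. Recall that the decoration $\theta$ is, by definition, the monomial
\[
 \theta = \prod_{v\in V_A} \left( \prod_{i\in a_A^{-1}(v)\cup \zeta_A^{-1}(v)} \psi_{v,i}^{a_i} \prod_{j=1}^m \kappa_{v,j}^{b_j} \right) \in A^\bullet(\M_A),
\]
where I have rewritten the inner index set $\{1,\ldots,n(v)\}$ of the statement as $a_A^{-1}(v)\cup \zeta_A^{-1}(v)$, the set of half-edges and legs incident to $v$. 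Since $\xi^*_{f}$ is a ring homomorphism, it commutes with the products and takes powers to powers, so the entire computation reduces to pulling back each individual factor $\psi_{v,i}$ and $\kappa_{v,j}$.

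First I would apply the first formula of Lemma \ref{lem:pullpsikappa} to each $\psi$-factor: for $i$ incident to $v$ we get $\xi^*_{f}(\psi_{v,i}) = \psi_{a_\Gamma\circ\beta_f(i),\beta_f(i)}$, so that $\xi^*_f(\psi_{v,i}^{a_i}) = \psi_{a_\Gamma\circ\beta_f(i),\beta_f(i)}^{a_i}$. Next I would apply the second formula to each $\kappa$-factor, obtaining $\xi^*_f(\kappa_{v,j}) = \sum_{w\in\alpha_f^{-1}(v)}\kappa_{w,j}$, whence $\xi^*_f(\kappa_{v,j}^{b_j}) = \bigl(\sum_{w\in\alpha_f^{-1}(v)}\kappa_{w,j}\bigr)^{b_j}$. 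Reassembling these pieces under the ring homomorphism yields exactly the claimed product. The one bookkeeping point worth spelling out is that in the statement of Lemma \ref{lem:pullpsikappa} the marking $i$ indexing the $\psi$-class is a half-edge or leg of $A$, and under the $A$-structure $f=(\alpha,\beta,\gamma)$ this corresponds via $\beta$ (extended by the identity on legs, since $\alpha\circ\zeta_\Gamma=\zeta_A$) to the appropriate half-edge or leg $\beta_f(i)$ of $\Gamma$ sitting on the vertex $a_\Gamma(\beta_f(i))$; this matches the subscript $\psi_{a_\Gamma\circ\beta_f(i),\beta_f(i)}$.

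I do not expect any genuine obstacle here, as this is a formal consequence of Lemma \ref{lem:pullpsikappa} together with functoriality of the pullback. The only mild subtlety is the treatment of legs: a leg $i\in\zeta_A^{-1}(v)$ is not literally in the image of $\beta_f$ (which handles half-edges), so strictly speaking one should read $\beta_f$ in the statement as acting by the identity on legs, consistent with condition (ii) of Definition \ref{def:morgraph} that $\alpha_f$ respects leg assignments. Once this convention is fixed, the first line of Lemma \ref{lem:pullpsikappa} applies uniformly to both half-edges and legs, and the corollary follows by substituting the two pullback formulas into each factor and using multiplicativity of $\xi^*_f$.
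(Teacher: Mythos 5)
Your proof is correct and is exactly the argument the paper intends: the text preceding Lemma \ref{lem:pullpsikappa} already notes that $\xi^*_f$ is a ring homomorphism, so the corollary follows by applying the two formulas of that lemma factor by factor, which is precisely what you do. Your extra remark about treating legs via the identity (consistent with $\alpha\circ\zeta_\Gamma=\zeta_A$) is a reasonable clarification of a convention the paper leaves implicit.
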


 \begin{theorem}\label{th:prod}
 Let $A_\theta$ and $B_\lambda$ be decorated stable $n$ pointed genus $g$ graphs. Then
 \begin{align*}
 [A_\theta]\cdot [B_\lambda] =\\
 \frac{1}{|\aut A| \cdot |\aut B|}  & \sum_{(\Gamma,f,g) \in \mathfrak{G}_{A,B}}\xi_{\Gamma*} \left(\xi^*_{f}(\theta) \cdot \xi^*_g(\lambda)  \prod_{(h,h')\in \im\beta_A \cap \im \beta_B} (-\psi_h -\psi_{h'}) \right).
 \end{align*}
 \end{theorem}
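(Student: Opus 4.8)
The plan is to reduce the statement to the undecorated intersection computation of Proposition \ref{intbound} together with two applications of the projection formula. By Notation \ref{pr:decboun} the product is
\[
[A_\theta]\cdot[B_\lambda] = \frac{1}{|\aut A|\,|\aut B|}\,\xi_{A*}(\theta)\cdot \xi_{B*}(\lambda),
\]
so it suffices to compute the class $\xi_{A*}(\theta)\cdot \xi_{B*}(\lambda)\in A^\bullet(\M_{g,n})$. First I would apply the projection formula to the proper morphism $\xi_A$, writing
\[
\xi_{A*}(\theta)\cdot \xi_{B*}(\lambda) = \xi_{A*}\!\left(\theta\cdot \xi_A^*\xi_{B*}(\lambda)\right),
\]
which moves the problem to computing the pullback $\xi_A^*\xi_{B*}(\lambda)$. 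This is the decorated analogue of Proposition \ref{intbound}, where the fundamental class $[\M_B]$ is replaced by $\lambda$.

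Second, I would establish this decorated pullback by running the excess intersection formula (\cite[Proposition 17.4.1]{fulton1984}) on the fiber square of Paragraph \ref{pr:setup}, but now pushing forward $\lambda$ rather than the fundamental class. Using the identification $\X \xrightarrow{\sim} \F_{A,B}$ of Proposition \ref{intprop} — under which $p_1$ and $p_2$ restrict to the gluing maps $\xi_f = \xi_{\Gamma\to A}$ and $\xi_g = \xi_{\Gamma\to B}$ on each component $\M_\Gamma$ — together with the computation of the excess bundle $E_\Gamma$ in Paragraph \ref{prgrph:excess}, I obtain
\[
\xi_A^*\xi_{B*}(\lambda) = \sum_{(\Gamma,f,g)\in\mathfrak{G}_{A,B}} \xi_{f*}\!\left(\xi_g^*(\lambda)\cdot \prod_{(h,h')\in\im\beta_A\cap\im\beta_B}(-\psi_h-\psi_{h'})\right),
\]
the product being exactly $c_{\textup{top}}(E_\Gamma)$ since $c_1(\L_h)=\psi_h$.

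Finally I would substitute this into the first display, use the projection formula a second time — now for the gluing map $\xi_f = \xi_{\Gamma\to A}$ — to pull $\theta$ inside as $\xi_f^*(\theta)$, and use the functoriality $\xi_A\circ\xi_{\Gamma\to A} = \xi_\Gamma$ of gluing maps to combine the two successive pushforwards into a single $\xi_{\Gamma*}$. This yields
\[
\xi_{A*}(\theta)\cdot \xi_{B*}(\lambda) = \sum_{(\Gamma,f,g)\in\mathfrak{G}_{A,B}} \xi_{\Gamma*}\!\left(\xi_f^*(\theta)\cdot \xi_g^*(\lambda)\cdot \prod_{(h,h')\in\im\beta_A\cap\im\beta_B}(-\psi_h-\psi_{h'})\right),
\]
and dividing by $|\aut A|\,|\aut B|$ gives the claimed formula; the pullbacks $\xi_f^*(\theta)$ and $\xi_g^*(\lambda)$ can be expanded explicitly using Corollary \ref{cor:pulltheta} if desired.

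I expect the only genuinely non-formal step to be the second one: verifying that the excess intersection formula continues to hold after twisting by the decoration $\lambda$ pulled back along $p_2$, and that under the isomorphism of Proposition \ref{intprop} this pullback is precisely $\xi_g^*(\lambda)$ on each component $\M_\Gamma$. Everything else is the projection formula and the factorization $\xi_A\circ\xi_{\Gamma\to A}=\xi_\Gamma$, so the proof is largely a bookkeeping exercise once the geometric input of Propositions \ref{intprop} and \ref{intbound} is in hand.
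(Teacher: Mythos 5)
Your proof is correct and follows essentially the same route as the paper, whose entire proof is the single sentence that the theorem ``follows by pushing forward the expression of Proposition \ref{intbound}.'' Your write-up simply makes explicit the details that this one-liner leaves implicit: the excess intersection formula must be applied to the decoration $\lambda$ rather than to $[\M_B]$ (which is legitimate, since \cite[Proposition 17.4.1]{fulton1984} and the identification of the fiber product in Proposition \ref{intprop} together handle arbitrary classes, with $p_1, p_2$ restricting to $\xi_f, \xi_g$ on each component $\M_\Gamma$), followed by the two uses of the projection formula and the factorization $\xi_A\circ\xi_{\Gamma\to A}=\xi_\Gamma$.
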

 
 \begin{proof}
  This follows by pushing forward the expression of Proposition \ref{intbound}.
 \end{proof}

\begin{remark} \label{Rmk:intnumb}
 If $A_\theta$ and $B_\lambda$ define classes in complementary degrees for $\M_{g,n}$, i.e. $[A_\theta]\cdot[B_\lambda]$ is a zero cycle, we can compute the degree of this zero cycle using Theorem \ref{th:prod}. Indeed, the formula above reduces this to the question of computing the intersection numbers of $\kappa$ and $\psi$-classes on the factors $\M_{g(v),n(v)}$ of the spaces $M_\Gamma$. These intersection numbers are governed by the KdV hierarchy as shown by Kontsevich \cite{kontsevich1992} after a conjecture by Witten. Thus in principle, we are able to compute intersection numbers of decorated strata classes.
\end{remark}
 
\begin{application} \label{app:tautkunneth}
 As an application of the intersection numbers above, we remark that for some $g,n$ it is possible to express the class of the diagonal $[\Delta] = [\Delta_{\M_{g,n}}] \in H^{6g-6+2n}(\M_{g,n} \times \M_{g,n})$ in terms of tautological classes. Indeed, assume that $H^*(\M_{g,n})=RH^*(\M_{g,n})$ and let $(e_i)_i$ be a basis of the tautological ring as a $\mathbb{Q}$-vector space. By Remark \ref{Rmk:intnumb} we can compute the pairing matrix $\eta_{i,j}=\deg(e_i \cdot e_j)$. Let $(\eta^{i,j})_{i,j}$ be the inverse matrix. Then we have a Kunneth decomposition of the diagonal
 \[[\Delta] = \sum_{i,j} \eta^{i,j} e_i \otimes e_j \in H^{*}(\M_{g,n} \times \M_{g,n}) = RH^*(\M_{g,n}) \otimes RH^*(\M_{g,n}).\]
 The assumption that all cohomology is tautological is for instance satisfied for all spaces $\M_{0,n}$ (see \cite{Keel}) and $\M_{1,n}$ for $n \leq 10$ (see \cite[Proposition 2]{Graber2001} and \cite{petersen2014}). It is also known for a few cases with $g \geq 2$ where we know that all even cohomology is tautological and where the ranks of these even cohomology groups sum up to the Euler characteristic of $\M_{g,n}$, implying the vanishing of the odd cohomology.
 
 We are going to see an application of these tautological Kunneth decompositions in Section \ref{sec:lowergenus}.
\end{application}

\section{Stacks of Pointed Admissible \texorpdfstring{$G$}{G}-covers}\label{sec:hurspace}
In the following section we introduce the stacks $\H_{g,G,\xi}$ parametrizing finite covers $\varphi: C \to D$ of (connected) curves together with a $G$-action on $C$, such that $C$ has genus $g$ and  such that $\varphi$ is the quotient map under the $G$-action. Further included is the data of a tuple of smooth points $p_{i,a} \in C$, containing all ramification points of $\varphi$, such that the behaviour of the $G$-action at these points is described by a monodromy datum $\xi$. 

This problem has been studied in a number of contexts. In \cite{Harris1982} Harris and Mumford first introduced the notion of an admissible cover, a technical condition we need to require if the curves $C,D$ above become nodal. This allowed them to write down a compact moduli space for such covers. However, they did not require the data of a $G$-action on $C$ but considered all covers $\varphi$ of a given degree.

The paper \cite{Abramovich2001} by Abramovich, Corti and Vistoli instead notes that if we take the quotient stack $[C/G]$ instead of the quotient $D=C/G$ in the realm of schemes, the data of the quotient morphism $\varphi: C \to [C/G]$ is equivalent to a stable map $[C/G] \to BG$ of the (stacky) curve $[C/G]$ into the classifying stack $BG$ of the finite group $G$. In \cite{Jarvis2005}, Jarvis, Kaufmann and Kimura take a variant of this construction. The crucial difference is that they consider the data of marked points on $C$, not on $[C/G]$. This is also the convention of our paper and it will be the case that our space $\H_{g,G,\xi}$ is a union of connected components of their moduli space. The only further restriction we impose is that the domain $C$ of the cover is a connected curve.

Finally, there is the book \cite{BertinRomagny} by Bertin and Romagny, which focuses on the data of the $G$-action on the curve $C$. This action of course determines the data of the quotient map $\varphi: C \to C/G$. This perspective and their results will be useful, since later in our paper we are interested in the map $\phi : \H_{g,G,\xi} \to \M_{g,r}$ which forgets the data of the group action and cover and just remembers the curve $C$ with the markings $p_{i,a}$. Here is where we crucially use the connectedness of $C$, since $C$ with the marked points $p_{i,a}$ is a stable curve.

Our goal in the following is to give a self-contained introduction to the theory of admissible $G$-covers and to define the stack $\H_{g,G,\xi}$. We prove properties and functorialities of this stack which we will use later, often citing relevant results from the literature. We comment on the detailed relation between our convention and those in the references above in Remark \ref{Rmk:relntoLit}. 



\subsection{Group Actions on Smooth Curves}
As a warm-up we start in the situation of a finite group $G$ acting effectively on a smooth curve $C$, i.e.\  via an injective group homomorphism $G \hookrightarrow \aut(C)$. Then the quotient $D=C/G$ exists (as a scheme) and it is itself a smooth curve. The corresponding quotient map $\varphi: C \to D$ is a \emph{Galois cover}, i.e.\  the group of automorphisms of $C$ over $D$ acts transitively on the fibres and is in fact  equal to $G$. 

A general point of $D$ has exactly $|G|$ preimages under $\varphi$, corresponding to the fact that the general point of $C$ has trivial stabilizer in $G$. The ramification points $p$ of $\varphi$ are exactly those points which have a nontrivial stabilizer $H=G_p$. We recall  the following result about the local action of $H$ around $p$. 
\begin{lemma} \label{Lem:stabcyclic}
 Let the finite group $G$ act on the smooth curve $C$ effectively and let $p \in C$ be a point. Then the stabilizer $H=G_p$ of $p$ is a cyclic group. The induced action of a generator of $H$ on $T_p C \cong \mathbb{C}$ is given by multiplication with a primitive $e$-th root of unity, where $e=|H|$.
\end{lemma}
\begin{proof}
 The derivative of the action of $G_p$ at the point $p$ induces a linear action $\theta: G_p \to \mathrm{GL}(T_p C) = \mathbb{C}^*$. We claim that $\theta$ is injective. Once we show this, the Lemma is proven, since any finite subgroup of $\mathbb{C}^*$ of order $e$ is a subgroup of the $e$th roots of unity and thus must be equal to this group.
 
 Now assume $\theta(h)=1$ for $h \in G_p$, then in local coordinates $z$ around $p$ the action $\sigma_h : C \to C$ of $h$ on $C$ looks like
 \[\sigma_h(z) = z + a_2 z^2 + a_3 z^3 + \ldots .\]
 By composing this power series with itself, it is easy to see that for all $m \geq 0$ the $m$-fold composition of $\sigma_h$ looks like
 \[(\sigma_h)^{\circ m}(z) = \sigma_{h^m}(z) = z + (m a_2) z^2 + O(z^3).\]
 For $m=\mathrm{ord}_G(h)$ we have $\sigma_{h^m}(z)=z$, so it follows $m a_2 = 0$, hence $a_2=0$. With similar arguments one shows $a_i=0$ for all $i \geq 2$, i.e.\  $\sigma_h = \mathrm{id}_C$. Since the action is effective, this implies $h=e_G$, so $\theta$ is injective as claimed.
\end{proof}
It follows that there exists a distinguished generator $h$ of $H$ acting by the root $\zeta_e = \exp(2 \pi i/e)$. We call $h$ the \emph{monodromy of $G$ at $p$}.

Now note that for $q=\varphi(p)$ we have a bijection
\[G/H \to \varphi^{-1}(q), gH \mapsto gp.\]
If we had chosen a different point $p'=tp \in \varphi^{-1}(q)$, $t \in G$, we would have obtained a stabilizer $G_{p'}=t H t^{-1}$ with the distinguished generator $h'=t h t^{-1}$. Overall we see that to a branch point $q$ of $\varphi$ we can uniquely associate a conjugacy class $[h]$ of $G$. If $q_1, \ldots, q_b \in D$ are the branch points of $\varphi$, we can write a formal sum
\[[h_1] + \ldots + [h_b]\]
of the corresponding conjugacy classes. In \cite{BertinRomagny} this is called the Hurwitz datum associated to $\varphi$. By making this a formal sum and by having conjugacy classes $[h_i]$ instead of group elements, this is independent of a choice of ordering of branch points $q_i$ as well as a distinguished point in the preimage $\varphi^{-1}(q_i)$. 

For our purposes it will be more convenient to include this ordering and the choice of a preimage. For us a \emph{monodromy datum} will be an ordered tuple \[\xi = (h_1, \ldots, h_b)\]
of elements $h_i \in G$.

We immediately note that formally it makes sense to include entries of the form $e_G$ in $\xi$, where $e_G \in G$ is the neutral element. This corresponds to tracking a point $q \in C$ such that $G$ acts freely on the fibre $\varphi^{-1}(q)$. 


Knowing the full branching behaviour of $\varphi$ it is easy to write down the Riemann-Hurwitz formula 
\begin{equation} \label{eqn:RiemHurw}
 2g-2 = |G| (2g'-2) + \sum_{i=1}^b \left(|G| - \frac{|G|}{\mathrm{ord}_G(h_i)} \right).
\end{equation}
determining the genus $g'$ of $D$.

\subsection{Admissible Covers}
As a next step we need to understand what happens when the curves $C$ and $D$ degenerate, obtaining nodal singularities. In this case, we must require the map $\varphi: C \to D$ to be an \emph{admissible cover}. In order to obtain a suitable stack, it will be natural to define the notion of a family of admissible covers parametrized by a scheme $S$. The following definition is essentially the same as in \cite[Definition 2.1]{Jarvis2005}, with the crucial difference that we require the curve $C$ to be connected.

\begin{definition} \label{def:admGcover}
An \emph{admissible $G$-cover} over a scheme $S$ is a map $\varphi: C \to D$ between connected, nodal curves over $S$ together with an action $G \curvearrowright C$ of $G$ on $C$ and markings $q_1, \ldots, q_b : S \to D$, such that
\begin{itemize}
 \item $(D,q_1, \ldots, q_b)$ is a stable curve,
 \item $\varphi$ is finite, mapping every node of $C$ to a node of $D$,
 \item the action of $G$ preserves $\varphi$ and the restriction of $\varphi$ over the set $D_{\text{gen}}$ - of unmarked, smooth points of $D$ - is a principal $G$-bundle,
 \item the local picture of $C \to D \to S$ at a point of $C$ over a node of $D$ is (analytically isomorphic to) that of 
 \[\mathrm{Spec} A[\zeta,\eta]/(\zeta \eta - a) \to \mathrm{Spec} A[x,y]/(xy-a^e) \to \mathrm{Spec} A,\]
 at $(\zeta, \eta)=(0,0)$ for some $e \geq 1$, where $\varphi^* x = \zeta^e$ and $\varphi^* y = \eta^e$,
 \item the local picture of $C \to D \to S$ at a point of $C$ over a marking of $D$ is (analytically isomorphic to) that of 
 \[\mathrm{Spec} A[\zeta] \to \mathrm{Spec} A[x] \to \mathrm{Spec} A,\]
 at $\zeta=0$ for some $e \geq 1$, where $\varphi^* x = \zeta^e$,
 \item for each geometric nodal point $P \in C$ with image point $s \in S$, the stabilizer $G_P$ is cyclic and its action on the fiber $C_s$ is \emph{balanced}, i.e.\  the local picture around $P$ looks like $\mathrm{Spec} A[\zeta,\eta]/(\zeta \eta - a)$ and for a generator $h$ of $G_P$ the action is given by
    \[h . (\zeta, \eta) = (\mu  \zeta, \mu^{-1} \eta)\]
    with a primitive root $\mu$ of unity of order equal to the order of $h$.
\end{itemize}
\end{definition}
Given an admissible $G$-cover, it is natural to place markings on $C$ at the preimages of the markings $q_i$ of the curve $D$. While in \cite{Jarvis2005} exactly one marking $p_i$ is chosen in the preimage of every $q_i$, it will be more convenient for us to have individual markings at \emph{all} preimage points of $q_i$. Note that these two data are equivalent: given any marking $p_i \in \varphi^{-1}(q_i)$ with stabilizer $\langle h_i \rangle$ under $G$, the remaining points in $\varphi^{-1}(q_i)$ are given by the $G$-orbit of $p_i$ and naturally bijective to $G/\langle h_i \rangle$ via $a \mapsto a p_i$. Below we denote the markings $a p_i$ by $p_{i,a}$.
\begin{definition} \label{def:pointedadmGcover}
A \emph{pointed admissible $G$-cover} with \emph{monodromy data $\xi=(h_1, \ldots, h_b) \in G^b$} over a scheme $S$ is an admissible $G$-cover 
\[\varphi: G \curvearrowright C \to (D,q_1, \ldots, q_b) \to S\]
together with sections
\[p_{i,a} : S \to C, i=1, \ldots, b, a \in G/\langle h_i \rangle,\]
such that
\begin{itemize}
 \item $\varphi^{-1}(q_i) = \{p_{i,a} : a \in G/\langle h_i \rangle \}$ for $i=1, \ldots, b$ and for $h \in G$ we have $h p_{i,a} = p_{i,ha}$,
 \item the monodromy of the $G$-action at the marking $p_i=p_{i,[e_G]}$ corresponding to the neutral element $e_G \in G$ is given by $h_i$ for all $i$, i.e.\  the generator $h_i$ of the stabilizer $G_{p_i}$ of $p_i$ acts on $T_{p_i} C$ by multiplication with the root $\zeta_e = \exp(2 \pi i/e)$ of unity, where $e=\operatorname{ord}_G(h_i)$.
\end{itemize}
\end{definition}
The reason we mark the other preimages of $q_i$ as well is the following.
\begin{lemma}
 Given a pointed admissible $G$-cover $\varphi: G \curvearrowright (C, (p_{i,a})_{i,a}) \to (D,q_1, \ldots, q_b)$, the curve $(C, (p_{i,a})_{i,a})$ is stable.
\end{lemma}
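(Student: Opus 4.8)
The plan is to verify the two defining conditions of stability for $(C,(p_{i,a})_{i,a})$ separately: first that the $p_{i,a}$ are distinct smooth points of $C$, and second the numerical condition $2g(v)-2+n(v)>0$ for every irreducible component $C_v$ of $C$. Connectedness and nodality of $C$ are already part of Definition \ref{def:admGcover}. For the first condition, the local picture of $\varphi$ at a preimage of a marking $q_i$ is $\spec A[\zeta]\to \spec A[x]$, which is smooth, so each $p_{i,a}$ is a smooth point; distinctness is clear since the $p_{i,a}$ for fixed $i$ are in bijection with $G/\langle h_i\rangle$ and the preimages of distinct $q_i$ are disjoint.

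The heart of the argument is the numerical condition, which I would deduce from the stability of $(D,q_1,\dots,q_b)$ by an equivariant Riemann--Hurwitz computation on each component. Fix an irreducible component $C_v$ of $C$ and let $D_w=\varphi(C_v)$, which is an irreducible component of $D$ since $\varphi$ is finite and surjective. The local pictures in Definition \ref{def:admGcover} show that $\varphi$ sends nodes to nodes, markings to markings, and is an unramified principal $G$-bundle over the smooth unmarked locus $D_{\text{gen}}$; hence the special points of $C$ lying on $C_v$ are \emph{exactly} the preimages of the special points of $D$ lying on $D_w$. Passing to normalizations, the induced finite map $\pi_v\colon \tilde C_v\to \tilde D_w$ is the quotient of $\tilde C_v$ by the image in $\aut(\tilde C_v)$ of the setwise stabilizer $G_v$ of $C_v$, because $D=C/G$; in particular $\pi_v$ is \emph{Galois} of some degree $d\geq 1$. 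Writing $S\subset \tilde D_w$ for the set of marked points of $\tilde D_w$ (the points over nodes and markings of $D$ on $D_w$) and $T=\pi_v^{-1}(S)$ for the marked points of $\tilde C_v$, the discussion above gives $n(v)=|T|$ and $n(D_w)=|S|$, and shows that $\pi_v$ is ramified only over $S$.

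Since $\pi_v$ is Galois, over each $y\in S$ all preimages share a common ramification index $e_y$, so there are $d/e_y$ of them and $\sum_{P\mapsto y}(e_P-1)=(d/e_y)(e_y-1)$. Summing the Riemann--Hurwitz formula for $\pi_v$ over $S$ then yields
\begin{equation*}
  2g(v)-2+n(v)=d\,\bigl(2g(D_w)-2+n(D_w)\bigr).
\end{equation*}
As $(D,q_1,\dots,q_b)$ is stable, the factor $2g(D_w)-2+n(D_w)$ is a positive integer, and $d\geq 1$, so the left-hand side is strictly positive. Since this holds for every component, $(C,(p_{i,a})_{i,a})$ is stable. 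Note the identity also shows, conversely, that instability of $C$ would force instability of $D$.

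The step I expect to require the most care is the bookkeeping underlying the identity $n(v)=\sum_{y\in S} d/e_y$: one must match the half-edge/leg count $n(v)$ of the stable graph of $C$ with marked points on the normalization $\tilde C_v$, correctly handling self-nodes (which contribute two points of $\tilde C_v$) and the two flavours of special point (nodes, whose ramification index $e$ comes from the local model $xy=a^e$, and markings). One must also justify that $\pi_v$ is genuinely Galois, i.e.\ that $D_w=\tilde C_v/G_v$ holds over the nodes as well, which rests on the balanced local model and the identification $D=C/G$. Reassuringly, however, the final inequality only uses $d\geq 1$, so neither the precise value of $d$ nor the individual ramification indices $e_y$ need to be computed.
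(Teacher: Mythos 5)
Your proof is correct, but it takes a genuinely different and more quantitative route than the paper's. The paper's own argument is a short point-counting one: a rational component $C'$ of $C$ must map onto a rational component $D'$ of $D$ (a rational curve only dominates rational curves), $D'$ carries at least three special points by stability of $(D,q_1,\ldots,q_b)$, and each of these has a preimage in $C'$ which is again a special point of $C$ by the local pictures in Definition \ref{def:admGcover}; hence $C'$ has at least three special points (the genus-one case is handled by the same observation with ``three'' replaced by ``one''). You instead establish the exact identity $2g(v)-2+n(v)=d\,\bigl(2g(D_w)-2+n(D_w)\bigr)$ via Riemann--Hurwitz. This treats all genera uniformly, avoids any case analysis, and yields strictly more than stability (multiplicativity of the stability quantity, plus the converse implication); the price is the machinery of normalizations and your Galois claim. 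One simplification is worth pointing out: the Galois property of $\pi_v$ --- precisely the step you flag as requiring the most care --- is not actually needed. For \emph{any} finite map $\pi_v\colon \tilde C_v\to \tilde D_w$ of degree $d$ ramified only over $S$, one has $\sum_{P\mapsto y}(e_P-1)=d-\#\pi_v^{-1}(y)$ for each $y\in S$, simply because $\sum_{P\mapsto y}e_P=d$; summing over $y\in S$ and using $n(v)=\sum_{y\in S}\#\pi_v^{-1}(y)$ gives your identity without knowing that the ramification indices over a given $y$ coincide. With that shortcut, your argument needs only the matching of special points upstairs and downstairs (which is indeed forced by the local models and by $\varphi^{-1}(q_i)=\{p_{i,a}\}$) and is fully watertight.
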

\begin{proof}
This is essentially clear: a rational component $C'$ of $C$ must map to a rational component $D'$ of $D$. Then $D'$ contains at least $3$ special points and each has some preimage in $C'$ which is again a special point, so $C'$ is stable.
\end{proof}

\begin{definition}
For a genus $g \geq 0$, a finite group $G$ and a monodromy datum $\xi=(h_1, \ldots, h_n)$ with elements $h_1, \ldots, h_n \in G$, 
define the stack $\H_{g,G,\xi}$ whose objects over a scheme $S$ are pointed admissible covers
\begin{equation} \label{eqn:fullvarphi} \varphi: G \curvearrowright (C,(p_{i,a})_{\substack{i=1, \ldots, b\\ a \in G/\langle h_i \rangle}}) \to (D,q_1, \ldots, q_b) \to S.\end{equation}
Isomorphisms between such objects are $G$-equivariant isomorphisms of the curves $C$ fixing the markings.

Define the stack $\Ho_{g,G,\xi}$ as the open substack of $\H_{g,G,\xi}$ where the curve $C$ (and $D$) is smooth.
\end{definition}

In the future we will abbreviate the notation in (\ref{eqn:fullvarphi}) to $(C \to D, (p_{i,a})_{i,a}, (q_i)_i)$ or even $(C \to D)$, with the understanding that the data of the group action on $C$ and the markings is implicit.

\begin{remark} \label{Rmk:relntoLit}
 We want to outline here how the space $\H_{g,G,\xi}$ compares to other constructions in the literature.
 
 As remarked before, our definition is closest to the one from \cite{Jarvis2005}. Given $g,G,\xi=(h_1, \ldots, h_b)$ denote by $g'$ the genus of the curve $D$, which can be computed via the  Riemann-Hurwitz formula (\ref{eqn:RiemHurw}). Then the space $\H_{g,G,\xi}$ is (canonically isomorphic to) the union of the components of the space $\M_{g',b}^G(h_1, \ldots, h_b)$ in \cite{Jarvis2005} where the domain $C$ of the admissible cover $\varphi: C \to D$ is connected. 
 
 On the other hand, in \cite[Definition 6.2.3.]{BertinRomagny} Bertin and Romagny also define a stack $\H_{g,G,[h_1]+ \ldots + [h_b]}$, which we denote by $\H^{\text{BR}}_{g,G,\xi}$ for clarity. Instead of pointed admissible $G$-covers, it just parametrizes admissible $G$-covers $\varphi: G \curvearrowright C \to (D,q_1, \ldots, q_b) \to S$ such that for all $i$ some preimage $p_i$ of $q_i$ has monodromy given by $h_i$. In other words there is no distinguished preimage of $q_i$. By comparison of definitions, one sees that this has a relation to the space $\mathcalorig{A}_{g',b}^{\mathbf{d},\mathbf{e}}$ defined in \cite[Appendix B.2.]{Abramovich2001} as
 \[\mathcalorig{A}_{g',b}^{\mathbf{d},\mathbf{e}} = \bigcup_{\xi} \H^{\text{BR}}_{g,G,\xi} .\]
 Here the union on the right goes over all monodromy data $\xi$ giving rise to the ramification multiplicities specified by the vectors $\mathbf{d},\mathbf{e}$ of positive integers. By the results from the Appendix and Theorem 4.3.2. of \cite{Abramovich2001} there exist natural isomorphisms and inclusions as open and closed substacks
 \[\mathcalorig{A}_{g',b}^{\mathbf{d},\mathbf{e}} \cong \mathcalorig{A}dm_{g',b}^{d,e}(G) \subset \mathcalorig{A}dm_{g',b}(G) \cong \mathcalorig{B}_{g',b}^{\text{bal}}(G) = \M_{g',b}(BG).\]
 The stack on the right is the stack of balanced twisted stable maps to the classifying stack $BG$ of the finite group $G$. The basic idea how to go from $\H^{\text{BR}}_{g,G,\xi}$ to $\M_{g',b}(BG)$ is that given a curve $C \to \mathrm{Spec}(\mathbb{C})$ with action by $G$ we can take the quotient of $G$ on both sides (acting trivially on $\mathrm{Spec}(\mathbb{C})$) and obtain a morphism $[C/G] \to BG$. The inverse operation is taking the fibre product via the universal torsor $\mathrm{Spec}(\mathbb{C}) \to BG$.
 
 From the proof of \cite[Theorem 2.4]{Jarvis2005} it follows that the canonical map $\H_{g,G,\xi} \to \H^{\text{BR}}_{g,G,\xi}$, forgetting the distinguished markings $p_{i,a}$ in $C$ and only remembering the markings $q_i$ in $D$, is \'etale, since it is a union of components of an iterated fibre product of \'etale morphisms to $\H^{\text{BR}}_{g,G,\xi}$.
\end{remark}

\subsection{Properties of \texorpdfstring{$\H_{g,G,\xi}$}{Hg,G,xi}}
In this section we collect results about the stacks $\H_{g,G,\xi}$ we are going to use later. We recall that $g \geq 0$ is the genus of the domain curve $C$ of the admissible covers, $G$ a finite group, $\xi=(h_1, \ldots, h_b)$ a monodromy datum of elements $h_i \in G$. We require that the number $g'$ determined by the formula (\ref{eqn:RiemHurw}) is a nonnegative integer, giving the genus of the curve $D$ in the admissible cover $\varphi: C \to D$. Let furthermore $n=\sum_{i=1}^b |G|/ \mathrm{ord}_G(h_i)$ be the total number of markings $p_{i,a}$ in $C$. 

\begin{theorem}\label{Thm:Hurwitzstack}
 The stack $\H_{g,G,\xi}$ is a smooth proper Deligne-Mumford stack of dimension $3g'-3+b$ over $\mathbb{C}$. 
 There exist natural maps 
 \begin{equation} \label{eqn:ideltadiagram}
  \begin{tikzcd}
   \H_{g,G,\xi} \arrow[r,"\phi"] \arrow[d,"\delta"] & \M_{g,r}\\
   \M_{g',b} & ~
  \end{tikzcd}
 \end{equation}
 defined by
 \begin{align*}
  \phi((C \to D, (p_{i,a})_{i,a}, (q_i)_i)) &= (C, (p_{i,a})_{i,a}),\\
  \delta((C \to D, (p_{i,a})_{i,a}, (q_i)_i)) &= (D, (q_i)_i).
 \end{align*}
 The morphism $\phi$ is representable, finite, unramified and a local complete intersection.
 The morphism $\delta$ is flat, proper and quasi-finite, however not necessarily representable. 
\end{theorem}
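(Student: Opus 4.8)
The plan is to deduce the global geometry of $\H_{g,G,\xi}$ from the literature on twisted stable maps, and then to establish the properties of $\phi$ and $\delta$ by combining deformation theory with standard fibre-dimension arguments.

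First I would settle the claim that $\H_{g,G,\xi}$ is a smooth proper Deligne--Mumford stack of dimension $3g'-3+b$. By Remark \ref{Rmk:relntoLit} the forgetful map $\H_{g,G,\xi} \to \H^{\text{BR}}_{g,G,\xi}$ is finite étale: it is a union of components of an iterated fibre product of étale morphisms, with finite discrete fibres given by the choices of distinguished preimages $p_i \in \varphi^{-1}(q_i)$. Moreover $\H^{\text{BR}}_{g,G,\xi}$ is an open and closed substack of $\M_{g',b}(BG)$, which by \cite{Abramovich2001} is a smooth proper Deligne--Mumford stack of dimension $3g'-3+b$. All four properties then descend along the finite étale cover: smoothness, the Deligne--Mumford property, and the dimension are preserved by étale maps, while properness of $\H_{g,G,\xi}$ follows because a finite morphism to a proper stack is proper.

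Next I would treat the two maps. Well-definedness is immediate: $(D,(q_i))$ is a stable curve of genus $g'$ by Definition \ref{def:admGcover} together with Riemann--Hurwitz (\ref{eqn:RiemHurw}), so $\delta$ lands in $\M_{g',b}$, and $(C,(p_{i,a}))$ is a stable curve of genus $g$ by the stability Lemma preceding the theorem, so $\phi$ lands in $\M_{g,r}$. For $\phi$, representability amounts to injectivity of $\aut(x) \to \aut(\phi(x))$: an automorphism of the cover restricting to the identity on $(C,(p_{i,a}))$ induces the identity on the quotient $D = C/G$, hence is the identity object-automorphism. Since $\H_{g,G,\xi}$ is proper and $\M_{g,r}$ is separated, $\phi$ is proper; its geometric fibres are finite because a fixed stable curve carries only finitely many $G$-actions (the automorphism group of a stable curve being finite), so $\phi$ is quasi-finite, and a representable, proper, quasi-finite morphism is finite. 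Finally, $\phi$ is a morphism between smooth Deligne--Mumford stacks, hence lci: factoring through its graph $\H_{g,G,\xi} \to \H_{g,G,\xi} \times \M_{g,r}$, the graph is a regular embedding (a section of the smooth first projection from a smooth stack) and the second projection is smooth. The delicate point is unramifiedness of $\phi$, which I expect to be the main obstacle, and which I would prove by infinitesimal rigidity. Given a square-zero extension $S \hookrightarrow S'$, an admissible $G$-cover over $S$, and a deformation of $(C,(p_{i,a}))$ to $S'$, there is at most one deformation of the cover inducing it: such a deformation is exactly a $G$-action on the deformed curve $\widetilde C / S'$ permuting the markings by $h \cdot p_{i,a} = p_{i,ha}$ and restricting to the given action over $S$, and this action determines both $\widetilde C / G$ and the cover. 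Now a $G$-action is a homomorphism from $G$ into the automorphism group scheme of $\widetilde C$ permitting permutations of the markings within each fibre $\varphi^{-1}(q_i)$; this group scheme is finite and unramified over $S'$, being a finite extension of the finite unramified group scheme of automorphisms fixing the stable marking $(C,(p_{i,a}))$. Unramifiedness forces every section over $S'$ to be determined by its restriction to $S$, so the extending $G$-action, and hence the deformed cover, is unique; thus $\Omega_\phi = 0$.

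For $\delta$, properness again follows from $\H_{g,G,\xi}$ proper and $\M_{g',b}$ separated. Its geometric fibres are finite, since the admissible $G$-covers of a fixed $(D,(q_i))$ with monodromy $\xi$ correspond to finitely many homomorphisms from the orbifold fundamental group of $D$ to $G$ with prescribed local monodromy, so $\delta$ is quasi-finite. For flatness I would invoke miracle flatness: $\H_{g,G,\xi}$ is smooth hence Cohen--Macaulay, $\M_{g',b}$ is smooth hence regular, both of dimension $3g'-3+b$, so the fibres of the quasi-finite $\delta$ have the expected dimension $0$, and a quasi-finite morphism from a Cohen--Macaulay stack to a regular stack of the same dimension is flat. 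Non-representability is then exhibited by example: a connected admissible $G$-cover carries nontrivial $G$-equivariant deck transformations over $\mathrm{id}_D$ (for instance those coming from the centre $Z(G)$), which map to the identity in $\aut(D,(q_i))$, so $\aut(x) \to \aut(\delta(x))$ need not be injective.
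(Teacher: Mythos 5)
Your proposal is correct in substance but takes a genuinely different route from the paper at several points, so it is worth comparing. For the global statement (smooth proper DM stack of dimension $3g'-3+b$) the paper simply cites \cite[Theorem 2.4]{Jarvis2005}, viewing $\H_{g,G,\xi}$ as a union of components of the Jarvis--Kaufmann--Kimura space $\M^G_{g',b}(\xi)$; you instead transfer the properties from $\M_{g',b}(BG)$ of \cite{Abramovich2001} through $\H^{\text{BR}}_{g,G,\xi}$ along the forgetful map of Remark \ref{Rmk:relntoLit}. For $\delta$ the paper again quotes JKK for flatness, properness and quasi-finiteness, whereas your miracle-flatness argument (Cohen--Macaulay source, regular target, equal dimensions, zero-dimensional fibres) is self-contained and is a nice alternative. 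For $\phi$ the paper follows \cite[Proposition 6.5.2]{BertinRomagny}: it realizes the fibre product $\H_{g,G,\xi}\times_{\M_{g,r}}S$ as a closed subscheme of the $|G|$-fold fibre power of the group object $\mathcal{A}$ of automorphisms of $C/S$ preserving the set of markings, obtaining representability, finiteness and unramifiedness in one stroke; you split this into faithfulness, (representable $+$ proper $+$ quasi-finite $\Rightarrow$ finite), and a deformation-theoretic rigidity argument. Note, however, that your rigidity argument rests on exactly the same key fact as the paper's proof, namely that $\mathcal{A}\to S$ is finite and unramified, so for $\phi$ the difference is mainly one of packaging; the lci argument via the graph factorization is identical.

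Two steps need more care. First, ``\'etale with finite discrete fibres'' does not imply finite (an open immersion is a counterexample), and finiteness of $\H_{g,G,\xi}\to\H^{\text{BR}}_{g,G,\xi}$ is precisely what your properness transfer requires. The statement is true, but it has to be argued: the choices of a distinguished preimage $p_i\in\varphi^{-1}(q_i)$ with monodromy $h_i$ form an open and closed subscheme of $\varphi^{-1}(q_i)$, which is finite over the base because $\varphi$ is finite and $q_i$ is a section of the proper universal curve; the iterated fibre product of these covers is therefore finite \'etale, and a union of connected components of a finite \'etale cover is again finite \'etale. (Alternatively, properness can be quoted from \cite{Jarvis2005}, as the paper does.)

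Second, your witness for non-representability of $\delta$ is slightly off: an element $\eta\in Z(G)$ does give a $G$-equivariant automorphism of $C$ over $\mathrm{id}_D$, but it sends $p_{i,a}$ to $p_{i,\eta a}$, so it is an automorphism of the \emph{pointed} object only when $\eta$ lies in every $\langle h_i\rangle$; in general it is an isomorphism between two differently marked objects, which says nothing against representability. To conclude you should exhibit a concrete case, e.g.\ $G=\Z/2\Z$ and $\xi=(1,\dots,1)$, where the covering involution is central, equals each $h_i$, hence fixes all markings $p_{i,a}$ and covers $\mathrm{id}_D$; this is exactly the example underlying the paper's discussion preceding Theorem \ref{thm:condnonempty}.
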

\begin{proof}
Since $\H_{g,G,\xi}$ is a union of components of $\M_{g',b}^G(\xi)$ from the paper by Jarvis, Kaufmann and Kimura, the fact that $\H_{g,G,\xi}$ is a smooth, proper  Deligne-Mumford stack together with the properties of $\delta$ follow from \cite[Theorem 2.4]{Jarvis2005}. The fact that $\delta$ is not necessarily representable comes from the possibility of having $G$-equivariant automorphisms of $(C,(p_{i,a})_{i,a})$ over the identity map on $D$ (see the discussion preceding Theorem \ref{thm:condnonempty} for details).

The proof of the properties of $\phi$ is exactly analogous to the proof of  \cite[Proposition 6.5.2.]{BertinRomagny}. 

Indeed, assume we are given an $S$-point $S \to \M_{g,r}$ for a scheme $S$, given by a family of stable curves $(C,p_1, \ldots, p_n) \to S$. Then we first want to show that the pullback of $\phi$ given as
\[\mathcal{F} := \H_{g,G,\xi} \times_{\M_{g,r}} S \to S\]
is finite and unramified, with $\mathcal{F}$ an algebraic space. For $T \to S$ a morphism of schemes, giving a $T$-point of $\mathcal{F}$  over $S$ means specifying a suitable $G$-action on $C_T := C \times_S T$. Once this $G$-action is specified, we obtain the admissible cover as $C_T \mapsto D=C_T/G$. To express the $G$-action, let
\[\mathcal{A} = \{\eta  \in \aut_S(C) : \eta(\{p_1, \ldots, p_n\}) \subset \{p_1, \ldots, p_n\}\}\]
be the group of automorphisms of $C$ over $S$ leaving invariant the \emph{set} of marked points. Then $\mathcal{A} \to S$ is finite and unramified (corresponding to the fact that the quotient $\M_{g,r} / S_n$ is still a separated Deligne-Mumford stack).




We can specify a $G$-action on $C_T$ by giving a $T$-point of the $|G|$-fold product
\[\mathcal{A}_G=\mathcal{A} \times_S \mathcal{A} \times_S \cdots \times_S \mathcal{A}\]
satisfying a number of closed conditions (defining a $G$-action, permuting the markings $p_i$ in the correct way, with the correct local monodromies, etc.). Thus $\mathcal{F}$ is represented by a closed subscheme of $\mathcal{A}_G$ so it is finite and unramified over $S$. We conclude that $\phi$ is representable, finite and unramified. Since $\H_{g,G,\xi}$ and $\M_{g,r}$ are smooth, it is also a local complete intersection.
\end{proof}

We call $\phi$ the \emph{source morphism} and $\delta$ the \emph{target morphism}, since they remember the source and target of the admissible cover, respectively.

\begin{definition} \label{Def:Hbar}
 We denote by $[\H_{g,G,\xi}] \in A_{3g'-3+b}(\M_{g,r})$ the pushforward of the fundamental class of $\H_{g,G,\xi}$ under the map $\phi$.
\end{definition}
These cycles and their intersections with tautological classes are the main object of study of the present paper.

Over the stack $\H_{g,G,\xi}$ we have universal curves $\mathcal{C}, \mathcal{D}$ with sections $\mathcal{p}_{i,a}, \mathcal{q}_i$ and a universal admissible $G$-cover
\begin{equation} \label{eqn:univGalois}
  \begin{tikzcd}
   \mathcal{C} \arrow[r,"\varphi"] \arrow[d,"\pi"] & \mathcal{D}\arrow[dl,"\tilde \pi"]\\
   \H_{g,G,\xi} \arrow[u,bend left,"\mathcal{p}_{i,a}"] \arrow[ur, bend right,swap,"\mathcal{q}_i"] & ~
  \end{tikzcd}.
 \end{equation}
The pointed families of curves $\mathcal{C}, \mathcal{D}$ induce the maps $\phi,\delta$ from Theorem \ref{Thm:Hurwitzstack}. There is a natural isomorphism $\mathcal{C} \cong \H_{g,G,(\xi,e_G)}$, see \cite[Section 6.5.2]{BertinRomagny}.

Given one of the markings $p_{i,a}$ on $\H_{g,G,\xi}$ we define $\psi_{p_{i,a}} \in A^1(\H_{g,G,\xi})$ as the pullback of the corresponding $\psi$-class on $\M_{g,r}$ under $\phi$. This is equivalent to defining it as $c_1(\mathcal{p}_{i,a}^* \omega_\pi)$ by the functoriality of the relative dualizing sheaf.

Similarly, given $\ell \geq 1$ we define $\kappa_\ell \in A^\ell(\H_{g,G,\xi})$ as the pullback of $\kappa_\ell \in A^\ell(\M_{g,r})$ under $\phi$. For notational distinction we will write the $\psi$ and $\kappa$ classes on $\M_{g',b}$ as $\psi'_{q_i}$ and $\kappa'_{\ell}$. Then we have the following convenient comparison result.

\begin{lemma} \label{Lem:psikappacompat}
 Assume the stabilizer of $p_{i,a}$ has order $e_i=\mathrm{ord}_G(h_i)$, then we have
 \begin{equation} \label{eqn:psicompat}
  \psi_{p_{i,a}} = \frac{1}{e_i} \delta^*(\psi'_{q_i}) \in A^1(\H_{g,G,\xi}).
 \end{equation}
 On the other hand, for $\ell \geq 1$ we have
 \begin{equation} \label{eqn:kappacompat}
  \kappa_\ell = |G| \delta^*(\kappa'_{\ell}) \in A^\ell(\H_{g,G,\xi}).
 \end{equation}
\end{lemma}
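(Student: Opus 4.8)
The plan is to establish the two formulas separately, letting the $\kappa$-comparison bootstrap off the $\psi$-comparison applied to the universal curve. Throughout I use the identifications recalled just before the lemma, namely $\psi_{p_{i,a}} = c_1(\mathcal{p}_{i,a}^*\omega_\pi)$ and $\delta^*\psi'_{q_i} = c_1(\mathcal{q}_i^*\omega_{\tilde\pi})$, where $\varphi\colon \mathcal{C}\to\mathcal{D}$ is the universal admissible cover over $\H_{g,G,\xi}$ with projections $\pi,\tilde\pi$ and sections $\mathcal{p}_{i,a},\mathcal{q}_i$.

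For the $\psi$-class identity (\ref{eqn:psicompat}), I would argue near the section $\mathcal{p}_{i,a}$, which meets only smooth points of the fibres. There the local model is $\varphi^* x = \zeta^{e_i}$, so the ramification comparison of relative dualizing sheaves reads
\[\omega_\pi \cong \varphi^*\omega_{\tilde\pi}\otimes \mathcal{O}_{\mathcal{C}}\big((e_i-1)\,\mathcal{p}_{i,a}\big)\]
in a neighborhood of that section. Restricting along $\mathcal{p}_{i,a}$, using $\varphi\circ\mathcal{p}_{i,a} = \mathcal{q}_i$ (hence $\mathcal{p}_{i,a}^*\varphi^*\omega_{\tilde\pi} = \mathcal{q}_i^*\omega_{\tilde\pi}$) and the standard self-intersection identity $\mathcal{O}_{\mathcal{C}}(\mathcal{p}_{i,a})|_{\mathcal{p}_{i,a}} = \mathcal{p}_{i,a}^*\omega_\pi^\vee$, whose first Chern class is $-\psi_{p_{i,a}}$, I obtain
\[\psi_{p_{i,a}} = \delta^*\psi'_{q_i} - (e_i-1)\psi_{p_{i,a}},\]
which rearranges to $e_i\,\psi_{p_{i,a}} = \delta^*\psi'_{q_i}$, as claimed.

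For the $\kappa$-class identity (\ref{eqn:kappacompat}) I would exploit the isomorphism $\mathcal{C}\cong\H_{g,G,(\xi,e_G)}$ recalled above, under which $\pi\colon\mathcal{C}\to\H_{g,G,\xi}$ becomes the map forgetting the extra trivial-monodromy marking. Writing $\phi',\delta'$ for the source and target maps of $\H_{g,G,(\xi,e_G)}$, the Cartesian square $\mathcal{C} = \H_{g,G,\xi}\times_{\M_{g,r}}\M_{g,r+1}$ together with the definition $\kappa_\ell = \pi^{\M}_*\psi_{r+1}^{\ell+1}$ and base change give
\[\kappa_\ell = \phi^*\kappa_\ell = \pi_*\big((\phi')^*\psi_{r+1}^{\ell+1}\big) = \pi_*\big(\psi_{p_{r+1}}^{\ell+1}\big).\]
The new marking has $e_{r+1} = \mathrm{ord}_G(e_G) = 1$, so applying the already-proven formula (\ref{eqn:psicompat}) on $\H_{g,G,(\xi,e_G)}$ yields $\psi_{p_{r+1}} = (\delta')^*\psi'_{q_{b+1}}$. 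Using the factorizations $\delta' = \bar\delta\circ\varphi$ and $\pi = \tilde\pi\circ\varphi$, where $\bar\delta\colon\mathcal{D}\to\M_{g',b+1}$ is the map induced by $\delta$, this becomes
\[\kappa_\ell = \tilde\pi_*\varphi_*\varphi^*\big(\bar\delta^*(\psi'_{b+1})^{\ell+1}\big).\]
The projection formula with $\varphi$ finite of degree $|G|$ gives $\varphi_*\varphi^* = |G|\cdot\mathrm{id}$, while base change along $\mathcal{D} = \H_{g,G,\xi}\times_{\M_{g',b}}\M_{g',b+1}$ identifies $\tilde\pi_*\big(\bar\delta^*(\psi'_{b+1})^{\ell+1}\big) = \delta^*\kappa'_\ell$, producing $\kappa_\ell = |G|\,\delta^*\kappa'_\ell$.

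The main obstacles I anticipate are two bookkeeping points rather than conceptual ones. First, justifying the dualizing-sheaf comparison in the nodal setting: this is precisely why I restrict to a neighborhood of the sections $\mathcal{p}_{i,a}$, which meet only smooth fibre points, so the clean étale-local picture $\zeta\mapsto\zeta^{e_i}$ applies and nodes never enter the computation. Second, the degree computation $\varphi_*\varphi^* = |G|$ for the possibly non-representable quotient map $\varphi\colon\mathcal{C}\to\mathcal{D}=[\mathcal{C}/G]$; with $\mathbb{Q}$-coefficients this is the assertion that $\varphi$ is finite flat of degree $|G|$ as a map of Deligne–Mumford stacks, which I would confirm from the generic $G$-torsor structure of the cover. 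The compatibilities of proper pushforward with the Cartesian squares defining the universal curves are standard, but I would state them explicitly to keep the base-change steps rigorous.
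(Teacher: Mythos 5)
Your proof of (\ref{eqn:psicompat}) is correct and is essentially the paper's own argument: both restrict the Hurwitz comparison $\omega_\pi \cong \varphi^*\omega_{\tilde\pi}\otimes\mathcal{O}_{\mathcal{C}}\bigl(\textstyle\sum_j(e_j-1)R_j\bigr)$ along the section $\mathcal{p}_{i,a}$ and use $c_1\bigl(\mathcal{p}_{i,a}^*\mathcal{O}_{\mathcal{C}}(\mathcal{p}_{i,a})\bigr)=-\psi_{p_{i,a}}$; working in a neighbourhood of the section is just a rephrasing of the paper's remark that the other sections are disjoint from $\mathcal{p}_{i,a}$.

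For (\ref{eqn:kappacompat}) the paper gives no argument of its own (it cites \cite[Th\'eor\`eme 10.3.4]{BertinRomagny}), so your self-contained proof is a genuinely different route, and its skeleton (reduce $\kappa_\ell$ to a power of a $\psi$-class on the universal curve, transfer it through $\varphi$, push down by the projection formula) can be made to work. But as written it has a genuine gap at its central step. You call $\phi'$ the source morphism of $\H_{g,G,(\xi,e_G)}$, which maps to $\M_{g,r+|G|}$ (the new trivial-monodromy orbit contributes $|G|$ markings), whereas the base-change identity $\phi^*\pi^{\M}_*=\pi_*(\phi')^*$ requires the projection $\mathrm{pr}_{\mathcal{C}}\colon\mathcal{C}\to\M_{g,r+1}$ of the Cartesian square; these two maps differ by the forgetful map $\M_{g,r+|G|}\to\M_{g,r+1}$. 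Since $\psi$-classes are \emph{not} stable under forgetful pullback (Proposition \ref{Pro:tautforgetpull}), the identification $\mathrm{pr}_{\mathcal{C}}^*\psi_{r+1}=\psi_{p_{r+1}}$ — where $\psi_{p_{r+1}}$ must be the class defined via the source map in order for (\ref{eqn:psicompat}) to apply — is not definitional: the two classes differ a priori by pullbacks of boundary divisors on which $p_{b+1,[e_G]}$ collides with forgotten points of its orbit. Closing the gap means showing these loci are empty in $\H_{g,G,(\xi,e_G)}$. This is true but needs an argument: on such a locus the component $R\cong\P^1$ of $C$ carrying $p_{b+1,[e_G]}$ would carry a second orbit point $p_{b+1,[a]}$, exactly one node and no other markings; then $a$ preserves $R$ (two components sharing the smooth point $p_{b+1,[a]}$ coincide), fixes the unique node, and being of finite order has a second fixed point on $R$, which is smooth and unmarked (since $a$ permutes the orbit markings freely) — contradicting the requirement that the $G$-action be a principal bundle away from preimages of nodes and markings of $D$. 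Alternatively, you can avoid $\H_{g,G,(\xi,e_G)}$ and (\ref{eqn:psicompat}) altogether by proving the divisor identity $\mathrm{pr}_{\mathcal{C}}^*\psi_{r+1}=\varphi^*\mathrm{pr}_{\mathcal{D}}^*\psi'_{b+1}$ on $\mathcal{C}$ directly: both sides equal $c_1(\omega_\pi)+\sum_{i,a}[\mathcal{p}_{i,a}]$, by the standard relation $\psi_{n+1}=c_1(\omega_\pi)+\sum_i D_{i,n+1}$ on universal curves, the Hurwitz formula above, and $\varphi^*[\mathcal{q}_i]=e_i\sum_a[\mathcal{p}_{i,a}]$.

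A second, smaller error: your justification of $\varphi_*\varphi^*=|G|\cdot\mathrm{id}$ by ``$\varphi$ is finite flat of degree $|G|$'' is false. Admissible covers are not flat at ramified nodes: in the local model $\mathrm{Spec}\,A[\zeta,\eta]/(\zeta\eta-a)\to\mathrm{Spec}\,A[x,y]/(xy-a^e)$ the fibre over the node has length $2e-1$ while the generic fibre has length $e$, so flatness fails whenever $e>1$. What you actually need, and what is true, is the projection formula for the proper map $\varphi$ applied to powers of Cartier divisor classes, together with $\varphi_*[\mathcal{C}]=|G|\,[\mathcal{D}]$ (which does follow from the generic principal $G$-bundle structure); this gives $\varphi_*\varphi^*\alpha=|G|\,\alpha$ for the class $\alpha=\bigl(\mathrm{pr}_{\mathcal{D}}^*\psi'_{b+1}\bigr)^{\ell+1}$ you transfer, and the rest of your computation then goes through.
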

\begin{proof}
 To prove (\ref{eqn:psicompat}) consider the universal admissible cover (\ref{eqn:univGalois}). Then we have
 \begin{align*}
  \delta^*(\psi'_{q_i}) &= \mathcal{q}_i^*(c_1(\omega_{\tilde \pi})) =(\varphi \circ \mathcal{p}_{i,a})^*(c_1(\omega_{\tilde \pi})) = \mathcal{p}_{i,a}^*(\varphi^* c_1(\omega_{\tilde \pi}))
 \end{align*}
 Since $\varphi$ is ramified along the sections $\mathcal{p}_{j,a}$ with ramification index $e_j-1$, we have
 \begin{equation}
  \omega_\pi = \varphi^* \omega_{\tilde \pi} \otimes \left(\otimes_{j} \mathcal{O}_{\mathcal{C}}(R_j)^{\otimes e_j-1} \right),
 \end{equation}
 where $R_j$ is the union of the images of the sections $\mathcal{p}_{j,a}$. The equation above just says that $\varphi$ is ramified along the divisors $R_j$ with ramification index $e_j-1$. Inserting in our computation above and using the sections $\mathcal{p}_{j,a'}$ are all disjoint from $\mathcal{p}_{i,a}$, except $\mathcal{p}_{i,a}$ itself, we obtain 
 \begin{align*}
  \delta^*(\psi'_{q_i}) &= \mathcal{p}_{i,a}^*\left(c_1(\omega_\pi) - (e_i-1) (\mathcal{p}_{i,a})_* [\H_{g,G,\xi}]\right) = \psi_{q_{i,a}} + (e_i-1)\psi_{q_{i,a}} = e_i \psi_{q_{i,a}}.
 \end{align*} 
 The statement about $\kappa$ classes is proved exactly as in \cite[Th\'eor\`{e}me 10.3.4.]{BertinRomagny}.
\end{proof}

\subsection{Boundary Strata of \texorpdfstring{$\H_{g,G,\xi}$}{Hg,G,xi} and Admissible \texorpdfstring{$G$}{G}-Graphs} \label{Sect:boundaryHurwitz}
The moduli space $\M_{g,n}$ has a stratification indexed by stable graphs $\Gamma$. Similarly, the space $\H_{g,G,\xi}$ has a stratification indexed by stable graphs $\Gamma$ together with a $G$-action. This is described in detail in Chapter 7 of \cite{BertinRomagny}. We summarize the results in our notation.

Given $(C \to D, (p_{i,a})_{i,a}, (q_i)_i) \in \H_{g,G,\xi}$ let $\Gamma$ be the dual graph of $C$. Since the action of $G$ on $C$ permutes the irreducible components and nodes of $C$, respectively, we obtain a group action $G \curvearrowright \Gamma$ on the stable graph. In particular, for each leg \emph{or} half-edge $l$ we can write down its stabilizer $G_l$, which is a cyclic group.

We need to record some finer data describing the action of $G_l$ on $C$ around the point corresponding to $l$, namely the monodromy of the $G$-action there. Let $v \in V(\Gamma)$ be the vertex adjacent to $l$, let $C_v$ be the normalization of the component of $C$ corresponding to $v$ and let $x_l \in C_v$ be the  point corresponding to the leg or half-edge $l$. If $l$ is a leg this is just the position of the corresponding marking, if $l$ is a half-edge this is the corresponding preimage of the node. Elements in the stabilizer $G_l$ preserve the vertex $v$ and correspondingly there is an action $G_l \curvearrowright C_v$ fixing $x_l$. For $e=|G_l|$ there is then a unique element $h_l \in G_l$ acting as multiplication by $\zeta_e = \exp(2 \pi i/e)$ on the tangent space $T_{x_l} C_v$.

\begin{definition} \label{Def:Hstablegraph}
 We call the data
 \begin{equation} \label{eqn:Hstablegraph} (\Gamma,G)=(G \curvearrowright \Gamma, (h_l)_{l \in L(\Gamma) \cup H(\Gamma)})\end{equation}
 the \emph{admissible $G$-graph} associated to $(C \to D, (p_{i,a})_{i,a}, (q_i)_i)$. 
\end{definition}
Note that from the balancing condition of the group action on $C$ it follows that for two half-edges $a,b$ forming an edge we have $h_a=h_b^{-1}$. Moreover, there is no element $t \in G$ preserving the edge but flipping the orientation, i.e.\  with $ta=b$. Such an element would correspond to a stabilizer of a node which exchanges the branches of the node, contradicting the local picture of the action around nodes from Definition \ref{def:admGcover}.


We denote by $\Ho_{g,G,\xi}(\Gamma,G)$ the set of elements in $\H_{g,G,\xi}$ with associated admissible $G$-graph $(\Gamma,G)$ and by $\H_{g,G,\xi}(\Gamma,G)$ its closure. This defines a stratification of $\H_{g,G,\xi}$ similar to the stratification of $\M_{g,n}$ by boundary strata. In the following we want to derive a parametrization of $\H_{g,G,\xi}(\Gamma,G)$ by a generalization of the usual gluing maps associated to the graph $\Gamma$.

Since the general case of these equivariant gluing maps has a fairly technical description, let us first look at a concrete example.
\begin{example}
The space $\H_{2,\mathbb{Z}/2\mathbb{Z},(1,1)}$ parametrizes double covers $C \to E$ from genus $2$ curves $C$ to genus $1$ curves $E$, ramified at two points $p_1, p_2 \in C$. One boundary stratum of this space is given by the locus of curves
\begin{equation} \label{eqn:exaCactionglue} (C,p_1,p_2) = (C_1,q) \cup_{q=q'} (C_2,q',p_1,p_2,q'') \cup_{q''=q} (C_1,q),\end{equation}
where $C_1$ has genus $1$, $C_2$ has genus $0$ and $C_2$ carries a $G=\mathbb{Z}/2\mathbb{Z}$-action leaving $p_1, p_2$ fixed and exchanging $q',q''$. This action extends to all of $C$, where the first component $C_1$ is exchanged with the second one.

The corresponding $G$-admissible stable graph $(\Gamma,G) $ is given by
\[(\Gamma,G)=
\begin{tikzpicture}[->,>=bad to,baseline=-3pt,node distance=1.3cm,thick,main node/.style={circle,draw,font=\Large,scale=0.5}]
\node at (1.4,0) [main node] (B) {0};
\node at (0,.5) [main node] (A1) {1};
\node at (0,-.5) [main node] (A2) {1};
\draw [-] (A1) to (B);
\draw [-] (A2) to (B);
\draw [-] (B) to (1.9,-.1);
\draw [-] (B) to (1.9,.1);
\draw [<->] (.5,-.2) to (.5,.2);
\end{tikzpicture} \]
Then we see a point in the closure $\H_{g,G,\xi}(\Gamma,G)$ can be specified by giving $(C_1,q) \in \M_{1,1} = \H_{1,\{0\},(0)}$ and $(C_1, p_1,p_2,q',q'') \in \H_{0,G,(1,1,0)}$. This means that the corresponding equivariant gluing map \[\xi_{(\Gamma,G)} : \M_{1,1} \times \H_{0,G,(1,1,0)} \to \H_{2,\mathbb{Z}/2\mathbb{Z},(1,1)}\] glues two copies of $C_1$ and one copy of $C_2$ to obtain the curve $C$ from (\ref{eqn:exaCactionglue}) together with the natural $G$-action on $C$.
\end{example} 
 
Now we want to  write down a similar map $\xi_{(\Gamma,G)}$ for a general admissible $G$-graph as in (\ref{eqn:Hstablegraph}). As in the example, the domain of $\xi_{(\Gamma,G)}$ will need one factor $\H_{g(v'),G_{v'},\xi_{v'}}$ for a representative $v'$ of each orbit of the vertices $V(\Gamma)$ under $G$.
 
Because of this, it will be convenient to introduce the quotient graph $\tilde \Gamma=\Gamma/G$. Its sets of vertices, half-edges and legs are the quotient sets of the corresponding sets for $\Gamma$ under the equivalence relation of being in the same $G$-orbit. Denote the quotient maps by 
\begin{equation}
 \pi_V : V(\Gamma) \to V(\tilde \Gamma), \pi_L : L(\Gamma) \to L(\tilde \Gamma),\pi_H : H(\Gamma) \to H(\tilde \Gamma).
\end{equation}
Since the group action respects the incidence of legs and half-edges with vertices and does not reverse the orientation of an edge, this is a well-defined stable graph. In fact, if $(\Gamma,G)$ is the admissible $G$-graph associated to $(C \to D, (p_{i,a})_{i,a}, (q_i)_i)$, then $\tilde \Gamma$ is canonically identified with the dual graph of $(D,(q_i)_i)$. 



Now choose a section of $\pi_V$, i.e.\  a set $V' \subset V(\Gamma)$ of representatives $v \in V(\Gamma)$ for each $[v] \in V(\tilde \Gamma)$. For each ${v'} \in V'$ choose representatives $L'_{v'}, H'_{v'}$ of the legs and half-edges in $\Gamma$ incident to ${v'}$ up to the action of $G_{v'}$. With these choices made, there are natural $G$-equivariant identifications
\begin{align*}
 \pi_V^{-1}([{v'}]) &\cong G/G_{v'} && ({v'} \in V'),\\
 \pi_L^{-1}([l]) &\cong G/G_l && (l \in L'_{v'}),\\
 \pi_H^{-1}([h]) &\cong G/G_h && (h \in H'_{v'}).
\end{align*}

For every ${v'}\in V'$ let
\begin{equation}
 \xi_{v'} = (h_l : l \in L'_{v'} \cup H'_{v'}) 
\end{equation}
be a monodromy datum. We will construct a natural morphism
\begin{equation}
 \xi_{(\Gamma,G)} : \H_{(\Gamma,G)} = \prod_{v' \in V'} \H_{g(v'),G_{v'},\xi_{v'}} \to \H_{g,G,\xi},
\end{equation}
whose image is $\H_{g,G,\xi}(\Gamma,G)$.

The idea is the following: for a curve $C$ with group action by $G$ and admissible $G$-graph $\Gamma$, look at the orbit of a vertex ${v'} \in V'$. This orbit corresponds to an orbit of the component $C_{v'}$ of $C$ associated to ${v'}$. Since $G$ acts on $C$ by isomorphisms, all components in the $G$-orbit of $C_{v'}$ must be isomorphic. On the other hand, the stabilizer $G_{v'}$ of ${v'}$ acts on $C_v$ with monodromy datum $\xi_{v'}$. The idea of the map $\xi_{(\Gamma,G)}$ is to take all these restricted actions $G_{v'} \curvearrowright C_v$ (together with the marked points from $\xi_{v'}$) and to reconstruct the curve $C$ from this.

The first step is to go from $C_{v'}$ to the orbit of components of $C$, which were all isomorphic to $C_{v'}$. So for each ${v'} \in V'$ and for an element $(G_{v'} \curvearrowright C_{v'}, (p^{v'}_{i,a})_{i,a}) \in \H_{g({v'}),G_{v'},\xi_{v'}}$ there is a (usually disconnected) curve
\[\widehat C_{v'} = \bigcup_{[g] \in G/G_{v'}} C_{v'}\]
together with a $G$-action on $\widehat C_{v'}$ such that the induced action on the connected components is the left-multiplication of $G$ on $G/G_{v'}$ and such that on the component $C_{v'} \subset \widehat C_{v'}$ corresponding to the trivial element $[e_G]$ the induced action of $G_{v'}$ is the given one. 

Note that for the action of $G$ on $\widehat C_{v'}$ there is a choice involved. This is the same kind of ambivalence that appears when defining the induced representation of $G_{v'}$ in $G$ by choosing a set of representatives of $G/G_{v'}$. In the end, the resulting map $\xi_{(\Gamma,G)}$ does not depend on this choice.

The map $\xi_{(\Gamma,G)}$ is defined by taking the disjoint union of all curves $\widehat C_{v'}$, ${v'} \in V'$, and glueing them together to a curve $C$ according to the graph $\Gamma$. Since the actions of $G$ on the curves $\widehat C_v$ are compatible with this gluing, the new curve $C$ inherits a $G$-action. Taking the quotient map $C \to D=C/G$ gives the desired admissible $G$-cover. This ends the construction of $\xi_{(\Gamma,G)}$.

From the construction above it follows that the composition of $\xi_{(\Gamma,G)}$ with the map $\phi$ forgetting the $G$-action factors through the gluing map $\xi_\Gamma$ associated to the stable graph $\Gamma$ as
\begin{equation}
  \begin{tikzcd}
   \H_{(\Gamma,G)} = \prod_{v' \in V'} \H_{g(v),G_v,\xi_v} \arrow[r,"\xi_{(\Gamma,G)}"]\arrow[d,"\phi_{(\Gamma,G)}"] & \H_{g,G,\xi}\arrow[d,"\phi"]\\
   \M_\Gamma = \prod_{v \in V(\Gamma)} \M_{g(v),n(v)} \arrow[r,"\xi_\Gamma", swap] & \M_{g,r}
  \end{tikzcd}
  \end{equation}
For the map $\phi_{(\Gamma,G)}$ note that the set $V(\Gamma)$ is a disjoint union of the $G$-orbits of the elements $v' \in V'$. Then $\phi_{(\Gamma,G)}$ is a product of maps $\phi$ followed by diagonals $\Delta : \M_{g(v'),n(v')} \to \prod_{v \in Gv'}  \M_{g(v'),n(v')}$. These diagonal maps arise since in the construction above, when defining $\widehat C_v$ we take the disjoint union of identical curves $C_v$, indexed by $G/G_v$, which is canonically bijective to the orbit of $v$.

Putting everything together, we see that the map $\phi_{(\Gamma,G)}$ factors as
\begin{equation} \label{eqn:phiGammaGfactor}
  \phi_{(\Gamma,G)} \colon 
  \underbrace{\prod_{v' \in V'} \H_{g(v),G_v,\xi_v}}_{=\H_{(\Gamma,G)}} \xrightarrow{\prod \phi} \prod_{v' \in V'} \M_{g(v'),n(v')} \xrightarrow{\prod \Delta} \underbrace{\prod_{v' \in V'} \prod_{v \in Gv'}  \M_{g(v'),n(v')}}_{=  \M_\Gamma}.
\end{equation}


Likewise we have a map 
\begin{equation}
    \delta_{(\Gamma,G)} \colon  \H_{(\Gamma,G)} \to  \prod_{v' \in V'} \M_{g'(v'),b(v')}:=\M_{\Gamma/G} ,
\end{equation}
given by taking the target morphism $\delta\colon \H_{g(v'),G_{v'},\xi_{v'}} \to \M_{g'(v'),b(v')}$ for each $v'\in V' \cong V(\Gamma/G)$.

\begin{definition} \label{Def:AutG}
 A \emph{morphism of admissible $G$-graphs} $f: (\Gamma',G) \to (\Gamma,G)$ is a usual morphism of stable graphs $f:\Gamma' \to \Gamma$ (see Definition \ref{def:morgraph}) which is equivariant with respect to the $G$-action and such that for each half-edge $l$ of $\Gamma$ mapping to a half-edge $l'$ of $\Gamma'$ we have $h_{l}=h'_{l'}$, where $h,h'$ are the monodromy data of $(\Gamma,G)$ and $(\Gamma',G)$, respectively.
 
 For an admissible $G$-graph $(\Gamma,G)$ the set of automorphisms of $(\Gamma,G)$ is denoted $\aut(\Gamma,G) \subset \aut(\Gamma)$.
\end{definition}

\begin{proposition}[see \cite{BertinRomagny}, Proposition 7.4.1.]
 The morphism $\xi_{(\Gamma,G)}$ has as image the closure $\H_{g,G,\xi}(\Gamma,G)$  of the set of elements in $\H_{g,G,\xi}$ with associated admissible $G$-graph $(\Gamma,G)$. It is generically of degree  $|\aut(\Gamma,G)|$ onto its image.
\end{proposition}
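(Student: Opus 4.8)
The plan is to mirror the strategy used for the ordinary gluing maps in the proof of Proposition \ref{intprop}: I would equip the source $\H_{(\Gamma,G)}$ with a modular interpretation as a space of admissible $G$-covers carrying a combinatorial marking, and then exhibit $\xi_{(\Gamma,G)}$ as the functor forgetting that marking. Concretely, I would define a $(\Gamma,G)$-\emph{marking} on a cover $(C\to D)\in\H_{g,G,\xi}$ as the $G$-equivariant analogue of the $\Gamma$-marking of Definition \ref{def:curstruc}: a $G$-equivariant system of sections, $\pi$-relative components, and an identification of the resulting dual-graph-with-$G$-action and monodromies $(h_l)_l$ with the fixed admissible $G$-graph $(\Gamma,G)$. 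Pulling such data back along base change gives a stack $\H'_{(\Gamma,G)}$, and exactly as in Proposition \ref{prop:Gcurve} the map sending a point of $\H_{(\Gamma,G)}=\prod_{v'}\H_{g(v'),G_{v'},\xi_{v'}}$ to the glued cover together with its tautological marking should be an isomorphism $\H_{(\Gamma,G)}\xrightarrow{\sim}\H'_{(\Gamma,G)}$; the inverse restricts a marked cover to a chosen set $V'$ of orbit representatives of its components. Under this identification $\xi_{(\Gamma,G)}$ is literally the marking-forgetting functor.

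First I would pin down the image. On the open locus of $\H_{(\Gamma,G)}$ where each factor lies in $\Ho_{g(v'),G_{v'},\xi_{v'}}$ (all component curves smooth), the glued curve $C$ has smooth components whose nodes, with their $G$-action and monodromies, reproduce exactly $(\Gamma,G)$; hence this locus maps into the locally closed stratum $\Ho_{g,G,\xi}(\Gamma,G)$, and the restriction functor shows it maps onto it. Since $\H_{(\Gamma,G)}$ is a finite product of proper stacks (Theorem \ref{Thm:Hurwitzstack}) it is proper, and $\H_{g,G,\xi}$ is separated, so $\xi_{(\Gamma,G)}$ is proper and its image is closed. As the smooth locus is dense in the source, the image is the closure of $\Ho_{g,G,\xi}(\Gamma,G)$, namely $\H_{g,G,\xi}(\Gamma,G)$.

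Finally the degree: over a generic geometric point $z=[C\to D]$ of the stratum, the modular description identifies the fibre with the set of $(\Gamma,G)$-markings on $C$. Since $z$ lies in the stratum, the dual $G$-graph of $C$ is isomorphic to $(\Gamma,G)$, so this set is nonempty, and precomposing one identification with elements of $\aut(\Gamma,G)$ exhibits it as a torsor under $\aut(\Gamma,G)$. For the pushforward of fundamental classes, the generic degree of $\xi_{(\Gamma,G)}$ onto its image equals the number of coarse-space preimages of $z$ times the ratio $|\aut(z)|/|\aut(w)|$ of the generic automorphism groups of target and source, for a generic source point $w$. I would evaluate these through the exact sequence $1\to\aut^0(C\to D)\to\aut(C\to D)\to Q\subseteq\aut(\Gamma,G)$, where $\aut^0$ denotes the automorphisms inducing the trivial graph automorphism. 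One checks that $\aut^0(C\to D)$ is canonically the product of the generic automorphism groups of the factor covers, hence equals $\aut(w)$, while $Q$ acts freely on the torsor of markings; thus the coarse fibre has $|\aut(\Gamma,G)|/|Q|$ points and $|\aut(z)|=|\aut^0|\cdot|Q|=|\aut(w)|\cdot|Q|$. Multiplying, the vertical contributions cancel and the degree is exactly $|\aut(\Gamma,G)|$. The \emph{main obstacle} is precisely this automorphism bookkeeping: verifying that the vertical automorphisms of the glued cover match those of the factors so that they cancel, that $\aut(\Gamma,G)$ acts freely and transitively on the set of markings, and that the construction of $\xi_{(\Gamma,G)}$ — which used a non-canonical choice of $G$-action on each induced curve $\widehat C_{v'}$ — is independent of those choices up to canonical isomorphism, so that the fibre groupoid is governed by $\aut(\Gamma,G)$ alone.
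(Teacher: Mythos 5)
The paper offers no proof of this proposition at all---it is imported from \cite{BertinRomagny}, Proposition 7.4.1---so there is no internal argument to compare yours against; judged on its own merits, your proof is correct. It is also well aligned with the toolkit the paper builds for neighbouring results: the $(\Gamma,G)$-marking interpretation you posit is exactly Definition \ref{def:admmark} together with Proposition \ref{prop:admmark} (which the paper proves by analogy with Proposition \ref{prop:Gcurve}), and viewing $\xi_{(\Gamma,G)}$ as the marking-forgetting functor does dispose of the non-canonical choice of coset representatives in the construction of the induced curves $\widehat C_{v'}$. Your identification of the image is sound: properness of $\H_{(\Gamma,G)}$ (Theorem \ref{Thm:Hurwitzstack}) makes the image closed, surjectivity of the smooth-factor locus onto $\Ho_{g,G,\xi}(\Gamma,G)$ follows by normalizing a cover in the stratum at its nodes and restricting to orbit representatives, and density of that locus in the source rests on the smoothability of nodal admissible covers, which the paper records just after Definition \ref{Def:abstHurwitzstgraph}. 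For the degree, your bookkeeping closes correctly, but the logical order matters: the formula ``degree $=$ (number of coarse preimages) $\times\ |\aut(z)|/|\aut(w)|$'' is only valid when all preimages have automorphism groups of the same order, and it is precisely your identification $\aut^0(C\to D)\cong\aut(w)$ (every stabilizer of a marking equals the kernel $\aut^0$, because $Q$ acts freely on the torsor of markings) that guarantees this; with that in hand, isomorphism classes of preimages are the $Q$-orbits on an $\aut(\Gamma,G)$-torsor, and the count gives $\bigl(|\aut(\Gamma,G)|/|Q|\bigr)\cdot\bigl(|\aut^0|\cdot|Q|\bigr)/|\aut^0|=|\aut(\Gamma,G)|$. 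Two residual points deserve explicit verification: that a tuple $(\psi_{v'})_{v'}$ of factor automorphisms glues to an element of $\aut^0$ (it does, via $\varphi(gx)=g\psi_{v'}(x)$, well defined by $G_{v'}$-equivariance), and that since the spaces $\H_{g,G,\xi}$ need not be irreducible the degree statement should be read component-wise on the image---your fibre computation is insensitive to this, as all counts depend only on $(\Gamma,G)$.
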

One quickly checks that the domain of $\xi_{(\Gamma,G)}$ (and thus $\H_{g,G,\xi}(\Gamma,G)$) is of the same dimension as the space $\M_{\Gamma/G}$ associated to the quotient graph $\Gamma/G$. In particular, the boundary divisors of $\H_{g,G,\xi}$ are indexed by the admissible $G$-graphs $(\Gamma,G)$ such that $\Gamma/G$ has exactly one edge. In other words, $\Gamma$ has exactly one $G$-orbit of edges.



\subsection{Abstract Characterization of Admissible \texorpdfstring{$G$}{G}-Graphs}
In Definition \ref{Def:Hstablegraph} we defined the admissible $G$-graph associated to a given pointed admissible $G$-cover $(C \to D, (p_{i,a})_{i,a}, (q_i)_i)$. However, for computations it will be necessary to give a purely combinatorial definition of an admissible $G$-graph. 

It turns out that not every graph $\Gamma$ with a group action by $G$ and monodromy data at the half-edges and legs can occur as the admissible $G$-graph of an admissible $G$-cover. This is related to the fact that not every tuple $(g,G,\xi)$ leads to a nonempty space $\H_{g,G,\xi}$. The question whether $\H_{g,G,\xi}$ is empty (and more generally the degree of the map $\delta : \H_{g,G,\xi} \to \M_{g',b}$) can be answered in terms of the group theory of $G$. 

We begin by defining a group action with stabilizer data on a stable graph.
\begin{definition}
 Let $G$ be a finite group and $\Gamma=(V,H,L,g,\iota,a,\zeta)$ an $n$-pointed stable graph. A \emph{$G$-action on $\Gamma$} is an action of $G$ on the sets $V,H,L$ such that $g: V \to \mathbb{Z}_{\geq 0}$ is $G$-invariant and such that $\iota,a,\zeta$ are $G$-equivariant. 
 
 A \emph{$G$-action with stabilizer datum} on $\Gamma$ is a $G$-action together with a collection $(h_l)_{l \in H \cup L}$ of elements of $G$ such that the stabilizer of each leg or half-edge $l$ of $\Gamma$ is cyclic with generator $h_l$ and such that for each $l \in H$ we have $h_{\iota(l)} = h_l^{-1}$. In other words, for $(l,l')$ forming an edge of $\Gamma$ we have $h_{l'}=h_l^{-1}$.
 
 A \emph{pre-admissible $G$-graph} $(\Gamma,G)$ is a stable graph $\Gamma$ together with a $G$-action and stabilizer datum such that the induced action of $\iota$ on the quotient $H/G$ is fixed point free, i.e.\  for an edge $(l,l')$ there is no element $t \in G$ with $tl=l'$.
\end{definition}
One verifies easily that the admissible $G$-graph associated to a given admissible $G$-cover $(C \to D, (p_{i,a})_{i,a}, (q_i)_i)$ gives a pre-admissible $G$-graph in the sense above. However, not every pre-admissible $G$-graph will correspond to some (nonempty) stratum of a suitable space of pointed admissible $G$-covers.

\begin{definition}
 Let $(\Gamma,G)$ be a pre-admissible $G$-graph. With respect to a choice $l_1, \ldots, l_b \in L$ of representatives of the $G$-action on $L$, the \emph{monodromy datum of $(\Gamma,G)$} is the collection $(h_{l_1}, \ldots, h_{l_b}) \in G^b$.
 
 For each vertex $v$ of $\Gamma$ with stabilizer $G_v$ under $G$ let  $L_v', H_v'$ be representatives of the legs and half-edges of $\Gamma$ incident to $v$ up to the action of $G_v$. Then the tuple $(h_{l})_{l \in L_v' \cup H_v'}$ is called the \emph{local monodromy datum at $v$}.
\end{definition}
\begin{definition} \label{Def:abstHurwitzstgraph}
 An \emph{admissible $G$-graph} is a pre-admissible $G$-graph $(\Gamma,G)$ such that for each vertex $v$ of $\Gamma$, the space $\H_{g(v), G_v, (h_{l})_{l \in L_v' \cup H_v'}}$ is nonempty (for some choice of representatives $L_v',H_v'$ as above\footnote{The nonemptyness of the space of pointed admissible $G$-covers is independent of these choices.}).
\end{definition}
Note that the space $\H_{g(v), G_v, (h_{l})_{l \in L_v' \cup H_v'}}$ is nonempty if and only if its interior part $\Ho_{g(v), G_v, (h_{l})_{l \in L_v' \cup H_v'}}$ is nonempty. Indeed, the balancing condition of admissible covers was chosen in such a way that a nodal admissible cover can always be smoothed in a family (see \cite{Abramovich2001}).

Using the definition of the $G$-equivariant gluing maps $\xi_{(\Gamma,G)}$ above, the following is immediate.
\begin{proposition}
 A pre-admissible $G$-graph $(\Gamma,G)$ with monodromy datum $\xi$ corresponds to a nonempty stratum of the space $\H_{g,G,\xi}$ if and only if $(\Gamma,G)$ is an admissible $G$-graph.
\end{proposition}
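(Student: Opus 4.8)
The plan is to read off the statement directly from the $G$-equivariant gluing map constructed above. By the proposition of \cite{BertinRomagny} (Proposition 7.4.1) recalled just before, the image of
\[
 \xi_{(\Gamma,G)} : \H_{(\Gamma,G)} = \prod_{v' \in V'} \H_{g(v'),G_{v'},\xi_{v'}} \to \H_{g,G,\xi}
\]
is exactly the closure $\H_{g,G,\xi}(\Gamma,G)$ of the locus of covers with associated admissible $G$-graph $(\Gamma,G)$. Hence this closure is nonempty if and only if the domain $\H_{(\Gamma,G)}$ is nonempty. A finite product of stacks is nonempty precisely when every factor is, so $\H_{(\Gamma,G)}$ is nonempty if and only if $\H_{g(v'),G_{v'},\xi_{v'}}$ is nonempty for every representative $v' \in V'$, where $\xi_{v'} = (h_l : l \in L'_{v'} \cup H'_{v'})$ is nothing but the local monodromy datum at $v'$. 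Comparing with Definition \ref{Def:abstHurwitzstgraph}, this is almost verbatim the condition defining an admissible $G$-graph, the only difference being that the definition quantifies over \emph{all} vertices $v$ of $\Gamma$ rather than a chosen set $V'$ of orbit representatives.

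To close this small gap I would check that vertices in the same $G$-orbit contribute simultaneously empty or nonempty factors. Writing $v = tv'$ for some $t \in G$, one has $G_v = t G_{v'} t^{-1}$, and left-multiplication by $t$ identifies the legs and half-edges incident to $v'$ with those incident to $v$, carrying the local monodromy datum at $v'$ to its conjugate at $v$. Conjugating a pointed admissible $G_{v'}$-cover by $t$ then yields a pointed admissible $G_v$-cover, giving an isomorphism of the corresponding stacks; this is also the content of the footnote in Definition \ref{Def:abstHurwitzstgraph} asserting independence of the choice of representatives $L'_v, H'_v$. Thus nonemptiness for all $v' \in V'$ is equivalent to nonemptiness for all $v \in V(\Gamma)$, which is exactly the condition that $(\Gamma,G)$ be an admissible $G$-graph. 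Both implications of the proposition then follow at once.

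The single point that requires genuine (if mild) care is the passage between the open stratum and the image of the gluing map: the statement speaks of a nonempty \emph{stratum} $\Ho_{g,G,\xi}(\Gamma,G)$, whereas the gluing map produces its closure. Here I would use that a point in the interior of $\H_{(\Gamma,G)}$, where each factor curve is smooth, glues to a cover whose associated admissible $G$-graph is exactly $(\Gamma,G)$, so it lands in the open stratum. This rests on the smoothing property recalled above, namely that each $\H_{g(v),G_v,\xi_v}$ is nonempty if and only if its interior $\Ho_{g(v),G_v,\xi_v}$ is, which in turn follows from the fact that the balancing condition lets every nodal admissible cover be smoothed in a family (see \cite{Abramovich2001}). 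Granting this, the open stratum is nonempty exactly when the domain $\H_{(\Gamma,G)}$ is, and the argument above applies unchanged; this is why the text can declare the result immediate.
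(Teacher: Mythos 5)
Your proof is correct and follows essentially the same route as the paper's: both directions run through the equivariant gluing map $\xi_{(\Gamma,G)}$ together with the smoothing property, with your converse phrased via the cited image-equals-closure result of Bertin--Romagny rather than the paper's explicit normalization of a general element of the stratum --- but that is the same geometric content. Your two extra checks (that vertices in one $G$-orbit give conjugate, hence simultaneously nonempty, local spaces, and that gluing smooth factor curves lands in the \emph{open} stratum) are exactly the points the paper's terse proof leaves implicit, so nothing is missing.
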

\begin{proof}
 If $(\Gamma,G)$ is admissible, the domain of the equivariant gluing map $\xi_{(\Gamma,G)}$ is nonempty and thus a general element of its image will be in the stratum $\Ho_{g,G,\xi}(\Gamma,G)$. On the other hand, if the  stratum corresponding to a pre-admissible $G$ graph $(\Gamma,G)$ is nonempty, then suitable components of the normalization of a general element $C \to D$ in this stratum (as described above) will give elements in the spaces $\Ho_{g(v), G_v, (h_{l})_{l \in L_v' \cup H_v'}}$, which are thus nonempty, hence $(\Gamma,G)$ is admissible.
\end{proof}
Using this result and Definition \ref{Def:abstHurwitzstgraph} we can reduce the classification of the strata in our space of pointed admissible $G$-covers to the question for which $(g,G,\xi)$ the space $\H_{g,G,\xi}$ is nonempty. 

A first obvious criterion is that the Riemann-Hurwitz formula needs to be satisfied, in the sense that for $\xi=(h_1, \ldots, h_b)$ the number
\[g'=\frac{1}{2 |G|} \left( 2g-2 - |G| \sum_{i=1}^b (1-\frac{1}{\operatorname{ord}_G(h_i)}) \right)+1  \]
is a nonnegative integer.

If this is satisfied, it makes sense to ask what the degree of the map $\delta : \H_{g,G,\xi} \to \M_{g',b}$ is. In other words, given a smooth curve $D$ of genus $g'$ with markings $q_1, \ldots, q_b$, how many pointed admissible $G$-covers $(C \to D, (p_{i,a})_{i,a}, (q_i)_i)$  lie above it (counted in a stacky sense, i.e.\  dividing by automorphisms)? This can be answered in terms of classical Hurwitz theory (see e.g. \cite{fultonhurwitz} or \cite{cavalieribook} for a modern treatment). The following discussion is related to results in \cite[Chapter 2.3]{BertinRomagny} concerning the number of connected components of the spaces $\H_{g,G,\xi}$.

Let the fundamental group $\fundgp$ of $D$ punctured at $q_1, \ldots, q_b$ -  with respect to some base point $y \in D^*=D \setminus \{q_1, \ldots, q_b\}$ - be given by
\begin{equation} \label{eqn:fundgens}
  \fundgp = \langle A_1, \ldots, A_{g'}, B_1, \ldots, B_{g'}, \gamma_1, \ldots, \gamma_b \rangle / \langle  \left( \prod_{j=1}^{g'} [A_j, B_j] \right) \gamma_1 \cdots \gamma_b \rangle.
\end{equation}
Here the $A_j, B_j$ are the usual generators of the fundamental group of $D$ and the $\gamma_k$ are loops around the punctures at $q_k$.

Given a $G$-cover $\varphi: C \to D$ whose branch points are contained in $\{q_1, \ldots, q_b\}$, it is \'etale over $D^*$.  For a loop $\gamma$ in $D^*$ based at $y$, we get a permutation of $\varphi^{-1}(y)$ by lifting $\gamma$ under $\varphi$ and sending the start point of the lifted path to the end point. Choose a base point $x \in \varphi^{-1}(y)$, then lifting the path $\gamma$ starting at $x$ there exists a unique $g_\gamma \in G$ such that the lifted path ends at $g_\gamma x$. 
The map 
\begin{equation} \label{eqn:psimonodrommorph}
  \psi : \fundgp \to G, \gamma \mapsto g_\gamma 
\end{equation}
is a group homomorphism\footnote{If composition in $\fundgp$ is defined as usual by concatenation of paths, we actually have to replace $\fundgp$ with the opposite group $\fundgp^{\mathrm{op}}$. Since these groups are isomorphic by $g \mapsto g^{-1}$ this does not change any of the following conclusions, so to keep notation simple we just write $\fundgp$.}. 

If we had chosen a different base point $x'=\eta x \in \varphi^{-1}(y)$ for $\eta \in G$, we would have obtained $g_\gamma' = \eta g_\gamma \eta^{-1}$. Thus $\psi$ is only unique up to conjugation in $G$.

From the presentation in (\ref{eqn:fundgens}) it is clear that $\psi$ is determined by the images $a_j, b_j, \sigma_k \in G$ of the generators $A_j, B_j, \gamma_k$ of $\fundgp$. Conversely, for any choice of such $a_j, b_j, \sigma_k \in G$ satisfying $(\prod_j [a_j, b_j] ) \sigma_1 \cdots \sigma_b = e_G$ we obtain a group homomorphism $\psi$. Thus, at least in principle, we can understand the set of group homomorphisms $\operatorname{Hom}(\fundgp, G)$ very concretely as a subset of the finite set $G^{2 g'+b}$. 

Due to the way we constructed $\psi$ from a $G$-cover $C \to D$, it is contained in a special subset $\operatorname{Hom}_\xi(\fundgp,G)$ of the set of all homomorphisms $\operatorname{Hom}(\fundgp,G)$.
\begin{definition} \label{Def:HomxipiG}
 We define $\operatorname{Hom}_\xi(\fundgp,G)$ to be the set of group homomorphisms $\psi: \fundgp \to G$ satisfying
 \begin{enumerate}
    \item[(1)] The homomorphism $\psi$ is surjective. 
    \item[(2)] The elements $\sigma_k=\psi(\gamma_k)$ describing the monodromy around the points $q_k \in D$ are contained in the conjugacy class $[h_k]$ of the elements $h_k$ from  the monodromy datum $\xi$.
\end{enumerate}
Since these properties are invariant under conjugating $\psi$ with an element of $G$, denote by $\operatorname{Hom}_\xi(\fundgp,G)/G$ the quotient of $\operatorname{Hom}_\xi(\fundgp,G)$ under this conjugation action.
\end{definition}
We claim that the morphism $\psi$ from (\ref{eqn:psimonodrommorph}) is contained in $\operatorname{Hom}_\xi(\fundgp,G)$. Property (1) follows from the fact that the cover $C$ is connected. Indeed, this is the case iff we can connect our base point $x \in C$ to any other point $tx$ in the same fibre by some path $\widehat \gamma$, for $t \in G$. In this case we can choose $\widehat \gamma$ such that it lies completely over $D^*$. But then it is the unique lift of the path $\gamma=\pi \circ \widehat \gamma \in \fundgp$ and so $\psi(\gamma)=g_{\gamma}=t$, hence every element $t\in G$ lies in the image of $\psi$. 
Finally property (2) comes from the definition of the monodromy datum of a cover.

On the other hand, given $\psi \in \operatorname{Hom}_\xi(\fundgp,G)$ we can reverse the construction (by Riemann's existence theorem) and obtain a $G$-cover $\varphi: C \to D$ with ramification behaviour described by $\xi$. Moreover, up to isomorphism, the cover $\varphi$ does not change if we conjugate $\psi$ by an element of $G$.

Note however that for the markings $(p_{i,a})_{i,a}$ in $C$ we can have some additional choices: in the preimage of the marking $q_i \in D$ there is at least one marking $p_i$ with mondoromy $h_i$ (i.e.\  with stabilizer generated by $h_i$ such that $h_i$ acts by multiplication with $\exp(2 \pi i/\operatorname{ord}_G(h_i))$ on $T_{p_i} C$). For a different preimage $\eta p_i$ of $q_i$, the corresponding monodromy is given by $\eta h_i \eta^{-1}$. Thus the set of all points in the fibre with monodromy $h_i$ is given by the quotient of the centralizer $C_G(h_i) = \{\eta \in G: \eta h_i \eta^{-1} = h_i\}$ by the subgroup generated by $h_i$.

A priori it would seem that the choice of markings is parametrized by the product $\prod_{j=i}^b C_G(h_i)/\langle h_i \rangle$. However, we also need to consider the automorphisms of the $G$-cover $C \to D$. First, any isomorphism $\rho: C \to C$ covering the identity of $D=C/G$ is of the form $p \mapsto \eta p$ for some $\eta \in G$, since $C \to D$ is a $G$-Galois cover. This isomorphism is $G$-equivariant iff $\rho(b p) = b \rho(p)$ for all $b \in G$, i.e.\  $\eta$ commutes with all elements $b \in G$. Conversely, for such a $\eta \in C(G)$ the map $\rho(p)=\eta p$ is a $G$-equivariant isomorphism of $C$ over the identity of $D$. 
Thus the set of choices of markings is actually parametrized by $(\prod_{i=1}^b C_G(h_i)/\langle h_i \rangle) / C(G)$.




We have now assembled all relevant ingredients to write down the degree of $\delta$.


\begin{theorem}\label{thm:condnonempty}
 Let $g \geq 0$, $G$ a finite group and $\xi=(h_1, \ldots, h_b)$ a monodromy datum. Then the space $\H_{g,G,\xi}$ is empty unless 
 \[g'=\frac{1}{2 |G|} \left( 2g-2 - |G| \sum_{i=1}^b (1-\frac{1}{\operatorname{ord}_G(h_i)}) \right)+1  \]
is a nonnegative integer. If this number is a nonnegative integer, the degree of the map $\delta : \H_{g,G,\xi} \to \M_{g',b}$ is given by
\begin{equation} \label{eqn:deltadeg}
 \operatorname{deg}(\delta) = \left| \operatorname{Hom}_\xi(\fundgp,G)/G \right| \cdot \left(\prod_{i=1}^b \frac{|C_G(h_i)|}{\operatorname{ord}_G(h_i)} \right)/ \left| C(G) \right|,
\end{equation}
for $\operatorname{Hom}_\xi(\fundgp,G)/G$ as in Definition \ref{Def:HomxipiG}.

In particular, the space $\H_{g,G,\xi}$ is empty iff $\operatorname{Hom}_\xi(\fundgp,G) = \emptyset$.
\end{theorem}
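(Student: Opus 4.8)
The plan is to treat the two assertions separately, deriving the degree formula from a weighted count of covers over a general base curve.

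\emph{Emptiness from Riemann--Hurwitz.} First I would observe that if $\H_{g,G,\xi}$ is nonempty then so is its dense open substack $\Ho_{g,G,\xi}$ of covers with smooth source, as recalled before Definition~\ref{Def:abstHurwitzstgraph}. A point of $\Ho_{g,G,\xi}$ is an honest Galois cover $\varphi\colon C\to D$ of smooth curves, and applying the Riemann--Hurwitz formula~(\ref{eqn:RiemHurw}) and solving for the genus of $D$ produces exactly the stated expression for $g'$. Hence $g'$ is forced to be a nonnegative integer, and contrapositively the space is empty whenever the formula fails to return one.

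\emph{Reduction of the degree to a fibre count.} By Theorem~\ref{Thm:Hurwitzstack} the morphism $\delta$ is flat, proper and quasi-finite, so $\deg(\delta)$ may be computed as the mass (the sum over isomorphism classes of the reciprocals of the orders of the automorphism groups) of the fibre of $\delta$ over a general point $[D]\in\M_{g',b}$. Such a general $D$ is smooth, and by admissibility the source $C$ of any point in the fibre is then smooth as well; the problem becomes that of counting smooth pointed Galois $G$-covers of the fixed curve $D$, each weighted by the reciprocal of its automorphism group.

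\emph{The count.} I would assemble this mass from the three ingredients prepared in the discussion preceding the theorem. Riemann's existence theorem identifies the isomorphism classes of connected $G$-covers $C\to D$ with the prescribed ramification with the set $\operatorname{Hom}_\xi(\fundgp,G)/G$ of Definition~\ref{Def:HomxipiG}, and the automorphism group of each such cover over $\mathrm{id}_D$ is the centre $C(G)$, because the associated homomorphism $\psi$ is surjective. For a fixed cover, a pointing is the choice in each fibre over $q_i$ of a point whose monodromy is exactly $h_i$, and there are $N:=\prod_{i=1}^b |C_G(h_i)|/\operatorname{ord}_G(h_i)$ such choices, giving a finite set $P$ on which $C(G)$ acts through the automorphisms $p\mapsto \eta p$. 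The pointed covers lying over one isomorphism class of $G$-cover therefore form the action groupoid $[P/C(G)]$, whose mass is $N/|C(G)|$, the point-stabilizers being exactly the automorphism groups of the pointed covers. Summing over the $|\operatorname{Hom}_\xi(\fundgp,G)/G|$ isomorphism classes gives
\[\deg(\delta)=\big|\operatorname{Hom}_\xi(\fundgp,G)/G\big|\cdot\Big(\prod_{i=1}^b \frac{|C_G(h_i)|}{\operatorname{ord}_G(h_i)}\Big)\Big/|C(G)|,\]
and since every factor after the first is strictly positive, the space is empty precisely when $\operatorname{Hom}_\xi(\fundgp,G)=\emptyset$.

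The step I expect to be the main obstacle is the last one: justifying that the correct quantity is the \emph{mass} of the fibre groupoid rather than a naive count, so that one divides $N$ by the full $|C(G)|$ even though $C(G)$ need not act freely on $P$. This amounts to identifying the fibre of $\delta$ with a disjoint union of action groupoids $[P/C(G)]$ and matching the stacky degree of the non-representable map $\delta$ with this mass; some additional care is needed in the small cases $(g',b)$ where a general $(D,q_1,\dots,q_b)$ still carries automorphisms.
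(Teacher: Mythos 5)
Your proposal is correct and follows essentially the same route as the paper: the paper also computes $\deg(\delta)$ as the (stacky) fibre of $\delta$ over a general point of $\M_{g',b}$, identifies isomorphism classes of connected covers with $\operatorname{Hom}_\xi(\fundgp,G)/G$ via monodromy and Riemann's existence theorem, parametrizes pointings by $\prod_i C_G(h_i)/\langle h_i\rangle$, and divides by the automorphism group $C(G)$ of a cover over $\mathrm{id}_D$. The ``main obstacle'' you flag --- that one must take the mass of the quotient groupoid $[P/C(G)]$, i.e.\ divide by the full $|C(G)|$ even when the action on pointings is not free --- is exactly what the paper's description of the fibre as the quotient of $\left(\operatorname{Hom}_\xi(\fundgp,G)/G\right)\times\prod_i C_G(h_i)/\langle h_i\rangle$ by $C(G)$ encodes implicitly, so your treatment is if anything more explicit than the paper's.
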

\begin{proof}
 In the section above we have shown that the preimage of a general $\mathbb{C}$-point $(C,q_1, \ldots, q_n) \in \M_{g',b}$ is isomorphic to the quotient of
 \[\left(\operatorname{Hom}_\xi(\fundgp,G)/G \right)\times  \prod_{i=1}^b C_G(h_i)/\langle h_i \rangle\]
 by the action of $C(G)$. This gives the multiplicity from above.
\end{proof}
For general finite groups $G$ and monodromy data $\xi$ it can be difficult to compute this degree. However, for a cyclic group $G=\mathbb{Z}/m \mathbb{Z}$ it is possible to write it very explicitly.

\begin{proposition}\label{prop:condcyclic}
 Let $m \geq 1$ 
 and let $G=\mathbb{Z}/m \mathbb{Z}$. Let $g \geq 0$, $\xi=(h_1, \ldots, h_b)$ a monodromy datum such that 
 \[g'=\frac{1}{2 |G|} \left( 2g-2 - |G| \sum_{i=1}^b (1-\frac{1}{\operatorname{ord}_G(h_i)}) \right)+1  \]
is a nonnegative integer. Let $m_0=\operatorname{gcd}(m,h_1, \ldots, h_b)$ and let $p_1, \ldots, p_s$ be the distinct prime factors of $m_0$.

Then the space $\H_{g,G,\xi}$ is empty unless $h_1 + \ldots + h_b=0 \in \mathbb{Z}/m \mathbb{Z}$. If this sum vanishes, the degree of $\delta$ satisfies
\begin{equation} \label{eqn:deltadegcyclic}
 \operatorname{deg}(\delta) = m^{2g'-1} \prod_{j =1}^s\left(1-\frac{1}{(p_j)^{2g'}} \right)\cdot \prod_{i=1}^b \operatorname{gcd}(m,h_i),
\end{equation}
where $\operatorname{gcd}(m,0)=m$. In particular, assuming $h_1 + \ldots + h_b=0 \in \mathbb{Z}/m\mathbb{Z}$, the space $\H_{g,G,\xi}$ is always nonempty for $g' \geq 1$ and is nonempty for $g'=0$ iff $m_0=1$.
\end{proposition}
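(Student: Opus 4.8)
The plan is to specialize the general degree formula from Theorem \ref{thm:condnonempty} to the abelian group $G=\mathbb{Z}/m\mathbb{Z}$ and then to carry out the resulting enumeration of group homomorphisms explicitly. Since $G$ is abelian, several ingredients of (\ref{eqn:deltadeg}) simplify drastically: every centralizer $C_G(h_i)$ equals $G$, the center $C(G)$ equals $G$, and each conjugacy class $[h_i]$ is the singleton $\{h_i\}$, so the conjugation action of $G$ on $\operatorname{Hom}_\xi(\fundgp,G)$ is trivial. Using $\operatorname{ord}_G(h_i)=m/\gcd(m,h_i)$ I would rewrite $|C_G(h_i)|/\operatorname{ord}_G(h_i)=\gcd(m,h_i)$, reducing the formula to
\[\deg(\delta)=\bigl|\operatorname{Hom}_\xi(\fundgp,G)\bigr|\cdot\frac{1}{m}\prod_{i=1}^b\gcd(m,h_i).\]

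Next I would count $\operatorname{Hom}_\xi(\fundgp,G)$ directly from the presentation (\ref{eqn:fundgens}). A homomorphism is given by images $a_j,b_j,\sigma_k$ of the generators; because $G$ is abelian all commutators $[a_j,b_j]$ are trivial, so the single defining relation collapses to $\sigma_1+\ldots+\sigma_b=0$. Condition (2) of Definition \ref{Def:HomxipiG} forces $\sigma_k=h_k$ (the conjugacy class being a singleton), so the relation reads $h_1+\ldots+h_b=0\in\mathbb{Z}/m\mathbb{Z}$. If this sum is nonzero there are no admissible homomorphisms, hence $\operatorname{Hom}_\xi(\fundgp,G)=\emptyset$ and $\H_{g,G,\xi}$ is empty, proving the first assertion. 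If the sum vanishes, the $\sigma_k$ are determined and the $a_j,b_j\in G$ may be chosen freely, subject only to surjectivity (condition (1)). Since the $h_i$ generate the subgroup $\langle m_0\rangle=m_0\mathbb{Z}/m\mathbb{Z}$, surjectivity of $\psi$ is equivalent to the arithmetic condition $\gcd(m_0,a_1,\ldots,a_{g'},b_1,\ldots,b_{g'})=1$.

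The heart of the argument is then the enumeration of the $2g'$-tuples $(x_1,\ldots,x_{2g'})\in(\mathbb{Z}/m\mathbb{Z})^{2g'}$ satisfying $\gcd(m_0,x_1,\ldots,x_{2g'})=1$. I would do this by inclusion-exclusion over the prime divisors $p_1,\ldots,p_s$ of $m_0$: for a subset $T\subseteq\{1,\ldots,s\}$, the tuples all of whose entries are divisible by each $p_j$ with $j\in T$ are exactly those with every entry divisible by $P_T=\prod_{j\in T}p_j$, of which there are $(m/P_T)^{2g'}$. Summing with signs gives
\[\bigl|\operatorname{Hom}_\xi(\fundgp,G)\bigr|=\sum_{T\subseteq\{1,\ldots,s\}}(-1)^{|T|}\Bigl(\frac{m}{P_T}\Bigr)^{2g'}=m^{2g'}\prod_{j=1}^s\Bigl(1-\frac{1}{p_j^{2g'}}\Bigr),\]
the Euler-type factorization being the one routine but essential computation.

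Finally I would substitute this count into the simplified degree formula, using $m^{2g'}/m=m^{2g'-1}$, to obtain (\ref{eqn:deltadegcyclic}). The nonemptiness statement then follows by inspection, since the space is nonempty precisely when $\deg(\delta)>0$, i.e. when $|\operatorname{Hom}_\xi(\fundgp,G)|>0$. For $g'\geq1$ each factor $1-p_j^{-2g'}$ is strictly positive, so the product never vanishes and the space is nonempty; for $g'=0$ the empty-product convention makes the count equal to $1$ when $s=0$ (that is, $m_0=1$) and $0$ otherwise, giving nonemptiness exactly when $m_0=1$. I expect the only genuinely delicate points to be the careful verification of the surjectivity criterion in terms of $\gcd$ and the bookkeeping in the inclusion-exclusion; everything else is a direct specialization of Theorem \ref{thm:condnonempty}.
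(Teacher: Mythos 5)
Your proposal is correct and follows essentially the same route as the paper's proof: specialize Theorem \ref{thm:condnonempty} using $C_G(h_i)=C(G)=G$ and $|C_G(h_i)|/\operatorname{ord}_G(h_i)=\gcd(m,h_i)$, force $\sigma_k=h_k$ since conjugacy classes are singletons, reduce surjectivity to the condition $\gcd(m_0,a_1,\ldots,b_{g'})=1$, and count by inclusion-exclusion over the primes dividing $m_0$ to get $m^{2g'}\prod_{j=1}^s\bigl(1-p_j^{-2g'}\bigr)$. The only (harmless) difference is that you spell out explicitly how the presentation's relation collapses to $h_1+\ldots+h_b=0$ for abelian $G$, a step the paper leaves implicit.
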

\begin{proof}
First we compute $\operatorname{Hom}_\xi(\fundgp,G)$. As we saw, a group homomorphism $\psi: \fundgp \to G$ is uniquely determined by the images $a_j, b_j, \sigma_k$ of the generators $A_j, B_j, \gamma_k$ of $\fundgp$. The second condition of Definition \ref{Def:HomxipiG} forces $\sigma_k$ to lie in the conjugacy class $[h_k]$. Since $G$ is abelian we have $\sigma_k = h_k$ for all $k$. On the other hand, the homomorphism $\psi$ is surjective onto $G$ iff the elements $a_j, b_j, \sigma_k$ generate $G$. In other words if and only if
\begin{equation} \label{eqn:gcdcondition} 1=\operatorname{gcd}(m,a_1, \ldots, a_{g'}, b_1, \ldots b_{g'}, \sigma_1, \ldots, \sigma_b) = \operatorname{gcd}(a_1, \ldots, a_{g'}, b_1, \ldots b_{g'}, m_0). \end{equation}
Each $a_j,b_j$ can be chosen from $0, 1, \ldots, m-1$, so there are $m^{2g'}$ choices in total. We need to substract those where all the numbers $a_j,b_j$ and $m_0$ share a common factor. For each number $q$ dividing $m$ there are $m/q$ numbers among $0, \ldots, m-1$ divisible by $q$, namely $0,q, \ldots, q (m/q  -1)$. Thus there are $(m/q)^{2g'}$ choices of $a_j, b_j$ with common factor $q$. 

We see that from the set of all $m^{2g'}$ choices of $a_j, b_j$ we must substract  
\begin{equation} \label{eqn:exclusionabset} \bigcup_{j=1}^s \left\{(a_1, \ldots, b_{g'}) : p_j | \operatorname{gcd}(a_1, \ldots, b_{g'},m_0) \right\},\end{equation}
which is exactly the set of choices not satisfying (\ref{eqn:gcdcondition}). Applying the inclusion-exclusion principle for computing the cardinality of (\ref{eqn:exclusionabset}) we obtain
\begin{align*}
|\operatorname{Hom}_\xi(\fundgp,G)| &= m^{2g'}-\sum_{\emptyset \neq S \subset \{1, \ldots, s\}} (-1)^{|S|+1} \left( \frac{m}{\prod_{j \in S} p_j} \right)^{2g'}\\
&= m^{2g'} \sum_{S \subset \{1, \ldots, s\}} (-1)^{|S|}  \frac{1}{\prod_{j \in S} (p_j)^{2g'}} \\
&= m^{2g'} \sum_{S \subset \{1, \ldots, s\}} \prod_{j \in S} \left(- \frac{1}{( p_j)^{2g'}} \right)\\
&=m^{2g'} \prod_{j =1}^s\left(1-\frac{1}{(p_j)^{2g'}} \right)
\end{align*}

To compute the remaining factors from (\ref{eqn:deltadeg}) coming from the choice of markings and from automorphisms, we note that $C_G(h_i)=C(G)=G$ since $G$ is abelian. Note also that $|G|/\operatorname{ord}_G(h_i) = \operatorname{gcd}(m,h_i)$. This explains the remaining terms in (\ref{eqn:deltadegcyclic}) and finishes the proof.
\end{proof}

  \section{Pullbacks by Boundary Morphisms}\label{sec:theint}

 Let $A$ be an $n$ pointed graph of genus $g$ and let $G$ be a finite group. In this section we will compute the intersection between $[\M_A]$ and $[\H_{g,G,\xi}]$ using the excess intersection formula (cf. \cite[Proposition 17.4.1]{fulton1984}). For this we need to identify the fiber product $\M_A \times_{\M_{g,r}}\H_{g,G,\xi}$ and compute the top Chern class of the resulting excess bundle. 
 
 For the intersection of $[\M_A]$ and $[\H_{g,G,\xi}]$ recall that $\H_{g,G,\xi}$ is stratified according to admissible $G$-graphs $(\Gamma,G)$. The stratum corresponding to $(\Gamma,G)$ is contained in the image of $\M_A$ iff $\Gamma$ has an $A$-structure. This leads to the following definition.
 
\subsection{The Fibre Product} \label{sect:fibreprod}

\begin{definition} \label{Def:genericequivAstructure}
 Let $(\Gamma,G)$ be an admissible $G$-graph and $A$ a stable graph. An \emph{$A$-structure $f$ on $(\Gamma,G)$} is an $A$-structure $f=(\alpha,\beta,\gamma)$ on $\Gamma$. We say that the $A$-structure on $(\Gamma,G)$ is \emph{generic} if 
 \[G \im \beta = H_\Gamma.\]
 An \emph{isomorphism of $A$-structures $(\Gamma',G,f')\rightarrow (\Gamma,G,f)$} is an isomorphism of admissible $G$-graphs $(\Gamma',G)\rightarrow (\Gamma,G)$ such that the induced diagram of stable graphs 
      \begin{center}
   \begin{tikzcd}
    & A\\
    \Gamma'\arrow[r] \arrow[ru,"f'"] & \Gamma \arrow[u,"f",swap]
   \end{tikzcd}
  \end{center}
  commutes. 
\end{definition}

\begin{notation}
  For a space $\H_{g,G,\xi}$ such that the number of markings $p_{i,a}$ is $n$ and a fixed stable graph $A$ of genus $g$ with $n$ legs we will denote  by $\mathfrak{H}_{A;G,\xi}$ a set of representatives of isomorphism classes of generic $A$-structures $(\Gamma,G,f)$ on admissible $G$-graphs $(\Gamma,G)$ for $\H_{g,G,\xi}$.
  
  For a generic $A$-structure $(\Gamma,G,f)$ there exists a natural map 
  \[\phi_{(\Gamma,G,f)}: \H_{(\Gamma,G)} \xrightarrow{\phi_{(\Gamma,G)}} \M_{\Gamma} \xrightarrow{\xi_{f: \Gamma \to A}} \M_{A},\]
  where the first map was defined above and the second map is the partial gluing map induced by the $A$-structure on $\Gamma$.
 \end{notation}

\begin{proposition}\label{prop:fibreprod}
 There is a natural Cartesian diagram
  \begin{equation*}
  \begin{tikzcd}
   \coprod_{(\Gamma,G,f) \in \mathfrak{H}_{A;G,\xi}} \H_{(\Gamma,G)} \arrow[r,"\coprod \xi_{(\Gamma,G)}"]\arrow[d,"\coprod \phi_{(\Gamma,G,f)}",swap] & \H_{g,G,\xi} \arrow[d,"\phi"]\\
   \M_A\arrow[r,"\xi_A"] & \M_{g,r}
   \end{tikzcd}.
  \end{equation*}
\end{proposition}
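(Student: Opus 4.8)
The plan is to follow the strategy of the proof of Proposition \ref{intprop} closely, constructing a pair of mutually inverse morphisms between the fibre product $\F := \M_A \times_{\M_{g,r}} \H_{g,G,\xi}$ and the disjoint union $\X := \coprod_{(\Gamma,G,f) \in \mathfrak{H}_{A;G,\xi}} \H_{(\Gamma,G)}$. Throughout I would use the modular interpretation of $\M_A$ by curves with $A$-markings (Definition \ref{def:curstruc} and Proposition \ref{prop:Gcurve}), working over connected base schemes $S$ and recording $\pi$-relative components so that every construction is stable under base change.

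First I would build the comparison map $q \colon \X \to \F$ from the strict universal property of the fibre product. An object of $\X$ over $S$ is a point of some $\H_{(\Gamma,G)}$; applying $\xi_{(\Gamma,G)}$ gives a pointed admissible $G$-cover $(C \to D)$ in $\H_{g,G,\xi}$ whose curve $C$ carries a canonical $\Gamma$-marking, while $\phi_{(\Gamma,G,f)} = \xi_{f} \circ \phi_{(\Gamma,G)}$ produces the point of $\M_A$ obtained by further gluing $C$ along the nodes not in $\im \beta_f$, i.e. endowing $C$ with the $A$-marking induced by composing its $\Gamma$-marking with $f$. Since the underlying $\M_{g,r}$-curve of both images is literally $C = \phi(C \to D)$, the required $2$-isomorphism $\xi_A \circ \phi_{(\Gamma,G,f)} \Rightarrow \phi \circ \xi_{(\Gamma,G)}$ is the identity, and this defines $q$.

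The heart of the argument is the inverse functor $r \colon \F \to \X$. Given an object $(C_A,\, (C' \to D),\, \alpha \colon C \xrightarrow{\sim} C')$ of $\F$, I would transport the $A$-marking on $C$ to $C'$ through $\alpha$, producing sections $\{\sigma'_i\}$ in the singular locus of $C'$ indexed by the edges of $A$. The new feature compared to Proposition \ref{intprop} is that, to obtain a $G$-equivariant structure, I must mark the entire $G$-orbit $G \cdot \{\sigma'_i\}$ of these nodes. Cutting $C'$ along the marked nodes and recording the $\pi$-relative components and their arithmetic genera defines a stable graph $\Gamma$ whose set of edges is exactly $G \cdot \{\sigma'_i\}$; the $G$-action on $C'$ equips $\Gamma$ with a $G$-action and with the monodromy data at the half-edges and legs, making $(\Gamma, G)$ an admissible $G$-graph. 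By construction $\im \beta_f = \{\sigma'_i\}$ and $H_\Gamma = G \cdot \{\sigma'_i\} = G \im \beta_f$, so the induced $A$-structure $f$ is automatically \emph{generic} in the sense of Definition \ref{Def:genericequivAstructure} --- this is precisely where the genericity condition enters, and where the decomposition into a \emph{disjoint} union over $\mathfrak{H}_{A;G,\xi}$ originates. Restricting the cover $C' \to D$ to a representative $C'_{v'}$ of each $G$-orbit of vertices, together with its induced $G_{v'}$-action and markings, gives a point of $\H_{g(v'),G_{v'},\xi_{v'}}$ and hence an object of $\H_{(\Gamma,G)}$; the action of $r$ on morphisms is induced in the evident way.

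Finally I would verify that $r \circ q$ and $q \circ r$ are naturally isomorphic to the respective identities, exactly as in the closing paragraphs of the proof of Proposition \ref{intprop}: $r \circ q$ reconstructs an isomorphic generic $A$-structure on the same curve, while $q \circ r$ recovers the triple $(C_A, (C'\to D), \alpha)$ up to the canonical identification of $C_A$ with the $A$-marked curve built from $C'$. The main obstacle I anticipate is the bookkeeping of $G$-equivariance: one must check that passing from a single orbit representative $C'_{v'}$ back to the full orbit of components --- encoded by the diagonal maps $\Delta$ in the factorization \eqref{eqn:phiGammaGfactor} of $\phi_{(\Gamma,G)}$ --- recovers $C'$ compatibly with the $G$-action, using that all components in a $G$-orbit are isomorphic and that the choice of induced action in the construction of $\xi_{(\Gamma,G)}$ does not affect the resulting point. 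Checking that all of this commutes with arbitrary base change, so that $q$ and $r$ are genuine morphisms of stacks rather than bijections on geometric points, is the remaining technical point requiring care.
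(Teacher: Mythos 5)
Your proposal is correct and follows essentially the same route as the paper: both construct $q$ from the strict universal property of the fibre product with the identity $2$-isomorphism, build the inverse $r$ by transporting the $A$-marking through $\alpha$ and saturating under the $G$-action to produce an admissible $G$-graph $(\Gamma,G)$ whose induced $A$-structure is automatically generic (since $H_\Gamma = G\im\beta$ by construction), and then check that the two composites are naturally isomorphic to the identities. The only cosmetic difference is that the paper sidesteps your final bookkeeping concern (restricting to orbit representatives and the choice of induced $G$-action) by working throughout with the modular interpretation of $\H_{(\Gamma,G)}$ as admissible covers equipped with a $(\Gamma,G)$-marking (Proposition \ref{prop:admmark}), so that $r$ lands directly in that stack without decomposing the cover into factors.
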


 \begin{example}
  Let $G=\Z/2\Z$ and let $\xi= (1,...,1)=(1^8)$ be the monodromy datum of length $8$ prescribing $8$ fixed points under $G$. For the stable graph
  \[
A = \begin{tikzpicture}[->,>=bad to,baseline=-3pt,node distance=1.3cm,thick,main node/.style={circle,draw,font=\Large,scale=0.5}]
\node[main node] (A) {2};
\node at (0,-.7) (B) {...};
\node at (-.6,-.7) (1) {1};
\node at (.6,-.7) (2) {8};
\node at (-.5,.3) (h1) {\footnotesize $h_1$};
\node at (-.6,-.2) (h2) {\footnotesize $h_2$};
\draw [-] (A) to [out=200, in=160,looseness=10] (A);
\draw [-] (A) to (0,-.5);
\draw [-] (A) to (.5,-.5);
\draw [-] (A) to (-.5,-.5);
\draw [-] (A) to (.2,-.5);
\draw [-] (A) to (-.2,-.5);
\end{tikzpicture}
  \] 
 we will compute the set $\mathfrak{H}_{A;G,\xi}$. In other words we want to know which admissible $G$-graphs $(\Gamma,G)$ admit a generic $A$ structure.  We claim that all such admissible $G$-graphs have the following form: 
   \begin{center}
\begin{tabular}{c@{\hspace{2cm}}c}
$    \begin{tikzpicture}[->,>=bad to,baseline=-3pt,node distance=1.3cm,thick,main node/.style={circle,draw,font=\Large,scale=0.5}]
    \begin{scope}
\node [main node] (A) {2};
\node at (-1.5,0) (t1) {$(\Gamma_{1},G)=$};
\node at (.4,.4) (h1) {\footnotesize $\tilde{h}_1$};
\node at (.9,.4) (h2) {\footnotesize $\tilde{h}_2$};
\node at (1,0) [scale=.3,draw,circle,fill=black] (B)  {};
\draw [-] (A) to (.1,-.5);
\draw [-] (A) to (-.1,-.5);
\draw [-] (A) to (.5,-.5);
\draw [-] (A) to (-.5,-.5);
\draw [-] (A) to (.3,-.5);
\draw [-] (A) to (-.3,-.5);
\draw [-] (B) to (1.2,.2);
\draw [-] (B) to (1.2,-.2);
\draw [-] (A) to [out=30, in=150] (B);
\draw [-] (A) to [out=-30, in=-150] (B);
\end{scope}
\begin{scope}[shift ={(.65,-1.2)}]
\draw [->,line width=.7mm] (0,.5) to (0,-.5);
\end{scope}
  \begin{scope}[shift ={(0,-2)}]
\node at (0,0) [scale=.3,draw,circle,fill=black] (A)  {};
\node at (1,0) [scale=.3,draw,circle,fill=black] (B)  {};
\node at (-1.5,0) (t2) {$\Gamma_{1}/G=$};
\draw [-] (A) to (.1,-.5);
\draw [-] (A) to (-.1,-.5);
\draw [-] (A) to (.5,-.5);
\draw [-] (A) to (-.5,-.5);
\draw [-] (A) to (.3,-.5);
\draw [-] (A) to (-.3,-.5);
\draw [-] (B) to (1.2,.2);
\draw [-] (B) to (1.2,-.2);
\draw [-] (B) to (A);
\end{scope}
\end{tikzpicture}
$
&
$
\begin{tikzpicture}[->,>=bad to,baseline=-3pt,node distance=1.3cm,thick,main node/.style={circle,draw,font=\Large,scale=0.5}]
\begin{scope}
\node[main node] (A) {1};
\node at (-1.5,0) (t1) {$(\Gamma_{2},G)=$};
\node at (.4,.4) (h1) {\footnotesize $\tilde{h}_1$};
\node at (.9,.4) (h2) {\footnotesize $\tilde{h}_2$};
\node at (1.3,0) [main node] (B) {1};
\draw [-] (A) to [out=30, in=150] (B);
\draw [-] (A) to [out=-30, in=-150] (B);
\draw [-] (A) to (.5,-.5);
\draw [-] (A) to (-.5,-.5);
\draw [-] (A) to (.2,-.5);
\draw [-] (A) to (-.2,-.5);
\draw [-] (B) to (1.8,-.5);
\draw [-] (B) to (.8,-.5);
\draw [-] (B) to (1.5,-.5);
\draw [-] (B) to (1.1,-.5);
\end{scope}
\begin{scope}[shift ={(.65,-1.2)}]
\draw [->,line width=.7mm] (0,.5) to (0,-.5);
\end{scope}
  \begin{scope}[shift ={(0,-2)}]
\node at (0,0) [scale=.3,draw,circle,fill=black] (A)  {};
\node at (1,0) [scale=.3,draw,circle,fill=black] (B)  {};
\node at (-1.5,0) (t2) {$\Gamma_{2}/G=$};
\draw [-] (A) to (.5,-.5);
\draw [-] (A) to (-.5,-.5);
\draw [-] (A) to (.2,-.5);
\draw [-] (A) to (-.2,-.5);
\draw [-] (B) to (1.8,-.5);
\draw [-] (B) to (.8,-.5);
\draw [-] (B) to (1.5,-.5);
\draw [-] (B) to (1.1,-.5);
\draw [-] (B) to (A);
\end{scope}
\end{tikzpicture}
$
\end{tabular}.
\end{center}
where the generator $\tau=1$ of $G$ acts as the involution fixing all the legs and all the vertices and switching the edges. 

To see this first note that by definition  for any admissible $G$-graph $(\Gamma,G)$ with monodromy datum $\xi$, the genus of $\Gamma/G$ must equal $0$. It immediatly follows that $\Gamma$ cannot have any loops. Since $A$ has only one edge, the cardinality of $G$ is 2 and the $A$ structure is generic $\Gamma$ has at most 2 edges. The possible admissible $G$-graphs are now completely determined by the condition that  for each vertex $v$ the involution $\tau$ must fix exactly $2g(v)+2$ legs or half-edges incident to $v$. 

There are in total $\binom{8}{6}$ admissible $G$-graphs of the form $(\Gamma_1,G)$ given by the different distributions of the legs. Likewise there are $\frac{1}{2}\binom{8}{4}$ admissible $G$-graphs of the form $(\Gamma_2,G)$.

For each $A$-structure $f=(\alpha,\beta,\gamma)$ on $(\Gamma_1,G)$ we can compose $f$ with an isomorphism of $(\Gamma_1,G)$ such that the edge $(h_1,h_2)$ is mapped to $(\tilde{h}_1,\tilde{h}_2)$. There are two possible choices, either $\beta(h_1)=\tilde{h}_1$ or $\beta(h_1)=\tilde{h}_2$. After these choices the $A$-structure $f$ is completely determined so there are 2 isomorphism classes of generic $A$-structure on $(\Gamma_1,G)$. The same argument holds for $(\Gamma_2,\tau_2)$ and there are 2 isomorphism classes of $A$-structures on $(\Gamma_2,G)$.

Granted Proposition \ref{prop:fibreprod} it now follows that we have a fibered diagram.
\begin{equation*}
  \begin{tikzcd}
    (\H_{(\Gamma_1,G)})^{\amalg 2 \binom{8}{6}}\coprod (\H_{(\Gamma_2,G)})^{\amalg \binom{8}{4}} \arrow[r]\arrow[d] & \H_{3,G,(1^8)} \arrow[d,"\phi"]\\
   \M_A\arrow[r,"\xi_A"] & \M_{3,8}
   \end{tikzcd}.
  \end{equation*}
An easy dimension count shows that there is no excess of intersection. We have therefore computed the intersection between $[\phi(\H_{3,G,(1^8)})]$ and $[\xi_A(\M_A)]$ in $A^\bullet(\M_{3,8})$.
\end{example}

  For the proof of Proposition \ref{prop:fibreprod}, we will proceed in the same way as in the proof of Proposition \ref{intprop}.

 \begin{definition}\label{def:admmark}
  Let $C\rightarrow D$ be an admissible cover of $\H_{g,G,\xi}$  over a connected base scheme~$\Base$. A $(\Gamma,G)$-marking on $C\rightarrow D$ is a $\Gamma$-marking on $C$ (see Definition \ref{def:curstruc}) such that the action of $G$ on $\Gamma$ is induced by the action of $G$ on $C$. We will denote an admissible cover with $(\Gamma,G)$ marking by $(C\rightarrow D)_{(\Gamma,G)}$.
 \end{definition}

  A $(\Gamma,G)$-marking on $C\rightarrow D$ defines a $\Gamma$-marking on $C$ and a $\Gamma/G$-marking on $C/G$. The data of a $(\Gamma,G)$-marking can be pulled back along morphisms of base schemes $\Base'\rightarrow \Base$ (by definition of the pullback of an admissible cover along such a morphism and the pullback of the $\Gamma$-structure on $C$).   We define a stack $\H_{(\Gamma,G)}'$ of admissible covers with a $(\Gamma,G)$-marking.

 \begin{proposition}\label{prop:admmark}
  There is a natural isomorphism $\H_{(\Gamma,G)}'\simeq \H_{(\Gamma,G)}$.
 \end{proposition}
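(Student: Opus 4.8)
The plan is to mirror the proof of Proposition \ref{prop:Gcurve}, constructing functors in both directions and checking they are mutually inverse. The forward functor $\H_{(\Gamma,G)} \to \H_{(\Gamma,G)}'$ is supplied by the equivariant gluing map $\xi_{(\Gamma,G)}$: over the universal admissible covers of the factors $\H_{g(v'),G_{v'},\xi_{v'}}$, the construction described above builds an admissible $G$-cover $\mathcal{C} \to \mathcal{D}$ together with its sections at the nodes and a decomposition into $\pi$-relative components indexed by $V(\Gamma)$. This decomposition, the induced sections of the normalization, and the tautological identification of the resulting dual graph with $\Gamma$ (condition (iv) of Definition \ref{def:curstruc}) together constitute a canonical $(\Gamma,G)$-marking, so we obtain an object of $\H_{(\Gamma,G)}'$, functorially in the base.

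For the inverse functor $r \colon \H_{(\Gamma,G)}' \to \H_{(\Gamma,G)}$, I would proceed as in Proposition \ref{prop:Gcurve}. Given an admissible cover $C \to D$ over a connected base with a $(\Gamma,G)$-marking, the marking produces $\pi$-relative components $C_v$ for $v \in V(\Gamma)$ and an identification of the dual graph with $\Gamma$ compatible with the $G$-actions. For each orbit representative $v' \in V'$, the stabilizer $G_{v'}$ preserves the component $C_{v'}$, and restricting the $G$-action gives an action $G_{v'} \curvearrowright C_{v'}$. The legs and half-edges incident to $v'$, grouped into $G_{v'}$-orbits over the representatives indexing $\xi_{v'}$, equip $C_{v'}$ with exactly the markings prescribed by $\xi_{v'}$, and the quotient map $C_{v'} \to C_{v'}/G_{v'}$ is a pointed admissible $G_{v'}$-cover. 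Connectedness of $C_{v'}$ is guaranteed by condition (iii) of Definition \ref{def:curstruc}, and the monodromy at each marking is precisely the stabilizer datum $(h_l)$ recorded by the $(\Gamma,G)$-marking, so this yields a point of $\H_{g(v'),G_{v'},\xi_{v'}}$. Taking the product over $v' \in V'$ defines $r$ on objects; a morphism of $(\Gamma,G)$-marked covers restricts to a compatible tuple of morphisms on the chosen components, defining $r$ on morphisms.

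It then remains to check that $r$ and the forward functor are mutually inverse up to natural isomorphism, following the template of Propositions \ref{prop:Gcurve} and \ref{intprop}: restricting the glued cover to the orbit representatives recovers the original factors, and conversely gluing the restricted covers $C_{v'}/G_{v'}$ back together according to $\Gamma$ reproduces the marked cover one started from.

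The main obstacle I anticipate is the bookkeeping hidden in the equivariant gluing construction. Forming $\widehat C_{v'} = \bigcup_{[g] \in G/G_{v'}} C_{v'}$ requires a choice of coset representatives of $G/G_{v'}$ (the induced-representation ambiguity flagged in the text), so I would need to verify that the forward functor, and hence the composite with $r$, is independent of this choice and well-defined up to canonical isomorphism. A related subtlety is that $\xi_{(\Gamma,G)}$ is not representable in general: there may be nontrivial $G$-equivariant automorphisms of a cover over the identity on $D$, coming from the center $C(G)$ as in the proof of Theorem \ref{thm:condnonempty}. Consequently the equivalence must be established at the level of the full groupoids of sections over each base, not merely on isomorphism classes of objects.
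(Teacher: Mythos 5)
Your proposal is correct and follows essentially the same route as the paper: the paper's proof of this proposition consists of the single remark that the argument is completely analogous to Proposition \ref{prop:Gcurve}, i.e.\ one assigns the canonical $(\Gamma,G)$-marking to the glued universal cover in one direction and recovers the tuple of factor covers from the $\pi$-relative components in the other, exactly as you describe. Your additional remarks on the coset-representative ambiguity and on working with the full groupoids of sections are legitimate refinements of that same argument rather than a different approach.
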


 \begin{proof}
The proof is completely analogous to that of Proposition \ref{prop:Gcurve}.
 \end{proof}

 \begin{proof}[Proof of Proposition \ref{prop:fibreprod}]
   We use the modular interpretation of  $\H_{(\Gamma,G)}$ given by admissible covers with a $(\Gamma,G)$-marking from Proposition~\ref{prop:admmark} and the modular interpretation of $\M_A$ in terms of curves with an $A$-marking  (Definition~\ref{def:curstruc}).  
  
  An object $((C\rightarrow D)_{(\Gamma,G)}, f)$ of $\coprod_{(\Gamma,G,f) \in \mathfrak{H}_{A;G,\xi}} \H_{(\Gamma,G)} $ over a (connected) scheme $\Base$ consists of an admissible $G$-graph $(\Gamma,G)$ with a generic $A$-structure $f$ and a Hurwitz cover  $C\rightarrow D$ with a $(\Gamma,G)$-marking. By definition we have 
  \[
  \xi_A\circ \phi_{(\Gamma,G,f)} ((C\rightarrow D)_{(\Gamma,G)}, f) = C = \phi \circ \xi_{(\Gamma,G)} ((C\rightarrow D)_{(\Gamma,G)}, f).
  \]
  We have a natural isomorphism $\xi_A\circ \phi_{(\Gamma,G,f)} \Rightarrow \phi \circ \xi_{(\Gamma,G)}$  given by the identity, so the strict universal property of fibre products (of stacks) gives us a morphism 
  \[q\colon  \coprod_{(\Gamma,G,f) \in \mathfrak{H}_{A;G,\xi}} \H_{(\Gamma,G)} \rightarrow \M_A\times_{\M_{g,r}} \H_{g,G,\xi}\]. 
  In other words we have a commutative diagram
    \begin{center}
   \begin{tikzcd}
     \coprod_{(\Gamma,G,f) \in \mathfrak{H}_{A;G,\xi}} \H_{(\Gamma,G)}  \arrow[ddr,"\coprod \phi_{(\Gamma,G,f)}"{name=U},swap,bend right] \arrow[drr,"\coprod \xi_{(\Gamma,G)}",bend left] \arrow[dr,"q" description] & &\\
   &      \M_A\times_{\M_{g,r}} \H_{g,G,\xi}  \arrow[dr,phantom,"\Rightarrow"] \arrow[r,"p_2"]\arrow[d,"p_1",swap]  &  \H_{g,G,\xi} \arrow[d,"\phi"]\\
   &  \M_A\arrow[r,"\xi_A",swap]&\M_{g,r}.
   \end{tikzcd}
  \end{center}
  We will construct an  inverse functor $r$ to $q$.

 Let $(C'_A,C\rightarrow D,\alpha\colon C'\xrightarrow{\sim} C)$ be an object of $\M_A\times_{\M_{g,r}} \H_{g,G,\xi}$ over a base scheme $\Base$ and let $G$ be the group associated to the admissible cover $C\rightarrow D$. As explained in the proof of Proposition \ref{intprop} the $A$-structure on $C'$ again passes through $\alpha$ to define an $A$-structure on the curve $C$. We now have the following data on $C\rightarrow D$
 \begin{enumerate}[i]
  \item a set of sections 
  \[
  E:=\bigcup_{a\in G} \{a(\sigma_i)\}_i
  \]
  in the singular locus of $C$ where $\sigma_i$ are the sections defined by the $A$-structure on $C'$,
  
  \item a set of sections 
  \[H:=\bigcup_{a\in G} \{a(\tilde{\sigma}_{i,j})\}_{i,j}\]
  in the normalization $\tilde{C}$ of $C$,
  
  \item a set $V$ of $\pi$-relative components of $C\backslash E$ 
 \end{enumerate}
 This data defines a stable graph 
 \[
  \Gamma := (V,H,L,g,\iota,a,\zeta)
 \]
 where $g$ is the genus function that assigns the geometric genus to each element of $V$, $\iota$ is the involution defined by $\iota(a(\tilde{\sigma}_{i,1}))=a(\tilde{\sigma}_{i,2})$ for all $a\in G$, and $a\colon H\rightarrow V$ is the function that sends a section to the $\pi$-relative component it lies on. 
 The action of $G$ on $C$ (and on the normalization $\tilde{C}$) induces a $G$-action (with stabilizer datum) on $\Gamma$. Since $C \to D$ is an object of $\H_{g,G,\xi}$, it follows that $(\Gamma,G)$ is an admissible $G$-graph.
  
  Let $\beta\colon H_A\hookrightarrow H$ be the obvious inclusion. Let $\alpha\colon V\rightarrow V_A$ be the map sending a $\pi$ relative component of $D_{\Gamma}$ to the $\pi$ relative component of $D_A$ it is mapped to under the partial normalization defined by the sections $E\backslash \im \beta$. Let $\gamma\colon H\backslash \im \beta \rightarrow V_A$ be the map which sends a section $a(\sigma_{i,j})$, where $a$ is not the trivial element, to the $\pi$-relative component of $C_A$ it is mapped to under the partial normalization map. This defines an $A$-structure $f$ on $(\Gamma,G)$. This $A$-structure is generic since $G\beta(H_A)=H$ by definition of $H$ (where $H_A$ is the set of half edges of $A$).
  
   \begin{figure}[H]
  \centering
    \includegraphics[width=.5\textwidth]{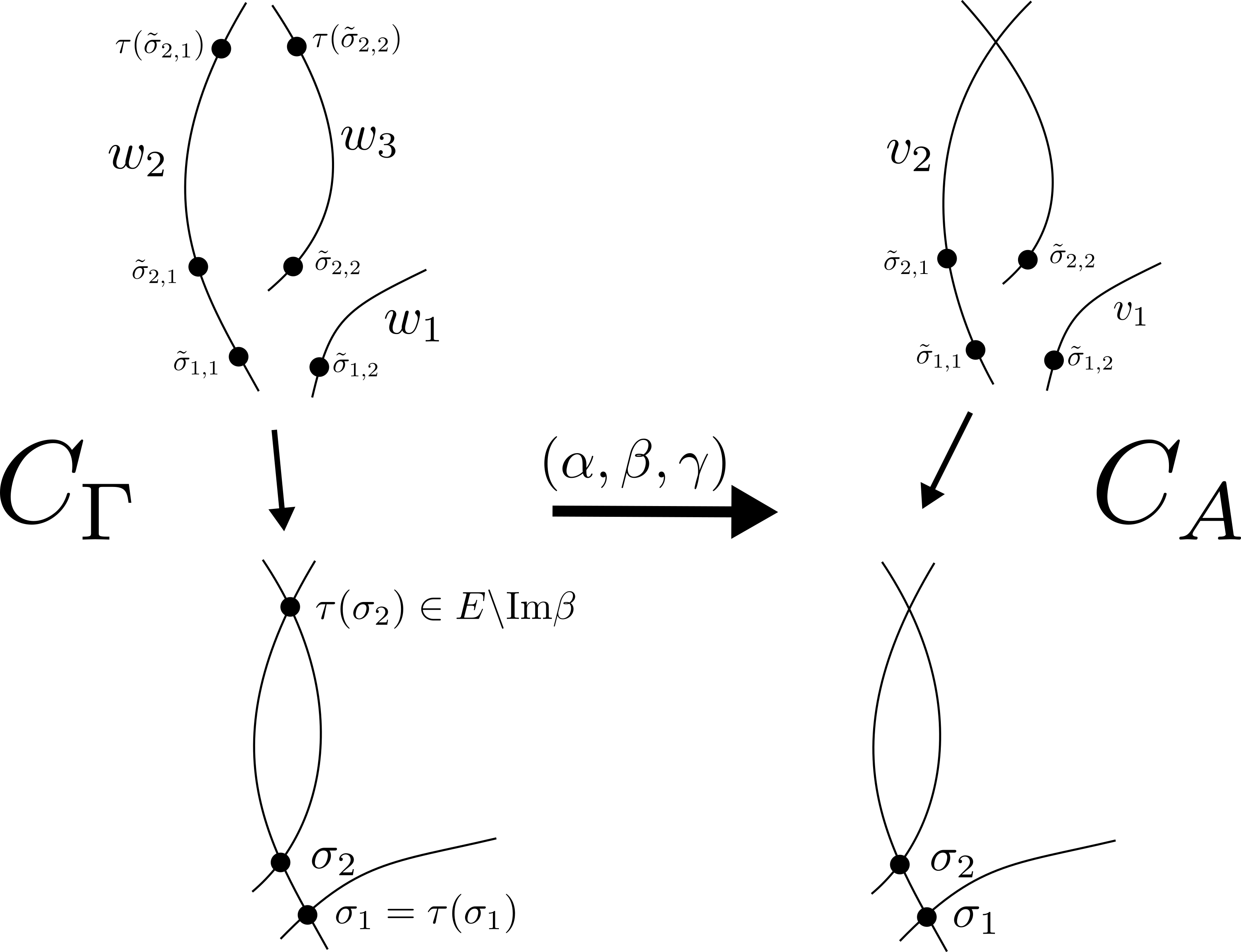}
    \label{fig:structures}
    \caption{The curve $C$ with $A$ structure and $\Gamma$ structure when $G\simeq \Z/2\Z$ with generator~$\tau$.}
\end{figure}
  
  Note that $f$ is unique up to isomorphisms of $A$-structures on $(\Gamma,G)$. We therefore have a well defined object of $ \coprod_{(\Gamma,G,f) \in \mathfrak{H}_{A;G,\xi}} \H_{(\Gamma,G)} $. This defines the functor $r$ on the objects of $\M_A\times_{\M_{g,r}} \H_{g,G,\xi} $.
  
  Let  $(\lambda \colon C'_1\rightarrow C_2', \eta \colon (C_1\rightarrow D_1)\rightarrow (C_2\rightarrow  D_2))$  be a morphism  in $\M_A\times_{\M_{g,r}} \H_{g,G,\xi}$ (note that $\xi_A(\lambda)=\phi (\eta)$). By passing through the above construction we see that we get an isomorphism of admissible $G$-graphs $(\Gamma_1,G)\rightarrow (\Gamma_2,G)$ which commutes with the $A$-structures $f_1$ and $f_2$ on $\Gamma_1, \Gamma_2$. We therefore obtain a morphism
  \[
   ((C_1\rightarrow D_1)_{(\Gamma_1,G)},f_1) \rightarrow ((C_2\rightarrow D_2)_{(\Gamma_2,G)},f_2).
  \]
  This defines a morphism in $ \coprod_{(\Gamma,G,f) \in \mathfrak{H}_{A;G,\xi}} \H_{(\Gamma,G)} $. This completes the definition of the functor \[r\colon \M_A\times_{\M_{g,r}} \H_{g,G,\xi} \rightarrow  \coprod_{(\Gamma,G,f) \in \mathfrak{H}_{A;G,\xi}} \H_{(\Gamma,G)}. \]
  
  It remains to check that $r\circ q$ and $q\circ r$ are naturally isomorphic to the respective identity maps. Let $((C\rightarrow D)_{(\Gamma,G)},f)$ be an object of $\coprod_{(\Gamma,G,f) \in \mathfrak{H}_{A;G,\xi}} \H_{(\Gamma,G)} $. Since $f$ is a generic $A$-structure on $(\Gamma,G)$ it is easy to verify that
  \begin{align*}
   r\circ q ((C\rightarrow D)_{(\Gamma,G)},f)&= r (C_A, (C\rightarrow D), \text{id}_C)\\
   &=((C\rightarrow  D)_{(\Gamma,G)},f).
  \end{align*}

  If $(C'_A, (C\rightarrow D), \alpha)$ is an object of $\M_A\times_{\M_{g,r}} \H_{g,G,\xi}$, then
  \begin{align*}
   q\circ r (C'_A, (C \rightarrow D), \alpha) & = q ((C\rightarrow D)_{(\Gamma,G)},f)\\
   &=  (C_A,(C\rightarrow D), \text{id}_C). 
  \end{align*}
 The isomorphism $\alpha$ induces an isomorphism of curves with $A$ structure $\alpha^{-1}\colon C'_A \rightarrow C_A$ so $(C_A,  (C\rightarrow D),\alpha)$ and $(C_A,  (C\rightarrow D), \text{id}_S)$ are isomorphic by $(\alpha^{-1},\text{id}_{C\rightarrow D})$. We thus have a natural isomorphism of functors $q\circ r \xRightarrow{\sim} \text{id}$.
  \end{proof}

 \subsection{The Excess Bundle}
 
 Let $A$ be a stable $n$ pointed genus $g$ graph. We have shown in Proposition \ref{prop:fibreprod} that the diagram
 \begin{equation}
 \label{diag:excessfib}
  \begin{tikzcd}
   \coprod_{(\Gamma,G,f)\in \mathfrak{H}_{A,G,\xi}} \H_{(\Gamma,G)} \arrow[r,"\coprod \xi_{(\Gamma,G)}"]\arrow[d,"\coprod \phi_{(\Gamma,G,f)}",swap] &  \H_{g,G,\xi}\arrow[d,"\phi"] \\
   \M_A \arrow[r,"\xi_A"] &\M_{g,r}.
  \end{tikzcd}
  \end{equation}
is Cartesian. We can compute the excess bundle separately on each component $\H_{(\Gamma,G)}$  of the disjoint union above. Each component $\H_{(\Gamma,G)}$ comes with an $A$-structure $f=(\alpha,\beta,\gamma)$ on $\Gamma$ so we have to compute
  \[
   E_f = \phi_f^* N_{\xi_A} /N_{\xi_{(\Gamma,G)}},
  \]
where we abbreviate $\phi_{(\Gamma,G,f)}$ as $\phi_f$ from now on.

 
 
 Let us slightly abuse notation and also denote by $\beta$ the induced map of edges $E_A\rightarrow E_\Gamma$. Let $N$ be a set of representatives for the orbits of the action of $G$ on $E$ such that $N\subset \im \beta$ (this is possible since the $A$-structure $f$ is generic). Then:

 \begin{proposition}\label{prop:excess}
 With the above notation we have on $\H_{(\Gamma,G)}$ that
\begin{equation}\label{eq:topexcess}
 c_{\text{top}} E_f = \prod_{(h,h')\in \im \beta\setminus N}  -\psi_h - \psi_{h'}.
\end{equation}
 \end{proposition}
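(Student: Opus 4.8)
The plan is to follow the strategy of Paragraph \ref{prgrph:excess} in the equivariant setting: I would compute the two normal bundles occurring in $E_f = \phi_f^* N_{\xi_A}/N_{\xi_{(\Gamma,G)}}$ as explicit direct sums of line bundles of the form $\L_h^\vee \otimes \L_{h'}^\vee$, identify the inclusion between them, and read off the top Chern class of the quotient. Throughout I work on a single component $\H_{(\Gamma,G)}$ carrying its generic $A$-structure $f=(\alpha,\beta,\gamma)$.

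First I would pull back the normal bundle of $\xi_A$. Recall from Paragraph \ref{prgrph:excess} that $N_{\xi_A} = \bigoplus_{(h,h')\in E_A} \L_h^\vee \otimes \L_{h'}^\vee$. Since $\phi_f = \xi_{f\colon \Gamma\to A}\circ \phi_{(\Gamma,G)}$ and the $\psi$-classes on $\H_{(\Gamma,G)}$ are by definition pullbacks of $\psi$-classes under the source maps, Lemma \ref{lem:pullpsikappa} shows that $\xi_{f\colon \Gamma\to A}$ sends the $\psi$-class at a half-edge $i$ of $A$ to the one at $\beta(i)$. Using the bijection $E_A \cong \im\beta$ induced by $\beta$ I therefore expect
\[ \phi_f^* N_{\xi_A} = \bigoplus_{(h,h')\in \im\beta} \L_h^\vee \otimes \L_{h'}^\vee \]
on $\H_{(\Gamma,G)}$. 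The factorisation (\ref{eqn:phiGammaGfactor}) of $\phi_{(\Gamma,G)}$ through diagonals does not affect this identity, as the diagonals only identify the $\psi$-classes at half-edges lying in a common $G$-orbit.

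The main step, and the place where the group action genuinely enters, is the computation of $N_{\xi_{(\Gamma,G)}}$. The map $\xi_{(\Gamma,G)}$ glues the curve along \emph{all} edges of $\Gamma$, but since $G$ acts on $\Gamma$ the nodes in a single orbit are smoothed by one common deformation parameter; thus the codimension of $\xi_{(\Gamma,G)}$ equals the number $|E_\Gamma/G|$ of orbits of edges, in agreement with $\dim \H_{(\Gamma,G)} = \dim \M_{\Gamma/G}$. For a representative edge $(h,h')$ of an orbit the smoothing parameter lives in $\L_h^\vee \otimes \L_{h'}^\vee$, and the crucial point is that the balancing condition of Definition \ref{def:admGcover} makes the edge stabiliser $\langle h_h\rangle$ act on this line by $\mu\cdot\mu^{-1}=1$, i.e.\ trivially, so no twist appears. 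I therefore expect
\[ N_{\xi_{(\Gamma,G)}} = \bigoplus_{(h,h')\in N} \L_h^\vee \otimes \L_{h'}^\vee, \]
where $N$ is the chosen set of orbit representatives with $N\subset \im\beta$ (possible precisely because $f$ is generic). Justifying the sum-over-orbits shape and the triviality of the twist, via the boundary description in Chapter 7 of \cite{BertinRomagny} together with the balancing condition, is the principal obstacle.

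Finally, since $N\subset\im\beta$, the natural injection $N_{\xi_{(\Gamma,G)}}\hookrightarrow \phi_f^* N_{\xi_A}$ supplied by the excess intersection formalism (cf.\ \cite{fulton1984}) simply matches the common summands indexed by $N$, so the excess bundle is
\[ E_f = \bigoplus_{(h,h')\in \im\beta\setminus N} \L_h^\vee \otimes \L_{h'}^\vee. \]
As $E_f$ is a direct sum of line bundles, its top Chern class is the product of their first Chern classes, and using $c_1(\L_h)=\psi_h$ this gives
\[ c_{\text{top}} E_f = \prod_{(h,h')\in \im\beta\setminus N}(-\psi_h-\psi_{h'}), \]
as claimed.
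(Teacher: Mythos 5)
Your proposal is correct, but it takes a genuinely different route from the paper. The paper proves Proposition \ref{prop:excess} by induction on the edges of $A$: it factors $\xi_A$ into single-edge gluing maps $\xi_i$, applies Proposition \ref{prop:fibreprod} at each stage, and uses multiplicativity of excess classes under composition of Cartesian squares; at each step either the codimension drops by one (no excess) or the newly glued edge lies in the $G$-orbit of an already glued edge, in which case the top row of the square is an identity and the excess contribution is the pullback of $N_{\xi_{i+1}}$, computed by the classical formula of Paragraph \ref{prgrph:excess}. In this way the paper never has to identify $N_{\xi_{(\Gamma,G)}}$ at all --- its only inputs are the fibre-product statement and formal properties of excess classes. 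Your argument instead computes both normal bundles outright, so its load-bearing step is exactly the one you flag as the principal obstacle: the identification
\[
N_{\xi_{(\Gamma,G)}} = \bigoplus_{(h,h')\in N}\L_h^\vee\otimes\L_{h'}^\vee,
\]
which rests on the equivariant deformation theory of $\H_{g,G,\xi}$ along its boundary --- one smoothing parameter per $G$-orbit of nodes, the line untwisted because the balancing condition makes the edge stabilizer act by $\mu\cdot\mu^{-1}=1$ on $T_h\otimes T_{h'}$. That claim is true, your reason for it (balancing, plus the fact that no element of $G$ flips an edge) is the correct one, and it can be extracted from \cite{BertinRomagny}; but in your write-up it remains an assertion with a citation, whereas the paper's induction replaces it by purely formal inputs. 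What your route buys in exchange is a shorter, non-inductive argument in which the geometric origin of each factor $(-\psi_h-\psi_{h'})$ is completely transparent. One small imprecision: the injection $N_{\xi_{(\Gamma,G)}}\hookrightarrow\phi_f^*N_{\xi_A}$ does not simply ``match the common summands indexed by $N$''; the summand of an orbit $Ge$ maps \emph{diagonally} into all summands indexed by edges of $Ge\cap\im\beta$, since smoothing the orbit equivariantly smooths every node in it. Because the component at the representative edge in $N$ is an isomorphism and the $G$-action identifies the line bundles within an orbit, the quotient still has total Chern class $\prod_{(h,h')\in\im\beta\setminus N}(-\psi_h-\psi_{h'})$, so your conclusion is unaffected --- but this is worth spelling out, since it is where the genericity hypothesis $N\subset\im\beta$ actually enters.
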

Some remarks concerning this formula: given a graph $(\Gamma,G)$ and a half-edge $h$, for any element $t \in G$ we have $\psi_{th}=\psi_{h}$, since the corresponding cotangent spaces are related by the action via $t$. This shows that the above formula is independent of the choice of representatives $N$ made before. 

In fact, we can interpret this formula as follows: for any orbit of an edge $e=(h,h')$ in $\Gamma$ let $k=|\im \beta \cap (G e)|$ be the number of edges in this orbit coming from the graph $A$. By the assumption that the $A$-structure on $(\Gamma,G)$ is generic, we have $k \geq 1$. Then the orbit of $e$ contributes a factor $(-\psi_h - \psi_{h'})^{k-1}$ to the Chern class $c_{\text{top}} E_f$ of the excess bundle. In particular, there is excess if and only if there is an orbit of edges containing at least two edges coming from $A$.

For the proof of Proposition \ref{prop:excess} we will use the following facts:

\begin{lemma}
  Recall that fibre products (of stacks) commute are compatible with composition. That is, if we have maps $f_1\colon X_1\rightarrow Z$, $f_2\colon X_2\rightarrow X_1$ and $g\colon Y\rightarrow Z$ then there is an isomorphism $X_2\times_{X_1} (X_1\times_Z Y)\simeq X_2\times_Z Y$.  Moreover if the morphisms $f_1$ and $f_2$ are lci, then in the resulting diagram
 \begin{center}
  \begin{tikzcd}
 X_2\times_{X_1} (X_1\times_Z Y)\simeq X_2\times_Z Y \arrow[r,"h_2"]\arrow[d,"g_2"] & X_1\times_Z Y \arrow[r,"h_1"]\arrow[d,"g_1"] & Y \arrow[d,"g"] \\
 X_2\arrow[r,"f_2"] & X_1 \arrow[r,"f_1"] & Z
  \end{tikzcd}
 \end{center}
 we have 
\[
 c_{\textup{top}} (g_2^*N_{f_1\circ f_2} / N_{h_1\circ h_2}) = c_{\textup{top}} (g_2^*N_{f_2} / N_{h_2}) \cdot h_2^*c_{\textup{top}} (g_1^*N_{f_1} / N_{h_1}).
\]
\end{lemma}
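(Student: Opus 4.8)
The plan is to treat the two assertions separately. The isomorphism $X_2 \times_{X_1}(X_1 \times_Z Y) \simeq X_2 \times_Z Y$ is the pasting law for fibre products, and I would deduce it purely from the (2-categorical) universal property: a map from a test scheme $T$ into either stack amounts to the same data, namely a map $T \to X_2$, a map $T \to Y$, and a compatibility $2$-isomorphism of their images in $Z$; for the iterated product this data is merely repackaged through $X_1 \times_Z Y$. Since both stacks represent the same $2$-functor they are canonically equivalent, and under this equivalence the projection $g_2$ and the composite $h_1 \circ h_2$ become the structure maps of $X_2 \times_Z Y$. Base change along lci morphisms preserves the lci property, so every morphism appearing in the diagram is lci and each excess bundle is an honest vector bundle.

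For the Chern class identity the key input is the conormal exact sequence for a composition of lci morphisms. First I would record, for the composite $f_1 \circ f_2$, the short exact sequence of normal bundles
\[
0 \to N_{f_2} \to N_{f_1 \circ f_2} \to f_2^* N_{f_1} \to 0
\]
on $X_2$, together with the analogous sequence $0 \to N_{h_2} \to N_{h_1 \circ h_2} \to h_2^* N_{h_1} \to 0$ on $X_2 \times_Z Y$; both come from the distinguished triangle of cotangent complexes of a composition. Pulling the first sequence back along $g_2$ and using the commutativity $f_2 \circ g_2 = g_1 \circ h_2$ of the bottom square, I would rewrite its right-hand term as $g_2^* f_2^* N_{f_1} = h_2^* g_1^* N_{f_1}$, so that after this pullback the two sequences have matching outer terms.

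The heart of the argument is then to assemble these into a commutative diagram of two short exact sequences (the rows) joined by three vertical maps: the natural inclusions $N_{h_2} \hookrightarrow g_2^* N_{f_2}$, $N_{h_1 \circ h_2} \hookrightarrow g_2^* N_{f_1 \circ f_2}$ and $h_2^* N_{h_1} \hookrightarrow h_2^* g_1^* N_{f_1}$ arising from the Cartesian squares. Each column map is injective with cokernel the relevant excess bundle, since for a base change of an lci morphism the pulled-back conormal sheaf surjects onto the conormal sheaf of the base change. Applying the snake lemma, all three kernels vanish and the cokernels form a short exact sequence
\[
0 \to \bigl(g_2^* N_{f_2}/N_{h_2}\bigr) \to \bigl(g_2^* N_{f_1 \circ f_2}/N_{h_1 \circ h_2}\bigr) \to h_2^*\bigl(g_1^* N_{f_1}/N_{h_1}\bigr) \to 0.
\]
The Whitney sum formula gives total Chern classes $c(E_f) = c\bigl(g_2^* N_{f_2}/N_{h_2}\bigr)\cdot h_2^* c\bigl(g_1^* N_{f_1}/N_{h_1}\bigr)$, and since the ranks of the two subquotients add up to that of $E_f$, comparing top-degree terms and using $c_{\textup{top}}(h_2^* \,\cdot\,)=h_2^* c_{\textup{top}}(\,\cdot\,)$ yields the claimed factorisation.

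The hard part will be the bookkeeping in the middle paragraph: verifying that the three vertical maps are genuinely compatible, so that the diagram commutes, and that each is the correct inclusion with the stated excess bundle as cokernel. This requires tracking the natural maps of (co)normal bundles under both base change and composition for lci morphisms of Deligne--Mumford stacks, which I would handle by appealing to the functoriality of the cotangent complex rather than by explicit local computation.
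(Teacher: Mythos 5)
The paper never proves this lemma: it is stated as a recollection of standard facts and applied immediately in the proof of Proposition~\ref{prop:excess}, so your proposal is supplying an argument the paper omits and must be judged on its own terms. Its skeleton is the right one and, with correctly stated hypotheses, it works: the transitivity sequence of normal bundles for a composition, the base-change inclusions of normal bundles whose cokernels are the excess bundles, the snake lemma, and the Whitney formula plus the rank count that reduces the top Chern class of an extension to the product of those of the sub- and quotient bundle. However, one of your supporting claims is false: lci morphisms are \emph{not} stable under arbitrary base change. For instance, the origin $V(x,y)\subset\mathbb{A}^2$ is a regular embedding, but its base change along the nodal curve $V(xy)\subset\mathbb{A}^2$ is the reduced node inside that curve, whose ideal $(x,y)\subset k[x,y]/(xy)$ is not principal, so the base change is not lci. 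Consequently the lci-ness of $h_1$, $h_2$ and $h_1\circ h_2$ --- equivalently, the fact that each $N_{h_i}$ is a vector bundle embedded in the corresponding pullback with locally free quotient --- is an additional hypothesis, tacitly built into the lemma the moment it writes down the excess bundles; in the paper's use it is verified concretely, because the fibre products are identified in Propositions~\ref{intprop} and~\ref{prop:fibreprod} and the base-changed maps are again gluing-type maps.

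The second repair concerns your two short exact sequences. For a general lci morphism the normal bundle is only a virtual object, and the transitivity triangle of cotangent complexes yields only a long exact sequence whose tail involves the relative differentials; the sequence of honest bundles
\[
0 \to N_{f_2} \to N_{f_1\circ f_2} \to f_2^* N_{f_1} \to 0
\]
holds when $\Omega_{f_1}=\Omega_{f_2}=0$, i.e.\ when the morphisms are unramified, for then each $L_{f}$ is $N_f^\vee[1]$ with $N_f^\vee$ locally free and the long exact sequence collapses to the short one. (It genuinely fails otherwise: take $f_2$ the zero section of the projection $f_1\colon Z\times\mathbb{A}^1\to Z$, so that $f_1\circ f_2=\mathrm{id}_Z$ has trivial normal bundle while $N_{f_2}$ and $f_2^*N_{f_1}$ do not vanish.) This costs nothing in the paper's setting, since every map the lemma is applied to --- the gluing maps of Notation~\ref{par:gluing} and the source maps $\phi$ of Theorem~\ref{Thm:Hurwitzstack} --- is finite, unramified and lci, but the hypothesis must be added for your two sequences, and hence the snake-lemma step, to exist. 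With unramifiedness and the lci-ness of the base changes assumed, the remaining bookkeeping (commutativity of the diagram of two exact rows and three vertical inclusions) does follow, as you say, from the compatibility of the transitivity triangle with base change, and your proof is complete.
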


 \begin{proof}[Proof of Proposition \ref{prop:excess}]
 We will argue by induction on the edges of $A$. We will show that whenever multiple edges of $A$ land in the same $G$ orbit of edges of $\Gamma$ there is a contribution to the excess bundle and then identify this contribution for each such edge of $A$. 
 
 We can decompose the map $\xi_A$ into a sequence of gluing morphisms $\xi_i$ each gluing a single edge. In this way we obtain a sequence of fibered diagrams
 \begin{equation}\label{eq:decomposition}
  \begin{tikzcd}
  F \arrow[r]\arrow[d,"\coprod \phi_{f}"] & ... \arrow[r] & F_{i} \arrow[r,"\eta_{i}"] \arrow[d,"\phi_{i}"] & F_{i-1}\arrow[r] \arrow[d,"\phi_{i-1}"] & ... \arrow[r] & \H_{g,G,\xi}\arrow[d,"\phi"]\\
  \M_A\arrow[r] & ...\arrow[r] &  \M_{A_{i}}\arrow[r,"\xi_{i}"] & \M_{A_{i-1}} \arrow[r] &... \arrow[r] &\M_{g,r} 
    \end{tikzcd}.
 \end{equation}
 Abusing notation we then have
 \[
  c_{\textup{top}} (\phi_f^* N_{\xi_A} / N_{\xi_{(\Gamma,G)}}) = \prod_{i=1}^{\# E_A} c_{\textup{top}} (\phi^*_iN_{\xi_{i}}/N_{\eta_i}).
 \]
 It therefore suffices to determine $c_{\textup{top}} (\phi^*_iN_{\xi_{A_i}}/N_{\eta_i})$ for each $i$. 
 
 By Proposition \ref{prop:fibreprod} the fibre products $F_{i}$ of Diagram (\ref{eq:decomposition}) take the form
 \[
 F_i=\coprod_{(\Gamma_i,G,f_i)\in \mathfrak{H}_{A_i,G,\xi}} \H_{(\Gamma_i,G)} 
 \]
 Let $ \H_{(\Gamma',G)} \subset F_i$ be one of the components. The map 
 \[
  \phi_{(\Gamma_i,G,f_i)}\colon  \H_{(\Gamma_i,G)} \rightarrow \M_{A_i}
 \]
 factors as $\xi_{\Gamma_i\rightarrow A_i}\circ \phi_{\Gamma_i}$  where $\phi_{\Gamma_i}\colon \H_{\Gamma_i,G} \hookrightarrow \M_{\Gamma_i}$ is the source map and $\xi_{\Gamma_i\rightarrow A_i}\colon \M_{\Gamma_i}\rightarrow \M_{A_i}$ is the gluing morphism. 
 
 We can then form the fibered diagram
   \begin{equation}\label{eq:excessdecomp}
  \begin{tikzcd}
   \coprod_{B\in \mathfrak{G}_{A_{i+1},\Gamma_i}} \coprod_{(\Gamma_{i+1},G,f_{i+1})\in \mathfrak{H}_{B,G,\xi}} \H_{(\Gamma_{i+1},G)} \arrow[r,"\coprod \xi'_{i+1}"]\arrow[d,"\coprod \phi_f"] & \H_{(\Gamma_i,G)}\arrow[d,"\phi_{\Gamma_i}"]\\
   \coprod_{B\in \mathfrak{G}_{A_{i+1},\Gamma_i}} \M_{B}\arrow[d] \arrow[r] &\M_{\Gamma_i}\arrow[d,"\xi_{\Gamma_i\rightarrow A_{i}}"]\\
   \M_{A_{i+1}} \arrow[r,"\xi_{i+1}"]& \M_{A_i}
  \end{tikzcd}.
 \end{equation}
 where the lower square is Cartesian by Proposition \ref{intprop} and the upper square is Cartesian by Proposition \ref{prop:fibreprod}.
 
  Since $\codim_{\M_{A_i}} \im (\xi_{i+1})  =1$ the codimension of $\xi'_{i+1}( \H_{(\Gamma_{i+1},G)})$ in $ \H_{(\Gamma_{i},G)}$ is either 1 or 0. If it is 1 then there is no excess on this irreducible component.
 
 If the codimension is 0 then $ \H_{(\Gamma_{i+1},G)}= \H_{(\Gamma_{i},G)}$, i.e.\ $(\Gamma_{i+1},G)=(\Gamma_i,G)$. Restricting Diagram \eqref{eq:excessdecomp} we get
    \begin{center}
  \begin{tikzcd}
    \H_{(\Gamma_{i+1},G)} \arrow[r,"\textup{id}_{\H} "]\arrow[d,"\phi_{\Gamma_{i+1}}"] &  \H_{(\Gamma_{i},G)}\arrow[d,"\phi_{\Gamma_i}"]\\
   \M_{\Gamma_i}\arrow[d,"\xi_{\Gamma_i\rightarrow A_{i+1}}"] \arrow[r,"\textup{id}_{\M}"] &\M_{\Gamma_i}\arrow[d]\\
   \M_{A_{i+1}} \arrow[r,"\xi_{i+1}"]& \M_{A_i}
  \end{tikzcd}.
 \end{center}
 We deduce that 
 \[
  \phi_{\Gamma_{i+1}}^*\xi_{\Gamma_i\rightarrow A_{i+1}}^*N_{\xi_{i+1}}/N_{\textup{id}_{\H}}= \phi_{\Gamma_{i+1}}^* \left( \xi_{\Gamma_i\rightarrow A_{i+1}}N_{\xi_{i+1}}/N_{\textup{id}_{\M}} \right) = \phi^*_{\Gamma_{i+1}}(\L_{h} \otimes \L_{h'})
 \]
 where $(h,h')$ is the edge of $\Gamma$ corresponding to the edge of $A_{i+1}$ glued together by the morphism $\xi_{i+1}$ and where the second equality is due to \ref{prgrph:excess}. 
 
 Now note that this situation only occurs when $A_{i+1}$ is a specialization of $\Gamma_{i}$. In other words this situation occurs exactly if and only if the node glued together by $\xi_{i+1}$ is already a node of $\Gamma$, which can only happen if and only if the edge glued by $\xi_{i+1}$ is in the $G$ orbit of one of the edges glued by $\M_{A_i}\rightarrow \M_{g,r}$.
 \end{proof}

From the excess intersection formula, Proposition \ref{prop:fibreprod} and Proposition \ref{prop:excess} we now obtain:
 
 \begin{theorem}\label{th:main}
  We have
  \begin{align*}
   \xi_A^*\phi_{*} ([\H_{g,G,\xi}])=  \sum_{ (\Gamma,G,f) \in \mathfrak{H}_{A;G,\xi}} 
  \phi_{f*}( c_{\textup{top}}E_f
   \cap
   [\H_{(\Gamma,G)}]).
  \end{align*}
    where $c_{\text{top}}(E_f)$ is as in Proposition \ref{prop:excess}.
 \end{theorem}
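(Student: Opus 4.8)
The plan is to deduce the formula directly from the excess intersection formula (\cite[Proposition 17.4.1]{fulton1984}), fed by the two structural inputs already in place: the Cartesian description of the fibre product from Proposition \ref{prop:fibreprod} and the explicit top Chern class from Proposition \ref{prop:excess}. The argument runs exactly parallel to the proof of Proposition \ref{intbound}, with the source morphism $\phi$ now playing the role that the gluing map $\xi_B$ played there. The only conceptual change is that $\phi$ is not a boundary gluing morphism, so I would invoke its lci property from Theorem \ref{Thm:Hurwitzstack} in place of the lci property of a gluing map.

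First I would assemble the hypotheses needed to run the excess intersection machinery. By Theorem \ref{Thm:Hurwitzstack} the morphism $\phi\colon \H_{g,G,\xi}\to \M_{g,r}$ is representable, finite (hence proper) and a local complete intersection, while the gluing morphism $\xi_A\colon \M_A\to \M_{g,r}$ is lci and proper (Notation \ref{par:gluing}). As both $\M_A$ and $\M_{g,r}$ are smooth, the Gysin pullback $\xi_A^*$ agrees with the refined Gysin homomorphism $\xi_A^!$, which is defined in the intersection theory of smooth Deligne--Mumford stacks with rational coefficients. Proposition \ref{prop:fibreprod} then identifies the fibre product $\M_A\times_{\M_{g,r}}\H_{g,G,\xi}$ with $\coprod_{(\Gamma,G,f)\in \mathfrak{H}_{A;G,\xi}}\H_{(\Gamma,G)}$, sitting in the Cartesian square whose horizontal maps are $\xi_A$ and $\coprod\xi_{(\Gamma,G)}$ and whose vertical maps are $\phi$ and $\coprod\phi_f$. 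Since $\coprod\xi_{(\Gamma,G)}$ is the base change of the lci map $\xi_A$ along $\phi$, it is again lci, so the excess bundle $E_f=\phi_f^*N_{\xi_A}/N_{\xi_{(\Gamma,G)}}$ is defined on each component $\H_{(\Gamma,G)}$.

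Next, using that the refined Gysin homomorphism commutes with proper pushforward along $\phi$, I would write
\[
\xi_A^*\phi_*[\H_{g,G,\xi}] = \Big(\coprod\phi_f\Big)_*\,\xi_A^![\H_{g,G,\xi}],
\]
so the problem reduces to evaluating the refined class $\xi_A^![\H_{g,G,\xi}]$ component by component. On each $\H_{(\Gamma,G)}$ the excess intersection formula identifies this class with $c_{\textup{top}}(E_f)\cap[\H_{(\Gamma,G)}]$, and Proposition \ref{prop:excess} rewrites $c_{\textup{top}}(E_f)$ as the product of the factors $-(\psi_h+\psi_{h'})$. Summing over $(\Gamma,G,f)\in\mathfrak{H}_{A;G,\xi}$ and applying $\phi_{f*}$ gives the asserted equality. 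The genuinely substantive work is entirely contained in Propositions \ref{prop:fibreprod} and \ref{prop:excess}, so what remains is formal; the one point I would verify with care is the commutation of the Gysin operation past $\phi_*$ in the stacky setting, which is precisely where the representability and finiteness of $\phi$ (in contrast to $\delta$) are genuinely used.
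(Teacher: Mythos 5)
Your proposal is correct and follows essentially the same route as the paper: the paper's proof of Theorem \ref{th:main} is precisely the combination of the excess intersection formula from \cite[Proposition 17.4.1]{fulton1984} with the Cartesian diagram of Proposition \ref{prop:fibreprod} and the Chern class computation of Proposition \ref{prop:excess}, in direct parallel to Proposition \ref{intbound}. Your additional care about the lci/properness/representability hypotheses and the commutation of the refined Gysin map with proper pushforward is exactly the formal bookkeeping the paper leaves implicit.
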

Note that for an $A$-structure $f=(\alpha, \beta, \gamma)$ appearing in the above sum and a factor $(- \psi_h - \psi_{h'})$ from $c_{\text{top}}(E_f)$ (where $(h,h')$ is an edge of $\Gamma$ in $\im \beta$), we have, by definition of $\psi$ classes on spaces of admissible covers
\[- \psi_h - \psi_{h'} = \phi_f^* (- \psi_{\beta^{-1}(h)} - \psi_{\beta^{-1}(h')}).\]
In other words the class $c_{\text{top}}(E_f)$ is a pullback of a suitable combination of $\psi$-classes on $\M_A$. By the projection formula and slight abuse of notation, we can therefore also write
\begin{align*}
   \xi_A^*\phi_{*} ([\H_{g,G,\xi}])=  \sum_{ (\Gamma,G,f) \in \mathfrak{H}_{A;G,\xi}} 
  c_{\textup{top}}E_f \cap \phi_{f*} [\H_{(\Gamma,G)}].
  \end{align*}
 


 \subsection{Pushing Forward by the Target Morphism}

 Let 
 \begin{equation}\label{diag:deltapsi}
     \begin{tikzcd}
     \H_{g,G,\xi} \arrow[r,"\phi"]\arrow[d,"\delta"] & \M_{g,r}\\
     \M_{g',b}
     \end{tikzcd}
 \end{equation}
 be the diagram from Theorem \ref{Thm:Hurwitzstack} and let $[A_\theta]$ be a decorated stratum class on $\M_{g,r}$. In certain situations it is useful to compute the pull-push 
 \[
 \delta_* \phi^* [A_\theta]
 \]
 in terms of decorated stratum classes on $\M_{g',b}$. We can now do this by forming the diagram
  \begin{center}
     \begin{tikzcd}
    & \coprod_{(\Gamma,G,f)\in \mathfrak{H}_{A,G,\xi}}\H_{(\Gamma,G)} \arrow[r,"\coprod \phi_{f}"]\arrow[ddl,bend right,"\coprod \delta_{(\Gamma,G)}",swap] \arrow[d,"\coprod \xi_{(\Gamma,G)}",swap] & \M_{A} \arrow[d,"\xi_A"]\\
    & \H_{g,G,\xi} \arrow[r,"\phi"]\arrow[d,"\delta"] & \M_{g,r}\\
    \coprod_{(\Gamma,G,f)\in \mathfrak{H}_{A,G,\xi}}\M_{\Gamma/G}\arrow[r,"\coprod \xi_{\Gamma/G}",swap] & \M_{g',b}
     \end{tikzcd}.
 \end{center}
 To express $\delta_* \phi^* [A_\theta]$ we first observe that
 \[\phi^* [A_\theta] = \phi^* \xi_{A*} \theta = \sum_{(\Gamma,G,f) \in \mathfrak{H}_{A,G,\xi}} \xi_{(\Gamma,G)*} \big(c_{\textup{top}}(E_f) \cdot \phi_f^* \theta\big).\]
 Here we use that the upper right square is Cartesian by Proposition \ref{prop:fibreprod} and the excess bundle of the pullback along $\phi$ is given, for each $(\Gamma,G,f) \in \mathfrak{H}_{A,G,\xi}$,  in terms of a pullback of $\psi$ classes on $\M_\Gamma$, as $c_{\textup{top}}(E_f)$ in Proposition \ref{prop:excess}.
 
 As a next step, the commutativity of the left part of the diagram above implies that
 \[\delta_* \phi^* [A_\theta] = \sum_{(\Gamma,G,f)} \delta_* \xi_{(\Gamma,G)*} \big(c_{\textup{top}}(E_f) \cdot \phi_f^* \theta\big) =\sum_{(\Gamma,G,f)} \xi_{\Gamma/G *} \delta_{(\Gamma,G)*} \big(c_{\textup{top}}(E_f) \cdot \phi_f^* \theta\big) .\]
 
 The term $c_{\textup{top}}(E_f) \cdot \phi_f^* \theta $ is a linear combination of monomials $\alpha_j$ in $\kappa$ and $\psi$-classes on $\H_{(\Gamma,G)}$. Using \eqref{diag:deltapsi} and Lemma \ref{Lem:psikappacompat} we can find for any such monomial $\alpha_j$ a monomial term\footnote{That is, a rational coefficient times a monomial in $\kappa$ and $\psi$-classes.} $\tilde \alpha_j$ in $\kappa$ and $\psi$-classes on $\M_{\Gamma/G}$ such that $\alpha_j = \delta_{(\Gamma,G)}^* \tilde \alpha_j$. Let $F$ be the $\mathbb{Q}$-linear extension of the map $\alpha_j \mapsto \tilde \alpha_j$, such that $\alpha = \delta_{(\Gamma,G)}^* F(\alpha)$ for any polynomial $\alpha$ in $\kappa$ and $\psi$-classes on $\M_{(\Gamma,G)}$. Then we have
 \[\delta_{(\Gamma,G)*} c_{\textup{top}}(E_f) \cdot \phi_f^* \theta = \delta_{(\Gamma,G)*} \delta_{(\Gamma,G)}^* F(c_{\textup{top}}(E_f) \cdot \phi_f^* \theta) = \deg \delta_{(\Gamma,G)} \cdot F(c_{\textup{top}}(E_f) \cdot \phi_f^* \theta)\]
 again by the projection formula.

 Combining these results we get the following combinatorial description of $\delta_*\phi^* [A_\theta]$ by the excess intersection formula and the projection formula:
 \begin{theorem}\label{th:mainpush}
 With the above notation we have 
  \begin{align*}
   \delta_*\phi^{*}\xi_{A*} ([\theta])=  \sum_{ (\Gamma,G,f) \in \mathfrak{H}_{A;G,\xi}} 
   \deg \delta_{(\Gamma,G)} \; \xi_{\Gamma/G*}(F(c_{\textup{top}}E_f \cdot \phi_f^*(\theta))
   \cap
   [\M_{\Gamma/G}]).
  \end{align*}
 \end{theorem}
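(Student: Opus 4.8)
The plan is to read the formula off the three structural facts already assembled before the statement: the Cartesian square of Proposition \ref{prop:fibreprod}, the identification of the excess bundle in Proposition \ref{prop:excess}, and the comparison of tautological classes under the target map in Lemma \ref{Lem:psikappacompat}.

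\emph{Step 1 (pull back $\xi_{A*}\theta$ along $\phi$).} First I would apply the excess intersection formula to the Cartesian square of Proposition \ref{prop:fibreprod}, in which $\coprod\phi_f$ and $\coprod\xi_{(\Gamma,G)}$ sit over $\xi_A$ and $\phi$. Since $\xi_A$ is a proper lci morphism, the formula applies componentwise on each $\H_{(\Gamma,G)}$, where the excess bundle is the $E_f$ computed in Proposition \ref{prop:excess}. This yields
\[
\phi^*\xi_{A*}\theta = \sum_{(\Gamma,G,f)\in\mathfrak{H}_{A;G,\xi}} \xi_{(\Gamma,G)*}\bigl(c_{\textup{top}}(E_f)\cdot\phi_f^*\theta\bigr).
\]

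\emph{Step 2 (commute $\delta_*$ past $\xi_{(\Gamma,G)}$).} Next I would push forward along $\delta$, using that the left part of the large diagram commutes, i.e. $\delta\circ\xi_{(\Gamma,G)}=\xi_{\Gamma/G}\circ\delta_{(\Gamma,G)}$; this holds by construction of the equivariant gluing map, since gluing is performed fibrewise and is compatible with passing to the $G$-quotient on both sides. Hence
\[
\delta_*\phi^*\xi_{A*}\theta = \sum_{(\Gamma,G,f)} \xi_{\Gamma/G*}\,\delta_{(\Gamma,G)*}\bigl(c_{\textup{top}}(E_f)\cdot\phi_f^*\theta\bigr).
\]

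\emph{Step 3 (descend through $\delta_{(\Gamma,G)}$ and apply the projection formula).} The class $c_{\textup{top}}(E_f)\cdot\phi_f^*\theta$ is a polynomial in $\psi$- and $\kappa$-classes on $\H_{(\Gamma,G)}$: the factors $-\psi_h-\psi_{h'}$ are $\psi$-classes at half-edges and $\phi_f^*\theta$ contributes pulled-back $\psi$- and $\kappa$-classes. Applying Lemma \ref{Lem:psikappacompat} factor by factor on $\H_{(\Gamma,G)}=\prod_{v'}\H_{g(v'),G_{v'},\xi_{v'}}$, each such monomial is a rational multiple of the $\delta_{(\Gamma,G)}$-pullback of a monomial in $\psi'$ and $\kappa'$ classes on $\M_{\Gamma/G}$; collecting these defines $F$ so that $c_{\textup{top}}(E_f)\cdot\phi_f^*\theta=\delta_{(\Gamma,G)}^*F\bigl(c_{\textup{top}}(E_f)\cdot\phi_f^*\theta\bigr)$. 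The projection formula then gives $\delta_{(\Gamma,G)*}\delta_{(\Gamma,G)}^*F(\cdots)=\deg(\delta_{(\Gamma,G)})\cdot F(\cdots)\cap[\M_{\Gamma/G}]$, valid because $\delta_{(\Gamma,G)}$ is proper and generically finite (a product of the maps $\delta$, which are proper and quasi-finite by Theorem \ref{Thm:Hurwitzstack}). Combining the three steps produces the asserted formula.

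\emph{Main obstacle.} The delicate point is Step 3: one must verify that \emph{every} class entering $c_{\textup{top}}(E_f)\cdot\phi_f^*\theta$ genuinely descends through $\delta_{(\Gamma,G)}$ via Lemma \ref{Lem:psikappacompat}. This requires reinterpreting the half-edge $\psi$-classes of $\Gamma$ as marking $\psi$-classes on the individual factors $\H_{g(v'),G_{v'},\xi_{v'}}$, applying the comparisons $\psi_{p_{i,a}}=e_i^{-1}\delta^*\psi'_{q_i}$ and $\kappa_\ell=|G_{v'}|\,\delta^*\kappa'_\ell$ on each factor, and tracking the resulting rational constants; finally one identifies $\deg(\delta_{(\Gamma,G)})$ as the product of the factorwise degrees $\deg\delta$, computable by Theorem \ref{thm:condnonempty} (or Proposition \ref{prop:condcyclic} in the cyclic case).
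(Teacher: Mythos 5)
Your proposal is correct and follows essentially the same route as the paper's own argument: excess intersection on the Cartesian square of Proposition \ref{prop:fibreprod} with the bundle from Proposition \ref{prop:excess}, commutativity $\delta\circ\xi_{(\Gamma,G)}=\xi_{\Gamma/G}\circ\delta_{(\Gamma,G)}$, and then descent of the $\kappa$- and $\psi$-monomials through $\delta_{(\Gamma,G)}$ via Lemma \ref{Lem:psikappacompat} followed by the projection formula yielding the factor $\deg\delta_{(\Gamma,G)}$. Even the point you flag as the main obstacle (that every monomial genuinely descends, with the rational constants tracked by $F$) is precisely the step the paper handles the same way.
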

 Note that $\deg \delta_{(\Gamma,G)}$ is given componentwise in Theorem \ref{thm:condnonempty} (and more explicitly for $G$ cyclic in Proposition \ref{prop:condcyclic}).

 \begin{corollary} \label{Cor:pullpushdelta}
  If $\H_{g,G,\xi}$ is a stack of pointed admissible $G$-covers with source and target morphisms $\phi$ and $\delta$, then in particular Theorem \ref{th:mainpush} implies that the pull-push 
  \[
   \delta_*\phi^* \colon A^\bullet(\M_{g,r})\rightarrow A^\bullet(\M_{g',b})
  \]
 sends tautological classes to tautological classes.
 \end{corollary}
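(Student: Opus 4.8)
The plan is to obtain the statement as a formal consequence of Theorem \ref{th:mainpush}, which already contains all the geometric work. Recall that the tautological ring $R^\bullet(\M_{g,r})$ is by definition the $\Q$-subspace of $A^\bullet(\M_{g,r})$ spanned by the decorated stratum classes $[A_\theta]=\frac{1}{|\aut A|}\xi_{A*}(\theta)$, where $A$ runs over stable graphs with $r$ legs and $\theta$ over monomials in $\psi$ and $\kappa$ classes on the factors of $\M_A$. Since $\delta_*\phi^*$ is $\Q$-linear, I would first reduce to checking that it sends each such generator into $R^\bullet(\M_{g',b})$; and since $[A_\theta]$ is a rational multiple of $\xi_{A*}(\theta)$, it suffices to prove that $\delta_*\phi^*\xi_{A*}(\theta)$ is tautological for every $A$ and every decoration $\theta$.

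Next I would invoke Theorem \ref{th:mainpush}, which gives the explicit identity
\[
\delta_*\phi^{*}\xi_{A*}([\theta]) = \sum_{(\Gamma,G,f)\in \mathfrak{H}_{A;G,\xi}} \deg \delta_{(\Gamma,G)}\; \xi_{\Gamma/G*}\bigl(F(c_{\textup{top}}E_f \cdot \phi_f^*(\theta))\cap [\M_{\Gamma/G}]\bigr).
\]
The index set $\mathfrak{H}_{A;G,\xi}$ is finite, and each coefficient $\deg\delta_{(\Gamma,G)}$ is a rational number computed in Theorem \ref{thm:condnonempty}, so by linearity it is enough to show that each summand $\xi_{\Gamma/G*}(F(c_{\textup{top}}E_f \cdot \phi_f^*(\theta))\cap [\M_{\Gamma/G}])$ is a tautological class on $\M_{g',b}$.

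The crux is therefore to identify $F(c_{\textup{top}}E_f \cdot \phi_f^*(\theta))$ as a genuine decoration on the quotient graph $\Gamma/G$, i.e.\ a polynomial in the $\psi'$ and $\kappa'$ classes on the factors of $\M_{\Gamma/G}$. For this I would trace through the construction of $F$: by Proposition \ref{prop:excess} the class $c_{\textup{top}}E_f$ is a monomial in $\psi$-classes on $\H_{(\Gamma,G)}$, by Corollary \ref{cor:pulltheta} the pullback $\phi_f^*(\theta)$ is a polynomial in $\psi$ and $\kappa$ classes there, and $F$ rewrites each resulting monomial $\alpha$ via the comparison Lemma \ref{Lem:psikappacompat} (formulas \eqref{eqn:psicompat} and \eqref{eqn:kappacompat}) as the unique $\tilde\alpha$ with $\alpha=\delta_{(\Gamma,G)}^*\tilde\alpha$, where $\tilde\alpha$ is a scalar multiple of a monomial in the $\psi'$ and $\kappa'$ classes on $\M_{\Gamma/G}$. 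Hence $F(c_{\textup{top}}E_f \cdot \phi_f^*(\theta))$ is a decoration in the sense of the definition of decorated stratum classes, so $\xi_{\Gamma/G*}(F(\cdots)\cap[\M_{\Gamma/G}])$ is a multiple of a decorated stratum class for $\Gamma/G$ and thus tautological. Summing the finitely many terms then gives $\delta_*\phi^*\xi_{A*}(\theta)\in R^\bullet(\M_{g',b})$. The only point needing genuine care — and the closest thing to an obstacle — is verifying that $F$ really lands among $\psi$-$\kappa$ decorations; this is guaranteed vertex-by-vertex on the factors of $\M_{\Gamma/G}$ precisely because Lemma \ref{Lem:psikappacompat} expresses each generating class upstairs as a $\delta^*$-pullback of a standard generator downstairs, so no new geometric input is required beyond Theorem \ref{th:mainpush}.
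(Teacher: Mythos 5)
Your proposal is correct and follows essentially the same route as the paper: the corollary is read off from Theorem \ref{th:mainpush} by linearity over decorated stratum classes, with the key observation (already built into the construction of $F$ via Proposition \ref{prop:excess}, Corollary \ref{cor:pulltheta}, and Lemma \ref{Lem:psikappacompat}, applied factor-by-factor on $\H_{(\Gamma,G)}$) that each summand $\xi_{\Gamma/G*}\bigl(F(c_{\textup{top}}E_f\cdot\phi_f^*\theta)\cap[\M_{\Gamma/G}]\bigr)$ is a pushforward of a $\psi$--$\kappa$ decoration under a gluing map, hence tautological. The verification that $F$ genuinely lands among decorations, which you flag as the one point needing care, is exactly the content the paper establishes in the discussion preceding Theorem \ref{th:mainpush}.
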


 \subsection{Forgetting Points} \label{Sect:forgetting}
 
 In practice we are usually interested in the image of $\H_{g,G,\xi}$ under the composition
 \begin{equation}\label{eq:phifor}
  \phi_{(h_{k_1},...,h_{k_j})} \colon \H_{g,G,\xi} \xrightarrow{\phi} \M_{g,r} \xrightarrow{\pi} \M_{g,n}
 \end{equation}
 where $\pi$ is the map forgetting the markings associated to $h_{k_1},...,h_{k_j}$ and their $G$-orbits and stabilizing the resulting curve. For example the hyperelliptic locus $\Hyp_g\subset \M_g$ can be defined in our terms as $\phi_{(1^{2g+2})}(\H_{g,\Z/2\Z,(1^{2g+2})})$.
 
 We are therefore interested in computing the intersection between such a locus and a decorated stratum class. If $A$ is an $n$-pointed stable graph we can form the commutative diagram
 \begin{equation}\label{diag:forgetpoints}
     \begin{tikzcd}
     \coprod_{B\in \mathfrak{A}_A} \M_B \arrow[r,"\coprod \xi_B"]\arrow[d,"\coprod \pi_B"] &\M_{g,r}\arrow[d,"\pi"]\\
     \M_A \arrow[r,"\xi_A"] & \M_{g,n}
     \end{tikzcd}
 \end{equation}
 where $\mathfrak{A}_A$ is the set of all isomorphism classes of $r$-pointed graphs $B$ such that after forgetting $r-n$ of the legs of $B$ we get $A$ (recall here that if $f=(\alpha,\beta,\gamma)\colon A'\rightarrow A$ is an isomorphism of stable graphs then the leg assignment must commute with $\alpha$, i.e. $\alpha \circ \zeta_{A'}=\zeta_A$, in particular a permutation of the set of legs of a stable graph $A$ does not neccesarily induce an isomorphism of $A$). We define the map $\pi_B\colon \M_B\rightarrow \M_A$ as the map forgetting the relevant points (and stabilizing) componentwise. 
 
 The diagram \eqref{diag:forgetpoints} is not Cartesian. 
 Indeed $\M_{g,r}$ has connected fibers over $\M_{g,n}$ but $\coprod_{B\in \mathfrak{A}}\M_B to \M_A$ does not have connected fibers. 
 However the map 
 \[
 \xi\colon \coprod_{B\in \mathfrak{A}} \M_B \rightarrow \M_A\times_{\M_{g,n}}\M_{g,r}
 \]
 is proper and generically 1-to-1 (unwrapping definitions we see that $\xi$ is surjective and $\coprod \xi_B$ is generically injective so $\xi$ is as well). It follows that $\pi^*([A])= \sum_{B\in \mathfrak{A}_A} [B]$.
 
 To summarize, we have the following diagram.
  \begin{equation*}
  \begin{tikzcd}
   \coprod_{B\in \mathfrak{A}_A} \coprod_{(\Gamma,G,f)\in \mathfrak{H}_{B,G,\xi}}\H_{(\Gamma,G)}\arrow[r,"\coprod \xi_{(\Gamma,G)}"]\arrow[d,swap,"\coprod \phi_f"]  &\H_{g,G,\xi}\arrow[d,"\phi"]\\
   \coprod_{B\in \mathfrak{A}_A} \M_B \arrow[d,swap,"\coprod \pi_B"] \arrow[r,"\coprod \xi_B"] & \M_{g,r}\arrow[d,"\pi"]\\
   \M_A \arrow[r,"\xi_A"] &\M_{g,n} 
  \end{tikzcd}
 \end{equation*}
 
 Then putting everything together, we get the following generalization of Theorem \ref{th:main}.
 
 \begin{theorem} \label{th:maingen}
 With notation as above we have
  \begin{align} \label{eqn:bdrypullHbarOrig}
   \xi_A^*\pi_*\phi_{*} ([\H_{g,G,\xi}])= \sum_{B \in \mathfrak{A}_A}  \sum_{ (\Gamma,G,f) \in \mathfrak{H}_{B;G,\xi}} (\pi_B)_* c_{\textup{top}}E_f
   \cap
  \phi_{f*}( 
   [\H_{(\Gamma,G)}]).
  \end{align} 
 \end{theorem}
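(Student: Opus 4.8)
The plan is to deduce the statement from Theorem \ref{th:main}, which already computes $\xi_B^*\phi_*[\H_{g,G,\xi}]$ for each $r$-pointed graph $B\in\mathfrak{A}_A$, by commuting the Gysin pullback $\xi_A^*$ past the proper pushforward $\pi_*$. Concretely, I would first establish the identity of operators
\[
 \xi_A^*\pi_* = \sum_{B\in\mathfrak{A}_A}\pi_{B*}\,\xi_B^* \colon A^\bullet(\M_{g,r})\longrightarrow A^\bullet(\M_A),
\]
and then apply both sides to the class $\alpha=\phi_*[\H_{g,G,\xi}]$.

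For this identity the key point is that, although the lower square of the displayed diagram preceding the theorem is \emph{not} Cartesian, it becomes so after replacing $\coprod_{B\in\mathfrak{A}_A}\M_B$ by the genuine fibre product $W=\M_A\times_{\M_{g,n}}\M_{g,r}$, with projections $g'\colon W\to\M_{g,r}$ and $\pi'\colon W\to\M_A$. Since the forgetful morphism $\pi$ is flat and proper, this square is Tor-independent, so the refined Gysin pullback of the lci morphism $\xi_A$ commutes with the proper pushforward of $\pi$, yielding $\xi_A^*\pi_*=\pi'_*\,g'^*$. The natural map $j\colon\coprod_B\M_B\to W$ is proper and generically $1$-to-$1$, as recorded in the text just before the theorem, whence $j_*[\coprod_B\M_B]=[W]$; this is the cycle-level manifestation of the relation $\pi^*[A]=\sum_B[B]$ established there. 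Since $g'\circ j=\coprod_B\xi_B$ and $\pi'\circ j=\coprod_B\pi_B$, the projection formula then gives $\pi'_*\,g'^*=\sum_B\pi_{B*}\,\xi_B^*$, proving the operator identity.

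It remains to assemble the pieces. Applying the identity to $\alpha=\phi_*[\H_{g,G,\xi}]$ and substituting the formula of Theorem \ref{th:main} for each term $\xi_B^*\phi_*[\H_{g,G,\xi}]$ gives
\[
 \xi_A^*\pi_*\phi_*[\H_{g,G,\xi}]=\sum_{B\in\mathfrak{A}_A}\pi_{B*}\sum_{(\Gamma,G,f)\in\mathfrak{H}_{B;G,\xi}}\phi_{f*}\big(c_{\textup{top}}E_f\cap[\H_{(\Gamma,G)}]\big).
\]
Because every factor $-\psi_h-\psi_{h'}$ of $c_{\textup{top}}E_f$ is the $\phi_f$-pullback of the corresponding combination of $\psi$-classes on $\M_B$ (as remarked after Theorem \ref{th:main}), one more application of the projection formula moves $c_{\textup{top}}E_f$ outside of $\phi_{f*}$, producing the asserted shape $(\pi_B)_*\big(c_{\textup{top}}E_f\cap\phi_{f*}[\H_{(\Gamma,G)}]\big)$, where the excess bundle is the one identified in Proposition \ref{prop:excess}.

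I expect the main obstacle to be the very first step: commuting $\xi_A^*$ with $\pi_*$ in the absence of a Cartesian square. Note that applying the projection formula directly to $\pi$ together with $\pi^*[A]=\sum_B[B]$ only produces the desired identity \emph{after} composing with the pushforward $\xi_{A*}$, which is not injective on Chow groups; one therefore genuinely needs flat base change over the true fibre product $W$, combined with the properness and generic injectivity of $\coprod_B\M_B\to W$, to obtain the identity at the level of pullbacks. The remaining subtlety is to keep the refined Gysin homomorphisms and the automorphism weightings of the stacky fundamental classes consistent, but this bookkeeping is already encoded in Theorem \ref{th:main} and in the relation $\pi^*[A]=\sum_B[B]$.
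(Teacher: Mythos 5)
Your proposal is correct and follows essentially the same route as the paper: the paper's own ``proof'' consists of displaying the three-row diagram and invoking exactly your two ingredients --- the Cartesian upper square (i.e.\ Theorem \ref{th:main} applied to each $B \in \mathfrak{A}_A$, with the excess class moved through $\phi_{f*}$ by the projection formula) and the proper, generically one-to-one comparison map $\coprod_B \M_B \to \M_A \times_{\M_{g,n}} \M_{g,r}$ that substitutes for the failure of the lower square to be Cartesian. In fact your write-up supplies the step the paper compresses into ``putting everything together,'' namely the operator identity $\xi_A^*\pi_* = \sum_B \pi_{B*}\xi_B^*$ obtained by commuting the refined Gysin map past the proper pushforward over the true fibre product $W$ and then transferring along $j$; this is precisely the argument needed, and your closing remark correctly identifies why the naive projection-formula shortcut (which only gives the identity after the non-injective $\xi_{A*}$) is insufficient.
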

 
 Next we want to generalize Theorem \ref{th:mainpush}. That is, looking at the diagram 
  \begin{equation}
     \begin{tikzcd}
     \H_{g,G,\xi} \arrow[r,"\phi"]\arrow[d,"\delta"] & \M_{g,r}\arrow[r,"\pi"] & \M_{g,n}\\
     \M_{g',b}
     \end{tikzcd}
 \end{equation}
 we want to compute the pull-push $\delta_* \phi^* \pi^* [A_\theta]$ for decorated stratum classes $[A_\theta]$ on $\M_{g,n}$. 
 
 From the diagram \eqref{diag:forgetpoints} it follows that
 \begin{equation} \label{eqn:tautforgetpullback} \pi^* [A_\theta] = \pi^* \xi_{A*} \theta = \sum_{B \in \mathfrak{A}_A} \xi_{B*} \pi_B^* \theta.\end{equation}
 The maps $\pi_B$ are just products of the usual forgetful maps between moduli spaces of stable curves. The pullback of monomials $\theta$ in $\kappa$ and $\psi$-classes is determined by the following well known result
 \begin{proposition} \label{Pro:tautforgetpull}
  Let $\pi\colon \M_{g,n+1}\rightarrow \M_{g,n}$ be the map forgetting marking $n+1$. Then we have
\begin{align*}
  \psi_i &=\pi^{*} (\psi_i) + D_{i,n+1}\\
  \kappa_a &= \pi^* (\kappa_a) -\psi_{n+1}^a
 \end{align*}
where $D_{i,n+1}$ is the divisor which has the markings $i$ and $n+1$ on an irreducible component $R$ of arithmetic genus 0 and all other markings on a component of genus $g$ not containing $R$.
 \end{proposition}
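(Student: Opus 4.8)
The plan is to prove the two comparison formulas in order, obtaining the $\kappa$-identity as a consequence of the $\psi$-identity; both are classical and recorded in \cite{Arbarello2011}, so I will only indicate the mechanism. Throughout I view $\pi\colon \M_{g,n+1}\to\M_{g,n}$ as the universal curve, with the markings $1,\dots,n$ giving sections whose images are the boundary divisors $D_{i,n+1}$.

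For the $\psi$-identity I would compare the cotangent line bundle $\L_i$ on $\M_{g,n+1}$, with $c_1(\L_i)=\psi_i$, against the pullback $\pi^*\L_i$ of the cotangent line at the $i$-th marking on $\M_{g,n}$. The differential of the stabilization morphism underlying $\pi$ furnishes a canonical homomorphism $\pi^*\L_i\to\L_i$ of line bundles. Over the open locus where the forgotten point $p_{n+1}$ is a smooth point distinct from $p_i$, stabilization is a local isomorphism near $p_i$ and this map is an isomorphism; over $D_{i,n+1}$, where $p_i$ and $p_{n+1}$ sit on a rational tail that is contracted, the cotangent lines genuinely differ. Writing the contraction in a local analytic $xy$-chart and computing the order of vanishing of the comparison map shows that it vanishes to order exactly one along $D_{i,n+1}$. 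Hence $\L_i\cong\pi^*\L_i\otimes\mathcal{O}(D_{i,n+1})$, which gives $\psi_i=\pi^*\psi_i+D_{i,n+1}$ on first Chern classes.

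For the $\kappa$-identity I would start from the definition $\kappa_a=(\text{forget})_*(\psi^{a+1})$ and use the two forgetful maps from $\M_{g,n+2}$: if $x,y$ denote the two added markings, let $F_y$ forget $y$ and $F_x$ forget $x$. Realising $\M_{g,n+1}\times_{\M_{g,n}}\M_{g,n+1}$ as the contraction of $D_{x,y}\subset\M_{g,n+2}$ and applying flat base change together with the projection formula, I can rewrite $\pi^*\kappa_a=F_{y*}\big((F_x^*\psi_y)^{a+1}\big)$. Now I substitute the already-proven $\psi$-identity $F_x^*\psi_y=\psi_y-D_{x,y}$. The crucial simplification is that $\psi_y\cdot D_{x,y}=0$, since $p_y$ lies on the rigid three-pointed rational bubble along $D_{x,y}$; therefore the binomial expansion collapses to $(\psi_y-D_{x,y})^{a+1}=\psi_y^{a+1}+(-1)^{a+1}D_{x,y}^{a+1}$. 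The first term pushes forward under $F_y$ to $\kappa_a$ on $\M_{g,n+1}$, while the self-intersection relation $D_{x,y}|_{D_{x,y}}=-\psi_\bullet$ (with $\psi_\bullet$ the cotangent at the node on the principal component) reduces the second term, after pushforward along the isomorphism $F_y|_{D_{x,y}}$, to a single multiple of $\psi_{n+1}^a$, producing the stated identity.

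The main obstacle is the two local computations that pin down the correction terms: verifying that the comparison homomorphism of cotangent lines vanishes to order exactly one along $D_{i,n+1}$ in the analytic model of the bubble contraction, and checking that the higher powers of the diagonal divisor $D_{x,y}$ collapse---via $\psi_y\cdot D_{x,y}=0$ and $D_{x,y}|_{D_{x,y}}=-\psi_\bullet$---to exactly one power of $\psi_{n+1}$ with no residual boundary contributions. As these facts are standard, in the paper itself I would simply refer to \cite{Arbarello2011} for the precise statements.
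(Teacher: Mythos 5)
The paper offers no proof of this proposition at all: it is quoted as a ``well known result'' (with \cite{Arbarello2011} as the implicit reference), so the only thing to compare your writeup against is the classical Arbarello--Cornalba argument, which is indeed exactly the route you take. Your treatment of the $\psi$-identity is correct, and the skeleton of your $\kappa$-argument (two forgetful maps from $\M_{g,n+2}$, push-pull, the vanishing $\psi_y \cdot D_{x,y}=0$, the self-intersection $D_{x,y}|_{D_{x,y}}=-\psi_\bullet$) is the right one. The genuine gap is at the very last step, where you assert that the boundary term produces ``the stated identity'': it does not, and carrying out the sign bookkeeping you skipped would have revealed that the proposition as printed is itself wrong.

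Concretely, in your notation: $\pi^*\kappa_a = F_{y*}\bigl((\psi_y - D_{x,y})^{a+1}\bigr) = F_{y*}\bigl(\psi_y^{a+1}\bigr) + (-1)^{a+1}F_{y*}\bigl(D_{x,y}^{a+1}\bigr)$. The first term is $\kappa_a$ on $\M_{g,n+1}$. For the second, $D_{x,y}^{a+1} = i_*\bigl((-\psi_\bullet)^a\bigr) = (-1)^a\, i_*(\psi_\bullet^a)$, and $F_y\circ i$ is the isomorphism carrying $\psi_\bullet$ to $\psi_{n+1}$, so the second term equals $(-1)^{a+1}(-1)^a\,\psi_{n+1}^a = -\psi_{n+1}^a$. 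Hence your argument proves $\pi^*(\kappa_a) = \kappa_a - \psi_{n+1}^a$, equivalently $\kappa_a = \pi^*(\kappa_a) + \psi_{n+1}^a$, with a \emph{plus} sign --- the Arbarello--Cornalba formula --- and not the stated $\kappa_a = \pi^*(\kappa_a) - \psi_{n+1}^a$. The stated version fails already in trivial cases: for $a=0$ it would say $2g-1+n = (2g-2+n)-1$, and on $\M_{0,4}$ (with $a=1$) it would give $\kappa_1 = -\psi_4$, whereas in fact $\kappa_1=\psi_4$ there (both have degree $1$). The likely source of the trouble, both in your conclusion and in the paper's statement, is mixing up which side of the equation carries the pullback: $\pi^*\kappa_a = \kappa_a - \psi_{n+1}^a$ and $\kappa_a = \pi^*\kappa_a - \psi_{n+1}^a$ are different assertions, and only the first is true. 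So your method is sound, but the proof as written claims to establish a false identity; the correct conclusion of your own computation is the corrected statement with $+\psi_{n+1}^a$.
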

Again, putting things together we can first use equation \eqref{eqn:tautforgetpullback} and Proposition \ref{Pro:tautforgetpull} to compute the pullback $\pi^* [A_\theta]$ in terms of decorated stratum classes $[B_{\theta'}]$ on $\M_{g,r}$ and then  apply Theorem \ref{th:mainpush}  to each of these $[B_{\theta'}]$ and push the result forward through $\delta$.


\subsection{Example Computations}  
In the following we apply the techniques from the last sections to compute some examples of pullbacks for hyperelliptic and bielliptic cycles.

  \begin{notation}
  We will denote by $\Hyp_{g,\hypbiram,2\hypbipair},\B_{g,\hypbiram,2\hypbipair}\subset \M_{g,\hypbiram+2\hypbipair}$ the spaces
  \begin{align*}
      \phi_{(1^{2g+2-\hypbiram})}(&\H_{g,\Z/2\Z,(1^{2g+2},0^\hypbipair)}),\\
      \phi_{(1^{2g-2-\hypbiram})}(&\H_{g,\Z/2\Z,(1^{2g-2},0^\hypbipair)}),
  \end{align*}
  respectively. Note that on the smooth locus these are the hyperelliptic and bielliptic curves of genus $g$ with $\hypbiram$ points fixed by the covering involution and $\hypbipair$ pairs of points conjugate under the covering involution. We will drop $\hypbiram$ and $\hypbipair$ from the notation when they are both zero.
  \end{notation}

  \begin{example}\label{ex:B21H21}
  Let $A$ be the stable graph 
  \begin{center}
   \begin{tikzpicture}[->,>=bad to,baseline=-3pt,node distance=1.3cm,thick,main node/.style={circle,draw,font=\Large,scale=0.5}]
\node [main node] (A) {2};
\node at (1,0) [main node] (B) {2};
\node at (0,.35) (v1) {$v_1$};
\node at (1,.35) (v2) {$v_2$};
\draw [-] (A) to (B);
\end{tikzpicture}
  \end{center}
we will use Theorem \ref{th:main} to compute $\xi_A^* ([\B_4])=\xi_A^*[\phi_{(1^6)}(\H_{4,\Z/2\Z,(1^6)})]$.  We form the diagram
\begin{center}
 \begin{tikzcd}
\coprod_{B\in \mathfrak{A}_A}\coprod_{(\Gamma,G,f)\in \mathfrak{H}_{(B,G,(1^6))}} \H_{(\Gamma,G)}\arrow[r]\arrow[d] & \H_{4,\Z/2\Z,(1^6)}\arrow[d]\\
\coprod_{B\in \mathfrak{A}_A}\M_B\arrow[d]\arrow[r] & \M_{4,6}\arrow[d]\\
\M_A \arrow[r] &\M_{4}
 \end{tikzcd}
\end{center}
where the upper square is Cartesian. The set $\mathfrak{A}_A$ consists of all possible distributions of legs over the vertices of $A$. We claim that from the nonemptyness condition of admissible $G$ graphs it follows that any graph $B$ in $\mathfrak{A}_A$, such that there exists an admissible $G$ graph $(\Gamma,G)$ with generic $B$ structure, will have one leg attached to one of the vertices and all other legs to the other vertex. Indeed since $A$ has a separating edge, any $B\in \mathfrak{A}_A$ has a seperating edge. It follows that for any $(\Gamma,G)$  with a generic $B$ structure the graph $\Gamma$ and thus $\Gamma/G$ must also have a seperating edge. The graph $\Gamma/G$ without the legs is  therefore of the form
\begin{center}
    \begin{tikzpicture}[->,>=bad to,baseline=-3pt,node distance=1.3cm,thick,main node/.style={circle,draw,font=\Large,scale=0.5}]
\node [main node] (A) {1};
\node at (1,0) [main node] (B) {0};
\draw [-] (A) to (B);
\node at (0,.35) (v1) {$v_1$};
\node at (1,.35) (v2) {$v_2$};
\end{tikzpicture}.
\end{center}
From Riemann-Hurwitz it now follows that to satisfy the nonemptyness condion the vertex of genus 1 must have 2 legs and/or half edges and the vertex of genus 0 must have 6 legs and/or half edges attached to it. This proves the claim. The set of graphs in $\mathfrak{A}_A$ having nonzero intersection with $\phi(\H_{4,\Z/2\Z,(1^6)})$ thus consists of graphs of the following form:
\begin{center}
\begin{tabular}{c@{\hskip 1cm}c}
$B_1=\begin{tikzpicture}[->,>=bad to,baseline=-3pt,node distance=1.3cm,thick,main node/.style={circle,draw,font=\Large,scale=0.5}]
\node [main node] (A) {2};
\node at (1,0) [main node] (B) {2};
\node at (0,.35) (v1) {$v_1$};
\node at (1,.35) (v2) {$v_2$};
\draw [-] (A) to (B);
\draw [-] (A) to (-.3,0);
\draw [-] (B) to (1.5,0);
\draw [-] (B) to (1.4,.2);
\draw [-] (B) to (1.4,-.2);
\draw [-] (B) to (1.3,.5);
\draw [-] (B) to (1.3,-.5);
\end{tikzpicture}$
&
$B_2=
\begin{tikzpicture}[->,>=bad to,baseline=-3pt,node distance=1.3cm,thick,main node/.style={circle,draw,font=\Large,scale=0.5}]
\node [main node] (A) {2};
\node at (1,0) [main node] (B) {2};
\node at (0,.35) (v1) {$v_1$};
\node at (1,.35) (v2) {$v_2$};
\draw [-] (A) to (B);
\draw [-] (B) to (1.3,0);
\draw [-] (A) to (-.5,0);
\draw [-] (A) to (-.4,.2);
\draw [-] (A) to (-.4,-.2);
\draw [-] (A) to (-.3,.5);
\draw [-] (A) to (-.3,-.5);
\end{tikzpicture}$
\end{tabular}
\end{center}
The corresponding admissible $G$-graphs  in $\mathfrak{H}_{B_i,\Z/2\Z,(1^6)}$ are, respectively, of the form
\begin{center}
\begin{tabular}{c@{\hskip 1cm}c}
$(\Gamma_1,G)=
\begin{tikzpicture}[->,>=bad to,baseline=-3pt,node distance=1.3cm,thick,main node/.style={circle,draw,font=\Large,scale=0.5}]
\node [main node] (A) {2};
\node at (1,0) [main node] (B) {2};
\node at (-.8,.5) (v1) {$\H_{2,\Z/2/Z,(1^2)}$};
\node at (1.8,.7) (v2) {$\H_{2,\Z/2/Z,(1^6)}$};
\draw [-] (A) to (B);
\draw [-] (A) to (-.3,0);
\draw [-] (B) to (1.5,0);
\draw [-] (B) to (1.4,.2);
\draw [-] (B) to (1.4,-.2);
\draw [-] (B) to (1.3,.5);
\draw [-] (B) to (1.3,-.5);
\end{tikzpicture}$
&
$(\Gamma_2,G)=
\begin{tikzpicture}[->,>=bad to,baseline=-3pt,node distance=1.3cm,thick,main node/.style={circle,draw,font=\Large,scale=0.5}]
\node [main node] (A) {2};
\node at (1,0) [main node] (B) {2};
\node at (-.8,.7) (v1) {$\H_{2,\Z/2/Z,(1^6)}$};
\node at (1.8,.5) (v2) {$\H_{2,\Z/2/Z,(1^2)}$};
\draw [-] (A) to (B);
\draw [-] (B) to (1.3,0);
\draw [-] (A) to (-.5,0);
\draw [-] (A) to (-.4,.2);
\draw [-] (A) to (-.4,-.2);
\draw [-] (A) to (-.3,.5);
\draw [-] (A) to (-.3,-.5);
\end{tikzpicture}$
\end{tabular}
\end{center}
with the obvious  $B_i$-structure on $(\Gamma_i,G)$. Note that the top Chern class of the excess bundle is trivial. We therefore have 
\begin{align*}
 \xi_A^*\phi_{(1^6)*}[\H_{4,\Z/2\Z,(1^6)}]= \binom{6}{1} \bigg(& \phi_{(1)*}[\H_{2,\Z/2\Z,(1^2)}] \otimes \phi_{(1^5)*}[\H_{2,\Z/2\Z,(1^6)}]\\ &+ \phi_{(1^5)*}[\H_{2,\Z/2\Z,(1^6)}] \otimes \phi_{(1)*}[\H_{2,\Z/2\Z,(1^2)}] \bigg).
\end{align*}
 For smooth bielliptic curves of genus 4 the bielliptic involution is unique and generically such curves have no further automorphisms. Therefore the degree of $\phi_{(1^6)*}\colon \H_{4,\Z/2\Z,(1^6)}\rightarrow \M_4$ onto its image is $6!$, the number of possible orderings of the $6$ ramification points of the bielliptic map. Similarly the degree of  $\phi_{(1)*}\colon \H_{2,\Z/2\Z,(1^2)}\rightarrow \M_{2,1}$ to its image is $1$ and the degree of  $\phi_{(1^5)*}\colon \H_{2,\Z/2\Z,(1^2)}\rightarrow \M_{2,1}$ is $5!$. Thus
 \begin{align*}
  \xi_A^*[\B_4] =& \frac{1}{6!} \xi_A^*\phi_{(1^6)*}[\H_{4,\Z/2\Z,(1^6)}]\\
  =&  \frac{\binom{6}{1}}{6!} \big( \phi_{(1)*}[\H_{2,\Z/2\Z,(1^2)}] \otimes \phi_{(1^5)*}[\H_{4,\Z/2\Z,(1^2)}] \\ &+ \phi_{(1^5)*}[\H_{4,\Z/2\Z,(1^6)}] \otimes \phi_{(1)*}[\H_{4,\Z/2\Z,(1^2)}] \big)\\
  =& \frac{1}{5!}\left( [\B_{2,1}]\otimes 5![\Hyp_{2,1}] + 5![\Hyp_{2,1}]\otimes [\B_{2,1}]\right)\\  
  =&[\B_{2,1}]\otimes [\Hyp_{2,1}] + [\Hyp_{2,1}]\otimes [\B_{2,1}] \in A^3(\M_{2,1}\times \M_{2,1}).
 \end{align*}
 This computation will turn out useful later when we compute $[\B_4]$ in Theorem \ref{th:B4}.
 \end{example}

 \begin{example}
 Let 
\[
 A =
\begin{tikzpicture}[->,>=bad to,baseline=-3pt,node distance=1.3cm,thick,main node/.style={circle,draw,font=\Large,scale=0.5}]
\node [main node] (A) {1};
\draw [-] (A) to [out=30, in=-30,looseness=5] (A);
\draw [-] (A) to [out=150, in=-150,looseness=5] (A);
\end{tikzpicture}
\]
and consider the maps $\phi_{(1^8)}\colon \Hyp_{3,\Z/2\Z,(1^8)}\rightarrow \M_3$, $\delta \colon \H_{3,\Z/2\Z,(1^8)}\rightarrow \M_{0,8}$. We will compute $\delta_*\phi_{(1^8)}^*([A])$ using Theorem \ref{th:mainpush}. We will go step by step through the different parts of the equation
\begin{itemize}
 \item First note that $\# \aut A =8$ so $[A]=\tfrac{1}{8}\xi_{A*}([\M_A])$. 

 \item The set $\mathfrak{A}$, introduced in \eqref{diag:forgetpoints},  consists of all possible distributions of the legs $\{1,...,8\}$ over the vertices of $A$. Since $A$ has only one vertex, the set $\mathfrak{A}_A$ consist of a single element.
\[
 \mathfrak{A} = \left\{ 
 B:=\begin{tikzpicture}[->,>=bad to,baseline=-3pt,node distance=1.3cm,thick,main node/.style={circle,draw,font=\Large,scale=0.5},shift={(0,.3)}]
\node [main node] (A) {1};
\node at (0,-.7) (B) {...};
\node at (-.6,-.7) (1) {1};
\node at (.6,-.7) (2) {8};
\node at (-.4,.3) {\footnotesize $i_1$};
\node at (-.55,.-.1) {\footnotesize $i_2$};
\node at (.4,.3) {\footnotesize $i_3$};
\node at (.55,-.1) {\footnotesize $i_4$};
\draw [-] (A) to (0,-.5);
\draw [-] (A) to (.5,-.5);
\draw [-] (A) to (-.5,-.5);
\draw [-] (A) to (.2,-.5);
\draw [-] (A) to (-.2,-.5);
\draw [-] (A) to [out=30, in=-30,looseness=5] (A);
\draw [-] (A) to [out=150, in=-150,looseness=5] (A);
\end{tikzpicture}
 \right\}
\]

\item 
There are two possible types of admissible $\Z/2\Z$-graphs with a generic $B$-structure.
\begin{center}
\begin{tabular}{c@{\hspace{2cm}}c}
$(\Gamma_1,\Z/2\Z)=
    \begin{tikzpicture}[->,>=bad to,baseline=-3pt,node distance=1.3cm,thick,main node/.style={circle,draw,font=\Large,scale=0.5}]
  \begin{scope}
\node [main node] (A) {1};
\node at (-1,0) [scale=.3,draw,circle,fill=black] (B)  {};
\node at (1,0) [scale=.3,draw,circle,fill=black] (C)  {};
\node at (-.8,.4) {\footnotesize $h_1$};
\node at (-.3,.4) {\footnotesize $h_2$};
\node at (-.8,-.4) {\footnotesize $h_3$};
\node at (-.5,-.4) {\footnotesize $h_4$};
\node at (.3,.4) {\footnotesize $h_5$};
\node at (.8,.4) {\footnotesize $h_6$};
\node at (.5,-.4) {\footnotesize $h_7$};
\node at (.9,-.4) {\footnotesize $h_8$};
\draw [-] (A) to (0.1,-.5);
\draw [-] (A) to (-0.1,-.5);
\draw [-] (A) to (-.3,-.4);
\draw [-] (A) to (.3,-.4);
\draw [-] (B) to (-1.2,.2);
\draw [-] (B) to (-1.2,-.2);
\draw [-] (C) to (1.2,.2);
\draw [-] (C) to (1.2,-.2);
\draw [-] (A) to [out=30, in=150] (C);
\draw [-] (A) to [out=-30, in=-150] (C);
\draw [-] (B) to [out=30, in=150] (A);
\draw [-] (B) to [out=-30, in=-150] (A);
\end{scope}
\begin{scope}[shift ={(0,-1.2)}]
\draw [->,line width=.7mm] (0,.5) to (0,-.5);
\end{scope}
  \begin{scope}[shift ={(0,-2)}]
\node at (0,-.2) [scale=.3,draw,circle,fill=black] (A)  {};
\node at (-1,0) [scale=.3,draw,circle,fill=black] (B)  {};
\node at (1,0) [scale=.3,draw,circle,fill=black] (C)  {};
\draw [-] (A) to (0.1,-.5);
\draw [-] (A) to (-0.1,-.5);
\draw [-] (A) to (-.4,-.5);
\draw [-] (A) to (.4,-.5);
\draw [-] (B) to (-1.2,.2);
\draw [-] (B) to (-1.2,-.2);
\draw [-] (C) to (1.2,.2);
\draw [-] (C) to (1.2,-.2);
\draw [-] (A) to (C);
\draw [-] (B) to (A);
\end{scope}
\end{tikzpicture}
$
&
$
(\Gamma_2,\Z/2\Z) =
   \begin{tikzpicture}[->,>=bad to,baseline=-3pt,node distance=1.3cm,thick,main node/.style={circle,draw,font=\Large,scale=0.5}]
  \begin{scope}
\node [scale=.3,draw,circle,fill=black] (A)  {};
\node at (-1,0) [scale=.3,draw,circle,fill=black] (B)  {};
\node at (1,0) [main node] (C) {1};
\draw [-] (A) to (0.1,-.2);
\draw [-] (A) to (-0.1,-.2);
\draw [-] (B) to (-1.2,.2);
\draw [-] (B) to (-1.2,-.2);
\draw [-] (C) to (1.5,.1);
\draw [-] (C) to (1.5,-.1);
\draw [-] (C) to (1.3,.4);
\draw [-] (C) to (1.3,-.4);
\draw [-] (A) to [out=30, in=150] (C);
\draw [-] (A) to [out=-30, in=-150] (C);
\draw [-] (B) to [out=30, in=150] (A);
\draw [-] (B) to [out=-30, in=-150] (A);
\end{scope}
\begin{scope}[shift ={(0,-1.2)}]
\draw [->,line width=.7mm] (0,.5) to (0,-.5);
\end{scope}
  \begin{scope}[shift ={(0,-2)}]
\node at (0,-.2) [scale=.3,draw,circle,fill=black] (A)  {};
\node at (-1,0) [scale=.3,draw,circle,fill=black] (B)  {};
\node at (1,0) [scale=.3,draw,circle,fill=black] (C)  {};
\draw [-] (A) to (0.2,-.5);
\draw [-] (A) to (-0.2,-.5);
\draw [-] (B) to (-1.2,.2);
\draw [-] (B) to (-1.2,-.2);
\draw [-] (C) to (1.2,.1);
\draw [-] (C) to (1.2,-.1);
\draw [-] (C) to (1.1,.3);
\draw [-] (C) to (1.1,-.3);
\draw [-] (A) to (C);
\draw [-] (B) to (A);
\end{scope}
\end{tikzpicture}
$
\end{tabular}.
\end{center}
There are $\frac{1}{2}\binom{8}{2,4,2}$ possible nonisomorphic distributions of the legs for the graph $\Gamma_1$ and  $\binom{8}{2,2,4}$ for $\Gamma_2$.  

\item There are 8 isomorphism classes of $B$-structures on $(\Gamma_1,\Z/2\Z)$. For any generic $B$-structure $f'$ on $(\Gamma_1,\Z/2\Z)$ there exists an automorphism of $B$-structures  \[(\Gamma_1,\Z/2\Z,f')\rightarrow (\Gamma_1,\Z/2\Z,f)\] with $f=(\alpha,\beta,\gamma)$  such that the edges of $B$ are mapped to the edges $(h_1,h_2)$ and $(h_5,h_6)$ under $\beta$. There are 4 possible choices for the image of the half edge $i_1$ and given this choices there are 2 possible images for the halfedge $i_3$. This completely determines a $B$-structure on $(\Gamma_1,\Z/2\Z)$. 
A similar argument shows that there are 8 isomorphism classes of generic $B$-structures for $(\Gamma_2,\mathbb{Z}/2\mathbb{Z})$.

\item We have $\deg \delta_{(\Gamma_1,\Z/2\Z)}=\deg \delta_{(\Gamma_2,\Z/2\Z)}=2$ by Proposition \ref{prop:condcyclic}.

\item For all generic $B$-structures $f$ on $\Gamma_1$ and $\Gamma_2$, the set $\im \beta_f \cap G(\im \beta_f)$ is empty, so the top Chern class of the excess bundle is 1. 

\item The graph $A$ is undecorated, in other words $\theta=1$. Therefore we have \[\delta_{(\Gamma_i,\Z/2\Z)*}\phi^*_{f_i}\pi_B^*(\theta)=\deg \delta_{(\Gamma_i,\Z/2\Z)}=2\].

\item The image of $\delta_* \phi_(1^8)^*\colon A^\bullet(\M_{3})\rightarrow A^\bullet(\M_{0,8})$ lies in the $\S_8$ invariant part $A^\bullet(\M_{0,8})^{\S_8}$ of $A^\bullet(\M_{0,8})$. Denote by $d_{2,4,2},d_{4,2,2}\in A^\bullet(\M_{0,8})^{\S_8}$ the classes given by taking the sum over all nonisomorphic dual graphs of the form (respectively):
\begin{center}
 \begin{tabular}{c@{\hspace{2cm}}c}
\begin{tikzpicture}[->,>=bad to,baseline=-3pt,node distance=1.3cm,thick,main node/.style={circle,draw,font=\Large,scale=0.5}]
\node at (0,-.2) [scale=.3,draw,circle,fill=black] (A)  {};
\node at (-1,0) [scale=.3,draw,circle,fill=black] (B)  {};
\node at (1,0) [scale=.3,draw,circle,fill=black] (C)  {};
\draw [-] (A) to (0.1,-.5);
\draw [-] (A) to (-0.1,-.5);
\draw [-] (A) to (-.4,-.5);
\draw [-] (A) to (.4,-.5);
\draw [-] (B) to (-1.2,.2);
\draw [-] (B) to (-1.2,-.2);
\draw [-] (C) to (1.2,.2);
\draw [-] (C) to (1.2,-.2);
\draw [-] (A) to (C);
\draw [-] (B) to (A);
\end{tikzpicture}
   &
\begin{tikzpicture}[->,>=bad to,baseline=-3pt,node distance=1.3cm,thick,main node/.style={circle,draw,font=\Large,scale=0.5}]
\node at (0,-.2) [scale=.3,draw,circle,fill=black] (A)  {};
\node at (-1,0) [scale=.3,draw,circle,fill=black] (B)  {};
\node at (1,0) [scale=.3,draw,circle,fill=black] (C)  {};
\draw [-] (A) to (0.2,-.5);
\draw [-] (A) to (-0.2,-.5);
\draw [-] (B) to (-1.2,.2);
\draw [-] (B) to (-1.2,-.2);
\draw [-] (C) to (1.2,.1);
\draw [-] (C) to (1.2,-.1);
\draw [-] (C) to (1.1,.3);
\draw [-] (C) to (1.1,-.3);
\draw [-] (A) to (C);
\draw [-] (B) to (A);
\end{tikzpicture}
 \end{tabular}
\end{center}
Note that there are $\frac{1}{2} \binom{8}{2,4,2}$ irreducible boundary components in $d_{2,4,2}$ and $ \binom{8}{2,2,4}$ in $d_{4,2,2}$.

Putting everything together we have 
\begin{align*}
 \delta_*\phi_{(1^8)}^*([A])&= \frac{1}{8}\cdot 1 \left(\frac{1}{2} \binom{8}{2,4,2}\cdot 16 \cdot 2 \binom{8}{2,4,2}^{-1} d_{2,4,2} + \binom{8}{2,2,4}\cdot 16 \cdot  \binom{8}{2,2,4}^{-1} d_{2,4,2}\right) \\
 &= 2d_{2,4,2} + 2 d_{4,2,2}\in R^2(\M_{0,8})
 \end{align*}
\end{itemize}
 \end{example}

 More examples are given in \cite[Section 2.3]{thesisjason}.

\section{Computing Tautological Expressions for Admissible Cover Cycles}\label{ch:computing}

Let $\phi_{\tilde{\xi}}\colon \H_{g,G,\xi} \rightarrow \M_{g,n}$ be the composition $\pi \circ \phi$ as in \eqref{eq:phifor}, let $\H:= \im \phi_{\tilde{\xi}}$ and assume that the class $[\H]$ is tautological. We can then try to compute $[\H]$ in terms of decorated stratum classes. In this section we will outline a few possible methods using Theorem \ref{th:main} and Theorem \ref{th:mainpush} to do this.

\subsection{Perfect Pairing}\label{sec:perfectpairing}

The intersection pairing $H^k(\M_{g,n})\otimes H^{2\dim \M_{g,n}-k}(\M_{g,n})\rightarrow \C$ is perfect since $\M_{g,n}$ is a smooth complete Deligne-Mumford stack. The Gorenstein conjecture asks whether this pairing restricted to the tautological rings $RH^{2k}(\M_{g,n})\otimes RH^{2\dim \M_{g,n}-2k}(\M_{g,n})$ is perfect as well. As was first shown in \cite{Petersen2014b} the Gorenstein conjecture is false in general.  However for many low values of $g$, $n$ and $k$ it is known that $H^{2k}(\M_{g,n})=RH^{2k}(\M_{g,n})$ and  $H^{2\dim \M_{g,n}-2k}(\M_{g,n})=RH^{2\dim \M_{g,n}-2k}(\M_{g,n})$ so that the pairing in this degree is also perfect when restricted to the tautological ring. In particular this is the case 
\begin{itemize}
    \item for $g=0$ \cite{Keel} and $g=1$ \cite{petersen2014} and arbitrary $n$ as well as $g=2, n \leq 19$ \cite{petersen2016},
    \item for $k=0,1$ \cite{arbarellocornalba},
    \item for $(g,n)=(3,0), (3,1), (3,2), (4,0)$. Here the ranks of the cohomology groups $H^{2k}(\M_{g,n})$ have been computed \cite{Getzler1998, getzlerhodge, bergstromptcts, Bergstrom2007} and agree with the ranks of the intersection pairing on $RH^{2k}(\M_{g,n})$ computed in \cite{Yang2008}.
\end{itemize}

Assume $[\H] \in H^{2k}(\M_{g,n})$ is tautological and that the intersection pairing of $\M_{g,n}$ restricted to the tautological ring is perfect in degree $2k$. Then we can compute this class in the following way. Suppose we have fixed bases $\{D_i\}$ and $\{\hat{D}_j\}$ for $RH^{2k}(\M_{g,n})$ and for $RH^{2\dim \M_{g,n}-2k}(\M_{g,n})$ where $D_i$ and $\hat{D}_j$ are decorated stratum classes. As described in Remark \ref{Rmk:intnumb} we can compute the intersection matrix
\[
 M = (D_i\cdot \hat{D}_j)_{ij}.
\]
By assumption, there is a unique expression $[\H] = \sum a_i D_i$, which we want to compute. It must satisfy
\[
 ([\H] \cdot \hat{D}_j)_j = \left(\sum_i a_i D_i\cdot \hat{D}_j\right)_j.
\] 
We can compute the numbers on the left hand side using
Theorem \ref{th:mainpush}. Indeed, for any decorated stratum class $\hat{D}_j=[A_\theta]$, the theorem allows us to express the zero cycle $\delta_* \phi_{\tilde \xi}^* [A_\theta]$ in terms of tautological classes. Again using Remark \ref{Rmk:intnumb} we can compute the degree of this tautological zero cycle. 

Finally, since $M$ is an invertible matrix by assumption, we can solve this expression for the $a_i$.


\begin{example}
 We can use this method to compute the class of the hyperelliptic locus in the genus 3 case explicitly. 
 
 A basis for $A^1(\M_3)$ is given by $\lambda$, $\delta_0$ and $\delta_1$ (or $\kappa_1$, $\delta_0$ and $\delta_1$, note that $12 \lambda = \kappa_1+\delta_0 +\delta_1$). A basis in terms of decorated stratum classes for $A^5(\M_3)$ is given by (see \cite{Faber1990}) 
 \begin{center}
  \begin{tabular}{c@{\hskip 1 cm}c@{\hskip 1 cm}c}
    $\hat{D}_1 = \Bigg[\begin{tikzpicture}[->,>=bad to,baseline=-3pt,node distance=1.3cm,thick,main node/.style={circle,draw,font=\Large,scale=0.5}]
\node [scale=.3,draw,circle,fill=black] (A) {};
\node [scale=.3,draw,circle,fill=black] [right of =A] (B) {};
\node [scale=.3,draw,circle,fill=black] [right of =B] (C) {};
\draw [-] (A) to [out=150, in=-150,looseness=20] (A);
\draw [-] (C) to [out=30, in=-30,looseness=20] (C);
\draw [-] (A) to (B);
\draw [-] (B) to [out=40, in=140] (C);
\draw [-] (B) to [out=-40, in=-140] (C);
\end{tikzpicture}\Bigg]$
& 
    $ \hat{D}_2 = \Bigg[\begin{tikzpicture}[->,>=bad to,baseline=-3pt,node distance=1.3cm,thick,main node/.style={circle,draw,font=\Large,scale=0.5}]
\node [scale=.3,draw,circle,fill=black] (A) {};
\node [scale=.3,draw,circle,fill=black] [right of =A] (B) {};
\node [scale=.3,draw,circle,fill=black] [right of =B] (C) {};
\draw [-] (A) to [out=150, in=-150,looseness=20] (A);
\draw [-] (C) to [out=30, in=-30,looseness=20] (C);
\draw [-] (B) to [out=-120, in=-60,looseness=20] (B);
\draw [-] (A) to (B);
\draw [-] (B) to (C);
\end{tikzpicture}\Bigg]$
&
  $ \hat{D}_3 = \Bigg[\begin{tikzpicture}[->,>=bad to,baseline=-3pt,node distance=1.3cm,thick,main node/.style={circle,draw,font=\Large,scale=0.5}]
\node [scale=.3,draw,circle,fill=black] (A) {};
\node [scale=.3,draw,circle,fill=black] [right of =A] (B) {};
\node [scale=.3,draw,circle,fill=black] [right of =B] (C) {};
  \node [main node] [left of =A] (D) {1};
\draw [-] (C) to [out=30, in=-30,looseness=20] (C);
\draw [-] (A) to (D);
\draw [-] (C) to (B);
\draw [-] (A) to [out=40, in=140] (B);
\draw [-] (A) to [out=-40, in=-140] (B);
\end{tikzpicture}\Bigg]$
  \end{tabular}
   \end{center}
  The intersection numbers between the bases have been computed in \cite[pg. 418]{Faber1990} (or use Theorem \ref{th:prod}): 
  \begin{center}
  \begin{tabular}{c@{\hskip 1 cm}c@{\hskip 1 cm}c}
   $\lambda \cdot \hat{D}_1 = 0$ &    $\lambda \cdot \hat{D}_2 = 0$ &    $\lambda \cdot \hat{D}_3 = \frac{1}{96}$ \\
   $\delta_0 \cdot \hat{D}_1 = -\frac{1}{4}$ &    $\delta_0 \cdot \hat{D}_2 = 0$ &    $\delta_0 \cdot \hat{D}_3 = \frac{1}{8}$ \\
      $\delta_1 \cdot \hat{D}_1 = \frac{1}{8}$ &    $\delta_1 \cdot \hat{D}_2 = -\frac{1}{16}$ &    $\delta_1 \cdot \hat{D}_3 = -\frac{1}{96}$
  \end{tabular}
 \end{center}
 Recall that we denote by $[\Hyp_3]$ the class $\tfrac{1}{8!} \phi_{(1^6)*}[\H_{3,\Z_2,(1^6)}]$. We leave it as an exercise to the reader to compute the intersection numbers
 \begin{center}
    \begin{tabular}{c@{\hskip 1 cm}c@{\hskip 1 cm}c}
        $[\Hyp_3] \cdot \hat{D}_1 = -\frac{1}{8}$ &    $[\Hyp_3] \cdot \hat{D}_2 =\frac{3}{16}$ &    $[\Hyp_3] \cdot \hat{D}_3 = 0$
    \end{tabular}
 \end{center}
using Theorem \ref{th:main} or Theorem \ref{th:mainpush}. Solving the resulting system of equations we get the well known expression
\begin{align*}
 [\Hyp_3]=  (9\lambda - \delta_0 - 3\delta_1)\in A^1(\M_{3})
\end{align*}
\end{example}

\begin{remark}
 The Sage program \cite{Schmittprogram} has this technique implemented as a function. However it is computationally not the most efficient method. It suffices to compute $[\B_{2,2,0}]$ in a short amount of time but most classes which are more complicated require too much computing time and memory. The problem is that in calculating the intersection of $[\im \phi_{\tilde{\xi}*}]$ with a decorated stratum classes $[A_\theta]$ we first have to pull back $[A_\theta]$ through the forgetful map $\pi \colon \M_{g,r}\rightarrow \M_{g,n}$. If the number $r-n$  is very high then the pullback $\pi^{*}[A_\theta]$ in terms of decorated stratum classes involves a large number of different classes which leads to a large increase in computation time. Moreover, many of these classes live on complicated stable graphs. The classification of all $G$-admissible graphs with large codimension is another computationally intensive operation.
\end{remark}

\subsection{Repeated Pullback to Lower Genus}\label{sec:lowergenus}

A second method for studying cohomology classes such as $\alpha= \phi_{\tilde{\xi}*} [\H_{g,G,\xi}]$ on $\M_{g,n}$ is to pull them back via the boundary gluing maps $\xi_A$ for stable graphs $A$. As the following result of Arbarello and Cornalba shows, knowing all the pullbacks $\xi_A^* \alpha$ is in many cases sufficient to determine $\alpha$ uniquely.
\begin{lemma}[Lemma 2.5, 2.6, \cite{accohomology}]\label{lem:pullboundrange}
  Let $\mathcalorig{A}$ be the set of stable graphs $A$ of genus $g$ with $n$ markings having exactly one edge. Then the pullback
  \[H^k(\M_{g,n}) \xrightarrow{\prod \xi_A^*} \prod_{A \in \mathcalorig{A}} H^k(\M_A)\]
  via all boundary gluing maps $\xi_A : \M_A \to \M_{g,n}$ is injective for $k \leq d(g,n)$ with
  \[d(g,n)=\begin{cases}
  n-4 &\text{if }g=0,\\
  2g-2 &\text{if }n=0,\\
  2g-3+n &\text{if }g>0, n>0.
  \end{cases}
  \]
\end{lemma}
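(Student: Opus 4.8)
The plan is to study the restriction of a cohomology class to the boundary $\partial\M_{g,n}=\M_{g,n}\setminus\Mo_{g,n}$, where $\Mo_{g,n}$ denotes the open locus of smooth curves, and to bound the kernel of this restriction by the homology of $\Mo_{g,n}$. Throughout I would work with $\Q$-coefficients, so that the Deligne--Mumford stack $\M_{g,n}$ behaves like a compact oriented orbifold of real dimension $2d$ with $d=3g-3+n$, allowing Poincar\'e duality and the vanishing theorems below to apply. The first reduction is to pass from the product of the $\xi_A^*$ to a single restriction $H^k(\M_{g,n})\to H^k(\partial\M_{g,n})$. The irreducible boundary divisors of $\M_{g,n}$ are precisely the images of the gluing maps $\xi_A$ for the single-edge graphs $A\in\mathcalorig{A}$, and $\coprod_A \xi_A\colon \coprod_A \M_A \to \partial\M_{g,n}$ is a finite surjective morphism, namely the normalization of the boundary. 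A proper surjective morphism of this kind induces an injection on rational cohomology via a transfer on each component; here rational coefficients are essential, for instance for the self-node divisor $\Delta_{\mathrm{irr}}$ where $\xi_A$ has generic degree $2$. Hence $\prod_A \xi_A^*\alpha=0$ forces $\alpha|_{\partial\M_{g,n}}=0$, and it suffices to show that restriction to the whole boundary is injective for $k\le d(g,n)$.

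Next I would insert this into the long exact sequence of the pair $(\M_{g,n},\partial\M_{g,n})$,
\[
 \cdots \to H^k(\M_{g,n},\partial\M_{g,n}) \to H^k(\M_{g,n}) \to H^k(\partial\M_{g,n}) \to \cdots,
\]
which identifies the kernel of restriction with the image of the relative cohomology. By excision one has $H^k(\M_{g,n},\partial\M_{g,n})\cong H^k_c(\Mo_{g,n})$, and Poincar\'e duality on the smooth oriented orbifold $\Mo_{g,n}$ gives
\[
 H^k(\M_{g,n},\partial\M_{g,n}) \cong H^k_c(\Mo_{g,n}) \cong H_{2d-k}(\Mo_{g,n};\Q).
\]
It therefore suffices to prove that $H_j(\Mo_{g,n};\Q)=0$ for $j=2d-k$ whenever $k\le d(g,n)$, i.e.\ for $j\ge 2d-d(g,n)$.

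The key external input is Harer's theorem that $\Mo_{g,n}$ has the rational homotopy type of a finite complex of dimension equal to the virtual cohomological dimension $\operatorname{vcd}(\Gamma_{g,n})$ of the mapping class group, so that $H_j(\Mo_{g,n};\Q)=0$ for $j>\operatorname{vcd}(\Gamma_{g,n})$, where $\operatorname{vcd}(\Gamma_{g,n})$ equals $n-3$ for $g=0$, equals $4g-5$ for $n=0$, and equals $4g-4+n$ for $g,n>0$. A short case-by-case computation then shows $2d-\operatorname{vcd}(\Gamma_{g,n})=d(g,n)+1$ in all three regimes, so that $2d-k\ge d(g,n)+1>\operatorname{vcd}(\Gamma_{g,n})$ exactly when $k\le d(g,n)$. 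This yields the vanishing $H^k(\M_{g,n},\partial\M_{g,n})=0$ in the required range, hence the injectivity of restriction to the boundary, and combined with the first reduction gives the lemma.

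The case-by-case arithmetic matching $2d-\operatorname{vcd}(\Gamma_{g,n})$ with $d(g,n)+1$ is routine bookkeeping; the substantive content is imported, namely Harer's vcd computation and the duality identification of the relative cohomology with $H_{2d-k}(\Mo_{g,n};\Q)$. I expect the only genuine subtlety to be the injectivity of the normalization pullback in the first reduction step, where the boundary must be treated as a possibly non-normal divisor and one must invoke the existence of a transfer on rational cohomology, in particular for the degree-$2$ self-gluing map onto $\Delta_{\mathrm{irr}}$.
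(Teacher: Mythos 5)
Your second and third steps are sound and follow exactly the route of Arbarello--Cornalba \cite{accohomology}, which is all the paper itself offers (it cites their Lemmas 2.5 and 2.6 rather than giving a proof): the long exact sequence of the pair, the identification $H^k(\M_{g,n},\partial\M_{g,n})\cong H^k_c(\Mo_{g,n})\cong H_{2d-k}(\Mo_{g,n};\Q)$, Harer's computation of the virtual cohomological dimension, and the bookkeeping $2d-\operatorname{vcd}=d(g,n)+1$ are all correct. The gap is precisely at the point you flagged as ``the only genuine subtlety,'' and your resolution of it is wrong: a finite surjective morphism onto a \emph{non-normal} compact variety admits no transfer and need not induce an injection on rational cohomology. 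The normalization $\nu\colon\P^1\to C$ of a nodal cubic is finite, surjective, of degree $1$, yet $H^1(C;\Q)\cong\Q$ while $H^1(\P^1;\Q)=0$, so $\nu^*$ has a kernel and no map $\tau$ with $\tau\circ\nu^*=\mathrm{id}$ can exist. This is exactly the situation here: $\partial\M_{g,n}$ is a normal crossings divisor whose components cross each other and themselves, hence is non-normal, and even a single divisor $\Delta_A$ (e.g.\ $\Delta_{\mathrm{irr}}$) is non-normal along its self-intersection locus. Classes supported on the crossing locus --- the analogues of the $H^1$ of the nodal cubic --- lie in the kernel of $H^k(\partial\M_{g,n})\to\prod_A H^k(\M_A)$. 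The transfer argument only covers the harmless factor $\M_A\to\M_A/\aut(A)$, not the normalization itself, nor the passage from the union $\bigcup_i\Delta_i$ to the disjoint union of its components, where Mayer--Vietoris produces further kernel.

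What is true, and what the lemma needs, is injectivity of the \emph{composite} $H^k(\M_{g,n})\to H^k(\partial\M_{g,n})\to\prod_A H^k(\M_A)$, i.e.\ injectivity of the second map on the image of the first, and this requires an input your proof does not contain: mixed Hodge theory (or an equivalent substitute). Since $\coprod_A\M_A$ is smooth and proper and maps properly and surjectively onto the proper stack $\partial\M_{g,n}$, Deligne's theory gives $\ker\bigl(H^k(\partial\M_{g,n};\Q)\to\prod_A H^k(\M_A;\Q)\bigr)=W_{k-1}H^k(\partial\M_{g,n};\Q)$; on the other hand $H^k(\M_{g,n};\Q)$ is pure of weight $k$ and restriction is a morphism of mixed Hodge structures, so by strictness any class in the image of restriction lying in $W_{k-1}$ is the restriction of a class in $W_{k-1}H^k(\M_{g,n};\Q)=0$. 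Combined with your vanishing of $H^k(\M_{g,n},\partial\M_{g,n})$ this closes the argument; it is exactly the content of the second of the two cited lemmas of \cite{accohomology}, while your duality-plus-Harer step is essentially the first. So the proof is repairable, but the missing step is a genuine idea (purity and strictness of weights), not routine bookkeeping, and the transfer claim as written is false.
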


\begin{remark}
 In particular note that $\dim \Hyp_g=2g-1$ and $\dim \B_g=2g-2$ and so $[\Hyp_g]\in H^{2g-4}(\M_g)$ and $[\B_g]\in H^{2g-2}(\M_g)$. Hence the classes of the (unmarked) hyperelliptic and the bielliptic  loci are always in the range of Lemma \ref{lem:pullboundrange}.
\end{remark}

In a lot of ways, computing the pullback by $\xi_A$ gives us finer information than the method of Section \ref{sec:perfectpairing}, since knowing the pullback $\xi_A^*\alpha$ determines the intersection number of $\alpha$ with any decorated stratum class $A_\theta$ on the graph $A$. The only intersection numbers we cannot obtain this way are those with pure polynomials in $\kappa$ and $\psi$-classes on $\M_{g,n}$.

So for every stable graph $A$ we form the diagram
 \begin{equation*}
  \begin{tikzcd}
   \coprod_{B\in \mathfrak{A}} \coprod_{(\Gamma,G,f)\in \mathfrak{H}_{B,G,\xi}}\H_{(\Gamma,G)}\arrow[r,"\coprod \xi_{(\Gamma,G)}"]\arrow[d,swap,"\coprod \phi_f"]  &\H_{g,G,\xi}\arrow[d,"\phi"]\\
   \coprod_{B\in \mathfrak{A}} \M_B \arrow[d,swap,"\coprod \pi_B"] \arrow[r,"\coprod \xi_B"] & \M_{g,r}\arrow[d,"\pi"]\\
   \M_A \arrow[r,"\xi_A"] &\M_{g,n} 
  \end{tikzcd}.
 \end{equation*}
Using Theorem \ref{th:main} we can express the pullback  $\xi_A^*\pi_*\phi_* ([\H_{g,G,\xi}])$ as 
  \begin{align} \label{eqn:bdrypullHbar}
   \xi_A^*\pi_*\phi_{*} ([\H_{g,G,\xi}])= \sum_{B \in \mathfrak{A}}  \sum_{ (\Gamma,G,f) \in \mathfrak{H}_{B;G,\xi}} (\pi_B)_* c_{\textup{top}}E_f
   \cap
  \phi_{f*}( 
   [\H_{(\Gamma,G)}]).
  \end{align}
  
Our goal is to express the right hand side of (\ref{eqn:bdrypullHbar}) entirely in terms of tautological classes. As shown in Proposition \ref{prop:excess}, the cycle $c_{\textup{top}}E_f$ is a combination of $\psi$-classes, which are tautological. Also we know how the pushforward $(\pi_B)_*$ acts on tautological classes. Now observe that the map $\phi_f$ is the composition $\xi_f \circ \phi_{(\Gamma,G)}$ and we also know how the partial gluing map $\xi_f$ acts on tautological classes. Thus effectively, we have to be able to express terms of the form $(\phi_{(\Gamma,G)})_* [\H_{(\Gamma,G)}]$.

Hence we can reduce to finding a tautological expression for $(\phi_{(\Gamma,G)})_* [\H_{(\Gamma,G)}]$. The space $\H_{(\Gamma,G)}$ itself was a product of moduli spaces $\H_{g(v),G_v,\xi_v}$ for a set of representatives $v$ of the $G$-action on $\Gamma$. Since these spaces have smaller dimension than $\H_{g,G,\xi}$, we assume that the corresponding cycles $\phi_* [\H_{g(v),G_v,\xi_v}] \in H^*(\M_{g(v),n(v)})$ are tautological and that we have computed them in terms of the generators of the tautological ring.

From the definition of $\phi_{(\Gamma,G)}$ we saw in (\ref{eqn:phiGammaGfactor}) that it factors as a product of the maps $\phi$ for the spaces $\H_{g(v),G_v,\xi_v}$ composed with diagonal morphisms $\Delta : \M_{g(v),n(v)} \to \prod_{w \in Gv} \M_{g(v),n(v)}$, where $Gv$ is the orbit of $v \in V(\Gamma)$ under the action of $G$. By Application \ref{app:tautkunneth}, we can express the class of the diagonal in terms of (tensor products of) tautological classes in the case that the entire cohomology $H^*(\M_{g(v),n(v)})$ is tautological. As discussed in Application \ref{app:tautkunneth}, this is the case for many small $(g(v),n(v))$. 


Suppose that the cycles $\phi_* [\H_{g(v),G_v,\xi_v}]$ and the classes of the diagonals appearing in (\ref{eqn:bdrypullHbar}) are tautological. Then we can determine  $\xi_A^*\pi_*\phi_{*}([\H_{g,G,\xi}])$ in terms of decorated stratum classes. Using Theorem \ref{th:prod} we can on the other hand compute $\xi^*_A\colon R^\bullet(\M_{g,n})\rightarrow R^\bullet(\M_A)$ explicitly as a linear map in terms of (generating sets of) decorated stratum classes. Thus knowing $\xi_A^*\pi_*\phi_{*}([\H_{g,G,\xi}])$  determines the class $\pi_*\phi_{*}([\H_{g,G,\xi}])$ in terms of decorated stratum classes up to the kernel of $\xi^*_A$. 

Note that in order to compute the kernel of $\xi_A^*$, we need to identify all relations between decorated stratum classes on $\M_{g,n}$ and on the factors of $\M_A$. While in principle these relations can be determined from the intersection form (in case that all cohomology in the corresponding degrees is tautological), this is too computationally intensive in most cases. Instead, we use the implementation of the generalized Faber-Zagier relations from \cite{Pixtprogram}, which are known to be a complete set of relations in many cases.

If we can do this for a number of graphs $A_i$ and if
\[
 \bigcap \ker \xi_{A_i}^* = \{0\}
\]
then we can completely determine the class $\pi_*\phi_* ([\H_{g,G,\xi}])$. In some cases where the intersection of kernels is not trivial, we can obtain some further linear restrictions on $\pi_*\phi_* ([\H_{g,G,\xi}])$ by computing intersection numbers with polynomials in $\kappa$ and $\psi$-classes on $\M_{g,n}$ as described in Section \ref{sec:perfectpairing}.

As an example we will now compute the class of the hyperelliptic locus in genus 5.

\begin{theorem}\label{th:H5}
 In $R^3(\M_5)$ we have 

\input{H5class.tex}
\end{theorem}

 
 \begin{proof}[Proof of Theorem \ref{th:H5}]
  Consider the following two graphs
    \begin{center}
   \begin{tabular}{c@{\hskip 2cm}c}
      $A_1=
   \begin{tikzpicture}[->,>=bad to,baseline=-3pt,node distance=1.3cm,thick,main node/.style={circle,draw,font=\Large,scale=0.5}]
\node [main node] (A) {2};
\node[main node] (B) [left of=A] {2};
\draw [-] (B) to [out=40, in=140] (A);
\draw [-] (B) to [out=-40, in=-140] (A);
\end{tikzpicture}$
  & $A_2=
 \begin{tikzpicture}[->,>=bad to,baseline=-3pt,node distance=1.3cm,thick,main node/.style={circle,draw,font=\Large,scale=0.5}]
\node[main node] (A) {3};
\node[main node] (B) [right of=A] {2};
\draw [-] (A) to (B);
\end{tikzpicture}$.
   \end{tabular}
  \end{center}
  Using Theorem \ref{th:prod} (and \cite{Schmittprogram}) the pullbacks 
  \begin{align*}
  \xi^*_{A_1}\colon A^3(\M_4) & \to A^3 (\M_{2,2}\times \M_{2,2})\\
  \xi^*_{A_2}\colon A^3(\M_4) & \to A^3 (\M_{3,1}\times \M_{2,1})
    \end{align*}
  can be computed in terms of decorated stratum classes. It turns out that $\ker \xi^*_{A_1}\cap \xi^*_{A_2}=\{0\}$. By Theorem \ref{th:main} we have
 \begin{align*}
 \xi_{A_1}^* [\Hyp_5]&=
  -\Bigg[
\begin{tikzpicture}[->,>=bad to,baseline=-3pt,node distance=1.3cm,thick,main node/.style={circle,draw,font=\Large,scale=0.5}]
\node at (1.4,0) [main node] (A) {2};
\node at (0,0) [main node] (B) {2};
\node at (-.2,.5) (Hi1) {$\Hyp_{2,0,2}$};
\node at (1.6,.5) (Hi2) {$\Hyp_{2,0,2}$};
\draw [-] (B) to [out=40, in=140] (A);
\draw [<-] (B) to [out=-40, in=-140] (A);
\draw [<->] (.7,-.2) to (.7,.2);
\end{tikzpicture}\Bigg]
  -\Bigg[
\begin{tikzpicture}[->,>=bad to,baseline=-3pt,node distance=1.3cm,thick,main node/.style={circle,draw,font=\Large,scale=0.5}]
\node at (1.4,0) [main node] (A) {2};
\node at (0,0) [main node] (B) {2};
\node at (-.2,.5) (Hi1) {$\Hyp_{2,0,2}$};
\node at (1.6,.5) (Hi2) {$\Hyp_{2,0,2}$};
\draw [-] (B) to [out=40, in=140] (A);
\draw [->] (B) to [out=-40, in=-140] (A);
\draw [<->] (.7,-.2) to (.7,.2);
\end{tikzpicture}\Bigg]\\
&=-\psi_1\cap[\Hyp_{2,0,2}]\otimes [\Hyp_{2,0,2}]- [\Hyp_{2,0,2}]\otimes \psi_1\cap[\Hyp_{2,0,2}]\in R^\bullet(\M_{2,2}\times \M_{2,2}).
 \end{align*}
The class $[\Hyp_{2,0,2}]\in  A^1(\M_{2,2})$ was originally computed in \cite[Lemma 6]{Belorousski1984}. We have:
\begin{align*}
 [\Hyp_{2,0,2}] = 
 \Bigg[\begin{tikzpicture}[->,>=bad to,baseline=-3pt,node distance=1.3cm,thick,main node/.style={circle,draw,font=\Large,scale=0.5}]
\node [main node] (A) {2};
\node at (.5,.4) (n1) {1};
\node at (.5,-.4) (n2) {2};
\draw [<-] (A) to (n1);
\draw [-] (A) to (n2);
\end{tikzpicture}\Bigg]
+
 \Bigg[\begin{tikzpicture}[->,>=bad to,baseline=-3pt,node distance=1.3cm,thick,main node/.style={circle,draw,font=\Large,scale=0.5}]
\node [main node] (A) {2};
\node at (.5,.4) (n1) {1};
\node at (.5,-.4) (n2) {2};
\draw [-] (A) to (n1);
\draw [<-] (A) to (n2);
\end{tikzpicture}\Bigg]
- 3 
 \Bigg[\begin{tikzpicture}[->,>=bad to,baseline=-3pt,node distance=1.3cm,thick,main node/.style={circle,draw,font=\Large,scale=0.5}]
 \node [scale=.3,draw,circle,fill=black] (A) {};
\node [main node] (B) [left of =A] {2};
\node at (.5,.4) (n1) {1};
\node at (.5,-.4) (n2) {2};
\draw [-] (A) to (B);
\draw [-] (A) to (n1);
\draw [-] (A) to (n2);
\end{tikzpicture}\Bigg]
-\frac{6}{5}
 \Bigg[\begin{tikzpicture}[->,>=bad to,baseline=-3pt,node distance=1.3cm,thick,main node/.style={circle,draw,font=\Large,scale=0.5}]
 \node [main node] (A) {1};
\node [main node] (B) [left of =A] {1};
\node at (.5,.4) (n1) {1};
\node at (.5,-.4) (n2) {2};
\draw [-] (A) to (B);
\draw [-] (A) to (n1);
\draw [-] (A) to (n2);
\end{tikzpicture}\Bigg]
-\frac{1}{5}
 \Bigg[\begin{tikzpicture}[->,>=bad to,baseline=-3pt,node distance=1.3cm,thick,main node/.style={circle,draw,font=\Large,scale=0.5}]
 \node [main node] (A) {1};
\node [main node] (B) [left of =A] {1};
\node at (0,.6) (n1) {1};
\node at (-.65,.6) (n2) {2};
\draw [-] (A) to (B);
\draw [-] (A) to (n1);
\draw [-] (B) to (n2);
\end{tikzpicture}\Bigg]
-\frac{1}{10}
 \Bigg[\begin{tikzpicture}[->,>=bad to,baseline=-3pt,node distance=1.3cm,thick,main node/.style={circle,draw,font=\Large,scale=0.5}]
\node [main node] (A) {1};
\node at (.5,.4) (n1) {1};
\node at (.5,-.4) (n2) {2};
\draw [-] (A) to (n1);
\draw [-] (A) to (n2);
\draw [-] (A) to [out=150, in=-150,looseness=5] (A);
\end{tikzpicture}\Bigg].
\end{align*}
This immediately gives $\psi_1\cap [\Hyp_{2,0,2}]$ as a class in terms of decorated stratum classes. We thus know $\xi_{A_1}^*([\Hyp_5])$ in terms of decorated stratum classes. 

By Theorem \ref{th:main} we have 
\[
 \xi_{A_2}^* ([\Hyp_5]) = [\Hyp_{3,1,0}\times \Hyp_{2,1,0}]\in H^6(\M_{3,1}\times \M_{2,1})
\]
The class $[\Hyp_{2,1,0}]$ has originally been computed in \cite[Theorem 2.2]{Eisenbud1987}. We have
\begin{align*}
 [\Hyp_{2,1,0}]= 3\psi_1 - \frac{6}{5}
  \Bigg[\begin{tikzpicture}[->,>=bad to,baseline=-3pt,node distance=1.3cm,thick,main node/.style={circle,draw,font=\Large,scale=0.5}]
 \node [main node] (A) {1};
\node [main node] (B) [left of =A] {1};
\draw [-] (A) to (B);
\draw [-] (A) to (0,.5);
\end{tikzpicture}\Bigg]
- \frac{1}{10}
 \Bigg[\begin{tikzpicture}[->,>=bad to,baseline=-3pt,node distance=1.3cm,thick,main node/.style={circle,draw,font=\Large,scale=0.5}]
 \node [main node] (A) {1};
\draw [-] (A) to (.5,0);
\draw [-] (A) to [out=150, in=-150,looseness=5] (A);
\end{tikzpicture}\Bigg].
\end{align*}
The class $[\Hyp_{3,1,0}]$ can again be computed using similar pullbacks, or alternatively from an expression for $[\Hyp_4]$ in terms of decorated stratum classes. The class $[\Hyp_4]$ was first computed in \cite[Proposition 5]{Faber2005}. Let
\[
B=
 \begin{tikzpicture}[->,>=bad to,baseline=-3pt,node distance=1.3cm,thick,main node/.style={circle,draw,font=\Large,scale=0.5}]
\node[main node] (A) {3};
\node[main node] (B) [right of=A] {1};
\draw [-] (A) to (B);
\end{tikzpicture}.
\]
Again using Theorem $\ref{th:main}$ we see that 
\[
\xi_A^*([\Hyp_4]= [\Hyp_{3,1,0}]\otimes [\Hyp_{1,1,0}] \in A^2(\M_{3,1})\otimes A^0(\M_{1,1})
\]
From this we can compute:

 \noindent \resizebox{\textwidth}{!}{
\begin{tabular}{c@{\hskip 0cm}r@{\hskip 0cm}l@{\hskip 0cm}r@{\hskip 0cm}l@{\hskip 0cm}r@{\hskip 0cm}l@{\hskip 0cm}r@{\hskip 0cm}l@{\hskip 0cm}r@{\hskip 0cm}l@{\hskip 0cm}r@{\hskip 0cm}l}
 $[\Hyp_{3,1,0}]=$
 &
 $6$
 &
  $ \Bigg[\begin{tikzpicture}[->,>=bad to,baseline=-3pt,node distance=1.3cm,thick,main node/.style={circle,draw,font=\Large,scale=0.5}]
 \node [main node] (A) {3};
\draw [<<-] (A) to (0,.5);
\end{tikzpicture}\Bigg]$
 &
 $-\frac{24}{7}$
 &
 $\Bigg[\begin{tikzpicture}[->,>=bad to,baseline=-3pt,node distance=1.3cm,thick,main node/.style={circle,draw,font=\Large,scale=0.5}]
 \node [main node] (A) {2};
\node [main node] (B) [left of =A] {1};
\draw [-] (A) to (B);
\draw [<-] (A) to (0,.5);
\end{tikzpicture}\Bigg]$
 &
 $-\frac{1}{7}$
 &
  $\Bigg[\begin{tikzpicture}[->,>=bad to,baseline=-3pt,node distance=1.3cm,thick,main node/.style={circle,draw,font=\Large,scale=0.5}]
 \node [main node] (A) {2};
\node [main node] (B) [left of =A] {1};
\draw [<-] (A) to (B);
\draw [-] (A) to (0,.5);
\end{tikzpicture}\Bigg]$
&
$-\frac{1}{10}$
&
 $\Bigg[\begin{tikzpicture}[->,>=bad to,baseline=-3pt,node distance=1.3cm,thick,main node/.style={circle,draw,font=\Large,scale=0.5}]
 \node [main node] (A) {1};
\node [main node] (B) [right of =A] {2};
\draw [->] (A) to (B);
\draw [-] (A) to (0,.5);
\end{tikzpicture}\Bigg]$
&
$-\frac{53}{7}$
&
 $\Bigg[\begin{tikzpicture}[->,>=bad to,baseline=-3pt,node distance=1.3cm,thick,main node/.style={circle,draw,font=\Large,scale=0.5}]
  \node [scale=.3,draw,circle,fill=black] (A) {};
 \node [main node] (B) [left of =A] {2};
\node [main node] (C) [right of =A] {1};
\draw [-] (A) to (B);
\draw [-] (A) to (C);
\draw [-] (A) to (0,.5);
\end{tikzpicture}\Bigg]$
&
$+\frac{48}{35}$
&
 $\Bigg[\begin{tikzpicture}[->,>=bad to,baseline=-3pt,node distance=1.3cm,thick,main node/.style={circle,draw,font=\Large,scale=0.5}]
  \node [main node] (A) {1};
 \node [main node] (B) [left of =A] {1};
\node [main node] (C) [right of =A] {1};
\draw [-] (A) to (B);
\draw [-] (A) to (C);
\draw [-] (A) to (0,.5);
\end{tikzpicture}\Bigg]$
\\
&
$+\frac{54}{35}$
&
 $\Bigg[\begin{tikzpicture}[->,>=bad to,baseline=-3pt,node distance=1.3cm,thick,main node/.style={circle,draw,font=\Large,scale=0.5}]
  \node [main node] (A) {1};
 \node [main node] (B) [right of =A] {1};
\node [main node] (C) [right of =B] {1};
\draw [-] (B) to (C);
\draw [-] (A) to (B);
\draw [-] (A) to (0,.5);
\end{tikzpicture}\Bigg]$
&
$-\frac{2}{7} $
&
$ \Bigg[\begin{tikzpicture}[->,>=bad to,baseline=-3pt,node distance=1.3cm,thick,main node/.style={circle,draw,font=\Large,scale=0.5}]
 \node [main node] (A) {2};
\draw [<-] (A) to (.5,0);
\draw [-] (A) to [out=150, in=-150,looseness=5] (A);
\end{tikzpicture}\Bigg]$
&
$-\frac{6}{7}$
&
$ \Bigg[\begin{tikzpicture}[->,>=bad to,baseline=-3pt,node distance=1.3cm,thick,main node/.style={circle,draw,font=\Large,scale=0.5}]
  \node [scale=.3,draw,circle,fill=black] (A) {};
 \node [main node] (B) [left of = A] {2};
\draw [-] (A) to (.5,0);
\draw [-] (A) to [out=150, in=30] (B);
\draw [-] (A) to [out=-150, in=-30] (B);
\end{tikzpicture}\Bigg]$
&
$+\frac{1}{84}$
&
$ \Bigg[\begin{tikzpicture}[->,>=bad to,baseline=-3pt,node distance=1.3cm,thick,main node/.style={circle,draw,font=\Large,scale=0.5}]
 \node [main node] (A)  {2};
 \node [scale=.3,draw,circle,fill=black] [left of = A] (B) {};
\draw [-] (A) to (.5,0);
\draw [-] (A) to  (B);
\draw [-] (B) to [out=150, in=-150,looseness=30] (B);
\end{tikzpicture}\Bigg]$
& 
$-\frac{53}{84}$
&
$ \Bigg[\begin{tikzpicture}[->,>=bad to,baseline=-3pt,node distance=1.3cm,thick,main node/.style={circle,draw,font=\Large,scale=0.5}]
  \node [scale=.3,draw,circle,fill=black]  (B) {};
 \node [main node] (A) [left of =B] {2};
\draw [-] (B) to (0,.5);
\draw [-] (A) to  (B);
\draw [-] (B) to [out=30, in=-30,looseness=30] (B);
\end{tikzpicture}\Bigg]$
&
$+\frac{4}{35}$
&
$ \Bigg[\begin{tikzpicture}[->,>=bad to,baseline=-3pt,node distance=1.3cm,thick,main node/.style={circle,draw,font=\Large,scale=0.5}]
 \node [main node] (A)  {1};
 \node [main node] [left of = A] (B) {1};
\draw [-] (A) to (0,.5);
\draw [-] (A) to  (B);
\draw [-] (A) to [out=-30, in=30,looseness=5] (A);
\end{tikzpicture}\Bigg]$
\\
&
$+\frac{9}{70}$
&
$ \Bigg[\begin{tikzpicture}[->,>=bad to,baseline=-3pt,node distance=1.3cm,thick,main node/.style={circle,draw,font=\Large,scale=0.5}]
 \node [main node] (A)  {1};
 \node [main node] [right of = A] (B) {1};
\draw [-] (A) to (0,.5);
\draw [-] (A) to  (B);
\draw [-] (B) to [out=30, in=-30,looseness=5] (B);
\end{tikzpicture}\Bigg]$
&
$-\frac{1}{35}$
&
$ \Bigg[\begin{tikzpicture}[->,>=bad to,baseline=-3pt,node distance=1.3cm,thick,main node/.style={circle,draw,font=\Large,scale=0.5}]
 \node [main node] (A)  {1};
 \node [main node] [right of = A] (B) {1};
\draw [-] (A) to (0,.5);
\draw [-] (A) to [out=30, in=150] (B);
\draw [-] (A) to [out=-30, in=-150] (B);
\end{tikzpicture}\Bigg]$
&
$+\frac{1}{105}$
&
$ \Bigg[\begin{tikzpicture}[->,>=bad to,baseline=-3pt,node distance=1.3cm,thick,main node/.style={circle,draw,font=\Large,scale=0.5}]
 \node [main node] (A) {1};
\draw [-] (A) to (.5,0);
\draw [-] (A) to [out=90, in=150,looseness=5] (A);
\draw [-] (A) to [out=-90, in=-150,looseness=5] (A);
\end{tikzpicture}\Bigg]$.
\end{tabular}
}
 \end{proof}

 \begin{remark}\label{Rm:H6}
  In the same way we can find an expression for $[\Hyp_6]\in R^4(\M_4)$ in terms of decorated stratum classes. Let 
    \begin{center}
   \begin{tabular}{c@{\hskip 2cm}c}
      $A_1=
   \begin{tikzpicture}[->,>=bad to,baseline=-3pt,node distance=1.3cm,thick,main node/.style={circle,draw,font=\Large,scale=0.5}]
\node [main node] (A) {3};
\node[main node] (B) [left of=A] {2};
\draw [-] (B) to [out=40, in=140] (A);
\draw [-] (B) to [out=-40, in=-140] (A);
\end{tikzpicture}$
  & $A_2=
 \begin{tikzpicture}[->,>=bad to,baseline=-3pt,node distance=1.3cm,thick,main node/.style={circle,draw,font=\Large,scale=0.5}]
\node[main node] (A) {4};
\node[main node] (B) [right of=A] {2};
\draw [-] (A) to (B);
\end{tikzpicture}$.
   \end{tabular}
  \end{center}
  With the help of a computer one verifies that, restricted to $R^4(\M_6)$, we have 
  \[
  \ker \xi^*_{A_1} \cap \ker \xi^*_{A_2}=\{0\}.
  \]
  We have 
  \[ \xi_{A_1}^* [\Hyp_6]=
  -\Bigg[
\begin{tikzpicture}[->,>=bad to,baseline=-3pt,node distance=1.3cm,thick,main node/.style={circle,draw,font=\Large,scale=0.5}]
\node at (1.4,0) [main node] (A) {2};
\node at (0,0) [main node] (B) {3};
\node at (-.2,.5) (Hi1) {$\Hyp_{3,0,2}$};
\node at (1.6,.5) (Hi2) {$\Hyp_{2,0,2}$};
\draw [-] (B) to [out=40, in=140] (A);
\draw [<-] (B) to [out=-40, in=-140] (A);
\draw [<->] (.7,-.2) to (.7,.2);
\end{tikzpicture}\Bigg]
  -\Bigg[
\begin{tikzpicture}[->,>=bad to,baseline=-3pt,node distance=1.3cm,thick,main node/.style={circle,draw,font=\Large,scale=0.5}]
\node at (1.4,0) [main node] (A) {2};
\node at (0,0) [main node] (B) {3};
\node at (-.2,.5) (Hi1) {$\Hyp_{3,0,2}$};
\node at (1.6,.5) (Hi2) {$\Hyp_{2,0,2}$};
\draw [-] (B) to [out=40, in=140] (A);
\draw [->] (B) to [out=-40, in=-140] (A);
\draw [<->] (.7,-.2) to (.7,.2);
\end{tikzpicture}\Bigg]
\]
and
  \[
 \xi_{A_2}^* ([\Hyp_6]) = [\Hyp_{4,1,0}\times \Hyp_{2,1,0}]
\]
the classes $[\Hyp_{3,0,2}]$ and $[\Hyp_{4,1,0}]$ can readily be computed from our expression for $[\Hyp_5]$. We thus obtain an expression for $[\Hyp_6]$. The set of decorated stratum classes  in $R^{4}(\M_{6})$ up to Pixton's relations is of cardinality $376$ (so conjecturally $\dim R^4(\M_6)=376$). To save ink we shall therefore not write down an explict expression for $[\Hyp_6]$ here, but it is available upon request.

The number of decorated stratum classes in $R^5(\M_7)$ is 7078. Our computer is not powerful enough to compute all the Pixton relations between these decorated stratum classes. Although most likely with more serious computing power computing these relations and computing $\Hyp_7$ is still possible. In general, with our method the main difficulty for computing $[\Hyp_g] \in R^{g-2}(\M_g)$  seems to be the computation of Pixton's relations. That is, in all cases where we can compute these relations - both on $\M_g$ and on the spaces $\M_{g_i,n_i}$ appearing in the boundary divisors - we can also compute $[\Hyp_g]$.
 \end{remark}

\subsection{Example: the Bielliptic Locus in Genus \texorpdfstring{$4$}{4}} \label{prgrph:invB4}
As an application of the methods of the previous section, we now compute the class $[\B_4] \in H^6(\M_4)$ of the locus of bielliptic curves in genus $4$. 

The dimension of $H^6(\M_4)$ is~32 by \cite[Theorem 1]{Bergstrom2007}. By \cite[Page 11]{Yang2008} the dimension of $RH^6(\M_4)$ is also 32, hence the cohomology group is completely tautological. A basis for $H^6(\M_4)$ in terms of decorated stratum classes is given by taking the last 32 generators of the relation given in \cite[Proposition 2]{Yang2008}. 

\begin{theorem}\label{th:B4}
  In $H^6(\M_4)$ we have
  
 \noindent \resizebox{\textwidth}{!}{
\begin{tabular}{c@{\hskip 0cm}r@{\hskip 0cm}l@{\hskip 0cm}r@{\hskip 0cm}l@{\hskip 0cm}r@{\hskip 0cm}l@{\hskip 0cm}r@{\hskip 0cm}l@{\hskip 0cm}r@{\hskip 0cm}l@{\hskip 0cm}r@{\hskip 0cm}l}
 $[\B_4]=$ &$480$ &
 $\Bigg[ \begin{tikzpicture}[->,>=bad to,baseline=-3pt,node distance=1.3cm,thick,main node/.style={circle,draw,font=\Large,scale=0.5}]
\node[main node] (A) {2};
\node[main node] (B) [right of=A] {2};
\draw [<->] (A) to (B);
\end{tikzpicture}\Bigg]$ & $
+180
$ & $\Bigg[ \begin{tikzpicture}[->,>=bad to,baseline=-3pt,node distance=1.3cm,thick,main node/.style={circle,draw,font=\Large,scale=0.5}]
\node[main node] (A) {2};
\node[main node] (C) [left of=A] {1};
\node[main node] (B) [right of=A] {1};
\draw [-] (A) to (B);
\draw [<-] (A) to (C);
\end{tikzpicture}\Bigg]$ & $
-354
$ & $\Bigg[ \begin{tikzpicture}[->,>=bad to,baseline=-3pt,node distance=1.3cm,thick,main node/.style={circle,draw,font=\Large,scale=0.5}]
\node[main node] (A) {1};
\node[main node] (C) [left of=A] {2};
\node[main node] (B) [right of=A] {1};
\draw [-] (A) to (B);
\draw [->] (A) to (C);
\end{tikzpicture}\Bigg]$ & $
-36
$ & $\Bigg[ \begin{tikzpicture}[->,>=bad to,baseline=-3pt,node distance=1.3cm,thick,main node/.style={circle,draw,font=\Large,scale=0.5}]
\node [scale=.3,draw,circle,fill=black] (A) {};
\node[main node] (C) [above left of=A] {1};
\node[main node] (D) [below left of=A] {1};
\node[main node] (B) [right of=A] {2};
\draw [-] (A) to (B);
\draw [-] (A) to (C);
\draw [-] (A) to (D);
\end{tikzpicture}\Bigg]$ & $
-378
$ & $\Bigg[ \begin{tikzpicture}[->,>=bad to,baseline=-3pt,node distance=1.3cm,thick,main node/.style={circle,draw,font=\Large,scale=0.5}]
\node [main node] (A) {1};
\node[main node] (C) [above left of=A] {1};
\node[main node] (D) [below left of=A] {1};
\node[main node] (B) [right of=A] {1};
\draw [-] (A) to (B);
\draw [-] (A) to (C);
\draw [-] (A) to (D);
\end{tikzpicture}\Bigg]$ & $
+\frac{816}{5}
$ & $\Bigg[ \begin{tikzpicture}[->,>=bad to,baseline=-3pt,node distance=1.3cm,thick,main node/.style={circle,draw,font=\Large,scale=0.5}]
\node [main node] (A) {1};
\node[main node] (C) [left of=A] {1};
\node[main node] (D) [left of=C] {1};
\node[main node] (B) [right of=A] {1};
\draw [-] (A) to (B);
\draw [-] (A) to (C);
\draw [-] (C) to (D);
\end{tikzpicture}\Bigg]$ 
\\ & 
$-7
$ & $\Bigg[ \begin{tikzpicture}[->,>=bad to,baseline=-3pt,node distance=1.3cm,thick,main node/.style={circle,draw,font=\Large,scale=0.5}]
\node[main node] (A) {3};
\draw [->>] (A) to [out=200, in=160,looseness=10] (A);
\end{tikzpicture}\Bigg]$ & $
+\frac{7}{3}
$ & $\Bigg[ \begin{tikzpicture}[->,>=bad to,baseline=-3pt,node distance=1.3cm,thick,main node/.style={circle,draw,font=\Large,scale=0.5}]
\node[main node] (A) {2};
\draw [->] (A) to [out=200, in=160,looseness=10] (A);
\draw [-] (A) to [out=20, in=-20,looseness=10] (A);
\end{tikzpicture}\Bigg]$ & $
-\frac{133}{16}
$ & $\Bigg[ \begin{tikzpicture}[->,>=bad to,baseline=-3pt,node distance=1.3cm,thick,main node/.style={circle,draw,font=\Large,scale=0.5}]
\node [scale=.3,draw,circle,fill=black] (A) {};
\node[main node] (B) [left of=A] {3};
\node [above right] at (B) {$\kappa_1$};
\draw [-] (A) to (B);
\draw [-] (A) to [out=20, in=-20,looseness=30] (A);
\end{tikzpicture}\Bigg]$ & $
+\frac{665}{16}
$ & $\Bigg[ \begin{tikzpicture}[->,>=bad to,baseline=-3pt,node distance=1.3cm,thick,main node/.style={circle,draw,font=\Large,scale=0.5}]
\node [scale=.3,draw,circle,fill=black] (A) {};
\node[main node] (B) [left of=A] {3};
\draw [->] (A) to (B);
\draw [-] (A) to [out=20, in=-20,looseness=30] (A);
\end{tikzpicture}\Bigg]$ & $
+\frac{75}{4}
$ & $\Bigg[ \begin{tikzpicture}[->,>=bad to,baseline=-3pt,node distance=1.3cm,thick,main node/.style={circle,draw,font=\Large,scale=0.5}]
\node [main node] (A) {1};
\node[main node] (B) [left of=A] {2};
\draw [-] (A) to (B);
\draw [->] (B) to [out=200, in=160,looseness=10] (B);
\end{tikzpicture}\Bigg]$ & $
+\frac{19}{6}
$ & $\Bigg[ \begin{tikzpicture}[->,>=bad to,baseline=-3pt,node distance=1.3cm,thick,main node/.style={circle,draw,font=\Large,scale=0.5}]
\node [main node] (A) {1};
\node[main node] (B) [left of=A] {2};
\draw [->] (A) to (B);
\draw [-] (B) to [out=200, in=160,looseness=10] (B);
\end{tikzpicture}\Bigg]$ 
\\ & $ 
-\frac{310}{3}
$ & $\Bigg[ \begin{tikzpicture}[->,>=bad to,baseline=-3pt,node distance=1.3cm,thick,main node/.style={circle,draw,font=\Large,scale=0.5}]
\node [main node] (A) {1};
\node[main node] (B) [left of=A] {2};
\draw [<-] (B) to [out=40, in=140] (A);
\draw [-] (B) to [out=-40, in=-140] (A);
\end{tikzpicture}\Bigg]$ & $
-\frac{136}{3}
$ & $\Bigg[ \begin{tikzpicture}[->,>=bad to,baseline=-3pt,node distance=1.3cm,thick,main node/.style={circle,draw,font=\Large,scale=0.5}]
\node [main node] (A) {1};
\node[main node] (B) [left of=A] {2};
\draw [->] (A) to (B);
\draw [-] (A) to [out=20, in=-20,looseness=10] (A);
\end{tikzpicture}\Bigg]$ & $
-\frac{37}{120}
$ & $\Bigg[ \begin{tikzpicture}[->,>=bad to,baseline=-3pt,node distance=1.3cm,thick,main node/.style={circle,draw,font=\Large,scale=0.5}]
\node[main node] (A) {1};
\draw [-] (A) to [out=100, in=140,looseness=10] (A);
\draw [-] (A) to [out=-100, in=-140,looseness=10] (A);
\draw [-] (A) to [out=20, in=-20,looseness=10] (A);
\end{tikzpicture}\Bigg]$ & $
+\frac{133}{144}
$ & $\Bigg[ \begin{tikzpicture}[->,>=bad to,baseline=-3pt,node distance=1.3cm,thick,main node/.style={circle,draw,font=\Large,scale=0.5}]
\node [scale=.3,draw,circle,fill=black] (A) {};
\node[main node] (B) [left of=A] {2};
\draw [-] (A) to (B);
\draw [-] (A) to [out=20, in=-20,looseness=30] (A);
\draw [-] (B) to [out=160, in=-160,looseness=10] (B);
\end{tikzpicture}\Bigg]$ & $
-\frac{9}{2}
$ & $\Bigg[ \begin{tikzpicture}[->,>=bad to,baseline=-3pt,node distance=1.3cm,thick,main node/.style={circle,draw,font=\Large,scale=0.5}]
\node [scale=.3,draw,circle,fill=black] (A) {};
\node[main node] (B) [left of=A] {2};
\draw [-] (A) to (B);
\draw [-] (B) to [out=40, in=140] (A);
\draw [-] (B) to [out=-40, in=-140] (A);
\end{tikzpicture}\Bigg]$ & $
+\frac{20}{9}
$ & $\Bigg[ \begin{tikzpicture}[->,>=bad to,baseline=-3pt,node distance=1.3cm,thick,main node/.style={circle,draw,font=\Large,scale=0.5}]
\node [scale=.3,draw,circle,fill=black] (A) {};
\node[main node] (B) [left of=A] {2};
\draw [-] (B) to [out=40, in=140] (A);
\draw [-] (B) to [out=-40, in=-140] (A);
\draw [-] (A) to [out=20, in=-20,looseness=30] (A);
\end{tikzpicture}\Bigg]$ 
\\ & $
+\frac{101}{36}
$ & $\Bigg[ \begin{tikzpicture}[->,>=bad to,baseline=-3pt,node distance=1.3cm,thick,main node/.style={circle,draw,font=\Large,scale=0.5}]
\node [scale=.3,draw,circle,fill=black] (A) {};
\node[main node] (B) [left of=A] {2};
\draw [-] (A) to (B);
\draw [-] (A) to [out=40, in=80,looseness=30] (A);
\draw [-] (A) to [out=-40, in=-80,looseness=30] (A);
\end{tikzpicture}\Bigg]$ & $
-\frac{85}{24}
$ & $\Bigg[ \begin{tikzpicture}[->,>=bad to,baseline=-3pt,node distance=1.3cm,thick,main node/.style={circle,draw,font=\Large,scale=0.5}]
\node [main node] (A) {1};
\node[main node] (B) [left of=A] {1};
\draw [-] (A) to (B);
\draw [-] (A) to [out=40, in=80,looseness=10] (A);
\draw [-] (A) to [out=-40, in=-80,looseness=10] (A);
\end{tikzpicture}\Bigg]$ & $
-\frac{221}{30}
$ & $\Bigg[ \begin{tikzpicture}[->,>=bad to,baseline=-3pt,node distance=1.3cm,thick,main node/.style={circle,draw,font=\Large,scale=0.5}]
\node [main node] (A) {1};
\node[main node] (B) [left of=A] {1};
\draw [-] (B) to [out=40, in=140] (A);
\draw [-] (B) to [out=-40, in=-140] (A);
\draw [-] (B) to [out=160, in=-160,looseness=10] (B);
\end{tikzpicture}\Bigg]$ & $
+\frac{26}{15}
$ & $\Bigg[ \begin{tikzpicture}[->,>=bad to,baseline=-3pt,node distance=1.3cm,thick,main node/.style={circle,draw,font=\Large,scale=0.5}]
\node [main node] (A) {1};
\node[main node] (B) [left of=A] {1};
\draw [-] (B) to (A);
\draw [-] (B) to [out=160, in=-160,looseness=10] (B);
\draw [-] (A) to [out=20, in=-20,looseness=10] (A);
\end{tikzpicture}\Bigg]$ & $
+\frac{243}{10}
$ & $\Bigg[ \begin{tikzpicture}[->,>=bad to,baseline=-3pt,node distance=1.3cm,thick,main node/.style={circle,draw,font=\Large,scale=0.5}]
\node [main node] (A) {1};
\node[main node] (B) [left of=A] {1};
\draw [-] (B) to [out=40, in=140] (A);
\draw [-] (B) to [out=-40, in=-140] (A);
\draw [-] (B) to (A);
\end{tikzpicture}\Bigg]$ & $
-\frac{57}{16}
$ & $\Bigg[ \begin{tikzpicture}[->,>=bad to,baseline=-3pt,node distance=1.3cm,thick,main node/.style={circle,draw,font=\Large,scale=0.5}]
\node [scale=.3,draw,circle,fill=black] (A) {};
\node[main node] (B) [left of=A] {2};
\node[main node] (C) [left of=B] {1};
\draw [-] (C) to (B);
\draw [-] (A) to (B);
\draw [-] (A) to [out=20, in=-20,looseness=30] (A);
\end{tikzpicture}\Bigg]$ 
\\ & $
-\frac{45}{16}
$ & $\Bigg[ \begin{tikzpicture}[->,>=bad to,baseline=-3pt,node distance=1.3cm,thick,main node/.style={circle,draw,font=\Large,scale=0.5}]
\node [scale=.3,draw,circle,fill=black] (A) {};
\node[main node] (B) [left of=A] {1};
\node[main node] (C) [left of=B] {2};
\draw [-] (C) to (B);
\draw [-] (A) to (B);
\draw [-] (A) to [out=20, in=-20,looseness=30] (A);
\end{tikzpicture}\Bigg]$ & $
-\frac{421}{12}
$ & $\Bigg[ \begin{tikzpicture}[->,>=bad to,baseline=-3pt,node distance=1.3cm,thick,main node/.style={circle,draw,font=\Large,scale=0.5}]
\node [scale=.3,draw,circle,fill=black] (A) {}; (A) {1};
\node[main node] (B) [right of=A] {2};
\node[main node] (C) [left of=A] {1};
\draw [-] (A) to [out=40, in=140] (B);
\draw [-] (A) to [out=-40, in=-140] (B);
\draw [-] (A) to (C);
\end{tikzpicture}\Bigg]$ & $
-\frac{70}{3}
$ & $\Bigg[ \begin{tikzpicture}[->,>=bad to,baseline=-3pt,node distance=1.3cm,thick,main node/.style={circle,draw,font=\Large,scale=0.5}]
\node [scale=.3,draw,circle,fill=black] (A) {}; (A) {1};
\node[main node] (B) [right of=A] {1};
\node[main node] (C) [left of=A] {2};
\draw [-] (A) to [out=40, in=140] (B);
\draw [-] (A) to [out=-40, in=-140] (B);
\draw [-] (A) to (C);
\end{tikzpicture}\Bigg]$ & $
+\frac{37}{3}
$ & $\Bigg[ \begin{tikzpicture}[->,>=bad to,baseline=-3pt,node distance=1.3cm,thick,main node/.style={circle,draw,font=\Large,scale=0.5}]
\node [scale=.3,draw,circle,fill=black] (A) {};
\node[main node] (B) [above left of=A] {1};
\node[main node] (C) [below left of=A] {2};
\draw [-] (A) to (B);
\draw [-] (A) to (C);
\draw [-] (A) to [out=20, in=-20,looseness=30] (A);
\end{tikzpicture}\Bigg]$ & $
-\frac{191}{5}
$ & $\Bigg[ \begin{tikzpicture}[->,>=bad to,baseline=-3pt,node distance=1.3cm,thick,main node/.style={circle,draw,font=\Large,scale=0.5}]
\node [main node] (A) {1};
\node[main node] (B) [above left of=A] {1};
\node[main node] (C) [below left of=A] {1};
\draw [-] (A) to (B);
\draw [-] (A) to (C);
\draw [-] (A) to [out=20, in=-20,looseness=10] (A);
\end{tikzpicture}\Bigg]$ & $
+17
$ & $\Bigg[ \begin{tikzpicture}[->,>=bad to,baseline=-3pt,node distance=1.3cm,thick,main node/.style={circle,draw,font=\Large,scale=0.5}]
\node [main node] (A) {1};
\node[main node] (B) [left of=A] {1};
\node[main node] (C) [left of=B] {1};
\draw [-] (A) to (B);
\draw [-] (B) to (C);
\draw [-] (A) to [out=20, in=-20,looseness=10] (A);
\end{tikzpicture}\Bigg]$ 
\\ & $
-\frac{251}{4}
$ & $\Bigg[ \begin{tikzpicture}[->,>=bad to,baseline=-3pt,node distance=1.3cm,thick,main node/.style={circle,draw,font=\Large,scale=0.5}]
\node [main node] (A) {1};
\node[main node] (B) [right of=A] {1};
\node[main node] (C) [left of=A] {1};
\draw [-] (A) to [out=40, in=140] (B);
\draw [-] (A) to [out=-40, in=-140] (B);
\draw [-] (A) to (C);
\end{tikzpicture}\Bigg]$ & $
-\frac{2019}{10}
$ & $\Bigg[ \begin{tikzpicture}[baseline=4pt,->,>=bad to,node distance=1.3cm,thick,main node/.style={circle,draw,font=\Large,scale=0.5}]
\node [main node] (A) {1};
\node[main node] (B) [above right of=A] {1};
\node[main node] (C) [above left of=A] {1};
\draw [-] (A) to  (B);
\draw [-] (C) to  (B);
\draw [-] (A) to (C);
\end{tikzpicture}\Bigg]$ &
\end{tabular}
}
 \end{theorem}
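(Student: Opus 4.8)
The plan is to determine $[\B_4]$ by pulling it back to all boundary divisors of $\M_4$ and using that these pullbacks determine the class uniquely. Since $\dim \B_4 = 6$ we have $[\B_4] \in H^6(\M_4)$, and as $d(4,0) = 2\cdot 4 - 2 = 6$, Lemma \ref{lem:pullboundrange} guarantees that the combined pullback
\[H^6(\M_4) \xrightarrow{\prod_A \xi_A^*} \prod_{A} H^6(\M_A)\]
over all one-edge stable graphs $A$ of genus $4$ is injective. It therefore suffices to compute $\xi_A^*[\B_4]$ for the three boundary divisor types of $\M_4$ --- the irreducible divisor (a single genus $3$ vertex carrying a loop) and the two separating divisors (a genus $1$ and a genus $3$ vertex; two genus $2$ vertices) --- and to match the result against the pullback of the unknown expansion $[\B_4] = \sum_i a_i D_i$ in the $32$-element basis of $RH^6(\M_4)$ taken from \cite[Proposition 2]{Yang2008}.

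Writing $\B_4 = \phi_{(1^6)}(\H_{4,\Z/2\Z,(1^6)})$, I would apply Theorem \ref{th:maingen} (the forget-points generalization of Theorem \ref{th:main}) to each $\xi_A^*$. For the separating graphs the computation is transparent. The two-genus-$2$ case is exactly Example \ref{ex:B21H21}, giving
\[\xi_A^*[\B_4] = [\B_{2,1}]\otimes[\Hyp_{2,1}] + [\Hyp_{2,1}]\otimes[\B_{2,1}],\]
and the analogous analysis for the genus-$1$/genus-$3$ graph again produces tensor products of lower admissible cover cycles (involving $\Hyp_{3,1,0}$, marked bielliptic cycles and $\Hyp_{1,1,0}$), with vanishing excess bundle since a separating edge forms a singleton $G$-orbit. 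The input cycles $[\Hyp_{2,1}]$, $[\Hyp_{3,1,0}]$ and the genus-$2$ bielliptic cycles are computed recursively by the very same boundary-pullback method, exactly as the hyperelliptic cycles are handled in the proof of Theorem \ref{th:H5}; these form the base cases of the recursion.

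The delicate step is the irreducible boundary divisor. Here the admissible $\Z/2\Z$-graphs $(\Gamma,G)$ carrying a generic $A$-structure include configurations in which the bielliptic involution \emph{swaps} two isomorphic components of the normalized curve. For such $(\Gamma,G)$ the map $\phi_{(\Gamma,G)}$ factors through a diagonal $\Delta \colon \M_{g(v),n(v)} \to \M_{g(v),n(v)} \times \M_{g(v),n(v)}$ as in \eqref{eqn:phiGammaGfactor}, so $\phi_{f*}[\H_{(\Gamma,G)}]$ involves the class of the diagonal. I expect this to be the main obstacle, and I would resolve it using Application \ref{app:tautkunneth}: for the small $(g(v),n(v))$ that occur here all of $H^*(\M_{g(v),n(v)})$ is tautological, so $[\Delta]$ admits a tautological Künneth decomposition computable from the intersection pairing via Remark \ref{Rmk:intnumb}. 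One must also carry along the excess factors $(-\psi_h-\psi_{h'})$ of Proposition \ref{prop:excess}, which contribute precisely when a $G$-orbit of edges of $\Gamma$ contains more than one edge collapsed by the gluing map.

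Finally, once all three pullbacks $\xi_A^*[\B_4]$ are written in terms of decorated stratum classes on the factors of $\M_A$, I would compare them with $\xi_A^*\big(\sum_i a_i D_i\big)$, computed as an explicit linear map via Theorem \ref{th:prod}. This produces a linear system in the $32$ unknowns $a_i$ whose solution, unique by the injectivity recalled above, is the expansion asserted in the theorem. In practice the combinatorics --- enumerating the admissible $G$-graphs, reducing everything modulo Pixton's relations \cite{Pixtprogram}, and solving the system --- is executed by the program \cite{Schmittprogram}, so that the only conceptual input beyond the recursion is the verification that the kernels of the three boundary pullbacks meet trivially in degree $6$.
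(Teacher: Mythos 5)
Your strategy is valid but genuinely different from the paper's. The paper does \emph{not} pull back to the three one-edge boundary divisors: it uses the separating graphs $B$ (two genus $2$ vertices) and $C$ (genus $3$ and genus $1$) together with a \emph{two-edge} graph $A$ (genus $2$ and genus $1$ joined by two edges), and then checks directly that $\ker\xi_A^*\cap\ker\xi_B^*\cap\ker\xi_C^*=\{0\}$ in $RH^6(\M_4)$, whereas you get uniqueness for free from Lemma \ref{lem:pullboundrange} (which indeed applies here, as $k=6=d(4,0)$). The real trade-off lies in the recursion. The paper's three graphs only require cycles it already controls ($[\Hyp_{2,0,2}]$, $[\B_{2,0,2}]$, $[\Hyp_{2,2,0}]$, $[\Hyp_{2,1,0}]$, $[\B_{2,1,0}]$, $[\B_{3,1,0}]$, $[\Hyp_{1,0,2}]$, and the diagonals of $\M_{1,1}$, $\M_{1,2}$), while your irreducible divisor $\delta_0$ introduces the strata $\H_{3,\Z_2,(1^8)}$ and $\H_{3,\Z_2,(1^4,0)}\times\H_{0,\Z_2,(1^2,0)}$, i.e.\ new base cases $[\Hyp_{3,2,0}],[\B_{3,0,2}]\in H^6(\M_{3,2})$. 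Since $d(3,2)=5<6$, these classes lie \emph{outside} the range of Lemma \ref{lem:pullboundrange} on $\M_{3,2}$, so they cannot themselves be determined by boundary pullbacks alone and require extra input (pairings with $\kappa$-$\psi$ polynomials as in Section \ref{sec:perfectpairing}). This added cost is presumably why the paper avoids $\delta_0$ altogether.

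Beyond this, you misplace the technical difficulties, and one of the two misplacements would derail the computation if followed literally. For the irreducible divisor there are in fact \emph{no} diagonal terms: a vertex-swapping admissible $\Z_2$-graph with a single $G$-orbit of edges would consist of two swapped vertices of equal genus $a$ joined by a swapped pair of edges, forcing $2a+1=4$, which is impossible; hence every vertex is fixed and $\phi_{(\Gamma,G)}$ contains no diagonal factor. (Excess also never occurs for any of your three divisors, since a one-edge graph can contribute at most one edge to any orbit; your stated reason, that a separating edge forms a singleton $G$-orbit, is not the correct one, as the relevant orbits can contain two edges.) Conversely, the genus $1$--genus $3$ divisor is \emph{not} a diagonal-free sum of tensor products: besides $[\B_{3,1,0}\times\Hyp_{1,1,0}]$ its pullback contains the term $2\,[\Hyp_{2,0,2}\times\Delta_{\M_{1,1}}]$, arising from the admissible $\Z_2$-graph in which a genus $2$ core carries two elliptic tails swapped by the bielliptic involution (its two edges form a single orbit containing the one edge of the divisor, so this structure is generic and excess-free). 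Dropping this term makes the linear system for the coefficients $a_i$ inconsistent, or yields a wrong class. With these two corrections, and the extra base cases on $\M_{3,2}$ supplied by additional pairing computations, your plan does lead to the stated formula.
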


  \begin{remark}
  We only know that the equality of Theorem \ref{th:B4} holds in cohomology. Because the entire cohomology ring is tautological we know that the cohomology class $[\B_4]$ is tautological. However the authors are unaware of any argument that the corresponding Chow class is tautological. 
  
 \end{remark}

By Riemann-Hurwitz, a bielliptic involution on a genus $4$ curve has $6$ fixed points, so the natural space describing such curves is $\H_{4,\Z_2,(1^6)}$. As a first step, we need to compare the fundamental class $[\B_4]$ with the pushforward $ \phi_{(1^6)*} [\H_{4,\Z_2,(1^6)}]$ supported on the same locus.


Next we want to show that on the generic bielliptic curve $C$ of genus $4$, the bielliptic involution is unique. Assume otherwise, i.e. that $C$ has two distinct bielliptic involutions $\tau_1, \tau_2$. Then inside $\aut(C)$ they generate a (finite) subgroup. Since $\tau_1^2=\mathrm{id}_C = \tau_2^2$, this subgroup must be a dihedral group of the form $D_{2n}$, where the additional relation $(\tau_1 \tau_2)^n=\mathrm{id}_C$ holds.

We now consider the Riemann-Hurwitz formula for the quotient map $C \to C/D_{2n}$. Let $g'$ be the genus of $C/D_{2n}$, then since there exists a map $C/\langle \tau_1 \rangle \to C/D_{2n}$, given by taking a further quotient, we have $g' \leq 1$, since by assumption  $C/\langle \tau_1 \rangle$ has genus $1$. Let $b$ be the number of branch points of $C \to C/D_{2n}$ and let $h_1, \ldots, h_b \in D_{2n}$ be generators of the stabilizers of some point over these $b$ branch points. Then Riemann-Hurwitz tells us
\[6=2 \cdot 4 -2 = (2n)\cdot (2g'-2) + \sum_{i=1}^b \left(2n- \frac{2n}{\operatorname{ord} h_i} \right).\]
Now $\operatorname{ord} h_i \geq 2$, so $2n- \frac{2n}{\operatorname{ord} h_i} \geq 2n-n=n$ and we conclude
\[6 \geq 2n(2g'-2)+bn \implies \frac{6}{n} + 2(2-2g') \geq b.\]
Since $\tau_1, \tau_2$ were distinct by assumption, we have $n \geq 2$, which implies $b \leq 7$ in the case $g'=0$ and $b \leq 3$ in the case $g'=1$. Correspondingly, the moduli spaces $\M_{g',b}$ parametrizing the curve $C/D_{2n}$ with marked branch points have dimensions at most $4$ and $3$, respectively. Since this dimension is the same as the corresponding space $\H_{4,D_{2n},\xi'}$ parametrizing the curve $C$ with $D_{2n}$-action, the general bielliptic curve of genus $4$ cannot have such an action of a dihedral group $D_{2n}$, since the space of bielliptic curves is of dimension $6$. Thus the bielliptic involution on the general bielliptic curve is unique.
  
We conclude that the degree of the map $\phi_{(1^6)}: \H_{4,\Z_2,(1^6)} \to \B_4 \subset \M_4$ to its image is given by the number of ways to order the $6$ fixed points of the bielliptic involution, which is $|\S_6|=6!$. To summarize we have
   \[
   [\B_4] = \frac{1}{6!}  \phi_{(1^6)*} [\H_{4,\Z_2,(1^6)}].
   \]

We determine this class by pullback to three distinct boundary strata.  Let 
  \begin{center}
   \begin{tabular}{c@{\hskip 2cm}c@{\hskip 2cm}c}
      $A=
   \begin{tikzpicture}[->,>=bad to,baseline=-3pt,node distance=1.3cm,thick,main node/.style={circle,draw,font=\Large,scale=0.5}]
\node [main node] (A) {1};
\node[main node] (B) [left of=A] {2};
\draw [-] (B) to [out=40, in=140] (A);
\draw [-] (B) to [out=-40, in=-140] (A);
\end{tikzpicture}$
  & $B=
 \begin{tikzpicture}[->,>=bad to,baseline=-3pt,node distance=1.3cm,thick,main node/.style={circle,draw,font=\Large,scale=0.5}]
\node[main node] (A) {2};
\node[main node] (B) [right of=A] {2};
\draw [-] (A) to (B);
\end{tikzpicture}$
& $C=
\begin{tikzpicture}[->,>=bad to,baseline=-3pt,node distance=1.3cm,thick,main node/.style={circle,draw,font=\Large,scale=0.5}]
\node[main node] (A) {3};
\node[main node] (B) [right of=A] {1};
\draw [-] (A) to (B);
\end{tikzpicture}.$
   \end{tabular}
  \end{center}
The pullback maps 
\[
\xi_A^*\colon H^6(\M_4)\rightarrow H^6(\M_A), \quad  \xi_B^*\colon H^6(\M_4)\rightarrow H^6(\M_B), \quad \text{and} \quad \xi_C^*\colon H^6(\M_4)\rightarrow H^6(\M_C)
\]
 can be computed in terms of bases of decorated stratum classes  by Theorem \ref{th:prod}, using the generalized Faber-Zagier relations calculated by \cite{Pixtprogram}. Inside the vector space $H^6(\M_4)=RH^6(\M_4)$ of dimension $32$, we have
\[\dim \ker \xi_A^* = 4, \quad \dim \ker \xi_B^* = 14, \quad \dim \ker \xi_C^* = 3\]
and it holds that
\[(\ker \xi_A^*) \cap (\ker \xi_B^*) \cap (\ker \xi_C^*) = \{0\}.\]
Thus the pullbacks of $[\B_4]$ via $\xi_A, \xi_B, \xi_C$ uniquely determine $[\B_4]$. Furthermore, looking at the dimensions of the kernels of $\xi_A^*, \xi_B^*, \xi_C^*$ we see that the linear system of equations for $[\B_4]$ which they provide is significantly overdetermined. This provides a check for the computations below: an error in the formulas for $\xi_A^*[\B_4], \xi_B^*[\B_4]$ or $\xi_C^*[\B_4]$, which form the inhomogeneous term of the corresponding linear equation, would in general cause this equation to have no solution.

In the following we compute the pullbacks of $[\B_4]$ and point out for various classes that appear in the expression for these pullbacks how to express them as tautological classes.


\begin{lemma}
  The pullback of $[\B_4]$ along $\xi_A$ is given by:
\begin{align*}
 \xi_A^*([\B_4]) = &
-\Bigg[
\begin{tikzpicture}[->,>=bad to,baseline=-3pt,node distance=1.3cm,thick,main node/.style={circle,draw,font=\Large,scale=0.5}]
\node at (1.4,0) [main node] (A) {1};
\node at (0,0) [main node] (B) {2};
\node at (-.2,.5) (Bi) {$\B_{2,0,2}$};
\node at (1.6,.5) (Hi) {$\Hyp_{1,0,2}$};
\draw [-] (B) to [out=40, in=140] (A);
\draw [<-] (B) to [out=-40, in=-140] (A);
\draw [<->] (.7,-.2) to (.7,.2);
\end{tikzpicture}\Bigg]
- \Bigg[
\begin{tikzpicture}[->,>=bad to,baseline=-3pt,node distance=1.3cm,thick,main node/.style={circle,draw,font=\Large,scale=0.5}]
\node at (1.4,0) [main node] (A) {1};
\node at (0,0) [main node] (B) {2};
\node at (-.2,.5) (Bi) {$\B_{2,0,2}$};
\node at (1.6,.5) (Hi) {$\Hyp_{1,0,2}$};
\draw [-] (B) to [out=40, in=140] (A);
\draw [->] (B) to [out=-40, in=-140] (A);
\draw [<->] (.7,-.2) to (.7,.2);
\end{tikzpicture}\Bigg]
 -
\Bigg[
\begin{tikzpicture}[->,>=bad to,baseline=-3pt,node distance=1.3cm,thick,main node/.style={circle,draw,font=\Large,scale=0.5}]
\node at (1.4,0) [main node] (A) {1};
\node at (0,0) [main node] (B) {2};
\node at (-.2,.5) (Hi) {$\Hyp_{2,0,2}$};
\node at (1.6,.5) (Bi) {$\B_{1,0,2}$};
\draw [-] (B) to [out=40, in=140] (A);
\draw [<-] (B) to [out=-40, in=-140] (A);
\draw [<->] (.7,-.2) to (.7,.2);
\end{tikzpicture}  \Bigg]\\
&
-
 \Bigg[
\begin{tikzpicture}[->,>=bad to,baseline=-3pt,node distance=1.3cm,thick,main node/.style={circle,draw,font=\Large,scale=0.5}]
\node at (1.4,0) [main node] (A) {1};
\node at (0,0) [main node] (B) {2};
\node at (-.2,.5) (Hi) {$\Hyp_{2,0,2}$};
\node at (1.6,.5) (Bi) {$\B_{1,0,2}$};
\draw [-] (B) to [out=40, in=140] (A);
\draw [->] (B) to [out=-40, in=-140] (A);
\draw [<->] (.7,-.2) to (.7,.2);
\end{tikzpicture}   \Bigg]
 +
 \Bigg[
\begin{tikzpicture}[->,>=bad to,baseline=-3pt,node distance=1.3cm,thick,main node/.style={circle,draw,font=\Large,scale=0.5}]
\node at (1.4,0) [main node] (B) {1};
\node at (0,0) [main node] (A) {2};
\node at (-.2,.5) (Hi2) {$\Hyp_{2,2,0}$};
\node at (1.6,.5) (Hi1) {$\Hyp_{1,1,0}$};
\draw [-] (A) to [out=40, in=140] (B);
\draw [-] (A) to [out=-40, in=-140] (B);
\draw [->] (.65,-.4) to [out=-140, in =-40,looseness = 10] (.75,-.4);
\draw [->] (.65,.2) to [out=-140, in =-40,looseness = 10] (.75,.2);
\end{tikzpicture} \Bigg]
+ 4
\Bigg[
\begin{tikzpicture}[->,>=bad to,baseline=-3pt,node distance=1.3cm,thick,main node/.style={circle,draw,font=\Large,scale=0.5},shift={(0,-.5)}]
\node at (1.2,0) [main node] (A) {1};
\node at (0,0) [main node] (B) {1};
\node at (0,1.2) [main node] (C) {1};
\node at (-.7,.3) (Hi) {$\Hyp_{1,0,2}$};
\node at (1,1) (Hi) {$\Delta_{\M_{1,2}}$};
\draw [-] (B) to (A);
\draw [-] (B) to (C);
\draw [-] (A) to (C); 
\draw [<->] (.7,.2) to (.2,.7);
\end{tikzpicture} \Bigg]\\
=&  
 -\psi_1 [\B_{2,0,2}]\otimes [\Hyp_{1,0,2}]  -[\B_{2,0,2}]\otimes \psi_1[\Hyp_{1,0,2}] - \psi_1[\Hyp_{2,0,2}]\otimes [\B_{1,0,2}]\\ &
 - [\Hyp_{2,0,2}]\otimes \psi_1[\B_{1,0,2}] + [\Hyp_{2,2,0}]\otimes [\Hyp_{1,0,2}] + 2[\Hyp_{1,0,2}\times \Delta_{\M_{1,2}}]. 
  \end{align*}
 Where $\Delta_{\M_{1,2}}$ is as in Application \ref{app:tautkunneth}. 
 \end{lemma}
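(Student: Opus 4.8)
The plan is to apply the forgetful version of the main pullback formula, Theorem \ref{th:maingen}, to the identity $[\B_4] = \frac{1}{6!}\phi_{(1^6)*}[\H_{4,\Z/2\Z,(1^6)}]$ established in Section \ref{prgrph:invB4}. Pulling back along $\xi_A$ reduces the computation to a finite combinatorial problem: enumerate the leg distributions $B \in \mathfrak{A}_A$ of the six ramification points over the genus $2$ and genus $1$ vertices of $A$, and for each $B$ enumerate the generic $B$-structures $(\Gamma, G, f) \in \mathfrak{H}_{B;\Z/2\Z,(1^6)}$ on admissible $\Z/2\Z$-graphs. Each such $(\Gamma, G, f)$ then contributes the term $(\pi_B)_* \big(c_{\textup{top}}(E_f) \cap \phi_{f*}[\H_{(\Gamma,G)}]\big)$ appearing in \eqref{eqn:bdrypullHbar}.

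The first main step is to classify the relevant admissible $\Z/2\Z$-graphs. Since $A$ has two vertices of distinct genera joined by two edges, the covering involution $\tau$ cannot exchange the two corresponding components of the degenerate curve; it must preserve each of them. This leaves two structurally different families: either $\tau$ fixes both nodes of $C$ (equivalently, both edges of $\Gamma$ lie in their own $G$-orbit), or $\tau$ swaps the two nodes (the two edges form a single $G$-orbit). Imposing the nonemptiness criterion of Definition \ref{Def:abstHurwitzstgraph} together with the constraint $g(\Gamma/G)=1$ at each vertex (via Proposition \ref{prop:condcyclic} and Riemann-Hurwitz) pins down the admissible local monodromy data and the distribution of the six fixed points. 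One then checks that the surviving admissible $\Z/2\Z$-graphs are exactly the \emph{banana} graphs over $A$ and the \emph{triangle} graph of three genus $1$ vertices, the latter mapping to $A$ by contracting the edge joining the two $\tau$-swapped vertices.

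The second step is to read off, for each admissible $\Z/2\Z$-graph, the factors $\H_{(\Gamma,G)} = \prod_{v'} \H_{g(v'),G_{v'},\xi_{v'}}$ and to identify their images under $\phi_{(\Gamma,G)}$ with the loci $\Hyp_{g,\hypbiram,2\hypbipair}$ and $\B_{g,\hypbiram,2\hypbipair}$ appearing in the statement. The excess bundle is then computed by Proposition \ref{prop:excess}: it is trivial when $\tau$ swaps the two nodes, and contributes a single factor $-\psi_h - \psi_{h'}$ on the banana graphs precisely when $\tau$ fixes both nodes, since the orbit of that edge then contains both edges coming from $A$. This is the source of the $\psi_1$-twists in the simplified expression, distributed over the two tensor factors. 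The diagonal class $\Delta_{\M_{1,2}}$ in the final term arises from the factorization \eqref{eqn:phiGammaGfactor} of $\phi_{(\Gamma,G)}$ through the diagonal $\Delta : \M_{1,2} \to \M_{1,2} \times \M_{1,2}$ attached to the orbit of the two swapped genus $1$ vertices of the triangle.

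Finally, I would assemble the contributions with their combinatorial multiplicities — the number of leg distributions, the number of isomorphism classes of $B$-structures, and the generic degrees of the maps $\phi$ onto their images — and divide by $6!$; the factor $4$ in the triangle term (becoming $2$ after passing to the normalized class $[\,\cdot\,]$, which absorbs the order-two automorphism swapping the two identified vertices) records precisely these counts. The main obstacle is the bookkeeping in the first two steps: ensuring that every admissible $\Z/2\Z$-graph over every leg distribution $B$ is accounted for exactly once up to isomorphism, and that the degree and symmetry factors are tracked correctly so that the final coefficients match. Given the classification and the excess-bundle computation, the identity then follows directly from Theorem \ref{th:maingen} and Proposition \ref{prop:excess}, with the simplification to the closed form being routine once the definitions of the $\psi$-twisted hyperelliptic and bielliptic classes are unwound.
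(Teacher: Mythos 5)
Your overall strategy --- applying Theorem \ref{th:maingen} to $[\B_4]=\tfrac{1}{6!}\phi_{(1^6)*}[\H_{4,\Z/2\Z,(1^6)}]$, enumerating the leg distributions $B\in\mathfrak{A}_A$ and the generic $B$-structures on admissible $\Z/2\Z$-graphs, computing the excess bundle via Proposition \ref{prop:excess}, and pushing forward along $\pi_B\circ\phi_f$ --- is exactly the paper's. However, you apply the excess criterion backwards. By Proposition \ref{prop:excess}, excess occurs if and only if some $G$-orbit of edges of $\Gamma$ contains at least \emph{two} edges coming from $A$. On the banana graphs, $\beta$ maps the two edges of $A$ bijectively onto the two edges of $\Gamma$; hence when the involution \emph{swaps} the two nodes, those edges form a single orbit containing both $A$-edges and the excess class is $-\psi_h-\psi_{h'}$, while when the involution \emph{fixes} both nodes, each edge is its own orbit containing exactly one $A$-edge and there is no excess. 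You claim the opposite. This is not a harmless slip: it would attach the $\psi$-factors to the node-fixing term (the one producing $[\Hyp_{2,2,0}]\otimes[\cdot]$) and strip them from the four terms involving $\B_{2,0,2}$ and $\B_{1,0,2}$, i.e.\ it yields a formula different from the one you are asked to prove --- in the statement the $\psi$-decorated graphs are precisely those carrying the node-swapping arrow.

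Second, the stratum responsible for the diagonal term is misidentified. The triangle of three genus $1$ vertices is \emph{not} an admissible $\Z/2\Z$-graph: an involution swapping the two adjacent genus $1$ vertices would map the edge joining them to itself while exchanging its half-edges, which is excluded by the definition of a pre-admissible $G$-graph (the induced action on $H_\Gamma/G$ must be fixed-point free); moreover all six monodromy-$1$ legs would have to lie on the $\tau$-fixed genus $1$ vertex, and Riemann--Hurwitz there forces $4g'=-2$, so the local space is empty in the sense of Definition \ref{Def:abstHurwitzstgraph}. Relatedly, your proposed $A$-structure contracting the edge between the two swapped vertices is not even generic in the sense of Definition \ref{Def:genericequivAstructure}, since the orbit of that edge would contain no edge of $A$. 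The correct stratum is a $4$-cycle with three genus $1$ vertices and one genus $0$ vertex carrying two of the six legs; the triangle decorated with $\Delta_{\M_{1,2}}$ appears only after pushing forward along $\pi_B$, when forgetting the legs contracts the genus $0$ bubble. This matters for the multiplicities too: the $4$-cycle admits generic contractions to all three leg distributions ($6{+}0$, $4{+}2$, $2{+}4$ legs on the genus $2$ and genus $1$ vertices), and summing these contributions is what produces the coefficient of the diagonal term, so with the triangle in its place the enumeration cannot be carried out as you describe.
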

 
 \begin{proof}
 This is an application of Theorem \ref{th:main}. Let $\mathfrak{A}$ be the set consisting of all possible distributions of 6 legs on $A$. Graphs in $\mathfrak{A}$ having nonzero intersection with $\phi(\H_{4,\Z_2,(1^6)})$ are of the form:
\begin{center}
\begin{tabular}{c@{\hskip 1cm}c@{\hskip 1cm}c}
$B_1=\begin{tikzpicture}[->,>=bad to,baseline=-3pt,node distance=1.3cm,thick,main node/.style={circle,draw,font=\Large,scale=0.5}]
\node[main node] (A) {2};
\node at (1,0) [main node] (B) {1};
\node at (.5,.5) (e_1) {$e_1$};
\node at (.5,-.5) (e_2) {$e_2$};
\draw [-] (A) to [out=40, in=140] (B);
\draw [-] (A) to [out=-40, in=-140] (B);
\draw [-] (A) to (-.5,.1);
\draw [-] (A) to (-.5,-.1);
\draw [-] (A) to (-.4,.2);
\draw [-] (A) to (-.4,-.2);
\draw [-] (A) to (-.3,.3);
\draw [-] (A) to (-.3,-.3);
\end{tikzpicture}$
&
$B_2=\begin{tikzpicture}[->,>=bad to,baseline=-3pt,node distance=1.3cm,thick,main node/.style={circle,draw,font=\Large,scale=0.5}]
\node[main node] (A) {2};
\node at (1,0) [main node] (B) {1};
\node at (.5,.5) (e_1) {$e_1$};
\node at (.5,-.5) (e_2) {$e_2$};
\draw [-] (A) to [out=40, in=140] (B);
\draw [-] (A) to [out=-40, in=-140] (B);
\draw [-] (A) to (-.5,.1);
\draw [-] (A) to (-.5,-.1);
\draw [-] (A) to (-.4,.2);
\draw [-] (A) to (-.4,-.2);
\draw [-] (B) to (1.5,-.1);
\draw [-] (B) to (1.5,.1);
\end{tikzpicture}$
&
$B_3=\begin{tikzpicture}[->,>=bad to,baseline=-3pt,node distance=1.3cm,thick,main node/.style={circle,draw,font=\Large,scale=0.5}]
\node[main node] (A) {2};
\node at (1,0) [main node] (B) {1};
\draw [-] (A) to [out=40, in=140] (B);
\draw [-] (A) to [out=-40, in=-140] (B);
\draw [-] (A) to (-.5,.1);
\draw [-] (A) to (-.5,-.1);
\draw [-] (B) to (1.4,.2);
\draw [-] (B) to (1.4,-.2);
\draw [-] (B) to (1.5,-.1);
\draw [-] (B) to (1.5,.1);
\end{tikzpicture}$.
   \end{tabular}
  \end{center}
The following admissible $G$-graphs  admit a $B_1$-structure:
\begin{center}
\begin{tabular}{c@{\hskip 1cm}c}
  $(\Gamma_{1,1},\Z_2)=
\begin{tikzpicture}[->,>=bad to,baseline=-3pt,node distance=1.3cm,thick,main node/.style={circle,draw,font=\Large,scale=0.5}]
\node at (1.4,0) [main node] (B) {1};
\node at (0,0) [main node] (A) {2};
\draw [-] (A) to [out=40, in=140] (B);
\draw [-] (A) to [out=-40, in=-140] (B);
\draw [<->] (.7,-.2) to (.7,.2);
\draw [-] (A) to (-.5,.1);
\draw [-] (A) to (-.5,-.1);
\draw [-] (A) to (-.4,.2);
\draw [-] (A) to (-.4,-.2);
\draw [-] (A) to (-.3,.3);
\draw [-] (A) to (-.3,-.3);
\draw [<->] (.7,-.2) to (.7,.2);
\end{tikzpicture} $
&
  $(\Gamma_{1,2},\Z_2)=\begin{tikzpicture}[->,>=bad to,baseline=-3pt,node distance=1.3cm,thick,main node/.style={circle,draw,font=\Large,scale=0.5}]
\node [main node] (A2) {1};
\node at (1,-.5) [main node] (A) {1};
\node at (1,.5) [scale=.3,draw,circle,fill=black] (A3) {};
\node at (2,0) [main node] (B) {1};
\node at (1.5,.5) (e_1) {$\tilde{e}_1$};
\node at (1.6,-.5) (e_2) {$\tilde{e}_2$};
\draw [-] (B) to  (A);
\draw [-] (B) to  (A3);
\draw [-] (A2) to  (A);
\draw [-] (A2) to  (A3);
\draw [-] (A3) to (1.1,1);
\draw [-] (A3) to (.9,1);
\draw [-] (A) to (1.2,.-.9);
\draw [-] (A) to (.8,-.9);
\draw [-] (A) to (.9,-1);
\draw [-] (A) to (1.1,-1);
\draw [<->] (.7,0) to (1.3,0);
\end{tikzpicture}$.
 \end{tabular}
\end{center}
There is up to isomorphism  one $B_1$-structures $f$ on $(\Gamma_{1,1},\Z_2)$. The top Chern class of the excess bundle $E_{f}$ is given by a $\psi$ class on each side. In other words
\[
 c_1(E_{f}) \cap \Bigg[\begin{tikzpicture}[->,>=bad to,baseline=-3pt,node distance=1.3cm,thick,main node/.style={circle,draw,font=\Large,scale=0.5}]
\node at (1.4,0) [main node] (B) {1};
\node at (0,0) [main node] (A) {2};
\node at (-.2,.5) (Hi) {\tiny $\H_{2,\Z_2,(1^6,0)}$};
\node at (1.6,.5) (Bi) {\tiny $\H_{1,\Z_2,(0)}$};
\draw [-] (A) to [out=40, in=140] (B);
\draw [-] (A) to [out=-40, in=-140] (B);
\draw [<->] (.7,-.2) to (.7,.2);
\draw [-] (A) to (-.5,.1);
\draw [-] (A) to (-.5,-.1);
\draw [-] (A) to (-.4,.2);
\draw [-] (A) to (-.4,-.2);
\draw [-] (A) to (-.3,.3);
\draw [-] (A) to (-.3,-.3);
\draw [<->] (.7,-.2) to (.7,.2);
\end{tikzpicture}\Bigg]
=
- \Bigg[
\begin{tikzpicture}[->,>=bad to,baseline=-3pt,node distance=1.3cm,thick,main node/.style={circle,draw,font=\Large,scale=0.5}]
\node at (1.4,0) [main node] (A) {1};
\node at (0,0) [main node] (B) {2};
\node at (-.2,.5) (Hi) {\tiny $\H_{2,\Z_2,(1^6,0)}$};
\node at (1.6,.5) (Bi) {\tiny $\H_{1,\Z_2,(0)}$};
\draw [-] (B) to [out=40, in=140] (A);
\draw [<-] (B) to [out=-40, in=-140] (A);
\draw [-] (B) to (-.5,.1);
\draw [-] (B) to (-.5,-.1);
\draw [-] (B) to (-.4,.2);
\draw [-] (B) to (-.4,-.2);
\draw [-] (B) to (-.3,.3);
\draw [-] (B) to (-.3,-.3);
\draw [<->] (.7,-.2) to (.7,.2);
\end{tikzpicture}  \Bigg]
-
 \Bigg[
\begin{tikzpicture}[->,>=bad to,baseline=-3pt,node distance=1.3cm,thick,main node/.style={circle,draw,font=\Large,scale=0.5}]
\node at (1.4,0) [main node] (A) {1};
\node at (0,0) [main node] (B) {2};
\node at (-.2,.5) (Hi) {\tiny $\H_{2,\Z_2,(1^6,0)}$};
\node at (1.6,.5) (Bi) {\tiny $\H_{1,\Z_2,(0)}$};
\draw [-] (B) to [out=40, in=140] (A);
\draw [->] (B) to [out=-40, in=-140] (A);
\draw [-] (B) to (-.5,.1);
\draw [-] (B) to (-.5,-.1);
\draw [-] (B) to (-.4,.2);
\draw [-] (B) to (-.4,-.2);
\draw [-] (B) to (-.3,.3);
\draw [-] (B) to (-.3,-.3);
\draw [<->] (.7,-.2) to (.7,.2);
\end{tikzpicture}   \Bigg]
\]
  There are 2 isomorphism classes of $B_1$-structures on $(\Gamma_{1,2},\Z_2)$. Indeed a $B_1$-structure $f=(\alpha,\beta,\gamma)$ on $(\Gamma_{1,2},\Z_2)$ can be given either by sending $e_1$ to $\tilde{e}_1$ or to $\tilde{e}_2$ under $\beta$. It is easy to see they are not isomorphic.

  The admissible bielliptic pairs with a $B_2$-structure are given by
  \begin{center}
\begin{tabular}{c@{\hskip 1cm}c}
  $(\Gamma_{2,1},\Z_2)=
\begin{tikzpicture}[->,>=bad to,baseline=-3pt,node distance=1.3cm,thick,main node/.style={circle,draw,font=\Large,scale=0.5}]
\node at (1.4,0) [main node] (B) {1};
\node at (0,0) [main node] (A) {2};
\draw [-] (A) to [out=40, in=140] (B);
\draw [-] (A) to [out=-40, in=-140] (B);
\draw [<->] (.7,-.2) to (.7,.2);
\draw [-] (A) to (-.5,.1);
\draw [-] (A) to (-.5,-.1);
\draw [-] (A) to (-.4,.2);
\draw [-] (A) to (-.4,-.2);
\draw [-] (B) to (1.9,-.1);
\draw [-] (B) to (1.9,.1);
\draw [<->] (.7,-.2) to (.7,.2);
\end{tikzpicture} $
&
  $(\Gamma_{2,2},\Z_2)=\begin{tikzpicture}[->,>=bad to,baseline=-3pt,node distance=1.3cm,thick,main node/.style={circle,draw,font=\Large,scale=0.5},shift={(0,-.5)}]
\node at (0,1) [main node] (A2) {1};
\node [main node] (A) {1};
\node at (1,1) [scale=.3,draw,circle,fill=black] (A3) {};
\node at (1,0) [main node] (B) {1};
\node at (.5,1.2) (e_1) {$\tilde{e}_1$};
\node at (.5,-.2) (e_2) {$\tilde{e}_2$};
\draw [-] (B) to  (A);
\draw [-] (B) to  (A3);
\draw [-] (A2) to  (A);
\draw [-] (A2) to  (A3);
\draw [-] (A3) to (1.1,1.4);
\draw [-] (A3) to (.9,1.4);
\draw [-] (A) to (.2,.-.4);
\draw [-] (A) to (-.2,-.4);
\draw [-] (A) to (.1,-.5);
\draw [-] (A) to (-.1,-.5);
\draw [<->] (.3,.7) to (.7,.3);
\end{tikzpicture}$
 \end{tabular}
\end{center}
Again there is up to isomorphism only one $B_2$-structures $f$ on $(\Gamma_{2,1},\Z_2)$ and the excess bundle is given by 
\[
 E_{f}\cap
\Bigg[\begin{tikzpicture}[->,>=bad to,baseline=-3pt,node distance=1.3cm,thick,main node/.style={circle,draw,font=\Large,scale=0.5}]
\node at (1.4,0) [main node] (B) {1};
\node at (0,0) [main node] (A) {2};
\node at (-.2,.5) (Hi) {\tiny $\H_{2,\Z_2,(1^6,0)}$};
\node at (1.6,.5) (Bi) {\tiny $\H_{1,\Z_2,(0)}$};
\draw [-] (A) to [out=40, in=140] (B);
\draw [-] (A) to [out=-40, in=-140] (B);
\draw [<->] (.7,-.2) to (.7,.2);
\draw [-] (A) to (-.5,.1);
\draw [-] (A) to (-.5,-.1);
\draw [-] (A) to (-.4,.2);
\draw [-] (A) to (-.4,-.2);
\draw [-] (B) to (1.9,-.1);
\draw [-] (B) to (1.9,.1);
\draw [<->] (.7,-.2) to (.7,.2);
\end{tikzpicture} \Bigg]
=
-\Bigg[
\begin{tikzpicture}[->,>=bad to,baseline=-3pt,node distance=1.3cm,thick,main node/.style={circle,draw,font=\Large,scale=0.5}]
\node at (1.4,0) [main node] (A) {1};
\node at (0,0) [main node] (B) {2};
\node at (-.2,.5) (Hi) {\tiny $\H_{2,\Z_2,(1^6,0)}$};
\node at (1.6,.5) (Bi) {\tiny $\H_{1,\Z_2,(0)}$};
\draw [-] (B) to [out=40, in=140] (A);
\draw [<-] (B) to [out=-40, in=-140] (A);
\draw [<->] (.7,-.2) to (.7,.2);
\draw [-] (B) to (-.5,.1);
\draw [-] (B) to (-.5,-.1);
\draw [-] (B) to (-.4,.2);
\draw [-] (B) to (-.4,-.2);
\draw [-] (A) to (1.9,-.1);
\draw [-] (A) to (1.9,.1);
\end{tikzpicture}\Bigg]
- \Bigg[
\begin{tikzpicture}[->,>=bad to,baseline=-3pt,node distance=1.3cm,thick,main node/.style={circle,draw,font=\Large,scale=0.5}]
\node at (1.4,0) [main node] (A) {1};
\node at (0,0) [main node] (B) {2};
\node at (-.2,.5) (Hi) {\tiny $\H_{2,\Z_2,(1^6,0)}$};
\node at (1.6,.5) (Bi) {\tiny $\H_{1,\Z_2,(0)}$};
\draw [-] (B) to [out=40, in=140] (A);
\draw [->] (B) to [out=-40, in=-140] (A);
\draw [<->] (.7,-.2) to (.7,.2);
\draw [-] (B) to (-.5,.1);
\draw [-] (B) to (-.5,-.1);
\draw [-] (B) to (-.4,.2);
\draw [-] (B) to (-.4,-.2);
\draw [-] (A) to (1.9,-.1);
\draw [-] (A) to (1.9,.1);
\end{tikzpicture}\Bigg]
\]
There are 2 isomorphism classes of $B_2$-structures on $(\Gamma_{2,2},\Z_2)$. We can send the edge $e_1$ to either $\tilde{e}_1$ or $\tilde{e}_2$ and this completely determines the $B_2$-structure.

There is only one admissible pair $(\Gamma_{3,1},\Z_2)$  with a $B_3$-structure. It is given by
\[
(\Gamma_3,\Z_2)=
 \begin{tikzpicture}[->,>=bad to,baseline=-3pt,node distance=1.3cm,thick,main node/.style={circle,draw,font=\Large,scale=0.5}]
\node at (1.4,0) [main node] (B) {1};
\node at (0,0) [main node] (A) {2};
\draw [-] (A) to [out=40, in=140] (B);
\draw [-] (A) to [out=-40, in=-140] (B);
\draw [-] (A) to (-.5,.1);
\draw [-] (A) to (-.5,-.1);
\draw [-] (B) to (1.8,.2);
\draw [-] (B) to (1.8,-.2);
\draw [-] (B) to (1.9,-.1);
\draw [-] (B) to (1.9,.1);
\draw [->] (.65,-.4) to [out=-140, in =-40,looseness = 10] (.75,-.4);
\draw [->] (.65,.2) to [out=-140, in =-40,looseness = 10] (.75,.2);
\end{tikzpicture}
\]
There is only one isomorphism class of $B_3$-structures and the top Chern class of the excess bundle is trivial. 

We have determined the sum of the classes in the expression of Theorem \ref{th:main}. Pushing all of these classes forward through the morphisms $\pi_B\circ \phi_{f}$  we obtain the desired expression.
 \end{proof}

 \begin{remark}
  The classes $[\B_{1,0,2}]$ and $[\B_{2,0,2}]$ are relative easy to compute in terms of decorated stratum classes (see \cite[Remark 3.4.2, Proposition 3.2.10]{thesisjason}). We have $[\Hyp_{1,0,2}]=[\M_{1,0,2}]$. The class $[\Hyp_{2,0,2}]$ is computed in \cite[Lemma 6]{Belorousski1984}. The class $[\Hyp_{2,2,0}]$ is given as $[\overline{\mathcalorig{DR}}_2(2)]$ in \cite[Theorem 0.1]{Tarasca2014}. The class $[\Hyp_{1,0,2}]$ is given as $[\bar{A}_2]$ in \cite[Theorem 3.33]{Pagani2008}. The class of the diagonal $[\Delta_{\M_{1,2}}]$ can easily be determined by Application \ref{app:tautkunneth}. We have thus completely determined the class $\xi^*_A([\B_4])$ in terms of decorated stratum classes.
 \end{remark}	

The pullback of $[\B_4]$ under $\xi_B$ and $\xi_C$ now works in a similar way.
 
 \begin{lemma}
  We have 
  \[
  \xi_B^*([\B_4]) = [\B_{2,1,0} \times \Hyp_{2,1,0}] +  [\Hyp_{2,1,0} \times \B_{2,1,0}] \in H^6(\M_{2,1}\times \M_{2,1}).
  \]
 \end{lemma}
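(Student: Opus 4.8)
The key observation is that the graph $B$ is literally the graph $A$ of Example~\ref{ex:B21H21}: two genus-$2$ vertices joined by a single (separating) edge. Hence the asserted identity is the cohomological shadow of the Chow computation carried out there, and the plan is to recall its steps. First I would use the identity $[\B_4]=\tfrac{1}{6!}\,\phi_{(1^6)*}[\H_{4,\Z/2\Z,(1^6)}]$ established in Paragraph~\ref{prgrph:invB4}, reducing the claim to evaluating $\xi_B^*\phi_{(1^6)*}[\H_{4,\Z/2\Z,(1^6)}]$ by the forgetful form of the pullback formula (Theorem~\ref{th:maingen}). This in turn requires enumerating the set $\mathfrak{A}_B$ of distributions of the six ramification legs over the two genus-$2$ vertices of $B$, and, for each resulting graph $B'$, the admissible $\Z/2\Z$-graphs $(\Gamma,G)$ carrying a generic $B'$-structure.

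The crucial combinatorial step is identical to Example~\ref{ex:B21H21}. Since $B$ has a separating edge, every such $(\Gamma,G)$ has a separating edge, so $\Gamma/G$ does as well; as the nonemptiness condition forces $g(\Gamma/G)=0$, the quotient graph must be a genus-$1$ vertex joined to a genus-$0$ vertex by one edge. The Riemann--Hurwitz formula (\ref{eqn:RiemHurw}) then forces exactly one marking on the genus-$1$ side and five on the genus-$0$ side, so the two orbits of vertices of $\Gamma$ are described by $\H_{2,\Z/2\Z,(1^2)}$ (a bielliptic genus-$2$ curve) and $\H_{2,\Z/2\Z,(1^6)}$ (a hyperelliptic genus-$2$ curve). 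Only the distributions placing one leg on one vertex and five on the other contribute. A dimension count shows that no $G$-orbit of edges of $\Gamma$ contains two edges coming from $B$, so by Proposition~\ref{prop:excess} the excess bundle is trivial and $c_{\textup{top}}(E_f)=1$.

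Assembling these contributions through Theorem~\ref{th:maingen} gives
\[
\xi_B^*\phi_{(1^6)*}[\H_{4,\Z/2\Z,(1^6)}]=\binom{6}{1}\Big(\phi_{(1)*}[\H_{2,\Z/2\Z,(1^2)}]\otimes\phi_{(1^5)*}[\H_{2,\Z/2\Z,(1^6)}]+(\text{swap})\Big).
\]
Finally I would divide by $6!$ and pass to the geometric classes using the degrees recorded in Example~\ref{ex:B21H21}: $\deg\phi_{(1)}=1$ and $\deg\phi_{(1^5)}=5!$, whence $\phi_{(1)*}[\H_{2,\Z/2\Z,(1^2)}]=[\B_{2,1,0}]$ and $\phi_{(1^5)*}[\H_{2,\Z/2\Z,(1^6)}]=5!\,[\Hyp_{2,1,0}]$. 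The overall constant is $\tfrac{1}{6!}\binom{6}{1}\cdot 5!=1$, yielding the claimed identity in $H^6(\M_{2,1}\times\M_{2,1})$.

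The only real work is the bookkeeping: confirming that the binomial leg-counts, the number of isomorphism classes of $B'$-structures, and the source-map degrees collapse to the clean coefficient $1$, and that the excess bundle genuinely vanishes. As all of this was already verified for the identical graph in Example~\ref{ex:B21H21}, no new obstacle arises; the main point is simply to observe that $B=A$ and invoke that computation.
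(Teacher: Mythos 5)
Your proposal is correct and is essentially the paper's own proof: the paper disposes of this lemma by simply citing Example \ref{ex:B21H21}, whose computation (the same leg distributions via the separating-edge argument, the same admissible $\Z/2\Z$-graphs with factors $\H_{2,\Z/2\Z,(1^2)}$ and $\H_{2,\Z/2\Z,(1^6)}$, trivial excess bundle, and the degree bookkeeping $\tfrac{1}{6!}\binom{6}{1}\cdot 5!=1$) you recapitulate faithfully. One small slip in your write-up: the Riemann--Hurwitz/nonemptiness condition forces $g(\Gamma/G)=1$ (the genus of the bielliptic quotient curve $D$), not $0$ --- the value $0$ belongs to the paper's earlier hyperelliptic example --- but since you immediately describe the quotient graph correctly as a genus-$1$ vertex joined to a genus-$0$ vertex, nothing downstream is affected.
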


 \begin{proof}
  This is Example \ref{ex:B21H21}.
 \end{proof}

\begin{lemma}
  In $H^{6}(\M_{3,1} \times \M_{1,1})$ we have
\begin{align*}
\xi_C^*([\B_4])=&
\Bigg[
\begin{tikzpicture}[->,>=bad to,baseline=-3pt,node distance=1.3cm,thick,main node/.style={circle,draw,font=\Large,scale=0.5}]
\node at (1.4,0) [main node] (A) {1};
\node at (0,0) [main node] (B) {3};
\node at (-.2,.5) (Hi2) {$\B_{3,1,0}$};
\node at (1.6,.5) (Hi1) {$\Hyp_{1,1,0}$};
\draw [-] (B) to (A);
\end{tikzpicture} \Bigg]
+ 2
\Bigg[
\begin{tikzpicture}[->,>=bad to,baseline=-3pt,node distance=1.3cm,thick,main node/.style={circle,draw,font=\Large,scale=0.5},shift={(0,-.5)}]
\node at (1.2,0) [main node] (A) {1};
\node at (0,0) [main node] (B) {2};
\node at (0,1.2) [main node] (C) {1};
\node at (-.7,.3) (Hi) {$\Hyp_{2,0,2}$};
\node at (1,1) (Hi) {$\Delta_{\M_{1,1}}$};
\draw [-] (B) to (A);
\draw [-] (B) to (C);
\draw [<->] (.7,.2) to (.2,.7);
\end{tikzpicture} \Bigg]\\
=& [\B_{3,1,0}\times \Hyp_{1,1,0}] + 2 [\Hyp_{2,0,2}\times \Delta_{\M_{1,1}}]
\end{align*}
\end{lemma}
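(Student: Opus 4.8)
The plan is to read off $\xi_C^*([\B_4])$ as a direct application of Theorem \ref{th:maingen}, in the same spirit as the preceding pullbacks along $A$ and along $B$ (the latter being Example \ref{ex:B21H21}). First I would use the identity $[\B_4]=\tfrac{1}{6!}\,\phi_{(1^6)*}[\H_{4,\Z_2,(1^6)}]$, established above, so that the task becomes computing $\xi_C^*\pi_*\phi_*[\H_{4,\Z_2,(1^6)}]$. Since $C$ has no legs, the index set $\mathfrak{A}_C$ of Theorem \ref{th:maingen} consists of the graph $C$ together with all distributions of the six ramification legs over its genus $3$ and genus $1$ vertices, and for each such $B$ I must enumerate the admissible $\Z/2\Z$-graphs $(\Gamma,G)$ carrying a generic $B$-structure and a nonempty stratum, using Definition \ref{Def:abstHurwitzstgraph} and the explicit nonemptiness/degree count of Proposition \ref{prop:condcyclic}.

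The enumeration is constrained by three facts, exactly as in Example \ref{ex:B21H21}: the quotient $\Gamma/G$ must have genus $g'=1$ (by Riemann--Hurwitz for the genus $4$ bielliptic cover); the edge of $C$ is separating, so any $B$, hence any $\Gamma$ with a $B$-structure, has a separating $G$-orbit of edges and $\Gamma/G$ a separating edge; and at each vertex the local monodromy datum must satisfy the nonemptiness condition of Proposition \ref{prop:condcyclic}. I expect this to pin down exactly two families. In the first, $\Gamma=C$ with $G$ acting as the covering involution on both vertices: the genus $3$ vertex is a bielliptic curve attached at a fixed point (monodromy $h=1$) and the genus $1$ vertex carries the elliptic involution, giving factors $\H_{3,\Z_2,(1^4)}$ and $\H_{1,\Z_2,(1^4)}$. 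In the second, the genus $1$ vertex of $C$ resolves into a \emph{$G$-orbit of two} isomorphic genus $1$ vertices swapped by the involution, attached to a central genus $2$ vertex along a swapped pair of nodes; here the two attaching nodes form a free orbit (monodromy $h=0$), and the genus $2$ factor is $\H_{2,\Z_2,(1^6,0)}$, whose two conjugate markings are precisely the images of the paired nodes.

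For both families the excess bundle is trivial: each $G$-orbit of edges contains exactly one edge coming from $C$, so $k=1$ in Proposition \ref{prop:excess} and $c_{\mathrm{top}}(E_f)=1$. Forgetting the ramification legs via $\pi_B$ and applying the factorization \eqref{eqn:phiGammaGfactor} of $\phi_{(\Gamma,G)}$, the first family contributes $[\B_{3,1,0}\times\Hyp_{1,1,0}]$. For the second family the size-$2$ orbit of genus $1$ vertices forces $\phi_{(\Gamma,G)}$ to factor through the diagonal $\Delta\colon\M_{1,1}\to\M_{1,1}\times\M_{1,1}$, so its contribution is supported on $\Hyp_{2,0,2}\times\Delta_{\M_{1,1}}$ with $\Delta_{\M_{1,1}}$ as in Application \ref{app:tautkunneth}; combining the leg-distribution counts, the degrees of the $\phi_f$ onto their images, and the overall factor $\tfrac{1}{6!}$ should produce the coefficients $1$ and $2$ displayed in the statement.

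The main obstacle will be the diagonal term and its coefficient $2$. Unlike the pullbacks along $A$ and $B$, here the genus $1$ part of $C$ genuinely comes from a swapped pair of components, and I must (i) verify that this is the only admissible $\Z/2\Z$-graph producing a size-$2$ vertex orbit, (ii) correctly read off the diagonal factor from \eqref{eqn:phiGammaGfactor}, and (iii) carefully tally the number of isomorphism classes of $C$-structures on $(\Gamma,G)$ (the choice of which paired edge the edge of $C$ maps to), the binomial leg-distribution multiplicities, and the map degrees so that these collapse to exactly $2$. Once these bookkeeping factors are checked against the analogous count in Example \ref{ex:B21H21}, the stated equality in $H^6(\M_{3,1}\times\M_{1,1})$ follows.
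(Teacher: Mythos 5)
Your proposal is correct and follows essentially the same route as the paper: the paper proves the analogous $\xi_A$-pullback in detail via Theorem \ref{th:maingen} (enumeration of leg distributions, admissible $\Z/2\Z$-graphs with generic structures, excess bundle, degree bookkeeping) and explicitly states that the $\xi_B$ and $\xi_C$ pullbacks "work in a similar way," which is exactly what you carry out. Your two families of admissible $G$-graphs, the triviality of the excess class, the diagonal factor arising from the size-two vertex orbit via \eqref{eqn:phiGammaGfactor}, and the identification of the coefficient $2$ with the two gluing choices for the swapped pair of edges (interacting with the hyperelliptic involution of the genus-$2$ component in the degree count) are precisely the ingredients of the paper's implicit argument.
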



The class of  $\B_{3,1,0} \subset \M_{3,1}$ has not been computed before but it can be determined by further pullbacks to the boundary.

 \begin{proof}[Proof of Theorem \ref{th:B4}]
  We have determined the three linear maps $\xi^*_A$, $\xi^*_B$ and $\xi_C^*$ in terms of a basis of decorated stratum classes for $H^6(\M_4)$. 
 We have also determined the the pullbacks $\xi^*_A[\B_4]$, $\xi^*_B[\B_4]$ and $\xi^*_C[\B_4]$ in terms of decorated stratum classes. Since the kernels of $\xi_A^*, \xi_B^*, \xi_C^*$ have trivial intersection, this completely determines the class $[\B_4]$. 
 \end{proof}

\begin{remark}
 The class $[\B_4]$ has first been computed in \cite{thesisjason} using the combined information of $\xi^*_A[\B_4]$, $\xi^*_B[\B_4]$ and the pull-push $\delta_*\phi_{(1^{(10)})}[\B_4]$ through the hyperelliptic curves
 \[
 \begin{tikzcd}
 \H_{4,\Z_2,(1^{10})}\arrow[r,"\phi_{(1^{10})}"] \arrow[d,"\delta"]  & \M_4\\
 \M_{0,10}
 \end{tikzcd}.
 \]
  This last operation is not covered by our previous results but will be described in Section \ref{sec:inthyp}. It should be seen as proof of concept that the intersection of two loci of admissible covers can again be described in terms of tautological classes and further spaces of admissible covers. It turns out that the result for $\delta_*\phi_{(1^{10})}^* [\B_4]$ is consistent with the formula in Theorem \ref{th:B4}, providing a further check of the previous computations.
\end{remark}

\subsection{Further Examples and Applications} \label{Sect:furtherexa}
Using our implementation of the methods in the previous sections, we can compute many more examples of admissible cover cycles.




Apart from the hyperelliptic and bielliptic cycles listed in Figures \ref{fig:hyperelliptcycles} and \ref{fig:bielliptcycles} in the introduction, we can compute a few other loci of admissible covers.
\begin{itemize}
    \item $[\H_{3,\mathbb{Z}/2\mathbb{Z},()}] \in H^6(\M_3)$, the cycle of genus $3$ curves $C$ admitting an unramified double cover of a genus $2$ curve,
    \item $[\H_{3,\mathbb{Z}/3\mathbb{Z},(0)}] \in H^4(\M_{1,3})$, the cycle of elliptic curves $(E,p_0,p_1,p_2)$ such that there exists an unramified triple\footnote{Such a cover must necessarily be a group homomorphism. It follows that it is a cyclic cover and in fact for the elliptic curve $(E,p_0)$ the point $p_1$ is a $3$-torsion point and $p_2=2 \cdot p_1$.} cover $E \to E'$ to an elliptic curve $E'$ such that $p_0, p_1, p_2$ form a fibre, 
    \item $\pi_* [\H_{3,\mathbb{Z}/3\mathbb{Z},(1,2)}] \in H^8(\M_3)$, the cycle of genus $3$ curves $C$ admitting a cyclic triple cover of a genus $1$ curve, with full ramification at $2$ points.
\end{itemize}
In particular, our methods allow us to go beyond the case of double covers.

One of the main bottlenecks in the above computations is the enumeration of all tautological relations in various spaces $RH^{2k}(\M_{g,n})$, which is done using the program \cite{Pixtprogram}. Since cycles of admissible covers tend to have high codimension $k$, it takes a long time to enumerate all decorated stratum classes of this degree and to compute the coefficients of the relations between them. Using a more refined approach to express tautological classes in a basis of $RH^{2k}(\M_{g,n})$, e.g. by computing intersection numbers of cycles in opposite degree, could potentially allow us to extend the above list of examples.

\begin{remark}
 In the paper \cite{hurwitzhodge}, intersection numbers of Hodge and $\psi$-classes are computed on spaces $\M_{g',\gamma}(BG)$ of stable maps to the classifying stack $BG$ of a finite (abelian) group $G$. As described in Remark \ref{Rmk:relntoLit}, the space $\M_{g',\gamma}(BG)$ is related to the space $\H_{g,G,\gamma}$, where the target of the admissible cover has genus $g'$. Indeed, there is a map $\H_{g,G,\gamma} \to \M_{g',\gamma}(BG)$. Indeed, given a cover $G \curvearrowright C \to D$ over a scheme $S$, the natural map $C \to S$ is $G$-equivariant with respect to the trivial action of $G$ on $S$. Taking the quotient by $G$ on both sides gives us a map $[C/G] \to [S / G] = S \times BG \xrightarrow{\pi_{BG}} BG$, which gives an family in $\M_{g',\gamma}(BG)$ over $S$. 
 
 In the case $d=2, g'=1$ and for monodromy data $\gamma$ of the form $\gamma=(1,1, \ldots, 1) = (1^{2n})$, this map is an isomorphism. Given $n \geq 1$ both spaces parametrize bielliptic curves of genus $n+1$ ramified over $2n$ points. 
 
 The results of \cite{hurwitzhodge} have an explicit specialization for these spaces. To state it, let $\lambda_i \in H^{2i}(\M_{n+1})$ be the $i$th Chern class of the Hodge bundle and let 
 \[\phi : \H_{n+1, \mathbb{Z}/2\mathbb{Z}, (1^{2n})} \to \M_{n+1}\]
 be the map remembering the bielliptic curve, without markings. Consider the generating series of integrals
 \[F(u) = \sum_{n=1}^\infty \frac{u^{2n-1}}{(2n-1)!} \int_{\H_{n+1, \mathbb{Z}/2\mathbb{Z}, (1^{2n})}} \phi^* \left( \lambda_{n+1} \lambda_{n-1} \right).\]
 Then a computation using \cite[Theorem 1]{hurwitzhodge} shows that
 \begin{equation} \label{eqn:hurwitzhodge}
     F(u) = \frac{i}{24} \left(\frac{1-e^{iu}}{1+e^{iu}} \right) = \frac{1}{48}u + \frac{1}{576}u^3+\frac{1}{5760} u^5 + \ldots.
 \end{equation}
 Now, up to a factor, the pushforward $\phi_*[\H_{n+1, \mathbb{Z}/2\mathbb{Z}, (1^{2n})}]$ is nothing but the class $[\B_{n+1}]$ of the bielliptic locus and by the projection formula we have
 \[\int_{\H_{n+1, \mathbb{Z}/2\mathbb{Z}, (1^{2n})}} \phi^* \left( \lambda_{n+1} \lambda_{n-1} \right) = \int_{\M_{n+1}} \phi_*[\H_{n+1, \mathbb{Z}/2\mathbb{Z}, (1^{2n})}] \cdot \lambda_{n+1} \lambda_{n-1}\]
 For $n=1,2,3$ we can compute the class of the bielliptic locus in terms of decorated stratum classes and thus the corresponding intersection numbers with $\lambda_{n+1} \lambda_{n-1}$. The results agree with the coefficients in $F(u)$ predicted by the formula (\ref{eqn:hurwitzhodge}) above. This provides a further nontrivial check for our computations.
\end{remark}

\subsection{Intersecting with the Hyperelliptic Locus}\label{sec:inthyp}
 
 In \cite{Faber2012} the authors calculate the class $[\B_3]$ of the bielliptic locus in genus 3 by pulling it back to the hyperelliptic locus (and evaluating it on a small number of test surfaces). In other words they compute the pull-push of $[\B_3]$ along the diagram 
 \[
 \begin{tikzcd}
 \H_{3,\Z_2,(1^8)}\arrow[r,"\phi_{(1^8)}"]\arrow[d,"\delta"] & \M_3\\
 \M_{0,8}
 \end{tikzcd}.
 \]
  The space $\M_{0,8}$ is relatively easy to understand; the inverse image of $\B_3$ along $\phi_{(1^8)}$ can be determined by set theoretic arguments\footnote{In the original published version of the paper the authors make a mistake in this part of the argument (see \cite[Remark 3.4.2]{thesisjason} for details).}. This allows the authors to compute the class $I:=\delta_* \phi^*_{(1^8)}([\B_3])$ up to a constant.  The authors then determine the composition $\delta_* \circ \phi_{(1^8)}^*\colon A^2(\M_3)\rightarrow A^2(\H_{3,\Z_2,(1^8)})\rightarrow A^2(\M_{0,8})$, in terms of a basis for $A^2(\M_3)$ consisting of products of divisors and $\kappa_2$, by computing  the pull-push $\delta_*\phi_{(1^8)}^*(D)$ for all boundary divisors $D\in A^1(\M_3)$ and multiplying out on both sides, and by directly computing $\delta_*\phi_{(1^8)}^*(\kappa_2)$. 
 Since $\dim A^2(\M_3)=7$, $\dim A^2( \M_{0,8})=6$ and $\delta_*\phi^*$ is surjective this determines 5 out of the 7 coefficients of the class $[\B_3]$, where we miss 1 coefficient because we only determine the pullback of $[\B_3]$ to the hyperelliptic locus up to a scalar.
 
  Using Theorem \ref{th:mainpush} this approach becomes a bit more systematic. Indeed we have seen that the composition 
 \[\delta_* \phi^*_{\tilde{\xi}}:R^k(\M_{g,n}) \rightarrow R^k(\H_{g,G,\xi}) \rightarrow R^k(\M_{g',b})\]
 can be computed algorithmically. If we want to compute the class $[\phi_{\tilde{\xi}'}\H_{g,G',\xi'}]$ of another space of admissible covers then we still have to determine the intersection
 \begin{equation}\label{eq:pulltohur}
 \phi_{\tilde{\xi}}^*[\phi_{\tilde{\xi}'}\H_{g,G',\xi'}].
 \end{equation}
 Currently such pullbacks have not been worked out in full generality, but the expectation is that the fibered diagram
 \[
 \begin{tikzcd}
 F \arrow[r]\arrow[d]& \H_{g,G',\xi'}\arrow[d,"\phi_{\tilde{\xi}'}"]\\
 \H_{g,G,\xi} \arrow[r,"\phi_{\tilde{\xi}}"] & \M_{g,n}
 \end{tikzcd}
 \]
 and its excess bundle can be analysed in a way similar to that of Proposition \ref{prop:fibreprod} and \ref{prop:excess}. At the moment we just have set  theoretic arguments to analyse the pullback \eqref{eq:pulltohur} in particular cases. 
 
 \bigskip
 
 \textbf{Pulling Back the Bielliptic Locus in Genus 4:} As an example we will now sketch the computation of the pullback of $[\B_4]$  to the stack $\H_{4,\Z_2,(1^{10})}$ parametrizing hyperelliptic curves of genus  4. 
 \[
 \begin{tikzcd}
 \H_{4,\Z_2,(1^{10})}\arrow[r,"\phi_{(1^{10})}"]\arrow[d,"\delta"] & \M_4\\
 \M_{0,10}
 \end{tikzcd}.
 \]
 We start by giving the set theoretic intersection of the bielliptic and hyperelliptic locus.


 \begin{proposition}\label{prop:theloci}
  The inverse image of $\B_4$ under the map $\phi_{(1^{10})}\colon \H_{4,\Z_2,(1^{10})}\rightarrow \M_4$ is given by two strata:
  \begin{enumerate}
   \item the locus $\mathcalorig{A}\subset \H_{4,\Z_2,(1^{10})}$ of admissible hyperelliptic covers $C\rightarrow D$ where $C$ is a curve with two irreducible components $C_1$ and $C_2$ of genus 1 and 2 respectively and two nodes between them; and where $C_2$ admits a bielliptic involution switching the nodes,
   
   \item the locus $\mathcalorig{B}\subset \H_{4,\Z_2,(1^{10})}$ of admissible hyperelliptic covers $C\rightarrow D$ where $C$ is a curve with two irreducible components $C_1$ and $C_2$ of genus 1 and 2 and two nodes between them; and where $C_1$ admits a bielliptic involution switching the nodes.
  \end{enumerate}
    \begin{figure}[H]
  \centering
    \includegraphics[width=.8\textwidth]{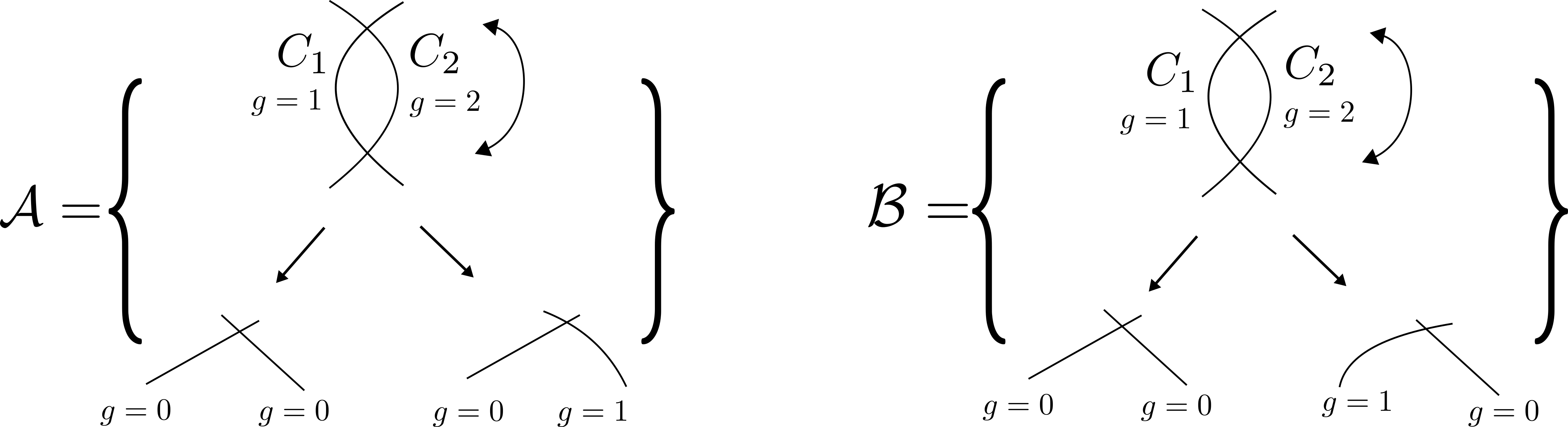}
    \label{fig:AB}
\end{figure}
 \end{proposition}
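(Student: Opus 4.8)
The plan is to establish the claimed set-theoretic equality by first showing that no \emph{smooth} curve lies in the intersection, so that $\phi_{(1^{10})}^{-1}(\B_4)$ is supported on the boundary, and then classifying the boundary strata of $\H_{4,\Z_2,(1^{10})}$ that admit an admissible bielliptic structure. Throughout I use that, by the results recalled in the section on $[\B_4]$, a curve lies in $\B_4$ if and only if it carries an admissible bielliptic $\Z_2$-cover to a genus-$1$ target.

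First I would reduce to the boundary using the Castelnuovo--Severi inequality. A point of the interior $\Ho_{4,\Z_2,(1^{10})}$ is a smooth hyperelliptic curve $C$ of genus $4$, carrying the degree-$2$ map $\sigma\colon C\to \P^1$. If such a $C$ were also bielliptic it would carry a second degree-$2$ map $\tau\colon C\to E$ to a genus-$1$ curve. Since the two quotients have different genera the maps are distinct, and being of prime degree they cannot factor through a common nontrivial cover, so Castelnuovo--Severi applies and gives $4=g(C)\le 2\cdot 0+2\cdot 1+(2-1)(2-1)=3$, a contradiction. Hence $\phi_{(1^{10})}^{-1}(\B_4)$ meets only the boundary.

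The core step is the boundary analysis. On a boundary point $C$ of the intersection the deck involution $\sigma$ of the hyperelliptic admissible cover is canonical, hence central among automorphisms; a bielliptic admissible structure provides a second involution $\tau$, and I would argue that $\sigma\tau=\tau\sigma$, so that $V:=\langle\sigma,\tau\rangle\cong(\Z/2)^2$ with a third involution $\rho=\sigma\tau$. The quotient $C/\sigma$ has genus $0$ and $C/\tau$ has genus $1$, and since $C/V$ is a quotient of the genus-$0$ curve $C/\sigma$ it again has genus $0$. Tracking arithmetic genera through the normalizations of the intermediate quotients, together with the stability and balancing constraints on admissible covers (and the nonemptiness criterion of Theorem \ref{thm:condnonempty} and Proposition \ref{prop:condcyclic} for the local pieces), I expect to be forced into the dual graph with one vertex of genus $1$ and one of genus $2$ joined by a double edge: every other boundary configuration either fails to be hyperelliptic (the quotient $C/\sigma$ acquires positive genus) or, after restricting $V$ to the components of the normalization and applying Castelnuovo--Severi component-by-component, cannot support a genus-$1$ quotient $C/\tau$. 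The remaining freedom is whether the bielliptic involution acts nontrivially on the genus-$2$ component $C_2$ (giving $\mathcalorig{A}$) or on the genus-$1$ component $C_1$ (giving $\mathcalorig{B}$), with $\sigma$ swapping the two nodes in both cases; this is exactly the dichotomy in the statement.

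For the reverse inclusion I would construct, for a generic member of each of $\mathcalorig{A}$ and $\mathcalorig{B}$, both an admissible hyperelliptic cover to a genus-$0$ target (so the curve lies in the image of $\phi_{(1^{10})}$) and an admissible bielliptic cover to a genus-$1$ target (so the curve lies in $\B_4$), checking the balancing condition at the two nodes in each case. The main obstacle is the combinatorial bookkeeping in the core step: Castelnuovo--Severi is only a smooth-curve statement, so it must be applied to the individual components of the normalization, and this has to be combined carefully with the compatibility of the two admissible cover structures across the nodes (including the balancing of the commuting $(\Z/2)^2$-action) in order to exclude all competing strata. Verifying that exactly these two configurations, and no deeper degenerations, survive is where the real work lies.
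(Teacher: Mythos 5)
Your proposal follows essentially the same route as the paper's own proof: the reverse inclusion is obtained by exhibiting a bielliptic involution on each of $\mathcalorig{A}$ and $\mathcalorig{B}$ (the required bielliptic involution on one component, combined with the elliptic/hyperelliptic involution on the other, both swapping the nodes), and the forward inclusion is reduced to the Castelnuovo--Severi inequality (no smooth curve of genus $\geq 4$ is both hyperelliptic and bielliptic) plus a combinatorial exhaustion of boundary strata, which the paper itself also leaves schematic by citing the argument of Faber--Pandharipande. Your additional observation that the two involutions commute and generate a $\Z_2\times\Z_2$-action is precisely the structure the paper exploits immediately afterwards in Proposition \ref{prop:thelocihur}, so it is consistent with, rather than divergent from, the paper's treatment.
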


 \begin{proof} 
  Since $\mathcalorig{A}$ and $\mathcalorig{B}$ are subsets of the stack $\H_{4,\Z_2,(1^{10})}$, one sees easily that the hyperelliptic involution exchanges in both cases the nodes connecting $C_1$ and $C_2$. On the other hand, we also have a bielliptic involution in both cases: for $\mathcalorig{A}$ it is given by the required bielliptic involution on $C_2$ and by the hyperelliptic involution of $C_1$. On the other hand, for $\mathcalorig{B}$ the bielliptic involution is given by the required bielliptic involution on $C_1$ and by the hyperelliptic involution on $C_2$.
  
  That these are all loci follows from a combinatorial exhaustion in the spirit of \cite[Proposition 3]{Faber2012} (in particular from the Castelnuovo-Severi inequality it follows that there are no smooth curves of genus $\geq 4$ with both a hyperellptic and a bielliptic involution).
 \end{proof}

In the following proposition, we show that the loci $\mathcalorig{A}$ and $\mathcalorig{B}$ are again parametrized by spaces of admissible $G$-covers. Since the hyperelliptic and bielliptic involution commute, they naturally induce an action of the group $G=\Z_2\times \Z_2$, which appears in the formulas.

\begin{proposition}\label{prop:thelocihur}
Let $\phi_{\Z_2,((0,1)^2)}$ denote the composition
 \[
   \phi_{\Z_2,((0,1)^2)}\colon \H_{2,\Z_2\times \Z_2,((1,0)^3,(0,1),(1,1))}\to \H_{2,\Z_2,(1^6,0,0)}\to  \H_{2,\Z_2,(1^6,0)},
 \]
 where the first map forgets the second $\Z_2$ action and the second forgets the pair of markings that previously had stabilizer $(1,0)$. Let $\phi_{\Z_2}$ denote the map 
 \[
     \phi_{\Z_2}\colon \H_{1,\Z_2\times \Z_2,((1,0)^2,(1,1)^2)}\to \H_{2,\Z_2,(1^4,0^2)} \to \H_{2,\Z_2,(1^4,0)}.
 \]
 forgetting the second $\Z_2$ action and one of the pair of points that previously had stabilizer $(1,1)$.
Let $\xi\colon \H_{2,\Z_2,(1^6,0)}\times \H_{1,\Z_2,(1^4,0)}\to \H_{4,\Z_2,(1^{10})}$ be the map gluing the admissible hyperelliptic covers together in the points not fixed by the involution. We have 
\begin{align*}
    \mathcalorig{A} &= \xi \big( \im \phi_{\Z_2,((0,1)^2)}, \H_{1,\Z_2,(1^4,0)}\big)\\
        \mathcalorig{B} &= \xi \big( \H_{2,\Z_2,(1^6,0)}, \im\phi_{\Z_2}\big).
\end{align*}
\end{proposition}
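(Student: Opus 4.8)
My plan is to prove the two identities symmetrically, reducing each to the set-theoretic description already obtained in Proposition~\ref{prop:theloci} together with the equivariant gluing construction of Section~\ref{Sect:boundaryHurwitz}. The key observation is that the gluing map $\xi$ only uses the hyperelliptic $\Z_2$-structures of the two factors: it is the $\Z_2$-equivariant gluing of two admissible hyperelliptic covers along the conjugate pairs of markings carrying the label $0$, and its target $\H_{4,\Z_2,(1^{10})}$ records only the hyperelliptic involution. Consequently the bielliptic condition distinguishing $\mathcalorig{A}$ from $\mathcalorig{B}$ enters solely through the restricted factor $\im\phi_{\Z_2,((0,1)^2)}$ (resp.\ $\im\phi_{\Z_2}$), and the whole statement reduces to two claims: first, that $\im\phi_{\Z_2,((0,1)^2)}$ is exactly the locus of hyperelliptic genus-$2$ curves $C_2$ with six Weierstrass markings and one conjugate pair of markings admitting a bielliptic involution switching this pair (and symmetrically for $\im \phi_{\Z_2}$ on the genus-$1$ side); and second, that $\xi$ glues such a pair of curves back into precisely the curves of $\mathcalorig{A}$ (resp.\ $\mathcalorig{B}$).

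For the first claim I would pass through the $\Z_2\times\Z_2$-cover structure. Given a point of $\mathcalorig{A}$, its genus-$2$ component $C_2$ is smooth, so its hyperelliptic involution $\sigma_2$ is unique and central in $\aut(C_2)$; it therefore commutes with the node-switching bielliptic involution $\beta_2$, and since $\sigma_2\neq\beta_2$ (they have quotient genus $0$ and $1$ respectively) the two generate a group $G=\langle\sigma_2,\beta_2\rangle\cong\Z_2\times\Z_2$ acting on $C_2$. Computing stabilizers, the six Weierstrass points of $\sigma_2$ have stabilizer $\langle(1,0)\rangle$ and form three $G$-orbits, the two fixed points of $\beta_2$ have stabilizer $\langle(0,1)\rangle$, and the two node-preimages are switched by both involutions and hence fixed only by the diagonal $(1,1)$; a Riemann--Hurwitz check via~(\ref{eqn:RiemHurw}) confirms the quotient $C_2/G$ has genus $0$, so $C_2$ defines a point of $\H_{2,\Z_2\times\Z_2,((1,0)^3,(0,1),(1,1))}$. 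Forgetting the $\beta_2$-factor turns the $(0,1)$- and $(1,1)$-orbits into free conjugate pairs and recovers the hyperelliptic $\Z_2$-cover, and then forgetting the $(0,1)$-pair leaves the node pair as the distinguished $0$-marking; this is exactly the definition of $\phi_{\Z_2,((0,1)^2)}$, so $C_2\in\im\phi_{\Z_2,((0,1)^2)}$. Conversely, a point of $\im\phi_{\Z_2,((0,1)^2)}$ arises by construction from such a $\Z_2\times\Z_2$-cover, whose extra involution is a node-switching bielliptic involution; the nonemptiness and the bookkeeping of markings are controlled by Theorem~\ref{thm:condnonempty}. The genus-$1$ analysis for $\im\phi_{\Z_2}$ is identical, except that the bielliptic involution now acts freely on $C_1$, so the relevant monodromy datum is $((1,0)^2,(1,1)^2)$ with no $(0,1)$-fixed points.

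It then remains to verify the two inclusions defining $\mathcalorig{A}$ (and symmetrically $\mathcalorig{B}$). For the inclusion $\supseteq$, gluing $C_2\in\im\phi_{\Z_2,((0,1)^2)}$ to an arbitrary $C_1\in\H_{1,\Z_2,(1^4,0)}$ along the $0$-pairs is $\Z_2$-equivariant because each hyperelliptic involution swaps its own pair, so the result is an admissible hyperelliptic cover of genus $4$ with exactly $10$ fixed markings; the node-switching involution $\beta_2$ on $C_2$ glued with the hyperelliptic involution of $C_1$ furnishes the global bielliptic involution required by Proposition~\ref{prop:theloci}, placing the glued curve in $\mathcalorig{A}$. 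For the reverse inclusion, the component decomposition of the previous paragraph exhibits every point of $\mathcalorig{A}$ as the image under $\xi$ of such a pair. The main obstacle I anticipate is precisely the marking-and-stabilizer bookkeeping through the two forgetful maps and the gluing map---confirming that the surviving $0$-marking is the nodal pair, that balancing and admissibility are preserved, and that the decomposition and reconstruction are mutually inverse as maps of loci rather than merely on generic points. This last point requires tracking the finite ambiguities (the choice of bielliptic involution on the distinguished component and the orderings of the markings) that the forgetful maps are designed to absorb; here the uniqueness arguments of Section~\ref{prgrph:invB4}, via the Castelnuovo--Severi inequality, guarantee that no spurious extra involutions enter.
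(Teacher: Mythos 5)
Your proof is correct and follows essentially the same strategy as the paper's own: decompose a point of $\mathcalorig{A}$ (resp.\ $\mathcalorig{B}$) into its two components, use the commuting hyperelliptic and bielliptic involutions on the distinguished component to exhibit a $\Z_2\times\Z_2$-cover with monodromy datum $((1,0)^3,(0,1),(1,1))$ (resp.\ $((1,0)^2,(1,1)^2)$), match the marking bookkeeping with the forgetful maps, and reconstruct via the gluing map $\xi$. The one point needing repair is your assertion that the genus-$1$ case is \emph{identical}: the involution $\sigma_1$ induced on $C_1$ by the global hyperelliptic involution is not central in $\aut(C_1)$ (genus-$1$ curves have translations), so commutativity of $\sigma_1$ and $\beta_1$ needs a separate, easy argument---for instance, $\beta_1$ is fixed-point-free by Riemann--Hurwitz, hence is translation by a nonzero $2$-torsion point, and such a translation commutes with every involution of the form $x\mapsto -x+a$.
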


\begin{proof}
We adopt the notation of Proposition \ref{prop:theloci}. It is obvious that both $\mathcalorig{A}$ and $\mathcalorig{B}$ are contained in the image of $\xi$. 

We start by considering $\mathcal{A}$. There are no conditions on the curve $C_1$. Let $f\colon C_2\to D_2$ be the hyperelliptic map on $C_2$ and let $\sigma$ be the corresponding hyperelliptic involution.
The curve $C_2$ also admits a bielliptic involution $\tau$, the involutions $\sigma$ and $\tau$ commute and the involution $\sigma\circ \tau$ is a bielliptic involution. The involution $\tau$ induces an involution on $D_2$. Modding out by this involution gives a map $D_2\rightarrow R$ where $R$ is a stable genus 0 curve. By the Riemann-Hurwitz formula, the ramification divisor of the composition $g\colon C_2\rightarrow D_2 \rightarrow R$ has degree 10. The only ramification points of $g$ are the fixed points of $\sigma$, $\tau$ and $\sigma\circ \tau$. Therefore the fixed points of $\sigma$ and of $\tau$ and of $\sigma \circ \tau$ are distinct. 

The situation is be summarized by the following diagram. Where the upper middle map is the hyperelliptic map. The lower middle map is the map $D_2\rightarrow R$ and the upper maps on the right and left are the two bielliptic maps.
\begin{center}
\begin{tikzpicture}
\draw (-1.5,0) to (1.5,0);
\draw (-5,0) to (-2,0);
\draw (2,0) to (5,0);
\draw (-2.5,1.5) to (2.5,1.5);
\draw (-2.5,-1.5) to (2.5,-1.5);
\foreach \x in {0,.25} {
\node[scale=.2,draw,star,star points=6,star point ratio=2.5,fill=black] at (-2+\x,1.5) {}; \node[scale=.3,draw,diamond,fill=black] at (-1+\x,1.5) {};
\node[scale=.2,draw,star,star points=6,star point ratio=2.5,fill=black] at (-4.5+\x,0) {};
\node[scale=.3,draw,diamond,fill=black] at (3+\x,0) {};
};
\foreach \x in {0,.25,.5,.75,1,1.25} {
\node[scale=.3,draw,circle,fill=black] at (.75+\x,1.5) {};
\node[scale=.3,draw,circle,fill=black] at (-.25+\x,0) {};
};
\foreach \x in {0,.25,.5} {
\node[scale=.3,draw,circle,fill=black] at (-3.25+\x,0) {};
\node[scale=.3,draw,circle,fill=black] at (3.75+\x,0) {};
\node[scale=.3,draw,circle,fill=black] at (1+\x,-1.5) {};
};
\node[scale=.2,draw,star,star points=6,star point ratio=2.5,fill=black] at (-1.25,0) {}; \node[scale=.3,draw,diamond,fill=black] at (-0.75,0) {};
\node[scale=.3,draw,diamond,fill=black] at (-3.75,0) {};
\node[scale=.2,draw,star,star points=6,star point ratio=2.5,fill=black] at (2.5,0) {};
\node[scale=.2,draw,star,star points=6,star point ratio=2.5,fill=black] at (-2,-1.5) {}; \node[scale=.3,draw,diamond,fill=black] at (-0.75,-1.5) {};
\draw [->] (-2,1.25) to (-3.5,.25);
\draw [->] (2,1.25) to (3.5,.25);
\draw [->] (0,1.25) to (0,.25);
\draw [<-] (-2,-1.25) to (-3.5,-.25);
\draw [<-] (2,-1.25) to (3.5,-.25);
\draw [<-] (0,-1.25) to (0,-.25);
\node at (.2,.75) {$f$};
\node [right] at (2.5,1.5) {$g=2$};
\node [left] at (-5,0) {$g=1$};
\node [right] at (5,0) {$g=1$};
\node [below] at (1,0) {$g=0$};
\node [right] at (2.5,-1.5) {$g=0$};
\end{tikzpicture}
\end{center}
Identifying $\sigma$ with $(1,0)$ and $\tau$ with $(0,1)$ it follows that cover $C_2$ lies in the image of $\phi_{\Z_2,((0,1)^2)}$.

The case of $\mathcalorig{B}$ is analogous.
\end{proof}

 Let us denote by 
 \[
\Hyp^s_{0,1,2m}:= \bigcup_{\sigma\in \S_{2m}} \sigma (\Hyp_{0,1,2m}) \subset \M_{0,1+2m}
 \]
 the image under the action of the symmetric group $\S_{2m}$ acting on the $2m$-markings permuted by the hyperellipitc involution.  We can draw a conclusion from Proposition \ref{prop:thelocihur} in terms of loci on the target space as follows

 \begin{proposition}\label{prop:identifyAB}
  Let $\tilde{\xi}\colon \M_{0,7}\times \M_{0,5} \rightarrow \M_{0,10}$ be the gluing morphism gluing the curves together in the last points. Let $\delta\colon \H_{4,\Z_2,(1^{10})} \rightarrow \M_{0,10}$ be the target map. We have 
  \begin{align*}
   \delta(\mathcalorig{A}) &= \tilde{\xi}(\Hyp^s_{0,1,7} \times \M_{0,5})\\
   \delta(\mathcalorig{B}) &= \tilde{\xi}(\M_{0,7} \times \Hyp^s_{0,1,4}).
  \end{align*}
 \end{proposition}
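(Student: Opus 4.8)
The plan is to deduce the statement from Proposition \ref{prop:thelocihur} by pushing its description of $\mathcalorig{A}$ and $\mathcalorig{B}$ forward along the target map $\delta$; I will treat $\mathcalorig{A}$ in detail, the case of $\mathcalorig{B}$ being obtained by interchanging the roles of the genus-$1$ and genus-$2$ components. First I would record that gluing commutes with passing to the quotient by the hyperelliptic involution $\sigma$. For $(C \to D) \in \mathcalorig{A}$ we have $C = C_1 \cup C_2$ with $\sigma$ exchanging the two nodes (Proposition \ref{prop:theloci}), so $D = C/\sigma$ is the nodal curve obtained by gluing $D_2 := C_2/\sigma \in \M_{0,7}$ and $D_1 := C_1/\sigma \in \M_{0,5}$ at the common image of the node pair, which is precisely $\tilde{\xi}(D_2,D_1)$. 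Writing $\delta_2 \colon \H_{2,\Z_2,(1^6,0)} \to \M_{0,7}$ and $\delta_1 \colon \H_{1,\Z_2,(1^4,0)} \to \M_{0,5}$ for the two target maps, this gives the commutation
\[ \delta \circ \xi = \tilde{\xi} \circ (\delta_2 \times \delta_1), \]
and therefore, using Proposition \ref{prop:thelocihur},
\[ \delta(\mathcalorig{A}) = \tilde{\xi}\big( \delta_2(\im \phi_{\Z_2,((0,1)^2)}) \times \delta_1(\H_{1,\Z_2,(1^4,0)}) \big). \]
It then remains to identify the two factors on the right.

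The unconstrained factor is handled by surjectivity of the target map. By Theorem \ref{Thm:Hurwitzstack} the map $\delta_1$ is flat, proper and quasi-finite, hence finite; since the monodromy datum $(1^4,0)$ satisfies $\sum h_i = 0 \in \Z_2$ and $m_0 = 1$, Proposition \ref{prop:condcyclic} shows that $\H_{1,\Z_2,(1^4,0)}$ is nonempty and that $\delta_1$ has positive degree. A finite, flat (hence open) and proper (hence closed) map with nonempty source onto the connected space $\M_{0,5}$ is surjective, so $\delta_1(\H_{1,\Z_2,(1^4,0)}) = \M_{0,5}$. The identical argument gives $\delta_2(\H_{2,\Z_2,(1^6,0)}) = \M_{0,7}$, which is the unconstrained factor occurring for $\mathcalorig{B}$.

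The main point is the identification of the constrained factor $\delta_2(\im \phi_{\Z_2,((0,1)^2)})$ with the symmetric hyperelliptic locus $\Hyp^s_{0,1,7} \subset \M_{0,7}$. For the inclusion $\subseteq$, an element of $\im \phi_{\Z_2,((0,1)^2)}$ carries a bielliptic involution $\tau$ commuting with $\sigma$; as in the Riemann--Hurwitz diagram in the proof of Proposition \ref{prop:thelocihur}, $\tau$ descends to an involution $\bar{\tau}$ of $D_2$ that fixes the node-marking and interchanges the six branch-markings in three pairs, so $D_2$ lies in $\Hyp^s_{0,1,7}$. For $\supseteq$, given $D_2$ in this locus I would first use surjectivity of $\delta_2$ to produce a double cover $C_2 \to D_2$ with the prescribed branching at the six points and the free node-pair over the node-marking, then lift $\bar{\tau}$ to an automorphism $\tau$ of $C_2$ and check that $\langle \sigma, \tau \rangle \cong \Z_2 \times \Z_2$ acts with monodromy datum $((1,0)^3,(0,1),(1,1))$, exhibiting $C_2 \in \im \phi_{\Z_2,((0,1)^2)}$ with $\delta_2(C_2) = D_2$.

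The hard part is exactly this lifting: one must verify that $\bar{\tau}$ lifts to a genuine involution rather than an order-$4$ automorphism, and that the fixed loci of $\sigma$, $\tau$ and $\sigma\tau$ are pairwise disjoint and distributed so as to realize the required monodromy. This is the same local branch-point analysis already carried out in the forward direction of Proposition \ref{prop:thelocihur}, so the cleanest route is to invoke that proposition directly rather than repeat the bookkeeping. Combining the three steps yields $\delta(\mathcalorig{A}) = \tilde{\xi}(\Hyp^s_{0,1,7} \times \M_{0,5})$, and the symmetric argument, exchanging the two components and using the identification of $\mathcalorig{B}$ in Proposition \ref{prop:thelocihur}, gives $\delta(\mathcalorig{B}) = \tilde{\xi}(\M_{0,7} \times \Hyp^s_{0,1,4})$.
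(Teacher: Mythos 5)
Your proposal follows the same route as the paper's (very terse) proof: factor $\delta$ on the glued loci of Proposition \ref{prop:thelocihur} as $\tilde{\xi}\circ(\delta_2\times\delta_1)$, i.e.\ as the quotient by the hyperelliptic involution, and then identify the image of each factor. The commutation step, the surjectivity of $\delta_1$ and $\delta_2$ on the unconstrained factors (finiteness from Theorem \ref{Thm:Hurwitzstack}, nonemptiness and positive degree from Proposition \ref{prop:condcyclic}, openness plus closedness onto a connected target), and the inclusion $\delta_2(\im\phi_{\Z_2,((0,1)^2)})\subseteq\Hyp^s_{0,1,6}$ are all correct, and are argued in more detail than in the paper. (I write $\Hyp^s_{0,1,6}$ for the locus in $\M_{0,7}$; the ``$7$'' in the statement is a notational slip, as the paper itself confirms when it later writes $\Hyp^s_{0,1,6}$.)

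The gap is the reverse inclusion $\Hyp^s_{0,1,6}\subseteq\delta_2(\im\phi_{\Z_2,((0,1)^2)})$, which you correctly single out as the hard part but then dispose of by ``invoking Proposition \ref{prop:thelocihur} directly.'' That proposition cannot supply this step: its statement only describes $\mathcalorig{A}$ and $\mathcalorig{B}$, and the analysis in its proof runs purely in the forward direction, starting from a curve $C_2$ that \emph{already} carries the two commuting involutions and deriving the quotient picture; nothing in it asserts that every $(D_2,q_1,\ldots,q_7)$ with an involution $\bar{\tau}$ fixing $q_7$ and pairing $q_1,\ldots,q_6$ arises as such a quotient. Since Proposition \ref{prop:identifyAB} is itself being deduced from Proposition \ref{prop:thelocihur}, the citation is circular at exactly the point where new content is required. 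The missing argument is short; either of the following closes it. (1) Let $R=D_2/\bar{\tau}$, let $r_1,r_2,r_3\in R$ be the images of the three marked pairs, $r_5$ the image of $q_7$, and $r_4$ the second branch point of $D_2\to R$. By Theorem \ref{thm:condnonempty}, $\H_{2,\Z_2\times\Z_2,((1,0)^3,(0,1),(1,1))}$ is nonempty (the entries sum to zero and generate $\Z_2\times\Z_2$, so $\operatorname{Hom}_\xi(\fundgp,\Z_2\times\Z_2)\neq\emptyset$) and its target map onto $\M_{0,5}$ is surjective, so there is a $\Z_2\times\Z_2$-cover $C_2\to R$ with this branch data; since the double cover of $R\cong\P^1$ branched at $\{r_4,r_5\}$ is unique, $C_2/\sigma\cong D_2$ compatibly with the markings. (2) Alternatively, complete your lifting argument: the double cover $C_2\to D_2$ branched at $q_1+\cdots+q_6$ is unique (unique square root of $\mathcal{O}(6)$ on $\P^1$), and $\bar{\tau}$ preserves the branch divisor, so it lifts to $\tilde{\tau}$ with $\tilde{\tau}^2\in\{\mathrm{id},\sigma\}$; the case $\tilde{\tau}^2=\sigma$ is impossible, since then $C_2\to R$ would be a connected $\Z/4\Z$-cover of a genus $0$ curve all of whose local monodromies lie in the index-two subgroup, contradicting surjectivity of the monodromy homomorphism (Definition \ref{Def:HomxipiG}). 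To be fair, the paper's one-line proof suppresses this point too; but a write-up that explicitly flags the step cannot then outsource it to a statement that does not contain it.
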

 
 \begin{proof}
This follows immediatly from applying the target map to the loci of Proposition \ref{prop:thelocihur}.
In particular note that $\delta$ induces a quotient map on the loci of Proposition \ref{prop:thelocihur} given by the $\Z_2$ action associated to the hyperelliptic structure.  
%
%
  \end{proof}

  Consider now the commutative diagram (which we know to be Cartesian on the level of sets)
  \begin{equation}\label{prgrph:normAB}
   \begin{tikzcd}
    \mathcalorig{A} \cup \mathcalorig{B} \arrow[r,"g"] \arrow[d,"i"] & \B_4\arrow[d] \\
    \H_{4,\Z_2,(1^{10})} \arrow[r,"\phi_{(1^{10})}"] & \M_4
   \end{tikzcd}.
  \end{equation}
We have $\codim_{\M_4} \B_4=3$. Since $\codim_{\M_{0,10}} \tilde{\xi}(\Hyp^s_{0,1,6}\times \M_{0,5}) = 3$ the excess bundle  $E= i^*N_{\phi_{(1^{10})}}/N_g$ is trivial on $\mathcalorig{A}$. 
 We have $\codim_{\M_{0,10}} \tilde{\xi}(\M_{0,5}\times \Hyp^s_{0,1,4}) = 2$. Let $\phi_{(1^6)}\colon \H_{2,\Z_2,(1^6,0)}\rightarrow \M_{2,2}$ be the source map and let $p\colon \mathcalorig{B}\rightarrow \H_{2,\Z_2,(1^6,0)}$ be the projection onto the second factor. The excess bundle restricted to $\mathcalorig{B}$ is given by 
\[
 p^*N_{\phi_{(1^6)}}.
\]
From the excess intersection formula we get the following.

 \begin{proposition}
  Let $a,b\in \Q_{>0}$ be the multiplicity of the loci $\mathcalorig{A}, \mathcalorig{B}$ in the (stack-theoretic) fibre product \eqref{prgrph:normAB}.
  Let $\tilde{\delta}\colon \H_{2,\Z_2,(1^6,0)}\rightarrow \M_{0,7}$ be the target map. With the above notation and that  of Proposition \ref{prop:identifyAB},
we have, 
  \begin{equation}\label{eq:B4pullback}
   \delta_* \phi_{(1^{10})}^* ([\B_4])= \tilde{\xi}_*(a[\Hyp^s_{0,1,6}]\otimes [\M_{0,5}] + b(\tilde{\delta}_*c_1(N_{\phi_{(1^6)}})\cap [\M_{0,7}])\otimes [\Hyp^s_{0,1,4}]).
  \end{equation}
 \end{proposition}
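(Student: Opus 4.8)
The plan is to compute the pullback $\phi_{(1^{10})}^*([\B_4])$ by excess intersection along the diagram \eqref{prgrph:normAB}, and then to push the resulting class forward along $\delta$ using the geometric identifications of Proposition \ref{prop:identifyAB}. All the preparatory data is already in place: by Proposition \ref{prop:theloci} the set-theoretic preimage of $\B_4$ is the union $\mathcalorig{A}\cup\mathcalorig{B}$, and in the paragraph preceding the statement we have identified the two excess bundles — rank $0$ (hence trivial) on $\mathcalorig{A}$, and equal to $p^*N_{\phi_{(1^6)}}$ on $\mathcalorig{B}$, where $p\colon \mathcalorig{B}\to \H_{2,\Z_2,(1^6,0)}$ is the projection onto the genus-$2$ factor. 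Since $\phi_{(1^{10})}$ is a local complete intersection morphism by Theorem \ref{Thm:Hurwitzstack}, the excess intersection formula \cite[Proposition 17.4.1]{fulton1984} applied to \eqref{prgrph:normAB} gives
\begin{equation*}
  \phi_{(1^{10})}^*([\B_4]) = a\,[\mathcalorig{A}] + b\, c_1(p^*N_{\phi_{(1^6)}})\cap [\mathcalorig{B}],
\end{equation*}
where $a,b$ are the multiplicities of the two loci in the stack-theoretic fibre product and the top Chern class of the rank-$0$ bundle over $\mathcalorig{A}$ contributes the factor $1$.

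Next I would apply $\delta_*$ term by term. The essential input is the modular description of Proposition \ref{prop:thelocihur}: both $\mathcalorig{A}$ and $\mathcalorig{B}$ are obtained by gluing a genus-$2$ and a genus-$1$ admissible $\Z_2$-cover along the node pair exchanged by the hyperelliptic involution, so that $\delta$ restricted to each locus factors as the product of the two component-wise target maps followed by the gluing morphism $\tilde{\xi}\colon \M_{0,7}\times\M_{0,5}\to\M_{0,10}$, the exchanged node pair being collapsed to the single node of the quotient. For $\mathcalorig{A}$ this factorization together with Proposition \ref{prop:identifyAB} yields $\delta_*[\mathcalorig{A}] = \tilde{\xi}_*([\Hyp^s_{0,1,6}]\otimes[\M_{0,5}])$. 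For $\mathcalorig{B}$ the genus-$2$ factor maps by $\tilde{\delta}\colon \H_{2,\Z_2,(1^6,0)}\to\M_{0,7}$ while the genus-$1$ factor maps onto $\Hyp^s_{0,1,4}\subset\M_{0,5}$; because the excess class $c_1(p^*N_{\phi_{(1^6)}})$ is pulled back from the genus-$2$ factor, the projection formula together with the compatibility of $\delta_*$ with this product structure gives
\begin{equation*}
  \delta_*\bigl(c_1(p^*N_{\phi_{(1^6)}})\cap[\mathcalorig{B}]\bigr) = \tilde{\xi}_*\bigl((\tilde{\delta}_* c_1(N_{\phi_{(1^6)}})\cap[\M_{0,7}])\otimes[\Hyp^s_{0,1,4}]\bigr).
\end{equation*}
Substituting both identities into the pushforward of the displayed pullback reproduces exactly \eqref{eq:B4pullback}.

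The hard part will be controlling the normalizations in the pushforward step. One must verify that $\delta$ restricted to $\mathcalorig{A}$ and to the genus-$2$ factor of $\mathcalorig{B}$ is generically of degree $1$ as a morphism of stacks — the apparent $\Z_2$-ambiguity coming from the hyperelliptic involution is already parametrized by the automorphisms recorded in $\H$, and $\tilde{\xi}$ is generically injective since genus-$0$ curves with the relevant markings carry no automorphisms — so that no spurious numerical factor arises and the coefficients remain the bare multiplicities $a,b$. Equally delicate is the passage from the abstract bundle $p^*N_{\phi_{(1^6)}}$ to the class $\tilde{\delta}_* c_1(N_{\phi_{(1^6)}})$ on the factor $\M_{0,7}$: this requires the product decomposition of $\mathcalorig{B}$ from Proposition \ref{prop:thelocihur} to be compatible, at the level of cycles, with the factorization of $\delta$ through $\tilde{\xi}$, so that the normal-direction class on the genus-$2$ factor is carried to the corresponding factor of the glued target without leaking into the genus-$1$ part. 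Verifying these compatibilities amounts to unwinding the gluing construction and tracking the induced maps on tangent spaces; this is routine in spirit but is where the real care of the argument lies.
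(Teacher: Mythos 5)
Your proposal is correct and follows essentially the same route as the paper: the paper likewise applies the excess intersection formula to the diagram \eqref{prgrph:normAB} with the trivial excess bundle on $\mathcalorig{A}$ and $p^*N_{\phi_{(1^6)}}$ on $\mathcalorig{B}$, then pushes forward along $\delta$ using the factorization through $\tilde{\xi}$ from Propositions \ref{prop:thelocihur} and \ref{prop:identifyAB}. The normalization issues you flag (generic degrees of $\delta$ on the two loci, compatibility of the product decomposition with the pushforward) are real but harmless here, since the undetermined constants $a,b$ absorb them — which is exactly how the paper treats them, deferring their explicit computation to \cite{thesisjason}.
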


The classes $[\Hyp^s_{0,1,4}]$ and $[\Hyp^s_{0,1,6}]$ can be computed in terms of decorated stratum classes as outlined in the rest of this paper. On the other hand we have
\[\tilde{\delta}_*c_1(N_{\phi_{(1^6)}})= \tilde{\delta}_*\phi_{(1^6)}^*([\Hyp_{2,0,2}])\]
the class $[\Hyp_{2,0,2}]$ was first computed in terms of decorated stratum classes in \cite[Lemma 6]{Belorousski1984} and is again easy to compute with the methods of this paper. The pull-push map 
\[\tilde{\delta}_*\phi_{(1^6)}^*\colon A^1(\M_{2,2})\to A^1(\M_{0,7})\]
can be computed in terms of bases of decorated stratum classes using Theorem \ref{th:mainpush}. Together this gives an expression in terms of decorated stratum classes for $\tilde{\delta}_*c_1(N_{\phi_{(1^6)}})$.

Details of these computations can be found in \cite{thesisjason}. We end up with an expression in terms of decorated stratum classes for \eqref{eq:B4pullback}.

The pull-push 
\[
\delta_*\phi^*_{(1^{10})}\colon A^3(\M_4)\to A^3(\M_{0,10}) 
\]
can also be determined in terms of bases of decorated stratum classes. Now $\dim A^3(\M_4)=32$, $\dim  A^3(\M_{0,10})=21$ and $\delta_* \phi^*$ turns out to be surjective. Above, we determined the image of $[\B_4]$ under $\delta_* \phi^*$ up to 2 coefficients $a,b$. This gives $19=21-2$ out of the $32$ coefficients neccesary to compute $[\B_4]$. In \cite{thesisjason} this computation was neccessary for the computation of $[\B_4]$.

\bibliography{main.bbl}{}
\bibliographystyle{alpha}

J.~Schmitt, \textsc{Departement Mathematik, ETH Z\"urich, R\"amistrasse 101, 8092 Z\"urich, Switzerland}
\par\nopagebreak
  \textit{E-mail address}: \texttt{johannes.schmitt@math.ethz.ch}
\bigskip

J.~van~Zelm, \textsc{Deparment of Mathematics, Humboldt-Universit\"{a}t zu Berlin, Rudower Chaussee 25, Room 1.415, 12489 Berlin, Germany}
\par\nopagebreak
  \textit{E-mail address}: \texttt{jasonvanzelm@outlook.com}
\end{document}